\documentclass{amsart}
\usepackage{amssymb}
\usepackage{mathtools}
\usepackage[only=llbracket,rrbracket,arrownot]{stmaryrd}
\usepackage[svgnames]{xcolor}
\usepackage[unicode,
  colorlinks=true,
  linktocpage=true,
  citecolor=OliveDrab,
  linkcolor=DarkMagenta,
  linkcolor=DarkMagenta,
  pdftitle={Higher Segal spaces and partial groups},
  pdfauthor={Philip Hackney and Justin Lynd},
  pdfkeywords={partial group, higher Segal space, root system, group action, closure system, symmetric set}
  ]{hyperref}
\usepackage[capitalise,noabbrev]{cleveref}
\usepackage{mathrsfs}
\usepackage{tikz-cd}
\usepackage{enumitem}
\usepackage{microtype}
\usepackage{dynkin-diagrams}
\frenchspacing

\usepackage[T1]{fontenc}

\usepackage{booktabs}

\DeclareMathOperator{\starset}{st}
\newcommand{\op}{\textup{op}}
\DeclareMathOperator{\id}{id}

\newcommand{\bcom}{B_{\textup{com}}}

\newcommand{\udec}{\textup{dec}_{\top}}
\newcommand{\ldec}{\textup{dec}_{\perp}}
\newcommand{\tw}{\operatorname{tw}}
\newcommand{\conj}{\operatorname{cj}}

\newcommand{\sset}{\mathsf{sSet}}
\newcommand{\sym}{\mathsf{Sym}}
\newcommand{\fun}{\mathsf{Fun}}
\newcommand{\set}{\mathsf{Set}}
\newcommand{\cat}{\mathsf{Cat}}
\newcommand{\gpd}{\mathsf{Gpd}}
\newcommand{\pgpd}{\mathsf{pGpd}}
\newcommand{\grp}{\mathsf{Grp}}
\DeclareMathOperator{\ob}{ob}

\newcommand{\ord}{\mathbf{\Delta}}
\newcommand{\fin}{\mathbf{\Upsilon}}
\newcommand{\ordalt}{\widetilde{\ord}}

\newcommand{\delint}{\ord_{\textup{int}}}

\newcommand{\finbot}{\fin_{\!\textup{z}}} 

\newcommand{\domraw}{\operatorname{D}}
\newcommand{\dom}[1]{\domraw(#1)}
\newcommand{\invset}{\operatorname{N}}

\newcommand{\edgemap}{\mathscr{E}}
\newcommand{\bous}{\mathscr{B}}
\DeclareMathOperator{\mat}{Mat}

\newcommand{\ps}{\mathcal{P}} 

\newcommand{\subsets}{\mathscr{S}}
\newcommand{\closeds}{\mathscr{C}}

\newcommand{\words}{\mathbf{W}}
\newcommand{\bswords}{\mathbf{S}}

\newcommand{\cube}[1]{\llbracket #1 \rrbracket}
\newcommand{\intcu}[1]{\cube{#1}}
\newcommand{\intcube}[2]{\cube{#1 \subset #2}}

\DeclareMathOperator{\core}{core}
\newcommand{\ul}{\underline}

\DeclareMathOperator{\cl}{cl}	

\newcommand{\rep}[1]{\Upsilon^{#1}}
\newcommand{\spine}[1]{\textup{Sp}^{#1}}
\newcommand{\NA}{\operatorname{NA}}
\DeclareMathOperator{\sk}{sk}
\DeclareMathOperator{\nondeg}{nd}
\newcommand{\name}[1]{\ulcorner \!#1\! \urcorner}

\DeclareMathOperator\image{im}

\newcommand{\trans}[2]{#1/\!\!/#2} 
\newcommand{\loc}[2]{L_{#1}(#2)} 
\newcommand{\pto}{\nrightarrow}

\newcommand{\rsys}{\Phi}
\newcommand{\pos}{\rsys^+}
\newcommand{\base}{\Pi} 
\newcommand{\cone}{\operatorname{cone}}
\newcommand{\conv}{\operatorname{conv}}
\newcommand{\reals}{\mathbb{R}}
\newcommand{\realspan}{\operatorname{span}_\reals}
\newcommand{\ints}{\mathbb{Z}}
\newcommand{\setmin}{\backslash}
\newcommand{\Sp}{\operatorname{Sp}}

\newcommand{\ko}{\textcolor{blue}}

\newcommand{\sepin}{\mathsf{G}}
\newcommand{\sepun}{\mathsf{F}}
\newcommand{\totin}{\mathsf{I}}

\newcommand{\tallmid}{\mathrel{\Big|}}

\newcommand{\barcl}{\overline{\cl}}
\newcommand{\bardom}{\overline{\domraw}}

\usepackage{manfnt}
\newcommand{\ncube}{\text{\mancube}}

\newtheorem{theorem}{Theorem}[section]
\newtheorem{proposition}[theorem]{Proposition}
\newtheorem{corollary}[theorem]{Corollary}
\newtheorem{lemma}[theorem]{Lemma}

\theoremstyle{definition}
\newtheorem{definition}[theorem]{Definition}
\newtheorem{convention}[theorem]{Convention}
\newtheorem{example}[theorem]{Example}
\newtheorem{notation}[theorem]{Notation}
\newtheorem{construction}[theorem]{Construction}

\theoremstyle{remark}
\newtheorem{remark}[theorem]{Remark}


\usetikzlibrary{decorations.markings}
\definecolor{lightpurple}{HTML}{c8c0fb}
\definecolor{mediumpurple}{HTML}{9080fb}
\definecolor{bblue}{HTML}{c5d3fb}
\definecolor{lblue}{HTML}{e3eafd}
\definecolor{lightpink}{HTML}{ff98f6}
\definecolor{verylightgray}{rgb}{0.95,0.95,0.95}

\begin{document}

\title{Higher Segal spaces and partial groups}

\author{Philip Hackney}
\address{Department of Mathematics, University of Louisiana at Lafayette}
\email{philip@phck.net} 
\urladdr{http://phck.net}

\author{Justin Lynd}
\address{Department of Mathematics, University of Louisiana at Lafayette}
\email{lynd@louisiana.edu}

\thanks{This work was supported by a grant from the Simons Foundation (\#850849, PH). PH was partially supported Louisiana Board of Regents through the Board of Regents Support fund LEQSF(2024-27)-RD-A-31. JL was partially supported by NSF Grant DMS-1902152.}

\date{September 4, 2025}

\thanks{Results from this paper were announced in \cite{Hackney:MFO,Lynd:MFO}.}

\subjclass[2020]{Primary: 
18N50, 
20N99, 
20M30, 
52A01, 
52A35, 
20F55. 
Secondary: 
18F20, 
18N60, 
20D20, 
20L05, 
51D20, 
55U10} 

\keywords{partial group, higher Segal space, root system, group action, closure system, symmetric set}

\begin{abstract}
The $d$-Segal conditions of Dyckerhoff and Kapranov are exactness properties for simplicial objects based on the geometry of cyclic polytopes in $d$-dimensional Euclidean space.
2-Segal spaces are also known as decomposition spaces, and most activity has focused on this case.
We study the interplay of these conditions with the partial groups of Chermak, a class of symmetric simplicial sets. 
The $d$-Segal conditions simplify for symmetric simplicial objects, and take a particularly explicit form for partial groups.
We show partial groups provide a rich class of $d$-Segal sets for $d > 2$, by undertaking a systematic study of the \emph{degree} of a partial group $X$, namely the smallest $k\geq 1$ such that $X$ is $2k$-Segal. 
We develop effective tools to explicitly compute the degree based on the discrete geometry of actions of partial groups, which we define and study.
Applying these tools involves solving Helly-type problems for abstract closure spaces.
We carry out degree computations in concrete settings, including for the punctured Weyl groups introduced here, where we find that the degree is closely related to the maximal dimension of an abelian subalgebra of the associated semisimple Lie algebra. 
\end{abstract}

\maketitle

\section{Introduction}

The classical Segal condition for a simplicial set, namely that the map 
\[
\mathscr{E}_n\colon X_n \to X_1 \times_{X_0} \cdots \times_{X_0} X_1
\]
sending an $n$-simplex to its spine is a bijection for all $n \geq 2$, characterizes which simplicial sets arise as nerves of categories.
It admits two recent generalizations of particular interest in the higher Segal spaces of Dyckerhoff and Kapranov \cite{DyckerhoffKapranov} and the partial groups of Chermak \cite{Chermak:FSL}.

The \emph{higher Segal conditions} are a family of conditions on simplicial objects based on triangulations of cyclic polytopes. 
They play a key role in the higher Dold--Kan correspondence \cite[4.27]{DyckerhoffJassoWalde} and have applications to higher algebraic K-theory \cite{poguntke}; see \cite{Dyckerhoff:CPOC} for a recent survey.
The conditions come in upper and lower variants, and are  progressively weaker as $d$ grows: a lower or upper $d$-Segal object is both lower and upper ($d{+}1$)-Segal. 
When $d = 1,2$ they may be interpreted as associativity conditions \cite{GKT1,Penney2017,Stern:2SOAS}, while for $d>2$ they represent measures of higher associativity \cite{GalGal:HSSLAA,Dyckerhoff:CPOC}.

The lower $1$-Segal condition is the ordinary Segal condition characterizing the essential image of the nerve. 
$2$-Segal simplicial objects have been studied intensely in recent years, often with the goal of categorifying and generalizing constructions of various types of associative algebras in representation theory and objective combinatorics, such as Hall and Hecke algebras, incidence algebras, coalgebras of rooted trees, and the like. 
They were introduced independently under the terminology \emph{decomposition spaces} by G\'alvez-Carrillo, Kock, and Tonks \cite{GKT1}. 
Roughly speaking, whereas the Segal condition implies that the span 
\[
X_{1} \times_{X_0} X_1 \xleftarrow{\mathscr{E}_2} X_2 \xrightarrow{d_1} X_1
\]
determines an associative, totally- and uniquely-defined composition law (the composition of the associated category), the $2$-Segal conditions enforce associativity for the same composition, which now may be partially defined and multiply valued. 
For recent introductions from the $2$-Segal space and decomposition space perspectives, respectively, 
see \cite{Stern:2SSP} and \cite{Hackney:DSP}. 

The second generalization relaxes the Segal condition on a simplicial set to require merely that Segal maps be \emph{injections} for $n\geq 2$. 
(The above composition will be then partially defined, but unique when it exists.)
We are interested in this condition chiefly for \emph{symmetric simplicial sets}, presheaves on the symmetric simplex category $\fin \supset \ord$ having the same objects $[n] = \{0,1,\dots,n\}$ as $\ord$ but with morphisms $[m] \to [n]$ all functions. 
A symmetric set is called \emph{spiny} if it satisfies this generalization of the Segal condition. 
Building on a theorem of Gonz\'alez \cite{gonzalez}, we showed in \cite{HackneyLynd:PGSSS} that the category of reduced spiny symmetric sets is equivalent to the category of \emph{partial groups} in the sense of Chermak \cite{Chermak:FSL}. 
Accordingly, a \emph{partial groupoid} is just a spiny symmetric set. 
In a partial groupoid an $n$-simplex can be written unambiguously in familiar bar notation $[f_1|\cdots|f_n]$ like in the nerve of a category or groupoid. 
But unlike in the nerve of a category, the total composition $[f_1|\cdots|f_n] \mapsto f_n \circ \cdots \circ f_1$ will only be defined for some tuples $(f_1,\dots,f_n)$ with edges agreeing successively target to source. 

An important class of examples of partial groups arises from partial actions of groups.
Given a partial action $G \times S \pto S$ of a group $G$ on the set $S$ in the sense of Exel \cite{Exel:PAGAIS}, there is a transporter groupoid $\trans{S}{G}$, having object set $S$ and morphisms $s \xrightarrow{g} g \cdot s$ whenever $g$ acts on $s$, as well as a functor $\trans{S}{G} \to G$. 
The associated partial group is the image $\loc{S}{G}$ the corresponding map $N(\trans{S}{G}) \twoheadrightarrow \loc{S}{G} \subseteq BG$ on nerves. 
The $n$-simplices of $\loc{S}{G}$ (the multipliable words of length $n$) are those tuples $[g_1|\cdots|g_n] \in BG_n$ of elements of $G$ that act successively on some element of $S$, that is, for which there is $s \in S$ such that $g_1 \cdot s$ is defined, $g_2\cdot (g_1 \cdot s)$ is defined, and so forth. 

The initial achievement of Chermak's theory of partial groups was to establish the existence and uniqueness of centric linking systems for saturated fusion systems on finite $p$-groups \cite{Chermak:FSL}. 
This was an important generalization of the Martino--Priddy conjecture (first proved by Oliver): two finite groups have homotopy equivalent Bousfield--Kan $p$-completed classifying spaces if and only if there is an isomorphism between their Sylow $p$-subgroups intertwining the conjugation maps between $p$-subgroups in the two groups \cite{Oliver:ECSPO,Oliver:ECSP2}. 
Linking systems are special types of partial groups called \emph{localities} in Chermak's framework \cite{Chermak:FL}, which are themselves special types of \emph{objective partial groups}. 
A standard example of a locality is the partial group $\loc{S\backslash1}{G}$ for the partial conjugation action on nonidentity elements $S\backslash1$ of a Sylow $p$-subgroup $S$ of $G$ \cite{HenkeLibmanLynd2023}.
Localities have been recently used by Chermak, Henke, and others \cite{ChermakHenke:FSLAD,Henke:CPNS} within a revision of the Classification of the Finite Simple Groups based on fusion systems along similar lines first outlined by Aschbacher. 

\subsection{Partial groups as higher Segal sets: initial results and motivation}
The main objective of this paper is to understand the higher associativity of partial groupoids by providing tools for deciding, for a fixed $d$, whether a partial groupoid is $d$-Segal. 

For $d = 1$ this is just the nerve theorem; a lower $1$-Segal partial groupoid is exactly (the nerve of a) groupoid. 
This project started when we wondered, like Segal's partial monoids considered in \cite{BOORS:2SSWC}, whether Chermak's partial groups give examples of $2$-Segal sets.
The answer is ``no'': 
\begin{quote}
\centering
If a partial group is 2-Segal, then it is a group.
\end{quote}
This is an immediate consequence of \cref{prop symmetric higher Segal}, which says that for symmetric sets, the lower $(2k{-}1)$-Segal, lower $2k$-Segal, upper $2k$-Segal, and upper $(2k{+}1)$-Segal conditions are all equivalent.

If $X$ is a spiny symmetric set, then these are further equivalent to the following (\cref{edgy segal characterization}): 
For each $n \geq 1$, each gapped sequence (meaning successive terms are at least two apart) 
\[
0 \leq i_0 \ll i_1 \ll \cdots \ll i_k \leq n
\]
of length $k+1$, and each potentially composable tuple 
\[
w \in X_1 \times_{X_0} \cdots \times_{X_0} X_1
\]
of length $n$, if the faces $d_{i_\ell}w$ are elements of $X_{n-1}$, then $w$ is an element of $X_n$.
This condition is the source of much of our additional intuition. 
It says that the composability of at least $k+1$ codimension $1$ faces not too close to one another implies the composability of the word itself. 
For instance, in a partial group, the first of the lower $3$-Segal conditions ($k = 2$, $n = 4$), says that if each of the three words $(f_2,f_3,f_4)$, $(f_1,d_1[f_2|f_3],f_4)$, and $(f_1,f_2,f_3)$ is multipliable, then so is $(f_1,f_2,f_3,f_4)$. 
Notice a tacit assumption in the condition that the relevant subwords of length two in $w$ are composable, so we can form word of length $n-1$ obtained by composing (applying the face map $d_1\colon X_2 \to X_1$) in the middle, like with $d_1[f_2|f_3]$. 

The collapsing of the higher Segal conditions to the lower odd ones leads to the following definition, which also makes sense for an arbitrary symmetric set.
\begin{definition}
\label{degree}
The \emph{degree} of a partial groupoid $X$ is the smallest $k \geq 1$ such that $X$ is lower $(2k{-}1)$-Segal. 
\end{definition}

The higher Segal conditions were originally defined by Dyckerhoff and Kapranov in terms of triangulations of cyclic polytopes.
The term ``degree'' comes instead from the results of Walde \cite{Walde:HSSHE}. 
In Walde's paper, lower $(2k{-}1)$-Segal objects 
are shown to be the polynomial functors of degree at most $k$ in a toy version of Goodwillie--Weiss manifold calculus for a class of ``coverings'' of the ``manifolds'' $[n]$. 

We are motivated to compute the degree of partial groupoids for at least two reasons.
First, the $d$-Segal conditions for $d > 2$ seem to be much less studied than the case $d = 2$, and there are fewer examples appearing in the existing literature. 
A primary purpose of this work is to provide a rich family of concrete examples of $d$-Segal sets for $d > 2$ of group theoretic interest. 
Second, Definition~\ref{degree} provides a new invariant of a partial group measuring its higher associativity.
Within finite group theory, this gives a new invariant of $p$-local structures of finite groups through Chermak's localities. 

\subsection{Degree as Helly number: main result}
Our main theorem is that the degree of a partial groupoid is a Helly number in the discrete geometry of (partial) actions, as we now explain.

One of the contributions of this paper is a flexible notion of action of a partial group.
An \emph{action} of a partial groupoid $L$ on a set $S$ is a map of partial groupoids $\rho \colon E \to L$ that is injective on stars as defined in \cref{actions} and has $E_0 = S$.
The prototypical example derives from a partial action of a group, where one can take $E$ to be the nerve of the corresponding transporter groupoid and $L$ to be the partial group $\loc{S}{G}$.

A partial action of a group gives a somewhat special type of action of $L_S(G)$, in as much as $E$ is the nerve of a groupoid (not just a partial groupoid) and $\rho$ is a surjective map of symmetric sets. 
If these two additional properties hold, we will call $\rho$ a \emph{characteristic action} of $L$. 
The defining internal conjugation action of an objective partial group $L$ on its object set is a characteristic action. 
The following theorem gives perspective as to the relative position of Chermak's objective partial groups within all partial groups. 

\begin{theorem}
\label{characteristic action}
Every partial groupoid $L$ admits a characteristic action.
If $\rho\colon E \to L$ is a characteristic action and $L$ embeds in $BG$ for some group $G$, then there is a partial action of $G$ on $E_0$ such that $\rho$ is isomorphic to $N(\trans{E_0}G) \to \loc{E_0}G = L$.
\end{theorem}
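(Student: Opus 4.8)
The plan is to prove the two assertions separately: first to build a characteristic action from scratch, and then to recognize an arbitrary characteristic action on an $L \hookrightarrow BG$ as the nerve of a transporter groupoid.

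\smallskip
\emph{Existence.} For the first statement I would produce a genuine groupoid $\mathcal{E}$ together with a surjective, star-injective map $\rho\colon N(\mathcal{E}) \to L$. The model to generalize is $EG \to BG$: there $G$ acts on itself by left translation, $N(\trans{G}{G}) = EG$, and the quotient map is surjective (every word is multipliable) and star-injective (the action is free). The subtlety for a general spiny $L$ is that neither the naive left-translation transporter nor the conjugation transporter on $L_1$ is a genuine groupoid: spininess only makes the spine maps injective, so it does not certify that a composite of two composable edges is again composable, and the head-to-tail composites demanded by a groupoid need not lie in $L$. The construction must therefore spread the vertices $L_0$ out into a larger object set $S = E_0$ on which composability becomes total, i.e.\ an unfolding (globalization) of the regular action. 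Concretely I would take the objects of $\mathcal{E}$ to be ``local frames'' recording enough composability context that every head-to-tail pair of edges is forced to be composable in $L$, with the edge labels recovered by $\rho$. Surjectivity of $\rho$ in every degree is then automatic from the degeneracies of $L$ (which split the relevant face maps), and star-injectivity should hold by construction.

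The main obstacle for this first part is verifying that $\mathcal{E}$ is a \emph{genuine} groupoid: totality and associativity of composition, and existence of inverses. This is exactly where the object set must be engineered so that adjacency in $\mathcal{E}$ forces composability in $L$, and where the symmetric structure of $L$ must supply the inverses. It is the technical heart, and the place where the gap between ``spiny'' and ``$2$-Segal'' is felt.

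\smallskip
\emph{Transport of structure.} Now suppose $\rho\colon E \to L$ is a characteristic action with $E = N(\mathcal{E})$ and an embedding $L \hookrightarrow BG$. Composing yields a map of symmetric sets $N(\mathcal{E}) \to BG$, that is, a functor $\phi\colon \mathcal{E} \to G$ to the one-object groupoid $G$: it is constant on objects and assigns to each morphism $m\colon s \to t$ a label $\phi(m) \in G$ with $\phi(\id) = e$, respecting composition and inverses. Star-injectivity of $\rho$ says precisely that for each $s \in S$ the morphisms out of $\starset(s)$ carry pairwise distinct labels. I would then define a partial action $G \times S \pto S$ by setting $g \cdot s := t$ whenever there is a (necessarily unique, by star-injectivity) morphism $s \to t$ in $\mathcal{E}$ with $\phi$-label $g$. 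The groupoid axioms, transported through $\phi$, give Exel's axioms: $\id_s$ yields $e \cdot s = s$; composability of $s \xrightarrow{g} t \xrightarrow{h} u$ yields $(hg)\cdot s = h \cdot (g \cdot s)$ on the relevant domains; and inverse morphisms yield $g^{-1} \cdot (g \cdot s) = s$ when $g \cdot s$ is defined.

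Finally I would exhibit an isomorphism $\mathcal{E} \cong \trans{S}{G}$ over $G$, sending $m\colon s \to t$ with $\phi(m) = g$ to the transporter morphism $s \xrightarrow{g} g \cdot s = t$. This is bijective on objects, injective on morphisms by star-injectivity, and surjective because every transporter morphism arises from its witnessing $\mathcal{E}$-morphism by definition of the partial action; taking nerves gives $E \cong N(\trans{S}{G})$ over $BG$. Since $\loc{S}{G}$ is by definition the image of $N(\trans{S}{G}) \to BG$, which equals the image of $E \xrightarrow{\rho} L \hookrightarrow BG$, and $\rho$ is levelwise surjective, this image is exactly $L$; hence $\loc{S}{G} = L$ and the quotient $N(\trans{S}{G}) \twoheadrightarrow \loc{S}{G}$ is identified with $\rho$. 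The main obstacle here is matching the transported data against Exel's precise axioms — getting the domains of definedness and the restriction conditions right — and checking the identification $\loc{S}{G} = L$ in every simplicial degree, not merely on edges, which uses levelwise surjectivity of $\rho$ together with the fact that multipliable words in $\loc{S}{G}$ are exactly those detected by the transporter groupoid.
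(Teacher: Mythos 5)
Your second half (transport of structure) is correct and is essentially the paper's argument: the paper composes $E \to L \subseteq BG$ to get a star-injective functor between groupoids, invokes Kellendonk--Lawson to convert this into a partial action of $G$ on $E_0$, and then identifies $E$ with $N(\trans{E_0}{G})$ over $L$ exactly as you describe (see \cref{pgs in a group}). Your direct verification of Exel's axioms in place of the citation is routine and fine, and your use of levelwise surjectivity to get $\loc{E_0}{G}=L$ matches the paper.

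The existence half, however, has a genuine gap: you never actually construct the groupoid. You correctly diagnose the obstruction (spininess does not make head-to-tail composition total, so no naive translation or conjugation transporter on $L_1$ works), propose unspecified ``local frames,'' and then explicitly defer the verification that the result is a genuine groupoid as ``the technical heart.'' That deferred step is the entire content of the claim, so as written this is a plan rather than a proof. The paper's construction (\cref{canonical action}) sidesteps the difficulty you worry about entirely: take
\[
\rho \colon \coprod_{n\geq 0} \coprod_{\nondeg L_n} \rep{n} \longrightarrow L,
\]
the coproduct of the classifying maps $\name{g}\colon \rep{n}\to L$ over all nondegenerate simplices $g$. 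Each $\rep{n}$ is already the nerve of a chaotic groupoid (\cref{ex representable}), so the domain is a genuine groupoid with nothing to verify --- totality of composition is free, and the needed composites in $L$ exist automatically because they are faces $g_{ik}=g_{jk}\circ g_{ij}$ of the simplex $g$ itself. Star injectivity of each $\name{g}$ is exactly nondegeneracy of $g$ (\cref{nondegen star injective}, via \cref{char nondeg}), star injectivity passes to the coproduct because it is a local condition at each vertex, and surjectivity holds because every simplex of $L$ is a degeneracy of a nondegenerate one. To repair your proof you would need either to reproduce something like this construction or to supply your ``local frames'' explicitly and carry out the groupoid verification you postponed.
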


Thus, every partial group $L$ is ``objective with respect to some action'', but not necessarily an internal conjugation action on a set of subgroups of $L$. 
For an example of a partial group that does not embed in a group, see \cref{sec NA}. 

A partial action of $G$ on the set $S$ imbues $S$ with the structure of a closure space, in which the closed sets are the intersections of domains of the partial functions $S \overset{g}{\pto} S$. 
Likewise, for an action $\rho\colon E \to L$, the \emph{domain}  of an $n$-simplex $f \in L_n$ is the set of those $x \in E_0$ for which there is a lift of $f$ having source $x$.
This endows $E_0$ with a closure operator $A \mapsto \cl(A)$, where $\cl(A)$ is the intersection of those domains of simplices that contain $A$.

The classical \emph{Helly number} is the smallest $h$ such that whenever each $h$ members of a finite family of at least $h$ convex sets has nonempty intersection, the entire family has nonempty intersection. 
Helly's Theorem from 1913 says that the Helly number for convex subsets of $\mathbb{R}^d$ is $d+1$. 
In the current setting, the relevant Helly number $h=h(\rho)$ is the one for the abstract closure space $(E_0,\cl)$.

\begin{theorem}[Main Theorem]
\label{main}
Let $\rho \colon E \to L$ be a characteristic action of a partial groupoid $L$ such that $E_0$ satisfies the  descending chain condition on closed subsets.
If $L$ is not a groupoid, then $\deg(L) = h(\rho)$.
\end{theorem}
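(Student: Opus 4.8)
The plan is to convert the degree, via the edgy reformulation of the Segal conditions, into a purely combinatorial extremal quantity, and then to match that quantity with the Helly number of $(E_0,\cl)$ through a dictionary between multipliability of words and nonemptiness of intersections of closed sets. Using \cref{prop symmetric higher Segal} and \cref{edgy segal characterization} together with \cref{degree}, I would first record that $\deg(L)$ equals the largest cardinality $m$ of a \emph{gapped obstruction}: a potentially composable but non-multipliable word $w$ of some length $n$, together with a gapped sequence $i_0 \ll \cdots \ll i_{m-1}$ of indices in $\{0,\dots,n\}$ for which every face $d_{i_\ell}w$ is multipliable. Indeed, $L$ is lower $(2k{-}1)$-Segal exactly when no gapped obstruction of cardinality $k+1$ exists, and gapped obstructions are closed under passing to gapped subsequences, so the smallest admissible $k$ is precisely $m$; since $L$ is not a groupoid, $m\ge 2$ and such obstructions exist.

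Second, I would set up the dictionary. By \cref{characteristic action} the embedding $L \subseteq BG$ produces a partial action of $G$ on $E_0$ with $\rho \cong N(\trans{E_0}G)\to\loc{E_0}G$. Fixing the source vertex as a common frame, a word $[g_1|\cdots|g_n]$ is multipliable iff the translated domains $D_i = (g_{i-1}\cdots g_1)^{-1}\dom{[g_i]}$ share a common point, i.e.\ $\bigcap_i D_i \neq \emptyset$; and all the sets in play — the $D_i$, the composite-edge domains produced by inner faces, and their translates — are closed in $(E_0,\cl)$, because multipliability of the faces exhibits them as domains of genuine simplices. I would also invoke the standard reformulation of the Helly number: $h(\rho)$ is the maximal cardinality of a \emph{minimal non-intersecting family} of closed sets, one whose total intersection is empty while every proper subfamily still meets.

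The heart of the argument is then the two inequalities. For $\deg(L)\le h(\rho)$, starting from a gapped obstruction of size $m$ I would read off from its $m$ faces a family $C_0,\dots,C_{m-1}$ of closed sets in the common frame: non-multipliability of $w$ gives $\bigcap_\ell C_\ell=\emptyset$, while multipliability of $d_{i_\ell}w$ gives $\bigcap_{\ell'\neq\ell}C_{\ell'}\neq\emptyset$, so the family is minimal non-intersecting of size $m$, whence $m\le h(\rho)$. For $\deg(L)\ge h(\rho)$, starting from a minimal non-intersecting family of size $h$ I would realize each member as the domain of an actual simplex — this is where the descending chain condition enters, guaranteeing that each closure is attained by finitely many domains and hence by a single subword — and then splice these subwords together, inserting \emph{buffer} edges between successive blocks so that the $h$ even-position faces delete the blocks one at a time; the resulting word is a gapped obstruction of size $h$, giving $\deg(L)\ge h(\rho)$.

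The step I expect to be the main obstacle is reconciling the face maps with set-omission. An inner face does not simply discard a constraint $D_j$ but \emph{merges} two adjacent edges, replacing $D_j\cap D_{j+1}$ by the larger composite domain $(g_{j-1}\cdots g_1)^{-1}\dom{[g_{j+1}g_j]}$. Making the dictionary exact — so that each gapped face corresponds cleanly to deleting a single member of the closed-set family — is precisely what forces the gapping hypothesis (the buffers absorb the merges) and the descending chain condition (to turn abstract closed sets into honest simplex domains). Getting these two devices to cooperate in both directions, and without an off-by-one error between obstruction size and family size, is the crux of the proof.
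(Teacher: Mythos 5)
Your proposal has two genuine gaps, one at the very start of the dictionary and one at the step you yourself flag as the crux. First, you invoke \cref{characteristic action} to place $L$ inside $BG$ and then build your constraint sets as translated domains $(g_{i-1}\cdots g_1)^{-1}\dom{[g_i]}$. But that theorem is conditional: it describes $\rho$ \emph{if} $L$ happens to embed in a group, and the paper's own example $\NA$ from \cref{sec NA} is a partial group that embeds in no group. For such $L$ the products $g_{i-1}\cdots g_1$ you want to translate by need not exist as elements of $L_1$, and interpreting the translates inside $E$ instead does not help (a point of $D_i$ so interpreted already lies in the domain of the whole prefix, so omitting one $D_j$ no longer corresponds to any face). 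Since \cref{main} is asserted for arbitrary partial groupoids, the argument cannot go through the group.

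Second, the merging problem you identify — that an inner face replaces $D_j\cap D_{j+1}$ by a larger composite domain rather than deleting a constraint, forcing gapping, buffers, and delicate bookkeeping — is precisely what the paper's proof \emph{dissolves} rather than solves, and your proposal leaves it unsolved. The paper abandons the spine for the star: by \cref{bs words characterization} (which rests on \cref{lem removal of gaps} and is available only because $X$ is symmetric), failure of lower $(2k{-}1)$-Segality is witnessed by a \emph{starry} word $(g_1,\dots,g_n)$ of edges with a common source $a$, lying in $\bswords_I(L)_n$ but not in the image of $\bous_n$, where $I$ need not be gapped and in the reverse direction $n$ can be taken to be $k+1$. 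In that frame each face $d_i$ with $i\geq 1$ literally deletes $g_i$, and \cref{bous domain lemma} converts membership in the image of $\bous$ into nonemptiness of $\bigcap\dom{g_i}$ inside $\dom{\id_a}$; no buffers and no gapping are needed in either direction, and DCC enters only through \cref{critical 1 simplices} as you anticipated. Your forward direction also has the off-by-one problem you worry about: a non-multipliable word of length $n$ yields $n$ edge-constraints but only $m$ faces known to be multipliable, so the family you ``read off'' is not yet critical; the paper repairs this by lumping the remaining domains into one extra member (giving a critical family of size $k+1$ or $k+2$). Finally, the paper must still compare $h(\rho)$ with $\sup_a h(\dom{\id_a})$ via \cref{helly critical domid} and treat the case $h(\rho)=2$ separately using the hypothesis that $L$ is not a groupoid; your outline does not address this case split.
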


The chain condition is not necessary for the inequality $\deg(L) \leq h(\rho)$, but we do not know if it is necessary for the reverse inequality.
As one application of \cref{main}, we prove the following upper bound for the degree of a partial groupoid in terms of its dimension as a symmetric set.  
\begin{theorem}[\cref{deg dim}]
\label{degdim}
If $L$ is a nonempty partial groupoid, then $\deg(L) \leq \dim(L)+1$.
In particular, a finite partial groupoid (i.e., one with finitely many edges) has finite degree. 
\end{theorem}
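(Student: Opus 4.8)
The plan is to obtain this as a consequence of \cref{main}, trading the dimension bound for a bound on a Helly number. If $L$ happens to be a groupoid then $\deg(L) = 1$ and, as $L$ is nonempty, $\dim(L) \geq 0$, so the inequality is clear; assume henceforth that $L$ is not a groupoid. Using \cref{characteristic action} I would fix a characteristic action $\rho \colon E \to L$. The remark following \cref{main} records that the inequality $\deg(L) \leq h(\rho)$ holds with no chain condition, so the whole theorem reduces to the purely closure-theoretic statement $h(\rho) \leq \dim(L) + 1$ for the space $(E_0, \cl)$.

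To bound the Helly number, set $h = h(\rho)$ and assume $h \geq 2$ (otherwise $h \leq 1 \leq \dim(L)+1$). By definition there is a family of closed sets $C_1, \dots, C_h$ in $E_0$ with $\bigcap_i C_i = \emptyset$ while $\bigcap_{j \neq i} C_j \neq \emptyset$ for each $i$. Picking $x_i \in \bigcap_{j \neq i} C_j$ one sees that $x_i \notin C_i$ and that the $x_i$ are pairwise distinct, since any coincidence $x_i = x_{i'}$ would lie in every $C_j$ and contradict $\bigcap_i C_i = \emptyset$. The objective is then to manufacture, out of these $h$ points in ``general position'', a single nondegenerate $(h{-}1)$-simplex of $L$; producing it gives $h - 1 \leq \dim(L)$, which is exactly what we want.

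I expect the manufacture of this simplex to be the crux, and the main obstacle. Each $C_i$ is an intersection of domains $\dom{f}$ of simplices, and I would first reduce to the case where $C_i = \dom{f^{(i)}}$ is itself sub-basic; this is automatic under a descending chain condition, but since \cref{degdim} assumes none, one must instead either choose $\rho$ large enough that a minimal witnessing family is already sub-basic, or argue directly with the given family. With the $f^{(i)}$ in hand, the construction should lift and splice them through the basepoints $x_i$—using that $E$ is the nerve of an honest groupoid, so that paths always compose, and that $\rho$ is surjective—into one multipliable word, whose edges are nondegenerate because consecutive basepoints are distinct. This is essentially the correspondence between minimal Helly-violating families and Segal-witnesses that underlies \cref{main}, and I would reuse that bookkeeping rather than rebuild it.

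Finally, the ``in particular'' clause follows once one knows that a partial groupoid with finitely many edges is finite-dimensional as a symmetric set: the extra symmetric degeneracies collapse repeated edges of finite order, so only finitely many nondegenerate simplices occur, and then $\deg(L) \leq \dim(L) + 1 < \infty$.
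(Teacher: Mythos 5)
Your overall strategy is the paper's: reduce to bounding $h(\rho)$ for a characteristic action, using the unconditional inequality $\deg(L)\leq h(\rho)$, and then turn a Helly-critical family into a high-dimensional nondegenerate simplex. But the step you yourself flag as ``the crux, and the main obstacle'' is a genuine gap, and the workarounds you sketch (``choose $\rho$ large enough that a minimal witnessing family is already sub-basic'', or ``argue directly with the given family'') are not developed and are not what makes the argument close. The missing observation is that the descending chain condition is \emph{not} unavailable here: one may assume $L$ is finite dimensional (otherwise $\dim(L)=\infty$ and there is nothing to prove), and \cref{fdim implies artinian} shows that finite dimensionality of $L$ forces the descending chain condition on closed subsets of $E_0$ for \emph{any} characteristic action. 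The reason is exactly the mechanism you are trying to exploit: a strictly decreasing chain of closed sets is witnessed by a growing collection of distinct nonidentity edges with common nonempty domain, and \cref{bous domain lemma}\,(2) together with \cref{char nondeg} converts such a collection of size $m$ into a nondegenerate $m$-simplex, so $m\leq\dim(L)$. With the chain condition in hand, \cref{critical 1 simplices} replaces your critical family $(C_i)$ by a critical family of the same size whose members are domains of \emph{distinct nonidentity $1$-simplices} $g_i$, and then a single point of the nonempty intersection $\bigcap_{j\neq i}\dom{g_j}$ produces, again via \cref{bous domain lemma}\,(2), a nondegenerate $(h{-}1)$-simplex (\cref{dim int domains}). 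Note the simplex is built as a \emph{star} from one common source point, not by splicing a word through all $h$ of your points $x_i$; your splicing construction is never specified precisely enough to check, and the pairwise-distinctness of the $x_i$ is not what is used.

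Two smaller points. First, the paper also has to dispose of the case $\deg(L)=2$ separately (it is covered because a non-groupoid has $\dim(L)\geq 1$); your ``$h\geq 2$'' reduction does not quite align with this, since \cref{critical 1 simplices} only guarantees nonidentity edges when the critical family has size at least $3$. Second, for the ``in particular'' clause, the clean justification that finitely many edges implies finite dimension is \cref{char nondeg}: a nondegenerate $n$-simplex has $n$ distinct nonidentity entries $f_{01},\dots,f_{0n}$ in $L_1$ (for partial groups this is sharpened in \cref{HM theorem}); your phrase about degeneracies ``collapsing repeated edges of finite order'' does not substitute for this.
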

The bound in \cref{degdim} does not hold for arbitrary symmetric sets (see \cref{ex sym sphere}).
We would be interested to know if there is an upper bound on the degree of a symmetric set in terms of its dimension. 

\subsection{Punctured Weyl groups and abelian sets of roots}

As an illustration of how \cref{main} can be applied to make concrete calculations, we introduce and study a collection of partial groups we call \emph{punctured Weyl groups}. 
These are the partial groups coming from the partial action of a finite Coxeter group $W$ on a set of positive roots $\pos$ in an associated root system $\rsys$. 
The underlying set of elements ($1$-simplices) of the partial group is $L_{\pos}(W)_1 = W\backslash\{w_0\}$ where $w_0$ is the longest element. 
We show the associated closure operator on $\pos$ is \emph{convex cone}, sending a subset $A$ of positive roots to $\cone_{\mathbb{R}}(A) = \mathbb{R}_{\geq 0}A \cap \pos$. 
\cref{main} then tasks us with solving a Helly type problem for $\pos$.
This is facilitated by the following observation, which appears to be new. 
\begin{proposition}[\cref{helly free Z-closure}]
\label{hZpos}
If $\rsys$ is crystallographic, then the Helly number of $\pos$ with respect to ordinary closure $A \mapsto \mathbb{Z}_{\geq 0}A \cap \pos$ is the maximal size of an abelian set of positive roots. 
\end{proposition}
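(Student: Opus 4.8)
The plan is to reduce the statement to a purely combinatorial computation of a Helly number and then compare two extremal quantities. Write $\cl$ for the ordinary closure $A \mapsto \ints_{\geq 0}A \cap \pos$. First I would invoke (or establish, for general finite closure spaces) the standard characterization of the Helly number in terms of \emph{free sets}: $h = h(\pos,\cl)$ equals the largest size of a finite $F \subseteq \pos$ with $\bigcap_{\gamma \in F} \cl(F \setminus \{\gamma\}) = \emptyset$. The inequality $h \geq \max|F|$ is immediate, since for such an $F$ the closed sets $C_\gamma = \cl(F\setminus\{\gamma\})$ form a minimally non-intersecting family: each $\gamma' \neq \gamma$ lies in $C_\gamma$, so omitting any one $C_\gamma$ leaves a nonempty intersection, while the total intersection is empty. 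For the reverse, given a minimally non-intersecting family $\{C_i\}$ one picks $p_i \in \bigcap_{j\neq i}C_j \setminus C_i$; the $p_i$ are distinct, and since $\cl(\{p_j : j\neq i\}) \subseteq C_i$ the set $\{p_i\}$ is free of the same size. With this in hand the proposition becomes the claim that the maximum size of a free set for $\cl$ equals the maximum size $m$ of an abelian set of positive roots.

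For the lower bound $h \geq m$, the key observation is that an abelian set is already $\cl$-closed: if $A$ is abelian then the only positive roots lying in $\ints_{\geq 0}A$ are the elements of $A$ themselves. This I would prove by induction on height. If $\delta = \sum_{\beta \in A} c_\beta \beta \in \pos$ has $\sum_\beta c_\beta \geq 2$ and minimal height among such, then $0 < \langle \delta,\delta\rangle = \sum_\beta c_\beta \langle \delta,\beta\rangle$ forces $\langle \delta,\beta\rangle > 0$ for some $\beta \in A$ with $c_\beta > 0$; hence $\delta - \beta$ is a root, again a nonnegative combination of $A$ and still positive, but of strictly smaller height. Either $\delta - \beta$ has coefficient-sum $\geq 2$, contradicting minimality, or $\delta = \beta + \beta''$ with $\beta,\beta'' \in A$, contradicting abelianness. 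Consequently $\cl(A \setminus \{\gamma\}) = A \setminus \{\gamma\}$ for each $\gamma$, so $\bigcap_\gamma \cl(A\setminus\{\gamma\}) = \emptyset$ and $A$ is free; taking $A$ of maximal size gives $h \geq m$.

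The main obstacle is the reverse inequality $h \leq m$, i.e.\ that every free set has size at most $m$. This is genuinely a statement about the maxima, since free sets need not themselves be abelian: a pair of simple roots whose sum is a root is free but not abelian. My plan is to \emph{abelianize} a free set of maximal size by an exchange argument. Given $\alpha,\beta \in F$ with $\alpha + \beta \in \pos$ and $\alpha + \beta \notin F$, replace $\beta$ by $\alpha+\beta$; this preserves cardinality and strictly increases $\sum_{\eta \in F}\operatorname{ht}(\eta)$, so the process terminates, and I would aim to show the terminal configuration is abelian of the same cardinality. The delicate point, and where I expect the real work and the crystallographic hypothesis to enter, is verifying that such an exchange preserves freeness, together with handling the degenerate case in which $\alpha$, $\beta$, and $\alpha+\beta$ already all lie in $F$; here one wants the integrality and root-string structure special to crystallographic systems. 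If a clean conceptual exchange proves elusive, a fallback is to bound $|F|$ directly against the known combinatorial description of maximal abelian sets of roots.
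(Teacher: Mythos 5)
Your lower bound is correct and complete: the height-descent argument shows an abelian set (and hence each of its subsets) is $\ints$-closed, so a maximal abelian set is Helly independent. But the upper bound, which you rightly identify as the main obstacle, is left as an unverified plan, and this is a genuine gap. Your exchange move $\beta \mapsto \alpha+\beta$ terminates and (since $\alpha,\beta,\alpha+\beta$ cannot all lie in a Helly independent set) would land on an abelian set, but everything hinges on the exchange preserving Helly independence of the family $\bigl(\cl(F\setminus\{\gamma\})\bigr)_\gamma$, and you give no argument for this; the fallback of comparing against Malcev's classification of maximal abelian sets is not a proof either. A related terminological slip is doing real work against you: what you call a ``free set'' (a set $F$ with $\bigcap_{\gamma}\cl(F\setminus\{\gamma\})=\varnothing$) is what the paper calls a Helly \emph{independent} set, whereas a \emph{free} set in the standard sense is one all of whose subsets are closed. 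In a general closure space the maximal size of a Helly independent set can strictly exceed the maximal size of a free set (your own example of two simple roots with $\alpha+\beta$ a root is Helly independent but not free), so the reduction you want is false without an extra hypothesis on the closure space.

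That extra hypothesis is exactly what the paper's proof supplies, and it makes the whole argument two lines. By Pilkington's result (\cref{Z-closure convex geometry}), $(\pos,\cone_\ints)$ satisfies the antiexchange axiom, i.e.\ is a convex geometry, and the Hoffman--Jamison-Waldner theorem (\cref{hrank convex geometry}) states that for convex geometries the Helly number equals the maximal size of a free set in the strong sense. It then remains only to observe that free $=$ abelian on the nose: since $\ints$-closure agrees with Bourbaki closure (every subset of an abelian set is vacuously closed; if $A$ is not abelian it has a two-element subset that is not closed), a set is free if and only if it is abelian. So the hard direction you are missing is not a root-theoretic exchange argument at all but the general convex-geometry fact that Helly independence is witnessed by free sets; if you want to complete your route without citing Hoffman and Jamison-Waldner, you would need to prove that statement (or its consequence here) using the antiexchange property, which is where the real content lies.
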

A subset $A \subseteq \rsys$ is \emph{abelian} if the sum of two roots in $A$ is never a root. 
The maximal size of an abelian set of positive roots was computed by Malcev in 1945, since it agrees with the maximal dimension of an abelian subalgebra of the corresponding complex semisimple Lie algebra. 
A \emph{really abelian} set of positive roots is to convex closure what an abelian set of roots is to ordinary closure in crystallographic types. 
By determining the maximal size of a really abelian set of positive roots, we produce the degrees in \cref{degLposW table} 
(and the degree is additive in orthogonal unions of root systems). 
For example, the punctured Weyl group of $E_8$ is lower $71$-Segal, but not lower $69$-Segal. 
\begin{table}
{
\renewcommand{\arraystretch}{1.3}
\setlength{\tabcolsep}{12pt}
\centering
\caption{Degree of punctured Weyl groups}\label{degLposW table}
\begin{tabular}{@{}lcclc@{}}\toprule
$\rsys$ & $\deg(L_{\pos}(W))$ && $\rsys$ & $\deg(L_{\pos}(W))$\\
\cmidrule(r){1-2} \cmidrule(l){4-5}
$A_n$ & $\lfloor \frac{(n+1)^2}{4} \rfloor$ && $B_n/C_n$ & $\binom{n}{2}+1$  \\
$D_n$ & $\binom{n}{2}$  && $F_4$ & $6$  \\
$E_6$ & $16$ && $G_2$& $2$ \\
$E_7$ & $27$             && $I_2(m)$ & $2$ 
\\
$E_8$ & $36$ && $H_3$& $5$  \\
      &      && $H_4$& $8$ \\
\bottomrule
\end{tabular}
}
\end{table}

Punctured Weyl groups are combinatorial analogues of the $p$-local punctured groups of \cite{HenkeLibmanLynd2023}. 
The latter are special types of localities admitting a characteristic action by conjugation on the set of nonidentity subgroups of a Sylow subgroup of $L$. 
\Cref{main} applies to all (finite) localities, so can be used to compute the degree of localities via the consideration of Helly type problems for Sylow intersections.
For example, it can be shown that the degree of a $p$-local punctured group $L$ is at most the $p$-rank of a Sylow subgroup.
This turns out to be a good bound in many cases.
For symmetric groups of odd degree at least 7 at the prime 2, our student Omar Dennaoui has shown in work in progress that it is sharp. 
As a curiosity, it is also sharp for a $2$-local punctured group $L$ of an exotic Benson--Solomon fusion system (giving $\deg(L) = 4$). 
\Cref{main} applies in certain infinite settings, such as for the discrete localities of Chermak and Gonzalez \cite{ChermakGonzalez:DLI} associated with the $p$-local compact groups of Broto, Levi, and Oliver \cite{BrotoLeviOliver2007}, which model the $p$-local structures of compact Lie groups and $p$-compact groups. 
We plan to return to these themes in later papers.

\subsection{Outline and suggestions for reading}

The next section (with the exception of \cref{sec NA}, which may be skipped) consists of background material and establishes notation.
\Cref{sec higher Segal} is core material on the higher Segal conditions, including for symmetric simplicial objects.
Some of this material is given an alternate interpretation in \cref{sec d Segal pgpd}, both in terms of words and in terms of stars.
Readers primarily interested in partial groups may wish to skim \cref{sec higher Segal} on a first reading and proceed quickly to \cref{sec d Segal pgpd}, with special attention to \cref{edgy segal char redux}, \cref{d seg embed in BG}, and \cref{bs words}.

Actions of partial groups are defined and studied in \cref{actions}, along with a concrete description in \cref{app action}.
Of particular importance are the characteristic actions of \cref{def char map} and the actions coming from a partial action of a group in \cref{ex realizable locality}.
\Cref{sec action to closure} discusses how each action gives rise to a closure space.

We next turn to the classical theory of Helly independence and the Helly number in \cref{sec helly independence}. 
The key takeaways from this section are \cref{helly independence,def helly crit} along with \cref{helly rank v helly critical} comparing them.

Our main theorem comparing degree and Helly number is proved in \cref{sec:degree_as_helly_rank}.
We apply the main theorem to finite dimensional partial groupoids in \cref{sec:the_finite_dimensional_case}, establishing the dimension bound and addressing the stability of degree under reduction, and we compute the degree of punctured Weyl groups in \cref{punctured weyl}.

\subsection*{Acknowledgements}
We've had illuminating and helpful discussions about this project with many people over the past few years, and in addition to others we're probably forgetting we'd like to thank
Kaya Arro,
Alexander Berglund,
Julie Bergner,
Tim Campion,
Andy Chermak,
Tobias Dyckerhoff,
Matthew Dyer,
George Glauberman,
Alex Gonzalez,
Jonas Hartwig,
Joachim Kock,
Robin Koytcheff,
Rémi Molinier,
Marco Praderio,
Edoardo Salati,
Brandon Shapiro,
Jan Steinebrunner,
Rafael Stenzel,
Walker Stern,
and
Jan Šťovíček.

\section{Simplicial machinery}
\subsection{Simplicial and symmetric sets}\label{sec simp symm}
Let $\ord$ denote the simplicial indexing category, whose objects are the sets $[n] = \{0, 1, \dots, n\}$ for $n\geq 0$ and whose morphisms are the order preserving maps.
The category $\ord$ is contained in the category $\fin$ which has the same objects, but where the morphisms are arbitrary functions.
The category of simplicial sets is $\sset = \fun(\ord^\op, \set)$ whose objects are contravariant functors and morphisms are natural transformations, while the category of symmetric (simplicial) sets is $\sym = \fun(\fin^\op, \set)$.
If $X$ is a simplicial set (or symmetric set) we write $X_n$ for $X([n])$ and $\alpha^* \colon X_n \to X_m$ for the image of the map $\alpha \colon [m] \to [n]$ in $\ord$ (resp.\ in $\fin$).
There is a forgetful functor $\sym \to \sset$ obtained by restriction along the inclusion $\ord \to \fin$, and we do not notationally distinguish between a symmetric set $X$ and its underlying simplicial set.
We also write $d_i \colon X_n \to X_{n-1}$ and $s_i \colon X_n \to X_{n+1}$ for the usual face and degeneracy operators in a simplicial set $X$.
The symbols $d_\bot, d_\top \colon X_n \to X_{n-1}$ will denote $d_0$ and $d_n$, respectively.

\begin{example}[Nerve of a category or groupoid]\label{ex nerve}
Each object $[n] = \{ 0 < \dots < n \}$ can be considered as a category with a unique morphism $i \to j$ just when $ i \leq j$.
The nerve functor $N \colon \cat \to \sset$
sends a small category $C$ to the simplicial set $NC$ given by the formula $NC_n = \hom_{\cat}([n], C)$.
The $n$-simplices are composable strings of morphisms
\[ \begin{tikzcd}[sep=small]
x_0 \rar{f_1} & x_1 \rar{f_2} & x_3 \rar{f_3} & \cdots \rar{f_n} & x_n,
\end{tikzcd} \]
sometimes written concisely as $[f_1 | f_2 | f_3 | \cdots | f_n]$ 
(when $n\geq 1$; a length $0$ string is an object of $C$, not easily expressible in the bar notation.)
Regarding $[n]$ as a groupoid with a unique morphism between any two objects, there is likewise an inclusion $\fin \to \gpd$.
An analogous nerve construction assigns to every groupoid its nerve as a symmetric set.
We do not distinguish notationally between the nerve and this groupoidal nerve 
as the underlying simplicial set of the latter agrees with the former.
\end{example}

\begin{convention}
We regard a group $G$ as a category with a single object $\ast$, automorphism group $\hom(\ast, \ast) \coloneq G$, and $g \circ h \coloneq g  h$.
This provides an embedding $B \colon \grp \to \sset$ (or $B \colon \grp \to \sym$).
The $i$\textsuperscript{th} face map $d_i$ has the effect
\[
  [g_1 | \dots | g_n] \mapsto [g_1 | \dots | g_{i-1} | g_{i+1} g_i | g_{i+2} | \dots | g_n]
\]
since our convention is to apply maps from right to left. 
Also $d_0$ deletes $g_1$ and $d_n$ deletes $g_n$. 
\end{convention}

\begin{example}\label{ex representable}
For $n\geq 0$, we write $\rep{n} \coloneq \hom_{\fin}(-,[n]) \in \sym$ for the representable symmetric set.
This is the nerve of the chaotic groupoid on $n+1$ objects, i.e., the groupoid having a unique morphism between any two objects. 
A $k$-simplex of $\rep{n}$ is just a function $[k] \to [n]$. 
It may be written unambiguously as a list of of length $k+1$, like $32351 \in \rep{7}_4$ in place of $\begin{tikzcd}[cramped] 3 \to 2 \to 3 \to 5 \to 1 \end{tikzcd}$.
Its boundary $\partial \rep{n}$ consists of those functions $[k]\to [n]$ which are not surjective.
More generally, its $m$-skeleton $\sk_m \rep{n}$ consists of those functions $[k] \to [n]$ whose image has at most $m+1$ elements.
\end{example}

Recall that every simplicial set (resp.\ symmetric set) $X$ has an opposite $X^\op$ (see, for instance, \cite[\href{https://kerodon.net/tag/003L}{Tag 003L}]{kerodon}).
It is induced by precomposing with the identity on objects functor $\ord \to \ord$ (resp.\ $\fin \to \fin$) which sends $f \colon [n] \to [m]$ to $\tau_m f \tau_n$, where $\tau_n \colon [n] \to [n]$ is given by $\tau_n(i) = n-i$.
If $C$ is a category or groupoid, then there is a natural isomorphism $N(C^\op) \cong (NC)^\op$.

It is also convenient to have at hand the category $\ordalt$ whose objects are nonempty subsets of the nonnegative integers $\mathbb{N} = \{0, 1, 2, \dots \}$, and morphisms are order preserving functions.
The inclusion 
\[
\ord \to \ordalt
\]
is an equivalence of categories, and there is a unique functor $\ordalt \to \ord$ realizing this: send an object $\{ i_0 < i_1 < \dots < i_n \} \subseteq \mathbb{N}$ to $[n]$.
Restriction gives an equivalence of categories $\fun(\ordalt^\op, \set) \to \fun(\ord^\op, \set) = \sset$.
We tacitly regard every simplicial set $X$ as a presheaf over this larger category via the composite
\[
	\ordalt^\op \to \ord^\op \xrightarrow{X} \set.
\]

\subsection{Edgy simplicial sets and spiny symmetric sets}\label{spiny edgy}

It was observed by Gonz\'alez in \cite{gonzalez} (see also \cite{BrotoGonzalez:ETPG}) that a partial group as defined by Chermak is really a simplicial set of a certain type.
In \cite{HackneyLynd:PGSSS}, we explained how the extra data of an inversion in Gonz\'alez's characterization of partial groups was just a property that a simplicial set might or might not have, and this property was equivalent to the simplicial set having a unique lift to a symmetric set. 
Here we recall some parts of \cite{HackneyLynd:PGSSS}, but formulated slightly differently in terms of an outer face complex $\words X$ of words in $1$-simplices of $X$. 

\begin{definition}
A map $f \colon [m] \to [n]$ in $\ord$ is called \emph{inert} if $f(i+1) = f(i) + 1$ for all $i=0,\dots,m-1$. 
Let $\delint \subseteq \ord$ be the subcategory with the same objects as $\ord$ and with morphisms the inert maps.
An \emph{outer face complex} is a functor $\delint^\op \to \set$.
\end{definition}

To put it another way, an outer face complex is a sequence of sets $X_n$ together with operators $d_\bot, d_\top \colon X_n \to X_{n-1}$ such that $d_\top d_\bot = d_\bot d_\top$.
Given a simplicial set $X$, there is an evident outer face complex given by restriction along the inclusion of $\delint$ into $\ord$.
Here is different sort of example. 

\begin{definition}[Word complex]\label{def word complex}
Given a simplicial set $X$, let $\words X$ be the outer face complex with $\words (X)_0 = X_0$ and for $n\geq 1$,
\begin{align*}
\words (X)_n &= \{ (f_1, \dots, f_n) \mid f_i \in X_1 \text{ and } d_1(f_i) = d_0(f_{i-1}) \} \\
&= X_{\{0,1\}} \times_{X_{\{1\}}} X_{\{1,2\}} \times_{X_{\{2\}}} \cdots \times_{X_{\{n-1\}}} X_{\{n-1,n\}}\\
&= X_1 \times_{X_0} X_1 \times_{X_0} \cdots \times_{X_0} X_1.
\end{align*}
The maps $d_\bot, d_\top \colon \words (X)_1 = X_1 \to \words (X)_0 = X_0$ agree with the maps in $X$, while for $n > 1$, the maps $d_\bot$ and $d_\top$ delete $f_1$ and $f_n$, respectively. 
\end{definition}

Notice that $\words X$ depends only on the simplicial 1-skeleton of $X$.
We've defined it as a presheaf on the category of inert maps, but we could have instead defined it as a presheaf on the larger category of \emph{contractive maps}: those $f \colon [m] \to [n]$ such that $f(i+1) \leq f(i) + 1$ for all $i=0,1,\dots, m-1$.
This would amount to having outer face maps together with degeneracy operators $s_i \colon \words (X)_n \to \words (X)_{n+1}$.

If $X$ is a simplicial set and $n\geq 1$, then the \emph{Segal map} 
\[ 
\edgemap_n \colon X_n \to \words (X)_n \subseteq \prod_{i=1}^n X_1
\]
sends $x \in X_n$ to $(\epsilon_{01}^*x,\dots,\epsilon_{n-1,n}^*x)$. 
Here, $\epsilon_{ij} \colon [1] \to [n]$ is $ij$\textsuperscript{th} \emph{coedge map} sending $0$ to $i$ and $1$ to $j$. 
Along with the identity $\edgemap_0 \colon X_0 \to X_0$, these maps assemble into a map of outer face complexes $\edgemap \colon X \to \words X$.
The following is \cite[4.1]{Grothendieck:TCTGA3}.

\begin{theorem}
A simplicial set $X$ is isomorphic to the nerve of a category if and only if the map of outer face complexes $\edgemap \colon X \to \words X$ is an isomorphism, the \emph{Segal condition}.
Similarly, a symmetric set $X$ is isomorphic the nerve of a groupoid if and only if $\edgemap \colon X \to \words X$ is an isomorphism.
\end{theorem}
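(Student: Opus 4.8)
The plan is to establish both biconditionals directly, handling the category case first and then upgrading to groupoids. The forward implications are immediate. If $X = NC$, an $n$-simplex is a composable string $[f_1|\cdots|f_n]$ and $\edgemap_n$ returns the list $(f_1,\dots,f_n)$; the matching condition $d_0 f_{i-1} = d_1 f_i$ is precisely what defines $\words(NC)_n$, and reassembling the string shows $\edgemap_n$ is a bijection, manifestly compatible with $d_\bot$ and $d_\top$. So nerves satisfy the Segal condition, and the same remark applies to the groupoidal nerve of a symmetric set.

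For the converse in the category case, suppose each $\edgemap_n$ is a bijection. I would reconstruct a category $C$ by setting $\ob C = X_0$ and $\hom_C(x,y) = \{ f \in X_1 \mid d_1 f = x,\ d_0 f = y\}$, taking $s_0 x$ as the identity of $x$, and defining $g \circ f := d_1\bigl(\edgemap_2^{-1}(f,g)\bigr)$ whenever $(f,g) \in \words(X)_2$. Associativity is then not a separate computation: applying $\edgemap_3^{-1}$ to a composable triple $(f_1,f_2,f_3)$ yields a unique $3$-simplex whose two inner faces $d_1$ and $d_2$ carry the spines of the two bracketings, so the simplicial identity $d_1 d_1 = d_1 d_2$ forces $(f_3 f_2) f_1 = f_3 (f_2 f_1)$; the unit laws come the same way from the degeneracies. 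To finish I would produce the isomorphism $X \cong NC$. Since $NC_n$ is by construction exactly $\words(X)_n$, the candidate comparison map is $\edgemap_n$ itself, a bijection by hypothesis, so everything reduces to checking that $\edgemap$ is a map of simplicial sets.

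That last check is the only real content. Compatibility of $\edgemap$ with the outer faces $d_\bot, d_\top$ is built in, and compatibility with the degeneracies is equally direct from the spine construction. The work is to show $\edgemap$ intertwines the inner face maps $d_i$ for $0 < i < n$: concretely, that the $i$-th edge of $d_i x$, the long edge $\epsilon_{i-1,i+1}^* x$, equals the composite of the $i$-th and $(i{+}1)$-th edges of $x$. This follows by applying the level-$2$ Segal bijection to the $2$-face of $x$ on the vertices $\{i-1,i,i+1\}$, whose spine is $(\epsilon_{i-1,i}^* x,\ \epsilon_{i,i+1}^* x)$ and whose $d_1$ is by definition that composite. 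I expect this bookkeeping with coedge maps to be the main, if entirely routine, obstacle, the underlying geometry being the classical nerve theorem.

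For the symmetric statement I would run the same reconstruction on the underlying simplicial set of $X$ to obtain a category $C$ with $X \cong NC$, and then use the symmetric structure to see that $C$ is a groupoid. Each function $[1]\to[1]$ acts on $X_1$; the swap sends an edge $f\colon x\to y$ to an edge $\bar f\colon y \to x$. The non-order-preserving map $\mu\colon[2]\to[1]$ with $\mu(0)=\mu(2)=0$ and $\mu(1)=1$, available in $\fin$ but not in $\ord$, sends $f$ to a $2$-simplex with spine $(f,\bar f)$ whose inner face $d_1$ is the degenerate edge $s_0 x$, whence $\bar f \circ f = \id_x$; symmetrically $f \circ \bar f = \id_y$. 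Thus every morphism of $C$ is invertible and $C$ is a groupoid. Finally, since $\fin$ is generated by $\ord$ together with the transpositions of the $[n]$, and $\edgemap$ is compatible with these transpositions through the reversal $f \mapsto \bar f$ on both sides, the isomorphism $X \cong NC$ upgrades to one of symmetric sets.
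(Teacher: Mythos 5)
Your argument is correct, but note that the paper does not prove this statement at all: it is quoted as \cite[4.1]{Grothendieck:TCTGA3} (together with the groupoid refinement from the authors' earlier work), so there is no in-paper proof to compare against. What you give is the standard nerve-theorem argument --- reconstruct $C$ from $X_1$, get associativity and units from $\edgemap_3^{-1}$ and the degeneracies via the simplicial identities, and identify $X_n$ with $NC_n = \words(X)_n$ via $\edgemap_n$ --- and all the steps check out, including the sign conventions ($d_1 f$ = source, $g\circ f = d_1[f|g]$) matching the paper's. The only place a careful write-up needs one more word is the final upgrade of $X \cong NC$ to an isomorphism of symmetric sets: the clean way to see that $\edgemap$ intertwines the transpositions is to observe that both sides have injective Segal maps, so the action of any $\alpha\colon[m]\to[n]$ in $\fin$ is determined by its effect on the edges $\epsilon_{ij}^*$, which for $i>j$ reduces to the order-preserving case composed with the edge involution, and your computation $\bar f\circ f=\id$, $f\circ\bar f=\id$ forces $\bar f=f^{-1}$, matching the groupoidal nerve.
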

In an edgy simplicial set, the Segal condition is relaxed. 

\begin{definition}
A simplicial set is \emph{edgy} if $\edgemap \colon X \to \words X$ is injective.
\end{definition}

\begin{notation}
If $X$ is an edgy simplicial set, we will sometimes write elements of $X_n$ in the form $[f_1 | \dots | f_n]$ when their image under $\edgemap_n$ is $(f_1, \dots, f_n)$.
That is, the bar notation is reserved for actual elements of $X_n$, in contrast with the group theoretical literature, where $(f_1,\dots,f_n)$ typically plays both roles.
An edge in the image of the degeneracy $X_0 \to X_1$ is denoted $\id_x$, and if $[f_1 | f_2] \in X_2$ then $f_2 \circ f_1$ or $f_2 f_1$ is notation for $d_1 [f_1 | f_2] \in X_1$.
\end{notation}

We can make the same definition for a symmetric set.
\begin{definition}
A symmetric set is \emph{spiny} if its underlying simplicial set is edgy.
\end{definition}

The Segal map has to do with the standard spine of a simplex, namely that associated to the spanning tree $\{\{i-1,i\}\}$ of $[n]$. 
However, by \cite[Theorem~3.6]{HackneyLynd:PGSSS}, the property of being spiny can be checked on whichever spanning tree of $[n]$ one prefers.
(This is definitely not the case for the property of being edgy, which is why we use terminology that distinguishes between the two.)
Other than the standard spine, the most important of these for us is $\{\{0,i\}\}$, which gives rise to the Bousfield--Segal map
\[
  \bous_n \colon X_n \to \prod_{i=1}^n X_1
\]
sending $x$ to $(\epsilon_{01}^*x,\dots,\epsilon_{0n}^*x)$ for $n\geq 1$. 
For example, $\bous_3$ sends a simplex of the form $[f_1|f_2|f_3]$ to the word $(f_1, f_2f_1, f_3f_2f_1)$.
The Bousfield--Segal map motivates the following.

\begin{definition}[Starry word complex]
Let $\finbot \subset \fin$ be the subcategory consisting of those maps $\alpha \colon [n] \to [m]$ such that $\alpha(0) = 0$.
Given a symmetric set $X$, let $\bswords X$ be the presheaf $\finbot^\op \to \set$ with $\bswords(X)_0 = X_0$ and for $n\geq 1$
\begin{align*}
\bswords(X)_n &= \{ (f_1, \dots, f_n) \mid d_1(f_1) = d_1(f_2) = \cdots = d_1(f_n) \}
 \\ 
 &= X_{\{0,1\}} \times_{X_{\{0\}}} X_{\{0,2\}} \times_{X_{\{0\}}} \cdots \times_{X_{\{0\}}} X_{\{0,n\}}.
\end{align*}
Suppose $m,n \geq 1$.
The unique maps $[n] \to [0]$ and $[0] \to [m]$ induce the maps $ \bswords(X)_0 \to \bswords(X)_n$ with $x \mapsto (\id_x, \dots, \id_x)$ and $\bswords(X)_m \to \bswords(X)_0$ with $(f_1, \dots, f_m) \mapsto d_1(f_i)$.
Given $\alpha \colon [n] \to [m]$ in $\finbot$ with $m,n \geq 1$, define $\alpha^* \colon \bswords(X)_m \to \bswords(X)_n$ by 
\[
  \alpha^*(f_1, \dots, f_m) = (f_{\alpha(1)}, \dots, f_{\alpha(n)})
\]
where $f_0$ is the identity having the same source as the other $f_i$.
\end{definition}

The Bousfield--Segal map $\bous_n$ lands in $\bswords(X)_n$.
These maps (including $\bous_0 = \id \colon X_0 \to \bswords(X)_0$) assemble into a $\finbot$-presheaf map $\bous \colon X \to \bswords X$.
The following is a consequence of \cite[Theorem~3.6]{HackneyLynd:PGSSS} and \cite[Theorem~4]{HackneyMolinier:DPG}.

\begin{theorem}\label{bous spiny Segal}
A symmetric set $X$ is spiny if and only if if $\bous \colon X \to \bswords X$ is a monomorphism, and $X$ is isomorphic to the nerve of a groupoid if and only if $\bous \colon X\to \bswords X$ is an isomorphism.
\end{theorem}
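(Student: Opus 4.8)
The plan is to treat the two biconditionals separately, in each case comparing the star-tree map $\bous$ against the standard-tree data already understood. For the first biconditional I would begin by recalling that in a presheaf category a map is a monomorphism exactly when it is injective at each object; thus $\bous \colon X \to \bswords X$ is a monomorphism if and only if every component $\bous_n \colon X_n \to \bswords(X)_n$ is injective. Since $\bous_0 = \id$ and, for $n \geq 1$, the inclusion $\bswords(X)_n \hookrightarrow \prod_{i=1}^n X_1$ lets us view $\bous_n$ as the spine map associated to the spanning tree $\{\{0,i\}\}$ of $[n]$, injectivity of all the $\bous_n$ is precisely injectivity of the spine maps for this tree. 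The flexibility recalled above from \cite[Theorem~3.6]{HackneyLynd:PGSSS} --- that spininess may be tested on any spanning tree --- then identifies this with spininess of $X$, giving the first equivalence.

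For the second biconditional I would reduce to the nerve characterization recalled above, namely that a symmetric set $X$ is isomorphic to the nerve of a groupoid if and only if $\edgemap \colon X \to \words X$ is an isomorphism. It therefore suffices to show that $\bous$ is an isomorphism if and only if $\edgemap$ is. The injectivity halves agree by the first paragraph (both are equivalent to spininess), so the entire content lies in comparing surjectivity.

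The forward direction is a direct computation. If $X = NG$ for a groupoid $G$, then $\bous_n$ sends $[f_1|\cdots|f_n]$ to the tuple of partial composites $(f_1, f_2f_1, \dots, f_n\cdots f_1)$, all sharing the source of $f_1$; conversely any coinitial tuple $(g_1,\dots,g_n) \in \bswords(X)_n$ is hit by the unique string with $f_1 = g_1$ and $f_i = g_i g_{i-1}^{-1}$. Hence $\bous$ is a bijection at each level, and in particular an isomorphism.

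The backward direction is where I expect the real obstacle. Granting that $\bous$ is an isomorphism, the first paragraph already gives that $X$ is spiny, so its elements admit bar notation; what remains is to promote this to the totally defined, invertible composition of a groupoid, equivalently to show that $\edgemap$ is surjective. The difficulty is that passing from a standard-spine word $(f_1,\dots,f_n)$ to a star-spine word would require the partial composites $f_i\cdots f_1$, which need not exist in a mere partial groupoid. Instead I would extract the missing structure from surjectivity of $\bous$ at low levels: surjectivity of $\bous_2$ supplies, for any two coinitial edges $g_1,g_2$, an edge $f_2$ with $f_2 g_1 = g_2$, and specializing $g_2$ to a degenerate edge yields inverses. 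One then argues that the resulting total, invertible composition makes $X$ a groupoid nerve, so that $\edgemap$ is an isomorphism and the stated equivalence follows; this promotion from the star condition to totality is exactly the input provided by \cite[Theorem~4]{HackneyMolinier:DPG}.
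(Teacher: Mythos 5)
Your proposal is correct and follows essentially the same route as the paper, which proves this statement simply by citing \cite[Theorem~3.6]{HackneyLynd:PGSSS} for the spanning-tree flexibility (giving the monomorphism half) and \cite[Theorem~4]{HackneyMolinier:DPG} for the groupoid-nerve half. Your added detail---the explicit inverse formula $f_i = g_i g_{i-1}^{-1}$ in the forward direction and the extraction of division/inverses from surjectivity of $\bous_2$ in the backward direction---is a faithful unwinding of those citations rather than a different argument.
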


The second part of this is a version of Grothendieck's nerve theorem for groupoids.
This perspective on spininess and Segality will be very important later in the paper, and we will return to it starting in \cref{bs words}.

A spiny symmetric set also has, for each $n$, a useful injection
\[
	X_n \to \mat_{n+1,n+1} (X_1)
\]
sending $f$ to the matrix whose $ij$\textsuperscript{th} entry is $f_{ij} \coloneq \epsilon_{ij}^*f$, and we sometimes identify $f$ with the matrix
\[
(f_{ij}) = 
\begin{bmatrix}
f_{00} & f_{01} & f_{02} & \cdots & f_{0n} \\
f_{10} & f_{11} & f_{12} & \cdots & f_{1n} \\
\vdots & & & & \vdots \\
f_{n0} & f_{n1} & f_{n2} & \cdots & f_{nn}
\end{bmatrix}.
\]
The superdiagonal of this matrix is $\edgemap_n(f)$ and the tail of its top row is $\bous_n(f)$, but it can be helpful to have the entire matrix at hand. 
For example, we have $f_{jk} \circ f_{ij} = f_{ik}$ and $f_{ij}^{-1} = f_{ji}$, and if $\sigma$ is a map in $\fin$ we have $\sigma^*(f_{ij}) = (f_{\sigma i, \sigma j})$.
One can also read off information about degeneracy from the matrix form.

\begin{definition}\label{def degenerate}
Let $X$ be a symmetric set and $x\in X_n$ an $n$-simplex.
Then $x$ is \emph{degenerate} if there exists a noninvertible surjection $\sigma \colon [n] \twoheadrightarrow [m]$ and an element $y \in X_m$ such that $x = \sigma^*y$.
If no such pair $(\sigma, y)$ exists, then $x$ is \emph{nondegenerate}.
\end{definition}

\begin{lemma}
\label{char nondeg}
In a spiny symmetric set, the following are equivalent for an $n$-simplex $f$ with matrix form $(f_{ij})$. 
\begin{enumerate}[label=\textup{(\arabic*)}, ref=\arabic*]
\item\label{nondeg} $f$ is nondegenerate.
\item\label{nonid offdiag} If $f_{ij}$ is an identity, then $i = j$. 
\item No row of $(f_{ij})$ contains a repeated element.
\item There exists a row of $(f_{ij})$ which does not contain a repeated element.
\item\label{bs distinct} $f_{01},\dots, f_{0n}$ are distinct, nonidentity elements. 
\end{enumerate}
\end{lemma}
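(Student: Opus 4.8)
The plan is to encode the simplex $f$ entirely in its matrix $(f_{ij})$ and to work with the recorded identities $f_{jk}\circ f_{ij}=f_{ik}$, $f_{ij}^{-1}=f_{ji}$, $f_{ii}=\id$, and $(\sigma^*f)_{ij}=f_{\sigma i,\sigma j}$, together with the fact that spininess makes $f\mapsto(f_{ij})$ injective (the matrix contains the injective Segal datum $\edgemap_n(f)$). I would prove $(1)\Leftrightarrow(2)$ first, then deduce $(2)\Leftrightarrow(3)\Leftrightarrow(4)\Leftrightarrow(5)$ from a single cancellation identity.

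The cancellation identity is: for fixed $i$ and $j\neq k$, one has $f_{ij}=f_{ik}$ if and only if $f_{jk}=\id$. Indeed $f_{jk}\circ f_{ij}=f_{ik}$, so $f_{ij}=f_{ik}$ iff $f_{jk}\circ f_{ij}=f_{ij}$ iff $f_{jk}=\id$ (right-multiply by $f_{ij}^{-1}=f_{ji}$). Reading this across the conditions: $\neg(2)$ provides an off-diagonal $f_{ab}=\id$ with $a\neq b$, and the identity (with $j=a$, $k=b$) then forces $f_{ia}=f_{ib}$ in \emph{every} row $i$, so every row has a repeat; conversely a repeat in any single row yields an off-diagonal identity, i.e. $\neg(2)$. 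Hence $\neg(2)$ is equivalent to ``every row repeats,'' which is in turn equivalent to ``some row repeats'' $=\neg(3)$, to ``no repeat-free row exists'' $=\neg(4)$, and, taking $i=0$ and using $f_{00}=\id$, to ``row $0$ repeats'' $=\neg(5)$. This settles $(2)\Leftrightarrow(3)\Leftrightarrow(4)\Leftrightarrow(5)$ at once.

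It remains to prove $(1)\Leftrightarrow(2)$. For $(2)\Rightarrow(1)$ I argue contrapositively: if $f=\sigma^*y$ is degenerate with $\sigma\colon[n]\twoheadrightarrow[m]$ a noninvertible surjection, then $m<n$ forces $\sigma(a)=\sigma(b)$ for some $a\neq b$, and $f_{ab}=(\sigma\epsilon_{ab})^*y=\id$ since $\sigma\epsilon_{ab}$ is a constant map, violating $(2)$; this direction uses no spininess. For $\neg(2)\Rightarrow\neg(1)$, assume $f_{ab}=\id$ with $a\neq b$ and let $\phi\colon[n]\to[n]$ fix every index except $b\mapsto a$. Using $f_{ab}=\id$ and the composition and inverse identities one checks $f_{aj}=f_{bj}$ and $f_{ja}=f_{jb}$ for all $j$, whence $(\phi^*f)_{ij}=f_{\phi(i)\phi(j)}=f_{ij}$ for all $i,j$; spininess upgrades this matrix equality to $\phi^*f=f$. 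Factoring $\phi=\delta\sigma$ through its image with $\sigma\colon[n]\twoheadrightarrow[n-1]$ a noninvertible surjection and $\delta$ injective, we obtain $f=\phi^*f=\sigma^*(\delta^*f)$, exhibiting $f$ as degenerate.

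The main obstacle is the construction in $\neg(2)\Rightarrow\neg(1)$: one must produce the collapsing map $\phi$, verify that \emph{all} matrix entries agree (including the mixed entries involving exactly one of $a,b$, which is where $f_{aj}=f_{bj}$ is used), and then — the essential point — invoke spininess to pass from equality of matrices to equality $\phi^*f=f$ of simplices. Everything else reduces to bookkeeping around the one cancellation identity.
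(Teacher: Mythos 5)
Your proof is correct. The chain $(2)\Leftrightarrow(3)\Leftrightarrow(4)\Leftrightarrow(5)$ is handled exactly as in the paper: the paper's one-line computation $f_{ij}=f_{kj}f_{ki}^{-1}$ is the same cancellation identity you isolate, showing that a repeat in any row is equivalent to the existence of an off-diagonal identity entry. The genuine difference is in $(1)\Leftrightarrow(2)$: the paper simply cites an external result (Lemma~7 of the Hackney--Molinier paper on discrete partial groups), whereas you prove it from scratch. Your argument there is sound on both sides --- the degenerate-implies-off-diagonal-identity direction via the constant map $\sigma\epsilon_{ab}$ needs no spininess, and the converse correctly builds the collapsing endomorphism $\phi$ (sending $b\mapsto a$), verifies the full matrix equality $(\phi^*f)_{ij}=f_{ij}$ including the mixed entries via $f_{aj}=f_{bj}\circ f_{ab}=f_{bj}$, invokes injectivity of the matrix map (which follows from spininess since the superdiagonal is $\edgemap_n(f)$) to get $\phi^*f=f$, and factors $\phi$ through $[n-1]$ to exhibit degeneracy. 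Note that since we are in $\fin$, the surjection $\sigma$ in that factorization need not be order-preserving, but \cref{def degenerate} only asks for a noninvertible surjection, so this is fine. The payoff of your route is a self-contained proof; the cost is that you are essentially reproving the cited lemma, at roughly a paragraph of extra length.
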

\begin{proof}
The equivalence of \eqref{nondeg} and \eqref{nonid offdiag} is \cite[Lemma~7]{HackneyMolinier:DPG}.
For a fixed $0 \leq k \leq n$, we have
$f_{ij} = f_{kj} f_{ik} = f_{kj}f_{ki}^{-1}$,
so row $k$ contains a repeated element if and only if there is an $i\neq j$ with $f_{ij}$ an identity.
This gives the equivalence of \eqref{nonid offdiag} with the last three.
\end{proof}

The last item of \cref{char nondeg} says that $\bous_n$ fully detects degeneracy in a spiny symmetric set, by asking whether any elements of $\bous_n(x)$ are duplicates.
By skew-symmetry, we could have used columns instead of rows in the statement of \cref{char nondeg}.

The following is \cite[Corollary 4.7]{HackneyLynd:PGSSS}.

\begin{theorem}
The category of reduced spiny symmetric sets is equivalent to the category of partial groups.
\end{theorem}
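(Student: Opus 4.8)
The plan is to build functors in both directions and show they are mutually inverse, using the bridge supplied by Gonz\'alez \cite{gonzalez} and our earlier analysis recalled above. On objects the correspondence is essentially forced. Given a reduced spiny symmetric set $X$ (so $X_0 = \{\ast\}$), I would set $M = X_1$ for the underlying set of the partial group. Since $X$ is spiny, $\edgemap \colon X \to \words X$ is a monomorphism, so each $X_n$ may be identified with its image, a set $\mathbf{D}_n \subseteq \words(X)_n$ of words of length $n$; their union is the domain $\mathbf{D}$ of multipliable words. The total product $\Pi \colon \mathbf{D} \to M$ is the long-edge operator, sending $x = [f_1 | \cdots | f_n] \in X_n$ to $\epsilon_{0n}^* x = f_{0n}$ in the matrix notation, which is $f_n \circ \cdots \circ f_1$ by the relation $f_{jk}\circ f_{ij} = f_{ik}$. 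Inversion $M \to M$ is the operator $\tau_1^*$ induced by the transposition $\tau_1 \colon [1] \to [1]$, so that $f^{-1} = f_{10}$.

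Next I would verify that this data satisfies Chermak's axioms. Closure of $\mathbf{D}$ under subwords and under the operations entering its definition follows from functoriality of $X$ under inert (and contractive) maps; the axioms relating $\Pi$ on $\mathbf{D}$ to restriction and to $\mathbf{D}$ itself are the face identities in $X$; and compatibility of inversion with $\Pi$ is exactly the skew-symmetry relations $f_{ij}^{-1} = f_{ji}$ and $\sigma^*(f_{ij}) = (f_{\sigma i, \sigma j})$ recorded after \cref{char nondeg}. The reducedness hypothesis $X_0 = \{\ast\}$ supplies a single unit $\id_\ast$, matching the single identity of a partial group. Conversely, from a partial group $(M,\mathbf{D},\Pi,(-)^{-1})$ I would reconstruct a symmetric set with $X_0 = \ast$, $X_1 = M$, and $X_n = \mathbf{D}_n$, assembling the $\fin$-presheaf structure from $\Pi$ and the inversion; spininess is automatic, as $X_n \subseteq \words(X)_n$ by construction.

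The genuine content—and the step I expect to be the main obstacle—is verifying that the full symmetric (that is, $\fin$-presheaf) structure on $X$ is equivalent to, and uniquely determined by, the inversion datum in Gonz\'alez's description of a partial group. Gonz\'alez already identifies partial groups with reduced edgy simplicial sets carrying a compatible inversion, so the crux is that an edgy simplicial set admits a \emph{unique} lift to a (necessarily spiny) symmetric set precisely when such inversion data exist, with the non-order-preserving operators pinned down by $\sigma^*(f_{ij}) = (f_{\sigma i, \sigma j})$; this is the technical heart imported from \cite[Theorem~3.6]{HackneyLynd:PGSSS} and ultimately rests on \cref{char nondeg} together with the spanning-tree flexibility of spininess and \cref{bous spiny Segal}. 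Granting the uniqueness of the lift, the two object assignments are mutually inverse. Finally I would check that a map of spiny symmetric sets, being determined by its effect on $X_1$, is the same thing as a homomorphism of partial groups—a function $M \to M'$ carrying multipliable words to multipliable words and commuting with $\Pi$—so that faithfulness, fullness, and naturality all reduce to the corresponding statements for the underlying edgy simplicial sets, yielding the asserted equivalence of categories.
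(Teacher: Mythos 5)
Your proposal is correct in outline and follows essentially the same route as the paper, which does not reprove this statement but cites it as \cite[Corollary~4.7]{HackneyLynd:PGSSS}: your construction (underlying set $X_1$, domain of multipliable words the image of the Segal maps, product the long edge $f_{0n}$, inversion the transposition $\tau_1^*$, with the technical heart being the unique lift of an edgy simplicial set with inversion to a spiny symmetric set) is precisely the argument of that reference as summarized in \cref{spiny edgy}. Since you, like the paper, defer the crucial uniqueness-of-lift step to \cite[Theorem~3.6]{HackneyLynd:PGSSS} rather than proving it, your sketch sits at the same level of self-containment as the paper's treatment.
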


\begin{convention}
In this paper we use the term \emph{partial groupoid} as a synonym for ``spiny symmetric set'' and write $\pgpd \subset \sym$ for the full subcategory of partial groupoids.
Likewise, \emph{partial group} will mean a reduced partial groupoid. 
We also frequently identify groups and groupoids with their nerves.
\end{convention}

\subsection{The platonically non-associative partial groupoid}\label{sec NA}
It is immediate from the definitions that any symmetric subset of the nerve of a groupoid is automatically a partial groupoid.
In particular, if $G$ is a group, any nonempty symmetric subset of $BG$ is a partial group.
But it has been known from the beginning that not every partial group embeds in a group, and in this section we construct a small example of this. 

First, detach the front two faces from the back and bottom faces of the boundary $\partial \rep{3} \subseteq \rep{3} = \hom(-,[n])$ of a symmetric $3$-simplex. 

\begin{center}
\begin{tikzpicture}[
  scale = 0.8,
  line join = round, 
  line cap = round,
  decoration={
    markings,
    mark=at position 0.5 with {\arrow{>}}
  }
]
\coordinate [label={[font=\normalsize]below left:0}] (a0) at (0,0);
\coordinate [label={[font=\normalsize]below right:1}] (a1) at (2,0);
\coordinate [label={[font=\normalsize]above right:2}] (a2) at (2,2);
\coordinate [label={[font=\normalsize]above left:3}] (a3) at (0,2);

\draw[fill=lblue,fill opacity=.7] (a0)--(a1)--(a2)--cycle;
\draw[fill=mediumpurple,fill opacity=.7] (a0)--(a2)--(a3)--cycle;

\draw[postaction={decorate}] (a0)--(a1) node [midway, below=1pt] {};
\draw[postaction={decorate}] (a1)--(a2) node [midway, right=1pt] {};
\draw[postaction={decorate}] (a2)--(a3) node [midway, above=1pt] {};
\draw[postaction={decorate}] (a0)--(a3);
\draw[postaction={decorate}] (a0)--(a2);

\coordinate [label={[font=\normalsize]below left:0}] (b0) at (4,0);
\coordinate [label={[font=\normalsize]below right:1}] (b1) at (6,0);
\coordinate [label={[font=\normalsize]above right:2}] (b2) at (6,2);
\coordinate [label={[font=\normalsize]above left:3}] (b3) at (4,2);

\draw[fill=lightpink,fill opacity=.7] (b0)--(b1)--(b3)--cycle;
\draw[fill=verylightgray,fill opacity=.9] (b1)--(b2)--(b3)--cycle;

\draw[postaction={decorate}] (b0)--(b1) node [midway, below=1pt] {};
\draw[postaction={decorate}] (b1)--(b2) node [midway, right=1pt] {};
\draw[postaction={decorate}] (b2)--(b3) node [midway, above=1pt] {};
\draw[postaction={decorate}] (b0)--(b3);
\draw[postaction={decorate}] (b1)--(b3);
\end{tikzpicture}
\end{center}
Then, glue these back along the spine of the original $3$-simplex
\begin{center}
\begin{tikzpicture}[scale = 1.4, line join = round, line cap = round]
\coordinate [label={[font=\normalsize]above:2}] (2) at (0,{sqrt(2)},0);
\coordinate [label={[font=\normalsize]right:1}] (1) at ({.5*sqrt(3)},0,-.5);
\coordinate [label={[font=\normalsize]below:0}] (0) at (0.3,0,0.5);
\coordinate [label={[font=\normalsize]left:3}] (3) at ({-.5*sqrt(3)-0.1},0,-.5);

\draw[fill=bblue,fill opacity=.5] (2)--(1)--(0)--cycle;
\draw[fill=mediumpurple,fill opacity=.5] (3)--(2)--(0)--cycle;
\draw[fill=lightpink,fill opacity=.5] (0)--(1)--(3) to[bend right=30] (0)--cycle;

\draw (0)--(2);
\draw[dashed] (1)--(3);
\draw[densely dotted] (0)--(3); 

\begin{scope}[decoration={markings,mark=at position 0.5 with {\arrow{to}}}]
\draw[postaction={decorate}] (0)--(1) node [midway, below=0.5pt] {$f$};
\draw[postaction={decorate}] (1)--(2) node [midway, right=2pt] {$g$};
\draw[postaction={decorate}] (2)--(3) node [midway, left=2pt] {$h$};
\draw[postaction={decorate}] (0)--(2);
\draw[postaction={decorate}] (0)--(3);
\draw[postaction={decorate}, bend left=30] (0) to (3); 
\end{scope}
\end{tikzpicture}
\end{center}
to obtain a symmetric set we will call $\NA$. 

In order to formalize this, let $F$ (front faces) be the symmetric subset of $\rep{3}$ whose $k$-simplicies are those $\alpha\colon [k] \to [3]$ with image in either $\{0,1,2\}$ or $\{0,2,3\}$. 
Similarly let $B$ (back faces) be the symmetric subset consisting of those $\alpha$ with image in either $\{0,1,3\}$ or $\{1,2,3\}$. 
Then $F$ and $B$ are partial groupoids containing the spine $\spine{3} \subset \rep{3}$, the symmetric subset of those $\alpha$ with image in one of $\{0,1\}$, $\{1,2\}$, or $\{2,3\}$ (see \cite[Remark 5.20]{HackneyLynd:PGSSS}). 
Finally let $Q = F \cap B$, the union of $\Sp^3$ with those simplices having image in $\{0,3\}$. 

Consider the pushout $\NA = F \sqcup_{\spine{3}} B$ in the category of symmetric sets: 
\[
\begin{tikzcd}
\spine{3} \dar[hook] \rar[hook] & F \dar[hook]\\
B \rar[hook] & \NA. \arrow[ul, phantom, "\ulcorner" very near start]
\end{tikzcd}
\]
It is a lot like $\partial \rep{3}$, except that the 03 edge of the 023 triangle has not been reglued to the 03 edge of the 013 triangle.  
Instead, $\NA$ has a double $03$-edge corresponding to the two ways of associating the $01$, $12$, and $23$ edges. 
In the pushout
\[
F \sqcup_Q B = \partial \rep{3}
\]
the two $03$ edges get reidentified. 
Thus, there is a quotient map $q \colon \NA \twoheadrightarrow \partial \rep{3}$ induced by the inclusion $\spine{3} \hookrightarrow Q$ and the identities on $F$ and $B$. 

\begin{lemma}
\label{NA spiny}
$\NA$ is spiny.
\end{lemma}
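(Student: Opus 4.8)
The plan is to verify directly that $\NA$ is edgy, that is, that the Segal map $\edgemap_n \colon \NA_n \to \words(\NA)_n$ is injective for every $n$; by definition this is exactly spininess. Since colimits in $\sym$ are computed levelwise, $\NA_n = F_n \sqcup_{\spine{3}_n} B_n$, and because the pushout is taken along monomorphisms the canonical maps $\iota_F \colon F \to \NA$ and $\iota_B \colon B \to \NA$ are injective with images overlapping exactly in the image of $\spine{3}$. The essential tool will be the quotient map $q \colon \NA \twoheadrightarrow \partial\rep{3} \subseteq \rep{3}$ constructed just above, together with the fact that $\rep{3}$, being the nerve of the chaotic groupoid, has $\edgemap$ an isomorphism by the nerve theorem; in particular a simplex of $\rep{3}$ is determined by its spine.

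First I would reduce from $\NA$ to $\rep{3}$. Given $x, y \in \NA_n$ with $\edgemap_n(x) = \edgemap_n(y)$, naturality of $\edgemap$ yields $\edgemap_n(qx) = \edgemap_n(qy)$ in $\words(\rep{3})_n$, whence $qx = qy =: \alpha \in \rep{3}_n$ by injectivity of $\edgemap$ on $\rep{3}$. It then remains to understand the fibers of $q$. The map $q$ is an isomorphism away from the simplices of $Q \setminus \spine{3}$, i.e.\ away from functions $\alpha \colon [n] \to [3]$ with $\image(\alpha) = \{0,3\}$; over such an $\alpha$ the fiber consists of the two copies $\alpha_F = \iota_F(\alpha)$ and $\alpha_B = \iota_B(\alpha)$. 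Thus whenever $\image(\alpha) \neq \{0,3\}$ the fiber of $q$ is a single point and $x = y$ immediately.

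The one remaining case — and the only genuine obstacle — is the doubled locus $\image(\alpha) = \{0,3\}$, where I must rule out $\{x,y\} = \{\alpha_F, \alpha_B\}$. The key observation is that such an $\alpha$ is a nonconstant sequence of values in $\{0,3\}$, so some consecutive pair satisfies $\alpha(i) \neq \alpha(i+1)$, and the corresponding spine edge is the doubled $03$- or $30$-edge. Since the images of $\iota_F$ and $\iota_B$ meet only in the image of $\spine{3}$ and this edge does not lie in $\spine{3}$, its $F$-copy and $B$-copy are distinct in $\NA_1$; by naturality of $\edgemap$ the $i$-th spine entry of $\alpha_F$ is the $F$-copy while that of $\alpha_B$ is the $B$-copy. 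Hence $\alpha_F$ and $\alpha_B$ have distinct spines, so $\edgemap_n(x) = \edgemap_n(y)$ forces $x = y$ in this case too. This establishes injectivity of $\edgemap_n$ for all $n$, i.e.\ that $\NA$ is spiny. The crux is precisely that any doubled simplex must expose the doubled edge somewhere along its spine, so the very feature that threatens the Segal condition cannot in fact violate it.
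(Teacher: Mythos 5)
Your proof is correct, and it finishes differently from the paper's. Both arguments share the same opening move: push a pair of simplices with equal spines down along $q\colon \NA \twoheadrightarrow \partial\rep{3}$, use spininess of $\partial\rep{3}$ to conclude they have the same image $\alpha$, and thereby reduce to the doubled locus. From there the paper argues structurally: the two lifts must both land in (the images of) $Q = F \cap B$, which is $1$-dimensional, and $1$-dimensional symmetric sets are automatically spiny, so the lifts coincide. You instead argue pointwise on the fiber: a simplex $\alpha$ with image exactly $\{0,3\}$ is a nonconstant $\{0,3\}$-valued sequence, so some consecutive pair jumps between $0$ and $3$, and the corresponding spine entry is the doubled edge itself; since $\iota_F$ and $\iota_B$ are simplicial maps whose images meet only along $\spine{3}$, the two lifts $\alpha_F$ and $\alpha_B$ already have distinct spines. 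Your route is more explicit and self-contained -- it needs no general fact about $1$-dimensional symmetric sets, only the description of the fibers of $q$ -- and it isolates the conceptual point that a doubled simplex necessarily exposes the doubled edge along its spine. The paper's route is shorter to state and generalizes more readily to similar gluings where the overlap is low-dimensional but not so easily enumerated. One cosmetic remark: the distinctness of the spine entries of $\alpha_F$ and $\alpha_B$ follows from $\iota_F, \iota_B$ being maps of symmetric sets (compatibility with $\epsilon_{i-1,i}^*$) rather than from naturality of $\edgemap$ as such, but this does not affect the argument.
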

\begin{proof}
Let $k,k'\in \NA_n \cong \hom(\rep{n},\NA)$ be two $n$-simplices which have the same spine. 
If $k$ and $k'$ are both in $F_n \subseteq \NA_n$, then since $F$ is spiny we have $k=k'$.
The same holds if $k,k'$ are both in $B_n$, so we assume $k\in F_n$ and $k'\in B_n$.
Since $\partial \rep{3}$ is spiny, $q \circ k = q \circ k'$.
This means that both factor through $Q \cong F \times_{\partial \rep{3}} B$. 
\[
\begin{tikzcd} 
\rep{n} \drar[dashed] \ar[drr, bend left, "k"] \ar[ddr, bend right, "k'"'] &[-0.5cm] \\[-0.5cm]
& Q \dar[hook] \rar[hook] & F \dar[hook]\\
& B \rar[hook] & \partial \rep{3}
\end{tikzcd}
\]
But $Q$ is 1-dimensional, so $\rep{n} \to Q$ factors through $\rep{1}$, hence so do $k$ and $k'$.
This implies $k,k' \in \sk_1(\NA)$, and since 1-dimensional symmetric sets are always spiny, we conclude that $k=k'$ in $\sk_1(\NA)$, hence in $\NA$.
\end{proof}

We call $\NA$ the \emph{platonically non-associative partial groupoid}.
If $(f,g,h)$ denotes the image in $\words(\NA)_3$ of the spine $(01,12,23)$ of $\id_{[3]} \in \rep{3}_3$ (as pictured above), then the edges $f,g,h \in \NA_1$ have the property that
\[
[f|g], [g|h], [g \circ f | h], [f| h \circ g] \in \NA_2, 
\]
but 
\[
h \circ (g \circ f) \neq (h \circ g) \circ f \text{ in } \NA_1. 
\]
$\NA$ is the minimal example of such a partial groupoid in the sense that, by Yoneda's lemma, maps from $\NA$ to a partial groupoid $X$ which are injective on the pair of long edges $0\to 3$ are in bijection with words $(f,g,h) \in \words (X)_3$ with the above properties. 
Likewise, the reduction of $\NA$ could be called the platonically non-associative partial group, and shares the same universal property (in the category of partial groups, rather than the category of partial groupoids).
We will compute the degree of $\NA$ in \cref{ex degree NA}. 

\section{Higher Segal spaces}\label{sec higher Segal}

At the beginning of this section, we provide background material on higher Segal spaces. 
Our approach highlights that the types of arguments used in the decomposition space literature can also be used for higher Segal spaces, by replacing pullback squares with cartesian cubes of larger dimension.
This relies on work of Walde, who recast the higher Segal conditions in terms of cartesian cubes. 
We'll begin with preliminaries on cartesian cubes before turning to the higher Segal conditions for simplicial objects in \cref{ss HSC Walde}.
Arguments in this section are in the spirit of those in \cite{Hackney:DSP}.
In \cref{ss SSO Segal} we discuss the case of symmetric simplicial objects, where a number of subtleties vanish.

In this section, $\mathcal{C}$ will denote a fixed category (or $\infty$-category) with finite limits.
All subsequent sections of the paper will take $\mathcal{C}$ to be the category of sets, and the reader is welcome to make this replacement immediately.

\subsection{Cubical diagrams}
We recall basics about cube-shaped diagrams in a category or $\infty$-category; references include \cite[\S3.3]{Walde:HSSHE} and \cite[\S6.1.1]{LurieHA}.
The \emph{generic cube} of dimension $n$ is the $n$-fold product $[1]^n \in \cat$ of the generic arrow $ \{ 0 \to 1 \} = [1] \in \ord \subset \cat$.
An \emph{$n$-dimensional cubical diagram} in $\mathcal{C}$, or briefly an \emph{$n$-cube}, is a functor $[1]^n \to \mathcal{C}$.
The functor category $\fun([1]^n, \mathcal{C})$ is the associated category of cubes.
If $S$ is a set of cardinality $n$, then we may also think of a functor $\ps(S) \to \mathcal{C}$ from the powerset of $S$ as an $n$-cube in $\mathcal{C}$ by choosing an isomorphism $\ps(S) \cong [1]^n$ (and similarly for $\ps(S)^\op$).

A map between $n$-cubes may be regarded as an $(n{+}1)$-cube.
Namely, we have the following description of the arrow category of the category of cubes, for each choice of isomorphism $[1] \times [1]^n \cong [1]^{n+1}$:
\[
	\fun([1], \fun([1]^n, \mathcal{C})) \cong \fun([1] \times [1]^n, \mathcal{C}) \cong \fun([1]^{n+1}, \mathcal{C}).
\]

An $n$-cube $Q \colon [1]^n \cong \ps(S) \to \mathcal{C}$ is \emph{cartesian} if it is a limit diagram.
Another way to say this is that $Q$ is cartesian if and only if it is right Kan extended from its restriction to the punctured cube $[1]^n \setmin 0 \cong \ps(S) \setmin \{ \varnothing \}$ (i.e.\ $Q \simeq i_*i^*Q$ where $i$ is in the inclusion of the punctured cube, $i_*$ is right Kan extension, and $i^*$ restriction).
We now recount several basic lemmas about cartesian cubes that we will need below.

\begin{lemma}\label{lem retract}
Retracts of cartesian $n$-cubes are cartesian.
\end{lemma}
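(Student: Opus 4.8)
The plan is to show that a retract of a cartesian $n$-cube is again cartesian by exploiting the fact that limits are detected by the universal property, and that retractions interact well with limit cones. Recall that an $n$-cube $Q$ is cartesian precisely when $Q \simeq i_* i^* Q$, where $i \colon \ps(S) \setmin \{\varnothing\} \hookrightarrow \ps(S)$ is the inclusion of the punctured cube and $i_*$ is right Kan extension. So suppose $P$ is a retract of a cartesian cube $Q$ in the category $\fun([1]^n, \mathcal{C})$, witnessed by maps $P \xrightarrow{j} Q \xrightarrow{r} P$ with $rj = \id_P$. The goal is to produce the corresponding identification for $P$.

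First I would pass to the punctured cube. Since $i^*$ is a functor, the retraction data $j, r$ restrict to a retraction $i^* P \xrightarrow{i^* j} i^* Q \xrightarrow{i^* r} i^* P$ of the restricted diagrams, again with $(i^* r)(i^* j) = \id_{i^* P}$. Applying $i_*$ (a functor as well) yields a retraction $i_* i^* P \to i_* i^* Q \to i_* i^* P$. The key structural input is the unit of the adjunction $\eta \colon \id \Rightarrow i_* i^*$, which is natural; so I would set up the commuting ladder of naturality squares for $\eta$ applied to the morphisms $j$ and $r$, giving a map of retracts from $(P, j, r)$ to $(i_* i^* P, i_* i^* j, i_* i^* r)$ compatible with the units $\eta_P$ and $\eta_Q$.

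The crux is then that $Q$ cartesian means $\eta_Q \colon Q \to i_* i^* Q$ is an isomorphism (an equivalence in the $\infty$-categorical case). I want to conclude $\eta_P \colon P \to i_* i^* P$ is an isomorphism. This follows from a general and elementary principle: \emph{a retract of an isomorphism is an isomorphism}, or more precisely, if a natural transformation $\eta$ is an isomorphism at $Q$ and $P$ is a retract of $Q$ compatibly with $\eta$, then $\eta_P$ is an isomorphism. Concretely, naturality of $\eta$ gives $\eta_P = \eta_P \circ \id_P = \eta_P \circ r \circ j = i_* i^*(r) \circ \eta_Q \circ j$, and one builds the inverse to $\eta_P$ as $r \circ \eta_Q^{-1} \circ i_* i^*(j)$, checking both composites equal the identity by repeatedly invoking naturality of $\eta$ and the identity $rj = \id_P$. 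Since $\eta_P$ is then an isomorphism, $P \simeq i_* i^* P$, which is exactly the statement that $P$ is cartesian.

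The main obstacle, and the only place requiring care, is the $\infty$-categorical bookkeeping: in the higher-categorical setting ``retract'' must be understood as a retraction diagram rather than a strict pair of maps with $rj = \id$, so the naturality arguments above should be phrased so that they make sense up to coherent homotopy. In practice this is handled by the standard fact that the class of equivalences is closed under retracts in any $\infty$-category, applied to the morphism $\eta$ in the functor $\infty$-category $\fun([1]^n, \mathcal{C})$; one need only observe that the retraction of cubes $(P, j, r)$ induces a retraction of the morphisms $\eta_P$ and $\eta_Q$ in the arrow category $\fun([1], \fun([1]^n, \mathcal{C}))$, after which closure of equivalences under retracts finishes the proof. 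The $1$-categorical case is then a special instance where everything is strict.
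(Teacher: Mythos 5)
Your proof is correct. The paper gives no argument of its own here, simply citing the general fact that limit diagrams are closed under retracts (kerodon Tag 05E6); your argument --- identifying cartesianness with the unit $\eta_Q \colon Q \to i_*i^*Q$ of the adjunction $i^* \dashv i_*$ being an equivalence, transporting the retraction through the functor $i_*i^*$ and the naturality of $\eta$, and then invoking closure of equivalences under retracts in the arrow category to handle the $\infty$-categorical coherence --- is precisely the standard proof of that cited fact, so you have supplied the details the paper delegates to the reference.
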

\begin{proof}
This is an instance of a general fact about closure of limit diagrams under retracts; see e.g.\ \cite[\href{https://kerodon.net/tag/05E6}{Tag 05E6}]{kerodon}.
\end{proof}

The following two well-known lemmas likely first appear in the Goodwillie calculus literature (with $\mathcal{C}$ the $\infty$-category of spaces). 
See Proposition~1.6 and Proposition~1.8 of \cite{Goodwillie:CalcII}.
In this generality, the next lemma is \cite[Lemma 3.3.8]{Walde:HSSHE}.

\begin{lemma}\label{cube lemma 2}
Let $P$ and $Q$ be $n$-cubes, and $R \colon P \to Q$ an $(n{+}1)$-cube. 
If $Q$ is cartesian, then $P$ is cartesian if and only if $R$ is cartesian.
\end{lemma}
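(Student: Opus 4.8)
The plan is to work from the right Kan extension characterization of cartesian cubes recalled above: an $m$-cube $X \colon \ps(U) \to \mathcal{C}$ is cartesian exactly when the canonical comparison $X(\varnothing) \to \lim_{\varnothing \neq V \subseteq U} X(V)$ is an isomorphism, since right Kan extension from the punctured cube agrees with $X$ away from the initial vertex and computes $\lim_{\varnothing \neq V} X(V)$ there. Thus each of $P$, $Q$, $R$ is governed by a single such comparison map, and the task is to relate the one for $R$ to the one for $P$ under the hypothesis on $Q$.

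First I would make the ambient identification explicit. Writing $T = S \sqcup \{t\}$ for the index set of $R$, the $(n{+}1)$-cube $R \colon \ps(T) \to \mathcal{C}$ restricts to $P$ on the subsets not containing $t$ and to $Q$ on the subsets containing $t$; the latter form a full $n$-subcube $\{ U : t \in U \} \cong \ps(S)$ with initial vertex $\{t\}$, on which $R$ agrees with $Q$ and where $R(\{t\}) = Q(\varnothing)$. In particular $R(\varnothing) = P(\varnothing)$.

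The heart of the argument — and the step I expect to be the main obstacle — is a decomposition of the limit over the punctured $(n{+}1)$-cube as a pullback
\[
\lim_{\varnothing \neq V \subseteq T} R(V) \;\cong\; \Big( \lim_{\varnothing \neq U \subseteq S} P(U) \Big) \times_{\lim_{\varnothing \neq U \subseteq S} Q(U)} Q(\varnothing),
\]
where the right-hand leg is the comparison map for $Q$ and the left-hand leg is induced by $R \colon P \to Q$. I would establish this by splitting the punctured poset $\ps(T) \setmin \{\varnothing\}$ into the up-closed part indexed by the $t$-containing subsets (which has initial object $\{t\}$ and so contributes only $Q(\varnothing)$) and the down-closed part indexed by the nonempty $t$-free subsets (the punctured $n$-cube of $P$), the two being glued along the natural transformation $R$. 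The cross-compatibility conditions of a cone over the whole punctured cube are then precisely the requirement that the image of a $P$-cone in $\lim_{\varnothing \neq U} Q(U)$ coincide with the image of $Q(\varnothing)$ under its comparison map, which is exactly the asserted fiber product. In the $\infty$-categorical setting one phrases this via (co)finality and transitivity of right Kan extensions along the factored inclusions of punctured cubes rather than by chasing cone elements, but the shape of the computation is identical.

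With the pullback in hand the conclusion is immediate. If $Q$ is cartesian, the comparison $Q(\varnothing) \to \lim_{\varnothing \neq U} Q(U)$ is an isomorphism, so base change shows the projection $\lim_{\varnothing \neq V} R(V) \to \lim_{\varnothing \neq U} P(U)$ is an isomorphism. Since the comparison map for $R$ factors as $R(\varnothing) = P(\varnothing) \to \lim_{\varnothing \neq V} R(V) \to \lim_{\varnothing \neq U} P(U)$, with the second arrow the isomorphism just produced, the first arrow is an isomorphism if and only if the composite is. That is, $R$ is cartesian if and only if $P$ is cartesian, as claimed.
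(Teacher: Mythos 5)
Your argument is correct. Note, though, that the paper does not actually prove this lemma: it defers to Goodwillie (Proposition 1.6 of \cite{Goodwillie:CalcII}) and to \cite[Lemma 3.3.8]{Walde:HSSHE}, so there is no in-paper proof to match against. What you have written is essentially the standard argument from that literature, recast in the comparison-map language the paper itself uses in \cref{cube appendix} for the companion \cref{generalized pasting law}. Your key pullback decomposition of the punctured $(n{+}1)$-cube limit is sound: the $t$-containing subsets form a cosieve isomorphic to $\ps(S)$ with initial object $\{t\}$ (so that part of the limit collapses to $Q(\varnothing)$), the nonempty $t$-free subsets form the punctured cube of $P$, and the cross-compatibilities of a cone reduce, via the cone conditions on each piece, to agreement of the two induced maps into $\lim_{\varnothing \neq U \subseteq S} Q(U)$ along the generating inclusions $U \subseteq U \cup \{t\}$. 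From there, base change along the equivalence $Q(\varnothing) \xrightarrow{\sim} \lim_{\varnothing \neq U} Q(U)$ and the factorization of the comparison map for $R$ through that for $P$ finish the proof exactly as you say. The one place to be careful, which you flag yourself, is that in the $\infty$-categorical setting the sieve/cosieve decomposition of the limit should be justified by a cofinality or Kan-extension argument rather than by chasing cone components; the paper's appendix shows the intended style (factoring the inclusion of punctured cubes and using transitivity of right Kan extensions), and your argument adapts to that formalism without change.
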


\begin{lemma}\label{generalized pasting law}
Suppose $P$, $Q$, and $R$ are $(n{+}1)$-cubes, which satisfy $R = Q \circ P$ when regarded as maps of $n$-cubes.
If $Q$ is cartesian, then $P$ is cartesian if and only if $R$ is cartesian.
\end{lemma}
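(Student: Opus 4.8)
The plan is to deduce this pasting law from \cref{cube lemma 2} applied one dimension higher, by packaging $P$, $Q$, and $R$ into a single commutative square of $n$-cubes. Writing each $(n{+}1)$-cube as a morphism of $n$-cubes, the hypothesis $R = Q\circ P$ reads $P = (P_0 \xrightarrow{p} P_1)$, $Q = (P_1 \xrightarrow{q} Q_1)$, and $R = (P_0 \xrightarrow{qp} Q_1)$. First I would assemble these into the commutative square
\[
\begin{tikzcd}
P_0 \arrow[r, "R"] \arrow[d, "P"'] & Q_1 \arrow[d, equal] \\
P_1 \arrow[r, "Q"'] & Q_1
\end{tikzcd}
\]
in the category $\fun([1]^n, \mathcal{C})$ of $n$-cubes, whose commutativity is precisely the relation $R = Q\circ P$ and whose right-hand edge is the identity of $Q_1$. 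Uncurrying $\fun([1]^2, \fun([1]^n, \mathcal{C})) \cong \fun([1]^{n+2}, \mathcal{C})$ turns this square into a single $(n{+}2)$-cube $\Theta$, and being cartesian is a property of $\Theta$ as a limit diagram, so it does not depend on how $\Theta$ is sliced into a morphism of lower-dimensional cubes.

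The reason for placing the identity edge on the right is that then both slicings of $\Theta$ have a target we already know to be cartesian. Regarding $\Theta$ as a morphism between its two rows gives a morphism $R \to Q$ of $(n{+}1)$-cubes whose target $Q$ is cartesian by hypothesis; regarding it as a morphism between its two columns gives a morphism $P \to \id_{Q_1}$ whose target is the identity cube $\id_{Q_1}$. Applying \cref{cube lemma 2} at dimension $n{+}1$ to each slicing, the cartesian target lets me conclude in the first case that $R$ is cartesian if and only if $\Theta$ is, and in the second that $P$ is cartesian if and only if $\Theta$ is. Chaining the two equivalences yields $P$ cartesian $\iff \Theta$ cartesian $\iff R$ cartesian, which is the claim.

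The one auxiliary fact this requires is that an identity cube is cartesian. I would establish this by a short induction on dimension using \cref{cube lemma 2}: an identity $1$-cube is an isomorphism and hence cartesian, while slicing the identity $(n{+}1)$-cube $\id_X$ along an internal coordinate of $X$ realizes it as a morphism $\id_{X_0} \to \id_{X_1}$ of identity $n$-cubes, whose target is cartesian by the inductive hypothesis, so \cref{cube lemma 2} applies. (Alternatively, one may simply cite the standard fact that any cube whose associated map of cubes is an equivalence is cartesian.)

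I expect the only real friction to be the bookkeeping around the two slicings: keeping straight which face of $\Theta$ is the source and which is the target, and verifying that under both slicings the target is one of the two cubes known to be cartesian -- namely $Q$ and $\id_{Q_1}$ -- rather than one of the unknowns $P$ and $R$. This is exactly what forces the identity edge to sit opposite $P$; with the identity placed so as to share the initial corner with $P$, the same argument would deliver only the implication ``$R$ cartesian $\implies P$ cartesian'' directly, and the converse would need a separate argument.
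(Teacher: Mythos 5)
Your proof is correct, but it takes a genuinely different route from the paper's. The appendix works directly with the stacked pair $X \colon [2] \times [1]^n \to \mathcal{C}$ and argues via right Kan extensions: it translates ``$R$ cartesian'' and ``$P$ cartesian'' into the statements that $X$ is right Kan extended from the subposets obtained by deleting $\{00,10\}$ and $\{00\}$ respectively (using that certain inclusions are left adjoints, hence initial), and then uses the cartesianness of $Q$ to show that the second Kan extension factors through the first. You instead collapse the composite into a degenerate $(n{+}2)$-cube $\Theta$ with an identity edge opposite $P$, and apply \cref{cube lemma 2} once per slicing, together with the auxiliary fact that identity cubes are cartesian (your induction for that fact is fine, and so is your choice of which corner to place the identity on --- the other placement would indeed only give one implication). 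Both arguments are valid; yours has the appeal of reducing the pasting law entirely to \cref{cube lemma 2}, which the paper imports from Walde, plus an elementary induction, whereas the paper's is a self-contained Kan-extension computation. The one point to make precise in the $\infty$-categorical generality the appendix aims for is the construction of $\Theta$: the hypothesis ``$R = Q \circ P$'' should be read as the data of a functor $[2] \to \fun([1]^n, \mathcal{C})$, and $\Theta$ is its restriction along the poset surjection $[1]^2 \to [2]$ that collapses the identity edge; with that said, everything in your argument goes through.
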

This lemma generalizes the usual pasting law for pullbacks when $n=1$.
For completeness, we provide a proof in \cref{cube appendix} in the generality of $\mathcal{C}$ an arbitrary $\infty$-category with finite limits.

\begin{remark}
When $\mathcal{C}$ is a complete and cocomplete $\infty$-category, these lemmas also follow from corresponding results for derivators (Theorem~8.7 and Proposition~8.11 of \cite{GrothStovicek:TTSHS}) applied to the homotopy derivator of $\mathcal{C}$. 
When $\mathcal{C}$ is a stable $\infty$-category, stronger statements hold -- see Corollary~A.16 and Corollary~A.18 of \cite{DyckerhoffJassoWalde}.
\end{remark}

\subsection{Higher Segal conditions after Walde}\label{ss HSC Walde}

In Walde's perspective on the higher Segal conditions \cite{Walde:HSSHE}, a simplicial object is lower $(2k{-}1)$-Segal if and only if it maps each strongly bicartesian $(k{+}1)$-dimensional cube in $\ord$ to a cartesian cube.
Strongly bicartesian means that each $2$-dimensional face is bicartesian.
On the other hand, not all strongly bicartesian cubes need be checked, only the ones with injective edges.
This collection of cubes corresponds precisely to intersection cubes associated with gapped subsets, as we now explain. 

Given an object $S \in \ordalt$ and proper subset $I \subset S$, the associated \emph{intersection cube} in $\ordalt$ is the functor 
\[
\intcu{I} = \intcube{I}{S} \colon \ps(I)^\op \to \ordalt
\]
that sends a subset $J \subseteq I$ to its complement $S\setmin J$ in $S$. 
We use the abbreviation $\intcu{I}$ when $S$ is understood. 
To illustrate the terminology, note that if we let $S_i = S\setmin i$ for $i\in I$,
then the vertices of the cube are the intersections $\intcu{I}_J = \bigcap_{j\in J} S_j$ of the $S_i$, and the edges are the inclusions.
The initial vertex of the cube is $\bigcap_{i\in I} S_i = S \setmin I$, and the terminal vertex is $S$.
For example, if $I = \{i_0,\dots,i_k\} \subset [n] = S$, then the intersection cubes for $k = 0$ and $1$ are 
$[n]$ and $[n] \leftarrow [n]\setmin \{i_0\}$, while the ones for $k = 2$ and $3$ look like
\[
\begin{tikzcd}
{[n]} & {[n]\setmin i_1} \lar\\
{[n]\setmin i_0} \uar & {[n]\setmin \{i_0,i_1\}} \uar \lar
\end{tikzcd}
\,\, \text{and} \quad
\begin{tikzcd}[column sep=0.8ex, row sep=0.8ex]
{[n]} \ar[from=rr] \ar[from=dd] \ar[from=dr] & & 
{[n]\setmin i_2} \ar[from=dd] \ar[from=dr]  \\
& {[n]\setmin i_1} \ar[from=rr, crossing over] & & {[n]\setmin\{i_1,i_2\}} \ar[from=dd] \\
{[n]\setmin i_0} \ar[from=rr] \ar[from=dr] & & {[n]\setmin\{i_0,i_2\}} \ar[from=dr] \\
& {[n]\setmin\{i_0,i_1\}} \ar[from=rr] \ar[uu, crossing over] & & {[n]\setmin\{i_0, i_1,i_2\}}.
\end{tikzcd}
\]

\smallskip
If $X \colon \ordalt^\op \to \mathcal{C}$ is a simplicial object, then composing $\intcu{I}$ with $X$ yields a cube
\[
X\intcu{I} = X\intcube{I}{S} \colon \ps(I) \to \mathcal{C}
\]
in $\mathcal{C}$ whose initial vertex is $X_S$ and whose terminal vertex is $X_{S \setmin I}$. 
In the special case where $S = [n]$ and $I$ has cardinality $k+1$, these amount to $X_S = X_n$ and $X_{S \setmin I} = X_{n-k-1}$. 

\smallskip
A subset $I \subseteq S$ is \emph{gapped} if for each pair of elements $i < i'$ in $I$, there is $j \in S$ such that $i < j < i'$. 
If $S = [n]$, this just means that each pair of distinct elements of $I$ are at distance at least two from one another. 
We use the notation $i \ll i'$ if there exists such a gap $j$ between $i$ and $i'$, so that a gapped subset can be written as a sequence
\[
i_0 \ll i_1 \ll i_2 \ll \cdots \ll i_k
\]
where $k+1$ is the cardinality of $I$.
As an example, if $I$ is the gapped subset $0 \ll i \ll n$ of $[n]$, then 
\[
  \begin{tikzcd}[row sep=small]
    X_n \ar[rr,"d_n"] \ar[dd,"d_0"'] \ar[dr,"d_i"'] & & 
    X_{n-1} \ar[dd, "d_0"' very near start] \ar[dr,"d_i"]  \\
    & X_{n-1} \ar[rr,"d_{n-1}" near end, crossing over] & & X_{n-2} \ar[dd,"d_0"] \\
    X_{n-1} \ar[rr,"d_{n-1}" near start] \ar[dr,"d_{i-1}"'] & & X_{n-2} \ar[dr,"d_{i-1}" near start] \\
    & X_{n-2} \ar[rr,"d_{n-2}"] \ar[from=uu, crossing over, "d_0" very near start] & & X_{n-3}.
  \end{tikzcd}
\]
is the corresponding 3-dimensional cube $X\cube{I}$.
\begin{definition}\label{def lower odd Segal}
Let $k$ be a positive integer.
A simplicial object $X$ is \emph{lower $(2k{-}1)$-Segal} if for every $n \in \mathbb{N}$ and every gapped set $I \subset [n]$ of cardinality $k+1$, the associated cube $X\cube{I}\colon \ps(I) \to \mathcal{C}$ is cartesian.
\end{definition}

These conditions generalize the usual Segal condition. 
Indeed, the lower 1-Segal condition coincides with the Segal condition (see e.g.\ \cite[\S1]{Walde:HSSHE} or \cite[Ex.~3.9]{Dyckerhoff:CPOC}), and if $X$ is lower $(2k{-}1)$-Segal, it is also lower $(2k{+}1)$-Segal (\cref{prop hierarchy}).

\begin{remark}\label{rmk walde lower segal}
\Cref{def lower odd Segal} is a distillation of the main theorem of \cite{Walde:HSSHE}. 
Specifically, it is a combination of Corollary 4.1.5, Theorem 6.1.1, and Theorem 7.2.2 of \cite{Walde:HSSHE}, along with an unraveling of a compatible (Definition 4.1.1) and primitive (Definition 4.3.1) claw whose constituent maps are injective.
\end{remark}

Each lower $(2k{-}1)$-Segal condition is a one-parameter family of conditions on $X$ involving gapped subsets of $[n]$ for each $n \geq 0$.
Observe however, that there are no gapped subsets $I$ of $[n]$ of cardinality $k+1$ if $n < 2k$;
the first nonvacuous condition involves the gapped sequence $0 \ll 2 \ll \cdots \ll 2k$ in $[2k]$. 

\begin{example}
If $X$ is the nerve of a category, then the first lower $1$-Segal condition ($k = 1$, $n = 2$) says that 
a $2$-simplex amounts to a pair $(f,g)$ of morphisms agreeing target to source, which is of course the case. 
The first of the lower $3$-Segal conditions ($k = 2$, $n = 4$) says a $4$-simplex amounts to a 
triple of $3$-simplices $[f|g|h]$, $[g|h|k]$, and $[f|h \circ g|k]$, again the case.
\end{example}

\begin{remark}\label{exercise sset}
Using a cofinality argument, the lower $(2k{-}1)$-Segal condition for a simplicial set $X$ can be reformulated as follows:
for every gapped set $I \subset [n]$ of cardinality $k+1$ and every list of $(n{-}1)$ simplices $(x_i) \in \prod_I X_{n-1}$ satisfying $d_i x_j = d_{j-1} x_i$ for $i < j$ in $I$, there exists a unique $x\in X_n$ such that $d_i x = x_i$ for all $i\in I$.
\end{remark}

The following is a variant on similar results for pullback squares, e.g. \cite[Lemma 3.10]{GKT1}. 
It reduces further the number of cubes one needs to check for lower $(2k{-}1)$-Segality. 

\begin{lemma}\label{segality top bottom}
Let $k$ be a positive integer and $X$ a simplicial object.
Assume that for each $n \in \mathbb{N}$ and each gapped subset $I \subset [n]$ of cardinality $k+1$ containing both $0$ and $n$, the cube $X\cube{I}$ is cartesian.
Then $X$ is lower $(2k{-}1)$-Segal.
\end{lemma}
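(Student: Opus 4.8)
The plan is to induct on the dimension $n$, using \cref{generalized pasting law} to push the extreme elements of a gapped set $I$ outward to $0$ and to $n$, thereby reducing an arbitrary gapped cube to one of the assumed-cartesian cubes containing both endpoints. Fix $k$. There are no gapped subsets of $[n]$ of cardinality $k+1$ when $n < 2k$, and for $n = 2k$ the only one is $\{0 \ll 2 \ll \dots \ll 2k\}$, which already contains $0$ and $n$; so the base case of the induction is immediate from the hypothesis. For the inductive step I would assume that $X\intcube{I'}{S}$ is cartesian for every gapped $I'$ of cardinality $k+1$ inside a simplex $S$ of dimension $<n$, fix a gapped $I = \{i_0 \ll \dots \ll i_k\} \subseteq [n]$, and (if $0, n \in I$ we are done by hypothesis) treat the case $i_k < n$, the case $i_0 > 0$ being symmetric with $d_\bot$ in place of $d_\top$.

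The heart of the argument is a \emph{swap} step: with $I^+ = (I \setmin \{i_k\}) \cup \{n\}$, which is again gapped of cardinality $k+1$ but now contains $n$, I claim $X\cube{I}$ is cartesian if and only if $X\cube{I^+}$ is. To prove the claim, write $A = X\intcube{I \setmin i_k}{[n]}$ for the common $k$-cube in which the top coordinate has not yet been contracted, and set $D = X\intcube{I\setmin i_k}{[n]\setmin\{i_k,n\}}$. Deleting the two distinct vertices $i_k$ and $n$ in either order gives one and the same $(k{+}1)$-cube $M\colon A \to D$, and it factors as a composite of maps of $k$-cubes in two ways, namely $M = Q_1 \circ X\cube{I} = Q_2 \circ X\cube{I^+}$, where $Q_1 = X\intcube{I^+}{[n]\setmin i_k}$ contracts $n$ and $Q_2 = X\intcube{I}{[n]\setmin n} = X\intcube{I}{[n-1]}$ contracts $i_k$. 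Both $Q_1$ and $Q_2$ are gapped $(k{+}1)$-cubes over simplices of dimension $n-1$, hence cartesian by the inductive hypothesis. Applying \cref{generalized pasting law} to each of the two factorizations of $M$ (with $Q_1$, respectively $Q_2$, as the cartesian middle cube) yields $X\cube{I}$ cartesian $\iff M$ cartesian $\iff X\cube{I^+}$ cartesian, which is the claim.

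With the swap in hand, at most two applications (one at the top, one at the bottom) replace $I$ by a gapped set of cardinality $k+1$ containing both $0$ and $n$, which is cartesian by hypothesis; this completes the inductive step and hence, by \cref{def lower odd Segal}, shows $X$ is lower $(2k{-}1)$-Segal.

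The step requiring the most care is the bookkeeping underlying the swap. One must verify that $Q_1$ and $Q_2$ are genuinely gapped cubes of cardinality $k+1$ over the lower-dimensional simplices $[n]\setmin i_k$ and $[n-1]$, so that the inductive hypothesis applies; in particular that excising the interior vertex $i_k$ from the ambient simplex does not destroy the gaps of $I^+$ (it does not, since $i_{k-1} \ll i_k$ leaves an ambient vertex strictly between $i_{k-1}$ and $n$), and that $\max I = i_k \leq n-1$ keeps $I$ inside $[n-1]$. One must also check that the two composites literally coincide as $(k{+}1)$-cubes, which is exactly the commutativity of deleting the distinct vertices $i_k$ and $n$. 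Once these points are nailed down, the conclusion is a formal consequence of \cref{generalized pasting law}, and no cube outside the cardinality-$(k{+}1)$ range ever needs to be checked.
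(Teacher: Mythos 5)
Your proof is correct and takes essentially the same route as the paper's: the same commuting square of $(k{+}1)$-cubes obtained by deleting $\max(I)$ and $\max(S)$ from the ambient simplex in either order, with \cref{generalized pasting law} applied twice to the two factorizations. The only difference is organizational --- the paper runs a double induction (on $|S|$, then on the ``cowidth'' of $I$) and concludes cartesianness of $X\cube{I}$ directly using a third inductively known cartesian cube, whereas you extract an if-and-only-if swap from the same square and chain at most two swaps down to a cube covered by the hypothesis.
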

\begin{proof}
By the cowidth of a subset $I \subset S \in \ordalt$ we mean the cardinality of the set $\{s \in S \mid s < \min(I) \text{ or } s > \max(I)\}$, 
so for $S = [n]$ we are in the hypotheses of the lemma just when the cowidth is $0$. 
Assume that for each $S' \in \ordalt$ and each gapped subset $I' \subset S'$ of cardinality $k+1$ and cowidth $0$, the associated cube $X\cube{I'}$ is cartesian. 
Fix $S \in \ordalt$ and a gapped subset $I \subset S$ of cardinality $k+1$. 
We wish to prove that $X\cube{I}$ is cartesian, and this is by induction first on $|S|$, and then on the cowidth of $I$ in $S$.
In the case $|S| = 2k+1$, the unique gapped subset has cowidth $0$ and so the result holds by hypothesis. 

Assume now $|S| > 2k+1$. 
We may assume that either $\min(S) \notin I$ or $\max(S) \notin I$, say the latter, so that $\max(I) < \max(S)$.
Write $m = \max(I)$ and $n = \max(S)$ for short. 
A subscript on $I$ or $S$ indicates that the corresponding elements have been removed. 
For example, $I_m = I \setmin m$ and $S_{m,n} = S \setmin \{m,n\}$. 
We also set $J = I \setmin m \cup n$, which is gapped in both $S_m$ and in $S$. 
Observe that since $m < n$ by assumption, $J$ has strictly smaller cowidth in $S$ than $I$ does.

Regard $\intcube{I}{S}$ as a map of cubes from $\intcube{I_m}{S_m}$ to $\intcube{I_m}{S}$. 
This is the top arrow in the commutative diagram
\[
\begin{tikzcd}[column sep=large]
\intcube{I_m}{S_m} \rar{\intcube{I}{S}} & \intcube{I_m}{S}\\
\intcube{I_m}{S_{m,n}} \uar{\intcube{J}{S_m}} \rar{\intcube{I}{S_n}} & \intcube{I_m}{S_n} \uar[swap]{\intcube{J}{S}}.
\end{tikzcd}
\]
By induction, $X\intcube{I}{S_n}$ and $X\intcube{J}{S}$ are cartesian, hence so is their composite by \cref{generalized pasting law}. 
By induction, the cube $X\intcube{J}{S_m}$ is cartesian as well. 
So by \cref{generalized pasting law} again, $X\intcube{I}{S}$ is cartesian.
\end{proof} 

We are now ready to introduce the other higher Segal conditions.

\begin{definition}[Other Higher Segal conditions]\label{def other higher Segal}
Let $k$ be a positive integer and $X$ a simplicial object.
Consider the collection of gapped subsets $I \subset [n]$ of cardinality $k+1$ and the associated collection of cubes $X\cube{I} \colon \ps(I) \to \mathcal{C}$.
We say that $X$ is
\begin{enumerate}
\item \emph{lower $2k$-Segal} if $X\cube{I}$ is cartesian whenever $0 \notin I$,
\item \emph{upper $2k$-Segal} if $X\cube{I}$ is cartesian whenever $n \notin I$, and
\item \emph{upper $(2k{+}1)$-Segal} if $X\cube{I}$ is cartesian whenever $0 \notin I$ and $n \notin I$.
\end{enumerate}
\end{definition}

If the definition seems a bit ad hoc, the reason is that all of our definitions are in terms of cartesian cubes, rather than the original geometric definitions (see \cite{poguntke} and \cite[p.\ xv]{DyckerhoffKapranov}) in terms of upper and lower triangulations of cyclic polytopes.
Walde proved in \cite{Walde:HSSHE} (see \cref{rmk walde lower segal}) that \cref{def lower odd Segal} is equivalent to the geometric definition of lower $(2k{-}1)$-Segal.
Independently, Poguntke proved the characterization in \cref{prop path space criterion} below for the original geometric definitions \cite[Proposition 2.7]{poguntke}.
As \cref{prop path space criterion} holds for the conditions defined in \cref{def other higher Segal}, this means that they coincide with the geometric ones.

\begin{remark}
One could also take $k=0$ in \cref{def lower odd Segal} and \cref{def other higher Segal} to arrive at notions of lower $(-1)$-Segal, lower and upper $0$-Segal, and upper $1$-Segal.
The latter three appear in \cite{poguntke}.
An adaptation of the proof of \cref{prop symmetric higher Segal} below shows that all four of these conditions coincide for a simplicial object $X \colon \ord^\op \to \mathcal{C}$, and just mean that $X$ is constant \cite[Ex.\ 3.9]{Dyckerhoff:CPOC}.
We will not consider this `degree zero' (\cref{def degree}) case any further in this paper, always taking $k > 0$ and not distinguishing between discrete and nondiscrete groupoids.
\end{remark}

The following is immediate from \cref{def lower odd Segal} and \cref{def other higher Segal}.

\begin{lemma}[Opposites]\label{lem opposites}
Let $X$ be a simplicial object and $d$ a positive integer.
If $d$ is odd, then $X$ is lower or upper $d$-Segal if and only if $X^\op$ is so.
If $d$ is even, then $X$ is lower $d$-Segal if and only if $X^\op$ is upper $d$-Segal. \qed
\end{lemma}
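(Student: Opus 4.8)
The plan is to reduce everything to a single comparison at the level of cubes: for each $n$ and each subset $I \subseteq [n]$, I claim the cube $X^\op\cube{I}$ is isomorphic to the cube $X\cube{\tau_n(I)}$, where $\tau_n(I) = \{\, n-i \mid i \in I \,\}$ and both cubes are taken over $S = [n]$. Granting this, cartesianness of $X^\op\cube{I}$ is equivalent to cartesianness of $X\cube{\tau_n(I)}$, and all four equivalences follow by bookkeeping, since $I \mapsto \tau_n(I)$ is a cardinality- and gappedness-preserving involution on subsets of $[n]$ that exchanges the conditions $0 \in I$ and $n \in I$.

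First I would record the effect of the opposite on face maps. From the description of $X^\op$ by precomposition with $\alpha \mapsto \tau_m \alpha \tau_n$, a direct computation gives $\tau_n \delta^i \tau_{n-1} = \delta^{n-i}$ for the coface maps, hence $d_i^{X^\op} = d_{n-i}^{X}$ on $n$-simplices. Next I would note that $\tau_n$ is an order-reversing involution of $[n]$: it preserves cardinality, sends gapped subsets to gapped subsets (reversing a sequence $i_0 \ll \cdots \ll i_k$ yields $n-i_k \ll \cdots \ll n-i_0$), and satisfies $0 \in I \iff n \in \tau_n(I)$ together with $n \in I \iff 0 \in \tau_n(I)$; in particular the condition $0, n \notin I$ is invariant under $\tau_n$.

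The heart of the argument is the cube isomorphism. The reindexing $\rho\colon \ps(I) \to \ps(\tau_n(I))$, $J \mapsto \tau_n(J)$, is an isomorphism of posets (equivalently, a reversal of the coordinate axes of the $|I|$-dimensional cube). On vertices both $X^\op\cube{I}$ and $X\cube{\tau_n(I)} \circ \rho$ take the value $X_{[n]\setmin J} = X_{n-|J|}$, so the identity maps give the comparison on objects. On a covering edge $J \to J \cup \{i\}$ one must check that the two structure maps agree: on the left cube the edge is $d_p^{X^\op} = d_{(n-|J|)-p}^{X}$, where $p$ is the position of $i$ inside the ordered set $[n]\setmin J$; on the right cube the corresponding edge is $d_q^{X}$, where $q$ is the position of $n-i$ inside $[n]\setmin \tau_n(J)$. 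A short count comparing how many deleted indices lie below $i$ versus above $i$ shows $(n-|J|)-p = q$, so the two functors agree on generators, hence as functors out of the poset $\ps(I)$. I expect this reindexing bookkeeping — essentially that order-reversal transposes the cube while the relation $d_i^{X^\op} = d_{n-i}^X$ reverses the face labels — to be the only real content; it is the higher-dimensional analogue of transposing a bicartesian square.

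Finally I would assemble the four statements. Since $\rho$ is an isomorphism of index posets, $X\cube{\tau_n(I)}$ is a limit diagram if and only if $X^\op\cube{I}$ is. For odd $d$: letting $I$ range over all gapped $(k{+}1)$-subsets shows lower $(2k{-}1)$-Segality is self-dual, while imposing the $\tau_n$-invariant condition $0, n \notin I$ shows upper $(2k{+}1)$-Segality is self-dual. For even $d = 2k$: because $\tau_n$ carries $\{\, I : n \notin I \,\}$ bijectively onto $\{\, \tau_n(I) : 0 \notin \tau_n(I) \,\}$, the upper $2k$-Segal condition on $X^\op$ translates exactly into the lower $2k$-Segal condition on $X$; applying this to $X^\op$ in place of $X$ and using $(X^\op)^\op = X$ gives the reverse implication.
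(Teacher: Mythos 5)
Your proof is correct and is exactly the verification the paper has in mind: the paper declares the lemma ``immediate from the definitions'' and gives no argument, and your reindexing of $X^\op\cube{I}$ as $X\cube{\tau_n(I)}$ (with the position count $(n-|J|)-p=q$ and the observation that $\tau_n$ preserves gappedness and swaps the conditions $0\in I$ and $n\in I$) is the content being left implicit. Nothing to change.
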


To state the next proposition, we need the \emph{d\'ecalage} functors of Illusie \cite[VI.1]{Illusie:CCD2}
\[
\ldec, \udec \colon \fun(\ord^\op, \mathcal{C}) \to \fun(\ord^\op, \mathcal{C}),
\]
which we now define (see also \cite[\S6]{Hackney:DSP}). 
There is a functor $\ord \to \ord$ which sends $[n]$ to the ordinal sum $[0] \star [n] = [n+1]$. 
Restriction along this functor induces the lower d\'ecalage functor $\ldec \colon \fun(\ord^\op, \mathcal{C}) \to \fun(\ord^\op, \mathcal{C})$.
If $X$ is a simplicial object, then $\ldec X$ is obtained from $X$ by deleting $X_0$, setting $\ldec X_n = X_{n+1}$, and deleting the bottom face and degeneracy maps (and renumbering the remaining ones by 1):
\[ \begin{tikzcd}
X: &  X_0 \rar["s_0" description] & X_1 \lar[shift left=2, "d_0"] \lar[shift right=2, "d_1"']  \rar[shift left=1.5] \rar[shift right=1.5] & X_2 \lar[shift left=3,"d_0"] \lar[shift right=3,"d_2"'] \lar
\rar[shift left=3] \rar[shift right=3] \rar &
X_3 
\lar[shift left=1.5] \lar[shift left=4.5] \lar[shift right=1.5] \lar[shift right=4.5] \cdots \\[+0.25cm]
\ldec X: &   & X_1  \rar[shift left=1.5] \rar[shift right=1.5, dotted] & X_2 \lar[shift left=3, dotted, "\color{gray} d_0"] \lar[shift right=3,"d_2"'] \lar
\rar[shift left=3] \rar[shift right=3, dotted] \rar &
X_3 
\lar[shift left=1.5] \lar[shift left=4.5, dotted] \lar[shift right=1.5] \lar[shift right=4.5] \cdots 
\end{tikzcd} \]
That is, $d_k \colon \ldec X_n \to \ldec X_{n-1}$ is equal to $d_{k+1} \colon X_{n+1} \to X_n$ (and similarly for degeneracies).
(In \cite{DyckerhoffKapranov}, $\ldec X$ is called the initial path space $P^\triangleleft X$.)
Likewise, there is a functor $\ord \to \ord$ sending $[n]$ to $[n] \star [0] = [n+1]$ and restriction along it induces the upper d\'ecalage functor $\udec \colon \fun(\ord^\op, \mathcal{C}) \to \fun(\ord^\op, \mathcal{C})$.
We again have $\udec X_n = X_{n+1}$ for $n\geq 0$, and this time we delete the top face and degeneracy maps (no renumbering of the remaining faces/degeneracies is necessary).

\begin{proposition}[Path space criterion \cite{poguntke}]\label{prop path space criterion}
Let $X$ be a simplicial object and $k$ a positive integer.
\begin{enumerate}
		\item $X$ is lower $2k$-Segal if and only if $\ldec X$ is lower $(2k{-}1)$-Segal.\label{PSC ldec}
		\item $X$ is upper $2k$-Segal if and only if $\udec X$ is lower $(2k{-}1)$-Segal.\label{PSC udec}
    \item $X$ is upper $(2k{+}1)$-Segal if and only if $\ldec \udec X = \udec \ldec X$ is lower $(2k{-}1)$-Segal.\label{PSC double}
\end{enumerate}
\end{proposition}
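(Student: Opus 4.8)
The plan is to reduce all three equivalences to a single combinatorial ``shift lemma'' describing how the cubes $X\cube{I}$ transform under décalage, and then to match the endpoint constraints of \cref{def lower odd Segal} and \cref{def other higher Segal} against the defining shape of $\ldec$ and $\udec$. First I would realize $\ldec$ on the enlarged category $\ordalt$ as restriction along the functor $L\colon S \mapsto \{0\} \cup (S+1)$, where $S+1 = \{s+1 \mid s\in S\}$; on $[n]$ this gives $L([n]) = [n+1]$ and recovers $(\ldec X)_n = X_{n+1}$ with the correct face maps. The key computation is that, for a gapped $I \subset [m]$, the intersection cube of $\ldec X$ has $J$-vertex $X_{L([m]\setmin J)} = X_{[m+1]\setmin (J+1)}$, since $0 \notin J+1$; likewise the edges are the evident inclusions. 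Hence the cubes coincide on the nose, $(\ldec X)\cube{I} = X\cube{I+1}$, where $I+1 \subset [m+1]$ is gapped of the same cardinality and avoids $0$.

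With the shift lemma in hand, part (1) is immediate. Adding $1$ is a bijection between gapped subsets $I \subset [m]$ of cardinality $k+1$ (over all $m$) and gapped subsets $\tilde I \subset [n]$ of cardinality $k+1$ with $0 \notin \tilde I$ (over all $n$): gaps shift along with $I$, and since $\min \tilde I \geq 1$ every gap of $\tilde I$ survives subtracting $1$, giving exhaustiveness. Under this bijection, ``all cubes of $\ldec X$ are cartesian'' is exactly ``all cubes $X\cube{\tilde I}$ with $0 \notin \tilde I$ are cartesian'', i.e.\ $\ldec X$ is lower $(2k{-}1)$-Segal iff $X$ is lower $2k$-Segal.

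For part (2) I would avoid redoing the combinatorics for $\udec$ and instead use \cref{lem opposites} together with the identity $\ldec(X^\op) = (\udec X)^\op$. This identity follows because the relevant indexing functors satisfy $U \circ T = T \circ L$, where $T$ is order-reversal and $U = {-}\star[0]$ (a short check on morphisms). The chain is then $X$ upper $2k$-Segal $\iff X^\op$ lower $2k$-Segal (even case of \cref{lem opposites}) $\iff \ldec(X^\op)$ lower $(2k{-}1)$-Segal (part (1)) $\iff (\udec X)^\op$ lower $(2k{-}1)$-Segal $\iff \udec X$ lower $(2k{-}1)$-Segal (odd case of \cref{lem opposites}). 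For part (3) I would first note $\ldec\udec = \udec\ldec$ from associativity of ordinal sum, so the two double décalages agree. Then, tracking the top endpoint through the shift lemma, $\ldec X$ is upper $2k$-Segal iff every $X\cube{\tilde I}$ with $0 \notin \tilde I$ \emph{and} $n \notin \tilde I$ is cartesian, which is precisely the upper $(2k{+}1)$-Segal condition for $X$; applying part (2) to $\ldec X$ yields $X$ upper $(2k{+}1)$-Segal $\iff \ldec X$ upper $2k$-Segal $\iff \udec\ldec X$ lower $(2k{-}1)$-Segal.

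The hard part, and essentially the only content, is pinning the shift lemma down precisely: realizing the décalage functors on $\ordalt$ so that the cubes genuinely match as diagrams (vertices and edge-inclusions, not merely up to abstract isomorphism), and checking that the shift is an honest bijection on gapped subsets carrying the constraints $0 \notin I$ and $n \notin I$ exactly onto the ``lower'' and ``upper'' qualifiers. Everything downstream — the commutation of the two décalages and the opposite identity — is routine bookkeeping with the indexing functors.
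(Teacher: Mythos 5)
Your proof is correct and rests on the same key observation as the paper's: the inclusion of $[n]$ into $[n+1]$ (at the bottom or at the top) identifies the d\'ecalage cubes with intersection cubes of $X$ on the nose and induces a bijection of gapped subsets carrying the endpoint constraints $0 \notin I$, $n \notin I$ exactly onto the ``lower''/``upper'' qualifiers. The only difference is routing: the paper proves \eqref{PSC udec} directly via $\delta^{n+1}$ and disposes of \eqref{PSC ldec} and \eqref{PSC double} by the analogous shifts $\delta^0$ and $\delta^0\delta^{n+1}$, whereas you prove \eqref{PSC ldec} directly and recover \eqref{PSC udec} via \cref{lem opposites} and the identity $\ldec(X^{\op}) = (\udec X)^{\op}$ --- a harmless variation.
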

Combining the criteria, $X$ is upper $(2k{+}1)$-Segal if and only if $\ldec X$ is upper $2k$-Segal if and only if $\udec X$ is lower $2k$-Segal.
These separate conditions are how \eqref{PSC double} is presented in \cite{poguntke,Dyckerhoff:CPOC}.
\begin{proof}
We prove \eqref{PSC udec}.
The natural inclusion $\delta^{n+1} \colon [n] \to [n+1]$ gives a bijection between gapped sets $I\subset [n]$ of cardinality $k+1$ and gapped sets $I' \subset [n+1]$ of cardinality $k+1$ such that $n+1 \notin I'$.
Under this correspondence, the cube $(\udec X)\cube{I}$ is equal to the cube $X\cube{\delta^{n+1}I}$, as $(\udec X)_{[n] \setmin J} = X_{[n+1] \setmin J}$ for $J\subseteq I \subset [n]$.
This establishes \eqref{PSC udec}.
The other statements are proved similarly, replacing $\delta^{n+1}$ by $\delta^0 \colon [n] \to [n+1]$ and $\delta^0 \delta^{n+1} \colon [n] \to [n+2]$.
\end{proof}

In a sense, the path space criterion tells us that \cref{def lower odd Segal} is the most essential of the higher Segal conditions. 
In \cref{prop symmetric higher Segal} we will see that this is even more pronounced for symmetric sets.

These higher Segal conditions fit into a hierarchy, due to the following proposition which appears as \cite[Proposition 2.10]{poguntke}; the cases that are not immediate from the definitions are that upper $(2k{-}1)$-Segal implies $2k$-Segal, and that lower or upper $2k$-Segal implies lower $(2k{+}1)$-Segal.

\begin{proposition}[Poguntke]\label{prop hierarchy}
If $X$ is lower or upper $d$-Segal, then $X$ is both lower $(d{+}1)$-Segal and upper $(d{+}1)$-Segal.
\end{proposition}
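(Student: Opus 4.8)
The plan is to organize the implications asserted by the proposition — that for each parity of $d$, lower or upper $d$-Segal forces both lower and upper $(d{+}1)$-Segal — into three tiers. Writing $d = 2k-1$ or $d = 2k$ and unwinding \cref{def lower odd Segal,def other higher Segal}, each of these conditions demands that $X\cube{I}$ be cartesian as $I$ ranges over the gapped subsets of $[n]$ of a fixed cardinality, subject to constraints of the form ``$0 \notin I$'' and/or ``$n \notin I$''. The arrows that keep $|I|$ fixed — namely lower $(2k{-}1)$-Segal $\Rightarrow$ lower/upper $2k$-Segal, and lower/upper $2k$-Segal $\Rightarrow$ upper $(2k{+}1)$-Segal — require no argument, since the family of cubes demanded by the conclusion is literally a subfamily of the one guaranteed by the hypothesis (one merely imposes an additional boundary constraint).

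The two essential arrows raise $|I|$ by one: (I) upper $(2k{-}1)$-Segal $\Rightarrow$ upper $2k$-Segal, and (II) lower $2k$-Segal $\Rightarrow$ lower $(2k{+}1)$-Segal. I would treat (II) first. Given a gapped $I = \{i_0 < \dots < i_{k+1}\} \subset [n]$ of cardinality $k+2$, regard the $(k{+}2)$-cube $X\cube{I}$ as a morphism of $(k{+}1)$-cubes by splitting off the smallest coordinate $i_0$: under $\ps(I) \cong \ps(I') \times [1]$ with $I' = I \setmin i_0$, the source face is $X\intcube{I'}{[n]}$ and the target face is $X\intcube{I'}{[n]\setmin i_0}$. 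Because $i_0$ lies strictly below every element of $I'$, the set $I'$ remains gapped of cardinality $k+1$ and avoids the minimum of its ambient object both inside $[n]$ and inside $[n]\setmin i_0$; hence both faces are cartesian by lower $2k$-Segality, and \cref{cube lemma 2} forces the connecting cube $X\cube{I}$ to be cartesian. Arrow (I) is the identical manoeuvre, again splitting off $i_0$: since the ambient $I$ now also excludes $n$, both faces additionally avoid the maximum, so they land in the upper $(2k{-}1)$ family and are cartesian by hypothesis.

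The remaining two arrows, upper $(2k{-}1)$-Segal $\Rightarrow$ lower $2k$-Segal and upper $2k$-Segal $\Rightarrow$ lower $(2k{+}1)$-Segal, I would deduce from (I) and (II) by passing to opposites via \cref{lem opposites}. For instance, if $X$ is upper $(2k{-}1)$-Segal then so is $X^\op$ (odd conditions are self-dual); applying (I) gives that $X^\op$ is upper $2k$-Segal, and since $2k$ is even this is equivalent to $X$ being lower $2k$-Segal. The same dualization converts (II) into upper $2k$-Segal $\Rightarrow$ lower $(2k{+}1)$-Segal. Together with the subfamily tier and with (I) and (II) themselves, this exhausts every arrow for both parities.

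The only place with genuine content is the cardinality-raising step, and the main obstacle is the bookkeeping it requires: one must confirm that after deleting the split-off index $i_0$ the residual set $I'$ is still gapped, has cardinality one less, and avoids the prescribed extreme element(s) of its (possibly reindexed) ambient object. This is the same style of verification that drives \cref{segality top bottom}; once these gap-and-boundary checks are in place, \cref{cube lemma 2} supplies the conclusion mechanically.
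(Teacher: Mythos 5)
Your proof is correct, and its engine — realizing $X\cube{I}$ as a morphism of smaller intersection cubes by splitting off $i_0=\min(I)$, checking that both faces $X\intcube{I'}{[n]}$ and $X\intcube{I'}{[n]\setmin i_0}$ fall under the hypothesis, and closing with \cref{cube lemma 2} — is exactly the mechanism of the paper's proof of the one implication it argues directly, namely lower $2k$-Segal $\Rightarrow$ lower $(2k{+}1)$-Segal. Your bookkeeping for $I'=I\setmin i_0$ is sound: since $\min(I')=i_1\geq i_0+2$, the set $I'$ stays gapped in both ambient objects and avoids the minimum of each, and in your arrow (I) it also avoids the maximum because $n\notin I$. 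Where you genuinely diverge is in the treatment of the odd-input implications. The paper runs the cube-splitting argument only once (after first reducing via \cref{segality top bottom} to gapped $I$ containing $0$ and $n$, then splitting off $0$), and then handles upper $(2k{+}1)$-Segal $\Rightarrow$ lower/upper $(2k{+}2)$-Segal by transporting the even case through the d\'ecalage equivalences of \cref{prop path space criterion}. You instead give a second direct cube-splitting argument for upper $(2k{-}1)$-Segal $\Rightarrow$ upper $2k$-Segal and then dualize everything with \cref{lem opposites}. Both routes work; yours is more uniform and self-contained (it needs neither \cref{segality top bottom} nor the path space criterion), while the paper's reuses machinery it has already built. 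The only caveat, shared with the paper, is that your arrow (I) is vacuous at $k=1$, since upper $1$-Segal is the excluded ``degree zero'' case; the proposition's odd upper input effectively starts at $d=3$ in both treatments.
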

\begin{proof}
Throughout $k$ is a positive integer.
We first show that if $X$ is lower $2k$-Segal, then $X$ is lower $(2k{+}1)$-Segal.
Let $I \subseteq S = [n]$ be a gapped subset of cardinality $k+2$  with $0,n \in I$.
By \cref{segality top bottom} it is enough to show that $X\cube{I}$ is cartesian.
The set $I_0 = I \setmin 0$ is gapped in both $S_0 = S\setmin 0$ and $S$ and contains the minimal element of neither.
By lower $2k$-Segality, the cubes $X\intcube{I_0}{S_0}$ and $X\intcube{I_0}{S}$ are cartesian. 
Since $X\cube{I}$ is the map $d_0$ between them, $X\cube{I}$ is cartesian by \cref{cube lemma 2}. 

If $X$ is upper $2k$-Segal, then it is lower $(2k{+}1)$-Segal by \cref{lem opposites} and the previous paragraph. 
If $X$ is upper $(2k{+}1)$-Segal, then $X$ is both lower and upper $(2k{+}2)$-Segal by the path space criterion and the previous paragraph. 
\end{proof}

\subsection{Symmetric simplicial objects}\label{ss SSO Segal}
We now turn to the case of symmetric simplicial objects in $\mathcal{C}$, i.e.\ objects in the category $\fun(\fin^\op, \mathcal{C})$. 
For us the most important case will be symmetric sets $\sym = \fun(\fin^\op, \set)$.
We say that a symmetric simplicial object $X$ is (upper or lower) $d$-Segal if and only if its underlying simplicial object is so.
That is, we use the restriction functor $\fun(\fin^\op, \mathcal{C}) \to \fun(\ord^\op, \mathcal{C})$ associated to the subcategory inclusion $\ord \to \fin$ to define the conditions.

\begin{proposition}\label{prop symmetric higher Segal}
Let $X \colon \fin^\op \to \mathcal{C}$ be a symmetric simplicial object in $\mathcal{C}$ and $k$ a positive integer.
The following are equivalent:
\begin{enumerate}
\item\label{lower2k-1} $X$ is lower $(2k{-}1)$-Segal.
\item\label{lower2k} $X$ is lower $2k$-Segal.
\item\label{upper2k} $X$ is upper $2k$-Segal.
\item\label{upper2k+1} $X$ is upper $(2k{+}1)$-Segal.
\end{enumerate}
\end{proposition}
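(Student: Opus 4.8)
The implications $(1)\Rightarrow(2)\Rightarrow(4)$ and $(1)\Rightarrow(3)\Rightarrow(4)$ are immediate: passing from $(1)$ to $(2)$ or $(3)$ and then to $(4)$ only shrinks the collection of cubes required to be cartesian (they also follow from \cref{prop hierarchy}). Hence the whole statement reduces to the single implication $(4)\Rightarrow(1)$, which sandwiches all four conditions together; note in particular that $(2)\Leftrightarrow(3)$, which also follows from $X\cong X^{\op}$ via \cref{lem opposites}, is then subsumed. So assume $X$ is upper $(2k{+}1)$-Segal. By \cref{def lower odd Segal} it suffices to prove that $X\intcube{I}{[n]}$ is cartesian for every $n$ and every gapped $I\subset[n]$ of cardinality $k+1$. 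The plan is to handle the top range $n\geq 2k+2$ using the symmetric structure, and the two boundary degrees $n=2k+1$ and $n=2k$ by a one-step descent.

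The key structural observation is that for a symmetric object the cube $X\intcube{I}{[n]}$ depends, up to isomorphism, only on $n$ and $|I|$ (not on the position or even the gappedness of $I$). Indeed, any bijection $\beta\colon[n]\to[n]$ restricts, for each $J\subseteq I$, to a bijection $\beta_J\colon[n]\setminus J\to[n]\setminus\beta(J)$; being restrictions of the single map $\beta$, the $\beta_J$ are compatible with the inclusions defining the two cubes, so after applying $X$—which, being symmetric, turns bijections into isomorphisms—they assemble into an isomorphism of cubes $X\intcube{I}{[n]}\cong X\intcube{\beta(I)}{[n]}$. (This is exactly where the passage from simplicial to symmetric sets is used.) Now if $n\geq 2k+2$ there is an interior gapped subset $I^{\circ}\subset[n]$ of cardinality $k+1$ with $0,n\notin I^{\circ}$; by hypothesis $X\intcube{I^{\circ}}{[n]}$ is cartesian, and choosing $\beta$ with $\beta(I^{\circ})=I$ shows $X\intcube{I}{[n]}$ is cartesian for \emph{every} $(k{+}1)$-subset $I$ of $[n]$.

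For the boundary degrees I descend one degree at a time using a degeneracy retract together with \cref{lem retract}. Given $I\subset[n]$ and any vertex $v\in[n]\setminus I$, let $\sigma\colon[n+1]\to[n]$ be the codegeneracy repeating $v$ and $\delta\colon[n]\to[n+1]$ the coface with $\sigma\delta=\id$. Because $v\notin I$, the restrictions of $\delta$ and $\sigma$ to the vertices $[n]\setminus J$ and $[n+1]\setminus\delta(J)$ are well defined and land in the prescribed targets, and they assemble into maps of cube diagrams $\intcube{I}{[n]}\to\intcube{\delta(I)}{[n+1]}\to\intcube{I}{[n]}$ whose composite is the identity. Applying $X$ exhibits $X\intcube{I}{[n]}$ as a retract of $X\intcube{\delta(I)}{[n+1]}$, with $|\delta(I)|=k+1$. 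By \cref{lem retract}, cartesianness of the larger cube forces cartesianness of the smaller one. Taking $n=2k+1$, so that $\delta(I)\subset[2k+2]$ is cartesian by the top-range case, yields all $(k{+}1)$-cubes in degree $2k+1$; taking $n=2k$ and reducing to degree $2k+1$ then yields the (unique) gapped cube there. Thus $X\intcube{I}{[n]}$ is cartesian in every degree $n\geq 2k$, proving $(4)\Rightarrow(1)$.

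I expect the main obstacle to be the verification in the descent step that the vertexwise restrictions of $\delta$ and $\sigma$ are genuinely \emph{natural} with respect to the inclusion maps of the cube—that is, that they constitute maps of the whole cube diagrams and not merely vertexwise sections—since this is precisely what allows \cref{lem retract} to be applied to the entire cube rather than to individual vertices; the choice $v\notin I$ is exactly what guarantees that the retraction $\sigma$ restricts to a map into $[n]\setminus J$ for all $J\subseteq I$. By contrast, the top-range argument is purely formal once bijection-invariance of the cubes is in hand, and the descent step itself uses no symmetry at all, only degeneracies.
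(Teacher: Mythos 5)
Your proof is correct and follows essentially the same route as the paper: both reduce everything to $(4)\Rightarrow(1)$ and transport cartesianness from an interior gapped cube in a higher degree down to the given cube by exhibiting a retract of cube diagrams (so that \cref{lem retract} applies), with the symmetric structure entering exactly where a non-order-preserving map of $[n]$'s is needed. The only difference is organizational: the paper's \cref{lem removal of gaps} performs the retraction in a single step via a surjection $\sigma\colon[m]\to[n]$ with $\sigma^{-1}(I)=I'$ together with a section, whereas you factor the same retraction into a permutation of $[n]$ followed by elementary codegeneracy descents.
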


Of course it is immediate from the definitions that $\eqref{lower2k-1} \Rightarrow \eqref{lower2k} \Rightarrow \eqref{upper2k+1}$ and $\eqref{lower2k-1} \Rightarrow \eqref{upper2k} \Rightarrow \eqref{upper2k+1}$, without the hypothesis that $X$ is symmetric.
We prove $\eqref{upper2k+1} \Rightarrow \eqref{lower2k-1}$ in the symmetric case using the following lemma.
In the special case when $m=n$, we have $X\cube{I} \simeq X\cube{I'}$, so $X\cube{I}$ is cartesian if and only if $X\cube{I'}$ is so.

\begin{lemma}\label{lem removal of gaps}
Let $X$ be a symmetric simplicial object and $I \subset [n]$ is a proper subset of cardinality $k+1$.
If $m \geq n$ and there is a subset $I' \subset [m]$ of cardinality $k+1$ such that $X\cube{I'}$ is cartesian, then $X\cube{I}$ is cartesian as well.
\end{lemma}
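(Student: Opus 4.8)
The plan is to follow the hint in the remark immediately preceding the statement (which records the case $m=n$) and split the argument into two ingredients: first, symmetry of $X$ forces the cartesianness of $X\cube{I}$ to depend only on the cardinality of $I$, not on its position inside $[n]$; second, enlarging the ambient set from $[n]$ to $[m]$ by adjoining points that can be collapsed away exhibits the smaller cube as a \emph{retract} of the larger one. Combining these reduces the claim to \cref{lem retract}.

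First I would record the permutation-invariance. Given two subsets $I_1, I_2 \subset [n]$ with $|I_1| = |I_2|$, choose a bijection $\phi\colon [n] \to [n]$ restricting to a bijection $I_1 \to I_2$ (possible since the subsets and their complements have matching cardinalities). Because $X$ is symmetric, $\phi$ acts, and its restrictions to the various $[n]\setmin J$ assemble into an isomorphism of cubes $X\cube{I_1} \cong X\cube{I_2}$; as isomorphic diagrams are simultaneously cartesian, the property ``$X\cube{I}$ is cartesian for some/any $I \subset [n]$ of cardinality $k+1$'' is well defined. Applying this inside $[m]$, the hypothesis that one $X\cube{I'}$ is cartesian is equivalent to the assertion that $X\cube{\hat I}$ is cartesian for \emph{every} cardinality-$(k{+}1)$ subset $\hat I \subset [m]$; applying it inside $[n]$, it suffices to prove $X\cube{I}$ cartesian for a single conveniently positioned subset.

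Next I would exploit this freedom. Since $I$ is proper of cardinality $k+1 \leq n$, pick $\tilde I \subseteq \{0, \dots, n-1\}$ of cardinality $k+1$, viewed also inside $[m]$ via the standard inclusion $[n]\subseteq[m]$; by the previous paragraph it is enough to show $X\intcube{\tilde I}{[n]}$ is cartesian, knowing $X\intcube{\tilde I}{[m]}$ is. Let $\iota\colon [n] \hookrightarrow [m]$ be the inclusion and $s\colon [m]\to[n]$ the order-preserving surjection fixing $[n]$ and sending every extra point to $n$, so that $s\iota = \id_{[n]}$. The crucial point is that $n \notin \tilde I$: this guarantees that for every $J \subseteq \tilde I$ both $\iota$ and $s$ restrict to maps $[n]\setmin J \to [m]\setmin J$ and $[m]\setmin J \to [n]\setmin J$ (the collapse target $n$ lies outside $J$), compatibly with the cube inclusions. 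Applying $X$ produces maps of cubes $X\intcube{\tilde I}{[n]} \to X\intcube{\tilde I}{[m]} \to X\intcube{\tilde I}{[n]}$ whose composite is $(s\iota)^\ast = \id$, exhibiting $X\intcube{\tilde I}{[n]}$ as a retract of the cartesian cube $X\intcube{\tilde I}{[m]}$. It is then cartesian by \cref{lem retract}.

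The main obstacle is bookkeeping rather than conceptual: one must check that the vertexwise isomorphisms in the permutation step, and the vertexwise section/retraction in the collapse step, are genuinely natural in $J$, so that they assemble into honest morphisms of cube-diagrams. Both reduce to the observation that $\phi$, $\iota$, and $s$ are single global maps whose restrictions are compatible with the defining inclusions of the intersection cubes, so the relevant squares already commute on underlying sets before $X$ is applied. It is worth emphasizing that symmetry of $X$ enters only in the permutation step; once the subsets are aligned inside $\{0,\dots,n-1\}$, the collapse argument is purely simplicial.
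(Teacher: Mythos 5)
Your proof is correct and takes essentially the same route as the paper's: both exhibit $X\cube{I}$ as a retract of $X\cube{I'}$ and then invoke \cref{lem retract}. The paper does this in one step via a single (generally non-order-preserving) surjection $\sigma\colon[m]\to[n]$ with $\sigma(I')=I$ and $\sigma^{-1}(I)=I'$ together with a section, whereas you factor the same data as a permutation step (where symmetry enters) followed by an order-preserving collapse onto a subset avoiding $n$ --- a purely presentational difference.
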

\begin{proof}
Let $\sigma \colon [m] \to [n]$ be a surjective function with $\sigma(I') = I$ and $\sigma^{-1}(I) = I'$.
Such a function exists since $I$ is a proper subset of $[n]$ (but it may not always be taken to be order preserving).
Let $\delta \colon [n] \to [m]$ be a section of $\sigma$.
Since $\delta$ is injective, it induces a map of cubes $\cube{I} \to \cube{I'}$.
But our choice of $\sigma$ also implies that it induces a map of cubes $\cube{I'} \to \cube{I}$.
Namely, the dashed function below exists for each $J \subseteq I'$.
\[ \begin{tikzcd}
{[m] \setmin J} \rar[dashed] \dar[hook] & {[n] \setmin \sigma(J)} \dar[hook] \\
{[m]} \rar{\sigma} & {[n]}
\end{tikzcd} \]
Since $\sigma \circ \delta = \id_{[n]}$, this exhibits $\cube{I}$ as a retract of $\cube{I'}$, and hence $X\cube{I}$ as a retract of $X\cube{I'}$. 
But $X\cube{I'}$ is cartesian by assumption, so $X\cube{I}$ is cartesian by \cref{lem retract}.
\end{proof}

\begin{proof}[Proof of \cref{prop symmetric higher Segal}]
Suppose $X$ is upper $(2k{+}1)$-Segal, and $I \subset [n]$ a gapped subset of cardinality $k+1$.
Then $I' = \{ i + 1 \mid i\in I \} \subset [n+2]$ is gapped and has $0,n+2 \notin I'$, hence $X\cube{I'}$ is cartesian.
By \cref{lem removal of gaps}, $X\cube{I}$ is cartesian.
\end{proof}

\begin{remark}\label{rmk non-gapped}
\Cref{lem removal of gaps} shows that if a symmetric simplicial object $X$ is lower $(2k{-}1)$-Segal, then $X\cube{I}$ is cartesian for every proper subset $I \subset [n]$, without any hypothesis about $I$ being gapped.
In particular, the first of these cubes has initial vertex $X_n$ where $n=k+1$, rather than $n=2k$.
\end{remark}

In light of \cref{prop symmetric higher Segal} and \cref{prop hierarchy}, the following is natural.

\begin{definition}\label{def degree}
The \emph{degree} of a symmetric simplicial object $X$ is the least positive integer $k$ such that $X$ is lower $(2k{-}1)$-Segal.
It is denoted by $\deg(X)$.
If no such integer exists, we say that $X$ has infinite degree and set $\deg(X) = \infty$.
\end{definition}

The terminology is motivated by \cite{Walde:HSSHE}, where lower $(2k{-}1)$-Segal objects are interpreted as polynomial functors of degree at most $k$.
If $X$ is a symmetric set, then $\deg(X) = 1$ if and only if $X$ is isomorphic to the nerve of a groupoid. 
We first look at a family of examples where the degree grows linearly in the dimension.

\begin{example}[Skeleta of the symmetric simplex]\label{ex boundary degree}
For $1 \leq m \leq n$, the $m-1$ skeleton of the representable object on $[n]$ has degree $m$ (see \cref{def skeleton}).
In particular, $\deg(\partial \rep{n}) = \deg(\sk_{n-1} \rep{n}) = n$.
Of course the statement is not true for $m > n$, as then $\sk_{m-1} \rep{n} = \rep{n}$ has degree 1.
The $p$-simplices of $X = \sk_{m-1} \rep{n}$,  may be identified with length $p+1$ ordered lists of elements in $[n]$ which include at most $m$ values.
If $m=1$, then $\sk_0 \rep{n}$ is the nerve of the discrete groupoid with object set $[n]$, hence has degree 1.
If $2 \leq m \leq n$, then $x = 102030\cdots 0m \in \rep{n}_{2m-2}$ is not an element of $X_{2m-2}$ since it includes the $m+1$ elements $\{0,1,\dots, m\}$, but its face $d_{2i} x$ is missing $i+1$, hence is in $X_{2m-3}$. 
Using \cref{exercise sset}, the elements $x_0 = d_0 x, x_2 = d_2 x, \dots, x_{2m-2} = d_{2(m-1)} x$ show that $X$ is not $(2(m{-}1){-}1)$-Segal, hence has degree at least $m$.
But $X$ is $(m{-}1)$-dimensional and spiny, hence has degree at most $m$ by \cref{deg dim} below.
\end{example}

\begin{example}[Symmetric sphere]\label{ex sym sphere}
Fix $n\geq 1$, and let $X = \rep{n} / \partial \rep{n}$, given by identifying all $m$-simplices in $\partial \rep{n}$ to a single point $\ast_m$.
So elements of $X_m$ are the surjective functions $[m] \twoheadrightarrow [n]$ (alternatively, length $m+1$ strings containing all elements of $[n]$), along with $\ast_m$.
We will show in \cref{app sphere} that $\deg(X) = 2n$.
\end{example}

Above, we defined d\'ecalage functors $\ldec, \udec \colon \fun(\ord^\op, \mathcal{C}) \to \fun(\ord^\op, \mathcal{C})$, and these may be extended to the symmetric case.
Indeed, the endofunctors $[0] \star (-)$ and $(-) \star [0]$ on $\ord$ extend to functors $\fin \to \fin$, and pulling back along them gives $\ldec, \udec \colon \fun(\fin^\op, \mathcal{C}) \to \fun(\fin^\op, \mathcal{C})$.

\begin{proposition}\label{degree dec}
If $X$ is a symmetric simplicial object, then \[ \deg(\ldec X) = \deg(X) = \deg(\udec X).\]
\end{proposition}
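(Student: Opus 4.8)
The plan is to deduce the statement formally from two results already in hand: the path space criterion (\cref{prop path space criterion}), which trades a lower $2k$- or upper $2k$-Segal condition on $X$ for a lower $(2k{-}1)$-Segal condition on a décalage, and the collapse of the higher Segal conditions for symmetric objects (\cref{prop symmetric higher Segal}), which lets us move between the lower $(2k{-}1)$-, lower $2k$-, and upper $2k$-Segal conditions on $X$ itself. No direct manipulation of cubes is needed beyond invoking these.

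First I would record the compatibility that makes $\deg(\ldec X)$ and $\deg(\udec X)$ meaningful and computable by the (simplicially stated) path space criterion. Since the endofunctors $[0] \star (-)$ and $(-) \star [0]$ on $\fin$ restrict along $\ord \to \fin$ to the corresponding endofunctors on $\ord$, the square of restriction functors commutes; hence the underlying simplicial object of the symmetric décalage $\ldec X$ (respectively $\udec X$) coincides with the simplicial décalage of the underlying simplicial object of $X$. Because Segal conditions on a symmetric object are by definition those of its underlying simplicial object, \cref{prop path space criterion} therefore governs the Segal conditions of $\ldec X$ and $\udec X$ as symmetric simplicial objects.

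The heart of the argument is then a chain of equivalences valid for every positive integer $k$. Using that $X$ is symmetric, \cref{prop symmetric higher Segal} gives that $X$ is lower $(2k{-}1)$-Segal if and only if $X$ is lower $2k$-Segal, and \cref{prop path space criterion}\eqref{PSC ldec} gives that $X$ is lower $2k$-Segal if and only if $\ldec X$ is lower $(2k{-}1)$-Segal; concatenating, $X$ is lower $(2k{-}1)$-Segal if and only if $\ldec X$ is. Symmetrically, \cref{prop symmetric higher Segal} identifies lower $(2k{-}1)$-Segality of $X$ with upper $2k$-Segality of $X$, and \cref{prop path space criterion}\eqref{PSC udec} identifies the latter with lower $(2k{-}1)$-Segality of $\udec X$. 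As these equivalences hold uniformly in $k$, the set of integers $k$ witnessing lower $(2k{-}1)$-Segality is identical for $X$, $\ldec X$, and $\udec X$; passing to the least such $k$ (\cref{def degree}) yields $\deg(\ldec X) = \deg(X) = \deg(\udec X)$, with the convention covering the case where the set is empty and all three degrees equal $\infty$. I do not expect a genuine obstacle, as the result is essentially a corollary of the two cited propositions; the only point needing care is the bookkeeping of the second paragraph, ensuring that décalage commutes with restriction to the underlying simplicial object so that the simplicially phrased path space criterion legitimately controls the symmetric degrees.
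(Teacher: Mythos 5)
Your proof is correct and follows essentially the same route as the paper's: apply the path space criterion (\cref{prop path space criterion}) to trade lower $(2k{-}1)$-Segality of the d\'ecalage for lower/upper $2k$-Segality of $X$, then invoke \cref{prop symmetric higher Segal} to collapse back to lower $(2k{-}1)$-Segality of $X$. The extra bookkeeping you include about the symmetric d\'ecalage restricting to the simplicial one is a point the paper leaves implicit, but it does not change the argument.
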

\begin{proof}
By \cref{prop path space criterion}, $\ldec X$ is lower $(2k{-}1)$-Segal if and only if $X$ is lower $2k$-Segal.
According to \cref{prop symmetric higher Segal}, this occurs if and only if $X$ is lower $(2k{-}1)$-Segal.
A similar argument establishes the second equality.
\end{proof}

\section{Higher Segal conditions for partial groupoids}
\label{sec d Segal pgpd}

We now shift our focus to edgy simplicial sets and spiny symmetric sets, where we can be more concrete about the higher Segal conditions.
These reduce to the following question described in the introduction: given a word $w = (f_1, \dots, f_n) \in \words (X)_n$ that has several ``faces'' in $X_{n-1} \subset \words (X)_{n-1}$, is it always the case that $w$ is in $X_n$?
The main result of this section is \cref{edgy segal characterization}, which provides this characterization.
For partial groupoids, it is often easier to work with starry words as in \cref{bs words}, and we give an analogous characterization via starry words in \cref{bs words characterization}.

\subsection{Edgy simplicial sets}
\label{sec edgy segal}

Let $X$ be an edgy simplicial set, and recall the outer face complex $\words X$ from \cref{def word complex} along with the map $\mathscr{E}\colon X \to \words X$ of outer face complexes. 
We discussed in \cref{spiny edgy} why it is reasonable to consider $\words X$ as having degeneracies as well as outer faces.
But it will generally have some inner faces as well:
\begin{definition}\label{extra inner face}
Let $w = (f_1, \dots, f_n) \in \words(X)_n$ and $1 \leq i \leq n-1$.
If $[f_i | f_{i+1}] \in X_2$, then we define
\[
	d_i(w) \coloneq (f_1, \dots, f_{i-1},  f_{i+1} \circ f_i, f_{i+2}, \dots, f_n).
\]
\end{definition}
Notice $d_i \edgemap_n(x)$ is defined for every $x\in X_n$, and is equal to $\edgemap_{n-1}d_i(x)$.
But \cref{extra inner face} is strictly more general, and $d_i(w)$ may lie outside of the image of $\edgemap_{n-1}$.

\begin{notation}
\label{notation WkX}
When $X$ is an edgy simplicial set and $I \subset [n]$ is a gapped subset, $\words_I(X)_n$ is the set
of words $w$ of length $n$ such that $d_i(w)$ is defined and in the image of $\edgemap_{n-1}$ for each $i \in I$. 
Let $\words_k(X)_n$ be the union of $\words_I(X)_n$ as $I$ ranges over the gapped subsets of $[n]$ of cardinality $k+1$. 
\end{notation}

The Segal map $\edgemap_n$ is normally regarded as having codomain $\words(X)_n$, but it has image in $\words_I(X)_n$ for each $I$ so can be viewed as a map to $\words_I(X)_n$ or to $\words_k(X)_n$ when convenient (the latter only when $n \geq 2k$). 
Our main goal in this section is to explain in \cref{edgy segal characterization} that the $(2k{-}1)$-Segality of an edgy simplicial set comes down to the surjectivity of the Segal maps $X_n \to \words_k(X)_n$. 
The most important ingredient for this is \cref{WkX as limit}, which requires a little setup.

Whenever a gapped subset $I \subset [n]$ is fixed, consider the following inclusions of full subcategories of $\ps(I)$ below left and the corresponding limits of restrictions of $X\cube{I}$ below right.
\[
\begin{tikzcd}[column sep=1em]
    & \ps(I)  & \\
\ps(I)_{12} \arrow[ur, "\iota_{12}"] \arrow[rr, swap,"\kappa"] && \ps(I)_{>0} \arrow[ul, swap, "\iota"],
\end{tikzcd}
\quad\quad
\begin{tikzcd}[column sep=1em]
    & X_n \arrow[dl, swap, "\iota_{12}^*"] \arrow[dr] & \\
\lim X\cube{I}_{12}  && \lim X\cube{I}_{>0} \arrow[ll],
\end{tikzcd}
\]
Here, $\ps(I)_{>0}$ has nonempty subsets of $I$ and $\ps(I)_{12}$ has subsets of cardinality $1$ or $2$. 
We are interested in the cartesianess of the cube $X\cube{I}$, which is that the right diagonal map above is a bijection.
Here, and in what follows, we identify $X_n \cong \lim X\cube{I}$.  

\begin{lemma}\label{lem suspects as limit}
Let $X$ be an edgy simplicial set and $I \subset [n]$ a gapped subset of size at least $2$. 
There is a bijection $\delta \colon \words_I(X)_n \to \lim X\cube{I}_{12}$ such that $\delta \circ \edgemap_n = \iota_{12}^*$.
\label{WkX as limit}
\end{lemma}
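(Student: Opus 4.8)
The plan is to make both sides of the claimed bijection completely explicit and then build the inverse of $\delta$ by a reconstruction-from-faces argument. First I would describe $\lim X\cube{I}_{12}$ concretely. The indexing poset $\ps(I)_{12}$ consists of the singletons and pairs of $I$, and each pair $\{i,j\}$ with $i<j$ receives exactly the two maps from $\{i\}$ and $\{j\}$; tracing the face structure of $X\cube{I}$, the vertex $\{i\}$ is $X_{[n]\setmin i}=X_{n-1}$, the inclusion $\{i\}\subset\{i,j\}$ induces $d_{j-1}$, and $\{j\}\subset\{i,j\}$ induces $d_i$. A cone is therefore determined by its singleton components, so $\lim X\cube{I}_{12}$ is the set of families $(x_i)_{i\in I}$ with $x_i\in X_{n-1}$ and $d_{j-1}x_i=d_i x_j$ for all $i<j$ in $I$, precisely the compatibility appearing in \cref{exercise sset}.

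Next I would define $\delta$. For $w\in\words_I(X)_n$ each $d_i(w)$ lies in $\image(\edgemap_{n-1})$ by definition of $\words_I(X)_n$, and edginess makes the preimage unique; set $\delta(w)=(x_i)_{i\in I}$ with $\edgemap_{n-1}(x_i)=d_i(w)$. Since $d_i\edgemap_n=\edgemap_{n-1}d_i$ and $\edgemap_{n-1}$ is injective, $\delta(\edgemap_n x)=(d_i x)_{i\in I}=\iota_{12}^*(x)$, giving $\delta\circ\edgemap_n=\iota_{12}^*$ at once. To see that $\delta$ lands in the limit I would check $d_{j-1}x_i=d_i x_j$. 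As $d_i(w)=\edgemap_{n-1}(x_i)$ is a genuine spine, its further word-faces coincide with $\edgemap_{n-2}$ of honest faces, so by injectivity of $\edgemap_{n-2}$ the required equality reduces to the word-level identity $d_{j-1}d_i(w)=d_i d_j(w)$. Gappedness gives $j\ge i+2$, so these two operations act at non-adjacent positions and agree by a direct computation, all relevant composites existing because $d_iw$ and $d_jw$ are genuine.

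For bijectivity I would use a single recovery rule. Writing $\edgemap_{n-1}(x_i)=(a^i_1,\dots,a^i_{n-1})$, the face $d_i(w)$ displays every edge $f_\ell$ of $w$ directly except for $\ell\in\{i,i+1\}$, where $d_i$ deletes or merges. Because $I$ is gapped of size at least $2$, the pair $\{\ell-1,\ell\}$ meets $I$ in at most one element, so some $i\in I$ has $\ell\notin\{i,i+1\}$, and then $f_\ell$ equals $a^i_\ell$ (if $\ell<i$) or $a^i_{\ell-1}$ (if $\ell>i+1$); hence $w$ is recovered from $\delta(w)$ and $\delta$ is injective. The same formula defines the candidate inverse on a compatible family $(x_i)$. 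Its consistency is the heart of the matter: for two admissible indices $i<i'$ the edge $\epsilon_{\ell-1,\ell}$ survives into the common face $X_{[n]\setmin\{i,i'\}}$, and the relation $d_{i'-1}x_i=d_i x_{i'}$ forces the two readings of $f_\ell$ to agree; the analogous argument on shared vertices shows the $f_\ell$ fit together into a genuine element of $\words(X)_n$.

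Finally I would verify $d_i(w)=\edgemap_{n-1}(x_i)$, so that $\delta(w)=(x_i)$. Off the positions $i,i+1$ this holds by construction, and the one genuinely new point is the merged entry: I must show $[f_i\,|\,f_{i+1}]\in X_2$ with composite equal to $a^i_i=\epsilon_{i-1,i+1}^*(x_i)$. This is exactly where $|I|\ge 2$ is used: gappedness gives $i-1,i+1\notin I$, so any other $j\in I$ has $i,i+1$ as adjacent visible edges of $x_j$, making $[f_i\,|\,f_{i+1}]$ a $2$-face of $x_j$, whose long edge then agrees with $a^i_i$ by compatibility on $X_{[n]\setmin\{i,j\}}$. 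I expect this gluing step, namely assembling the missing composites and certifying global consistency of the reconstructed word from merely pairwise data, to be the main obstacle; the remaining work is careful bookkeeping with the renumbering of faces and the outer cases $i\in\{0,n\}$, which are only easier.
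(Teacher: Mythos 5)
Your proposal is correct and follows essentially the same route as the paper's proof: define $\delta$ via the unique $\edgemap_{n-1}$-preimages, verify compatibility by reducing to the word-level commutation of $d_i$ and $d_j$ (which uses gappedness), and invert by recovering each $f_m$ as the $\{m-1,m\}$ edge of any $x_i$ with $i\notin\{m-1,m\}$, with $|I|\geq 2$ guaranteeing such an $i$ exists and the pairwise compatibility on $X_{[n]\setmin\{i,j\}}$ guaranteeing independence of the choice. Your ``gluing step'' for the merged entry $[f_i|f_{i+1}]$ via a third index $j\in I\setmin i$ is exactly the paper's argument that $\{i-1,i,i+1\}\subseteq[n]\setmin j$ by gappedness.
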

\begin{proof}
The following square of partial functions commutes for $i \ll j$ in $I$, crucially because $I$ is gapped:  
\[ 
\begin{tikzcd}
\words(X)_n \rar["\arrownot\quad"{marking}] \dar["\arrownot\quad"{marking}] & \words(X)_{[n] \setmin i} \dar["\arrownot\quad"{marking}] \\
\words(X)_{[n] \setmin j} \rar["\arrownot\quad"{marking}] & \words(X)_{[n] \setmin \{i,j\}}. 
\end{tikzcd} 
\]
For example, both ways around send a word $(f_1, \dots, f_n)$ to 
\[ 
(f_1, \dots, f_{i-1}, f_{i+1} \circ f_i, f_{i+2}, \dots, f_{j-1}, f_{j+1} \circ f_j, \dots, f_n)
\]
if neither $i$ nor $j$ are endpoints, provided $[f_i | f_{i+1}]$ and $[f_j | f_{j+1}]$ are in $X_2$.
(A modified argument applies if $i = 0$ or $j=n$.)
This shows that a word $w \in \words_I(X)_n$ determines an element $(x_\bullet(w)) \in \lim X\cube{I}_{12}$ defined uniquely by the conditions $\edgemap_{n-1}x_i(w) = d_i(w)$ and $\edgemap_{n-2}x_{ij}(w) = d_i d_j(w) \in X_{n-2}$ for $i \ll j$ in $I$. 

The map $\delta\colon w \mapsto (x_\bullet(w))$ satisfies $\delta \circ \edgemap_n = \iota_{12}^*$ by construction, and we claim that it is a bijection. 
Given an element $(x_\bullet)$ of the limit, we can produce a unique word $w = (f_1, \dots, f_n)$ from it as follows.
Choose, for each $m = 1, \dots, n$ an element $i \in I \setmin \{m-1,m\}$, and let $f_m$ be the $\{m-1,m\}$ edge of $x_i$, i.e., the restriction of $x_i \in X_{[n]\setmin i}$ along the inclusion of $\{m-1,m\}$ into $[n]\setmin i$. 
The existence of $i$ uses the assumption $|I| \geq 2$ when $\{m-1,m\} \cap I$ is not empty, but $f_m$ is otherwise independent of the choice of $i$. 
Indeed, if $j$ is another element of $I$ not in $\{m-1,m\}$, then the inclusion of $\{m-1,m\}$ factors through $[n]\setmin \{i,j\}$. 
So $f_m$ is the restriction of $x_{ij}$, and hence also of $x_j$. 

This implies the uniqueness of $w$. 
To complete the proof of existence, it remains to check that $w$ is in $\words_I(X)_n$ and has $\edgemap_{n-1}x_i = d_i(w)$ for each $i \in I$. 
The set $\{ i-1, i , i+1 \}$ is a subset of $[n] \setmin j$ for each $j \in I\setmin i$ by gappedness.
So using that $|I| \geq 2$ to get there is such a $j$, we have first that $[f_i | f_{i+1}] \in X_2$ for each $i \in I$ with $0 < i < n$, and then that the $\{i-1,i+1\}$ edge of $x_i$ agrees with the $\{i-1,i+1\}$ edge of $x_j$, which is $d_1[f_i|f_{i+1}]$.
Since the other principal edges of $x_i$ agree with $d_i(w)$ by construction of $w$, this shows $\edgemap_{n-1}x_i = d_i(w)$. 
\end{proof}

\begin{theorem}\label{edgy segal characterization}
Let $k \geq 0$ be an integer.
An edgy simplicial set $X$ is lower $(2k{-}1)$-Segal if and only if $\edgemap_n\colon X_n \to \words_k(X)_n$ is surjective for all $n \geq 2k$. 
\end{theorem}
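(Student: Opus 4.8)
The plan is to fix $n \geq 2k$ and reduce the statement, one gapped set at a time, to the identification of $\words_I(X)_n$ with the limit $\lim X\cube{I}_{12}$ provided by \cref{WkX as limit}. First I would record the bookkeeping. Since $\edgemap_n$ has image in $\words_I(X)_n$ for every gapped $I$, and $\words_k(X)_n = \bigcup_I \words_I(X)_n$ with the union over all gapped $I \subset [n]$ of cardinality $k+1$, the map $\edgemap_n \colon X_n \to \words_k(X)_n$ is surjective if and only if $\edgemap_n \colon X_n \to \words_I(X)_n$ is surjective for every such $I$. Likewise, by \cref{def lower odd Segal}, $X$ is lower $(2k{-}1)$-Segal exactly when $X\cube{I}$ is cartesian for every gapped $I$ of cardinality $k+1$ and every $n$; as such $I$ exist only for $n \geq 2k$, it suffices to prove, for each fixed $n \geq 2k$ and each fixed gapped $I$, that $X\cube{I}$ is cartesian if and only if $\edgemap_n \colon X_n \to \words_I(X)_n$ is surjective.

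The heart of the argument is the triangle from the discussion preceding \cref{WkX as limit}, in which $\iota_{12}^* = \kappa^* \circ \iota^*$, with $\iota^* \colon X_n \to \lim X\cube{I}_{>0}$ the right diagonal (so $X\cube{I}$ is cartesian precisely when $\iota^*$ is a bijection) and $\kappa^* \colon \lim X\cube{I}_{>0} \to \lim X\cube{I}_{12}$ the comparison induced by the inclusion $\ps(I)_{12} \hookrightarrow \ps(I)_{>0}$. I would first show that $\kappa^*$ is a bijection. This is a cofinality statement: the inclusion $\ps(I)_{12} \hookrightarrow \ps(I)_{>0}$ is initial, since for every nonempty $J \subseteq I$ the poset of its subsets of cardinality one or two is nonempty (it contains a singleton) and connected (two singletons $\{i\},\{i'\} \subseteq J$ are joined through the pair $\{i,i'\}$); hence restriction along it preserves the limit. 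Concretely, a compatible family on $\ps(I)_{12}$ extends uniquely to one on $\ps(I)_{>0}$ by transporting the value at a singleton along face maps, well-definedness and compatibility being simplicial identities. This step needs $|I| = k+1 \geq 2$, matching the hypothesis of \cref{WkX as limit}, and is the only place the substantive case $k \geq 1$ enters.

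Granting that $\kappa^*$ is bijective, the conclusion follows by combining it with \cref{WkX as limit} and edgyness. The lemma supplies a bijection $\delta \colon \words_I(X)_n \to \lim X\cube{I}_{12}$ with $\delta \circ \edgemap_n = \iota_{12}^*$, so $\edgemap_n \colon X_n \to \words_I(X)_n$ is surjective if and only if $\iota_{12}^*$ is. Because $X$ is edgy, $\edgemap_n$ — and hence $\iota_{12}^* = \delta \circ \edgemap_n$ — is injective, so $\iota_{12}^*$ is surjective if and only if it is bijective; and since $\iota_{12}^* = \kappa^* \circ \iota^*$ with $\kappa^*$ bijective, $\iota_{12}^*$ is bijective if and only if $\iota^*$ is, i.e.\ if and only if $X\cube{I}$ is cartesian. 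Chaining these equivalences settles the fixed pair $(n,I)$, and the bookkeeping of the first paragraph assembles them into the theorem. I expect the main obstacle to be precisely the cofinality step: verifying cleanly that the punctured-cube limit is detected by its one- and two-element vertices, so that $\kappa^*$ is a bijection, while everything else is either formal or already supplied by \cref{WkX as limit} and the injectivity coming from edgyness.
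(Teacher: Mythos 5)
Your proposal is correct and follows essentially the same route as the paper: reduce to a fixed gapped $I$, show the comparison $\lim X\cube{I}_{>0} \to \lim X\cube{I}_{12}$ is a bijection via the cofinality of $\ps(I)_{12} \hookrightarrow \ps(I)_{>0}$ (the paper cites \cite[Lemma~8.3.4]{Riehl:CHT} for exactly the nonempty-and-connected comma category criterion you verify by hand), and then conclude by \cref{WkX as limit} together with injectivity of $\edgemap_n$. The only stray remark is that the cofinality step itself does not require $|I|\geq 2$; that hypothesis is needed only for \cref{WkX as limit}.
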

\begin{proof}
From the definitions, $\edgemap_n \colon X_n \to \words_k(X)_n$ is a surjection (i.e., a bijection) if and only if it is a bijection onto $\words_I(X)_n$ for all gapped sets $I \subset [n]$ of cardinality $k+1$. 
Fix $n \geq 2k$ and such an $I$, and consider the diagram
\[
\begin{tikzcd}[column sep=3em]
    & X_n \arrow[dl, swap, "\edgemap_n"] \arrow[d, "\iota_{12}^*"] \arrow[dr] & \\
\words_I(X)_n \arrow[r,"\delta"] & \lim X\cube{I}_{12} & \arrow[l] \lim X\cube{I}_{>0} \arrow[l]. 
\end{tikzcd}
\]
For each nonempty subset $J$ of $I$, the overcategory $\kappa \downarrow J$ is the poset $\ps(J)_{12}$ of $1$ and $2$ element subsets of $J$. 
Since this is nonempty and connected, the bottom right map is a bijection by \cite[Lemma~8.3.4]{Riehl:CHT}. 
It follows that $X\cube{I}$ is cartesian if and only if $\iota_{12}^*$ is a bijection, which is the case just when $\edgemap_n$ is a bijection by \cref{WkX as limit}. 
\end{proof}

The following is a restatement of the theorem.

\begin{corollary}
\label{edgy segal char redux}
Let $X$ be an edgy simplicial set. 
Then $X$ is lower $(2k{-}1)$-Segal if and only if for each $n \geq 1$, each gapped sequence 
$0 \leq i_0 \ll i_1 \ll \cdots \ll i_k \leq n$
of length $k+1$, and each potentially composable tuple 
$w \in X_1 \times_{X_0} \cdots \times_{X_0} X_1$
of length $n$, if 
$d_{i_0}w, d_{i_1}w, \dots, d_{i_k}w$ are all defined and in $X_{n-1}$, then $w \in X_n$.
\end{corollary}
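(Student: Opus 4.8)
The plan is to treat \cref{edgy segal char redux} as a pure translation of \cref{edgy segal characterization}, so that the work is entirely a matter of unwinding the definitions of $\words_k(X)_n$ and of the hypotheses, rather than proving anything new. First I would fix $k$ and $n$ and match the quantifiers: a gapped sequence $0 \leq i_0 \ll i_1 \ll \cdots \ll i_k \leq n$ of length $k+1$ is exactly the datum of a gapped subset $I = \{i_0, \dots, i_k\} \subset [n]$ of cardinality $k+1$. Using that $X$ is edgy, $\edgemap_{n-1}$ is injective, so I would silently identify $X_{n-1}$ with $\image(\edgemap_{n-1}) \subseteq \words(X)_{n-1}$; under this identification the hypothesis that each $d_{i_\ell} w$ is defined and lies in $X_{n-1}$ is precisely the condition defining membership $w \in \words_I(X)_n$ in \cref{notation WkX}. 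Here the interior faces are those of \cref{extra inner face} (so ``defined'' means $[f_{i_\ell}|f_{i_\ell+1}] \in X_2$), while the two possible endpoint faces, occurring when $i_0 = 0$ or $i_k = n$, are the outer faces $d_\bot, d_\top$, which are always defined on words.

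Next I would read off the conclusion. For a potentially composable tuple $w \in \words(X)_n$, the assertion $w \in X_n$ means exactly that $w$ lies in the image of $\edgemap_n$, and edginess makes the corresponding simplex unique. Consequently, fixing $n$ and letting $I$ range over all gapped subsets of $[n]$ of cardinality $k+1$, the displayed condition says precisely that $\edgemap_n$ surjects onto $\bigcup_I \words_I(X)_n = \words_k(X)_n$. Thus the condition in \cref{edgy segal char redux} at level $n$ is literally the surjectivity statement for $\edgemap_n\colon X_n \to \words_k(X)_n$ of \cref{edgy segal characterization}.

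The one genuine observation, and the step I would single out as needing care, concerns the range of $n$: \cref{edgy segal characterization} asks for surjectivity only when $n \geq 2k$, whereas \cref{edgy segal char redux} quantifies over all $n \geq 1$. I would note that a gapped sequence $0 \leq i_0 \ll \cdots \ll i_k \leq n$ forces $i_k \geq i_0 + 2k \geq 2k$, hence $n \geq 2k$; so for $1 \leq n < 2k$ there is no gapped subset of $[n]$ of cardinality $k+1$, the stated hypothesis is never triggered, and those conditions are vacuous. Therefore quantifying over $n \geq 1$ is equivalent to quantifying over $n \geq 2k$, and \cref{edgy segal characterization} delivers the equivalence. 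Beyond this range check and the endpoint bookkeeping for $I$ described above, I do not anticipate any real obstacle, since the corollary and the theorem encode the same condition in different notation.
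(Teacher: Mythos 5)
Your proposal is correct and matches the paper exactly: the paper's entire justification for \cref{edgy segal char redux} is the sentence ``The following is a restatement of the theorem,'' and your unwinding of $\words_I(X)_n$, the identification of $X_{n-1}$ with $\image(\edgemap_{n-1})$, and the vacuousness check for $n < 2k$ are precisely the bookkeeping that restatement implicitly relies on.
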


When a partial group embeds in a group, one may use this additional structure to avoid discussion of partially-defined inner face maps on $\words X$, as in the following.

\begin{corollary}
\label{d seg embed in BG}
Let $C$ be a category and suppose $X\subseteq NC$ is a simplicial subset.
The simplicial set $X$ is lower $(2k{-}1)$-Segal if and only if for each $n\geq 1$ and each gapped sequence $0 \leq i_0 \ll i_1 \ll \cdots \ll i_k \leq n$, and each $f= [f_1| \dots| f_n] \in NC_n$, if $d_{i_0}f, d_{i_1}f, \dots, d_{i_k}f \in X_{n-1}$ then $f\in X_n$. \qed
\end{corollary}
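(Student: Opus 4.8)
The plan is to derive this from \cref{edgy segal char redux}. First I would observe that $X$ is edgy: since $NC$ is the nerve of a category it satisfies the ordinary Segal condition, so $\edgemap\colon NC \to \words(NC)$ is an isomorphism of outer face complexes and in particular each $\edgemap_m\colon NC_m \to \words(NC)_m$ is a bijection; as $X \subseteq NC$ is a simplicial subset, $\edgemap_m\colon X_m \to \words(X)_m$ is the restriction of this bijection, hence injective. Thus \cref{edgy segal char redux} applies to $X$, and it suffices to show that the word-theoretic condition appearing there, which I will call (E), is equivalent to the simplex-theoretic condition (B) stated here.

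The two conditions are linked by the Segal bijection $\edgemap_n\colon NC_n \to \words(NC)_n$, under which every word lifts uniquely to an $NC$-simplex. If $f = [f_1|\cdots|f_n] \in NC_n$ lifts $w = (f_1,\dots,f_n)$, then by the identity $d_i\edgemap_n = \edgemap_{n-1}d_i$ the edge word of $d_if$ is exactly the inner face $d_i(w)$; consequently, whenever $d_i(w)$ is defined, it lies in $X_{n-1}$ in the sense of \cref{edgy segal char redux} if and only if the $NC$-simplex $d_if$ lies in $X_{n-1}$ (using uniqueness of lifts and that faces of $X$-simplices remain in $X$). The implication (B) $\Rightarrow$ (E) is then routine: given a word $w \in X_1\times_{X_0}\cdots\times_{X_0}X_1$ with each $d_{i_\ell}(w)$ defined and in $X_{n-1}$, I would lift $w$ to $f \in NC_n$, transport the hypotheses to $d_{i_\ell}f \in X_{n-1}$, apply (B) to conclude $f \in X_n$, and read off $w = \edgemap_n(f) \in X_n$.

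The substance of the corollary lies in the reverse implication (E) $\Rightarrow$ (B), and this is the step I expect to be the main obstacle. One starts from $f = [f_1|\cdots|f_n] \in NC_n$ with all $d_{i_\ell}f \in X_{n-1}$, but, unlike in (E), is handed no a priori control over the individual edges of $f$ or its inner $2$-faces. To apply (E) I must verify (i) that each edge $f_m \in X_1$, so that $w = (f_1,\dots,f_n)$ is a genuine element of $\words(X)_n$, and (ii) that each $d_{i_\ell}(w)$ is defined, i.e.\ $[f_{i_\ell}|f_{i_\ell+1}] \in X_2$ for interior $i_\ell$. Both are combinatorial consequences of gappedness together with the paper's standing convention $k \geq 1$: the edge $f_m$ survives as a face of $d_{i_\ell}f$ exactly when $i_\ell \notin \{m-1,m\}$, and since at most one of the $k+1 \geq 2$ gapped indices can lie in the two-element set $\{m-1,m\}$, some $d_{i_\ell}f \in X_{n-1}$ retains $f_m$, forcing $f_m \in X_1$; likewise the triangle $[f_{i_\ell}|f_{i_\ell+1}]$ spanned by $\{i_\ell-1,i_\ell,i_\ell+1\}$ is a face of $d_{i_{\ell'}}f$ for every $\ell' \neq \ell$, since gappedness forces $|i_{\ell'}-i_\ell| \geq 2$ and hence $i_{\ell'} \notin \{i_\ell-1,i_\ell,i_\ell+1\}$, so it lies in $X_2$. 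Once (i) and (ii) hold, the bridge above turns each hypothesis $d_{i_\ell}f \in X_{n-1}$ into $d_{i_\ell}(w) \in X_{n-1}$, condition (E) produces $w \in X_n$, and uniqueness of lifts upgrades this to $f \in X_n$.
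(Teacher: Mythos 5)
Your proof is correct and takes exactly the route the paper intends: the corollary carries a bare \qed because it is meant to follow immediately from \cref{edgy segal char redux} by using the Segal bijection $\edgemap$ on $NC$ to pass between words and simplices, which is precisely your "bridge." The gappedness arguments you supply in the harder direction --- that some $d_{i_\ell}f$ retains each edge $f_m$ and that each triangle $\{i_\ell-1,i_\ell,i_\ell+1\}$ sits inside some $d_{i_{\ell'}}f$, so the edges and inner $2$-faces of $f$ land in $X$ --- are exactly the details the paper leaves implicit, and they are right.
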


\begin{example}\label{ex bcom M}
If $M$ is a monoid, we write $\bcom M \subseteq BM$ for the simplicial subset of commuting tuples of elements, i.e.\ $(\bcom M)_n = \hom(\mathbb{N}^n, M) \subseteq M^{\times n}$, where $\mathbb{N}^n$ is the free commutative monoid on $n$ generators.
This simplicial set is always lower $3$-Segal.
Suppose we have $(m_1, \dots, m_n) \in M^{\times n}$ and $1 < i < n-1$ such that 
\[
  [m_2 | \dots | m_n], \quad [m_1 | \dots | m_{i+1}m_i | \dots | m_n], \quad [m_1 | \dots | m_{n-1}] \in (\bcom M)_{n-1}.
\]
The first element tells us that $m_i m_j = m_j m_i$ for $i,j \geq 2$, the third element tells us this same equality for $i,j \leq n-1$. 
The last one to check is $m_1 m_n = m_n m_1$, and this holds by the middle element. Hence $[m_1|\dots|m_n] \in (\bcom M)_n$.
By \cref{segality top bottom} and \cref{edgy segal characterization}, this is enough to guarantee that this simplicial set is lower 3-Segal.
In case $M = G$ is a group, $\bcom G$ is a symmetric set (see \cite[Example 1.11]{HackneyLynd:PGSSS}), so we have $\deg(\bcom G) \leq 2$, with equality if and only if $G$ is nonabelian.
It is possible (outside of the group case) for $\bcom M$ to be 2-Segal when $M$ is not commutative. 
For instance, if $M$ is freely generated by two idempotent elements, then $\bcom M$ is a Segal partial monoid, so is 2-Segal by \cite[Example 2.1]{BOORS:2SSWC}.
\end{example}

\subsection{Partial groupoids and starry words}\label{bs words}

In the previous subsection, we proved \cref{edgy segal characterization} which provided a characterization higher Segality for edgy simplicial sets.
We give a useful variation in this section which is valid in the case of spiny symmetric sets, and is based around \emph{starry words}.
Recall from \cref{bous spiny Segal} that a symmetric set $X$ is spiny if and only if $\bous \colon X \to \bswords X$ is a monomorphism.

\begin{notation}
When $X$ is a partial groupoid and $I \subset [n]$ is a subset not containing $0$, $\bswords_I(X)_n$ is the set
of starry words $w$ of length $n$ such that $d_i(w)$ is in the image of $\bous_{n-1}$ for each $i \in I$. 
Let $\bswords_k(X)_n$ be the union of $\bswords_I(X)_n$ as $I$ ranges over the all subsets of $[n]$ of cardinality $k+1$ which do not contain $0$.
\end{notation}

We have the following, easier variant of \cref{lem suspects as limit} in the symmetric case.

\begin{lemma}\label{lem starry-suspects as limit}
Let $X$ be a partial groupoid and $I \subset [n]$ a subset of cardinality at least two which does not contain $0$.
There is a bijection $\delta \colon \bswords_I(X)_n \to \lim X\cube{I}_{12}$ such that $\delta \circ \bous_n = \iota_{12}^*$.
\end{lemma}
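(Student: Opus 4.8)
The plan is to mirror the strategy of \cref{lem suspects as limit}, but to exploit that in the symmetric setting the inner faces behave far more simply. Recall from \cref{bous spiny Segal} that spininess of $X$ means precisely that every Bousfield--Segal map $\bous_m$ is injective; this is the workhorse, letting me replace equalities of simplices by equalities of their starry words at each level. First I would unwind $\lim X\cube{I}_{12}$ concretely: since $\ps(I)_{12}$ consists of the singletons and doubletons of $I$ with their inclusions, an element of the limit is a family $(x_i)_{i \in I}$ with $x_i \in X_{[n]\setmin i}$ such that the restrictions of $x_i$ and $x_j$ to $X_{[n]\setmin\{i,j\}}$ agree for all $i \neq j$ in $I$. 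Because $0 \notin I$, each face $[n]\setmin i$ retains the basepoint $0$, so for $0 < i$ the inner face $d_i$ on starry words is simply deletion of the $i$-th entry -- a total operation, in contrast to the partially defined composition appearing in \cref{lem suspects as limit}. This is the structural reason the symmetric case requires no gappedness hypothesis.

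For the forward map, given $w \in \bswords_I(X)_n$ and $i \in I$, the hypothesis $d_i(w) \in \image(\bous_{n-1})$ together with injectivity of $\bous_{n-1}$ produces a unique $x_i \in X_{[n]\setmin i}$ with $\bous_{n-1}(x_i) = d_i(w)$; I set $\delta(w) = (x_i)_{i\in I}$. To see this lands in the limit, I would compare starry words: the starry word of the restriction of $x_i$ to $X_{[n]\setmin\{i,j\}}$ is $d_i(w)$ with its $j$-th entry deleted, and symmetrically for $x_j$, so both equal $w$ with entries $i$ and $j$ removed; injectivity of $\bous_{n-2}$ then forces the two restrictions to coincide. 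The identity $\delta \circ \bous_n = \iota_{12}^*$ is immediate, since $\bous_{n-1}(d_i f) = d_i\bous_n(f)$ for a genuine simplex $f$, so that $x_i = d_i f$.

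For the inverse and bijectivity, I would reconstruct a word from a family $(x_i)$. Using $|I| \geq 2$, for each $m \in \{1,\dots,n\}$ there is some $i \in I$ with $i \neq m$, and I define $f_m$ to be the $\{0,m\}$-edge of $x_i$; independence of the choice of $i$ follows from the limit compatibility, since any two such $x_i,x_j$ agree after restriction to $X_{[n]\setmin\{i,j\}}$, which still contains $\{0,m\}$. The resulting $w = (f_1,\dots,f_n)$ is a starry word, as all $f_m$ share the source $0$, and by construction $\bous_{n-1}(x_i) = (f_k)_{k \neq i} = d_i(w)$, so $d_i(w) \in \image(\bous_{n-1})$ for every $i \in I$; hence $w \in \bswords_I(X)_n$ and $\delta(w) = (x_i)$. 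Uniqueness of $w$ uses $|I| \geq 2$ once more: if $\delta(w) = \delta(w')$ then $d_i(w) = d_i(w')$ for all $i \in I$, and choosing two distinct indices $i \neq i'$ shows every entry of $w$ agrees with that of $w'$.

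I expect no serious obstacle: the argument is essentially bookkeeping, and the one point to handle with care is the systematic use of injectivity of $\bous$ at levels $n-1$ and $n-2$ to pass between simplices and starry words, together with the two appeals to $|I| \geq 2$ (to define $f_m$ and to pin down $w$). The only place demanding mild attention is keeping the basepoint at $0$ throughout, which is guaranteed precisely by the hypothesis $0 \notin I$.
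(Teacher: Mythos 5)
Your proposal is correct and follows essentially the same route as the paper: both constructions define $\delta(w)=(x_i)$ via injectivity of $\bous_{n-1}$ and $\bous_{n-2}$ (the compatibility in the limit being the identity $d_i d_j = d_{j-1} d_i$ on starry words, which is your "delete entries $i$ and $j$ in either order"), and both recover the inverse by reading off $f_m$ as the $\{0,m\}$-edge of any $x_i$ with $i\neq m$, using $|I|\geq 2$. The paper's write-up is terser but contains no additional ideas.
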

\begin{proof}
Since $d_i d_j = d_{j-1} d_i$ in $\bswords(X)$ for $1\leq i <j$, a starry word $u \in \bswords_I(X)_n$ determines an element $(x_\bullet(u)) \in \lim X\cube{I}_{12}$ defined uniquely by the conditions $\bous_{n-1}x_i(u) = d_i(u)$ and by $\bous_{n-2}x_{ij}(u) = d_i d_j(u) \in X_{n-2}$ whenever $i < j$ are in $I$. 

The map $\delta\colon w \mapsto (x_\bullet(w))$ satisfies $\delta \circ \bous_n = \iota_{12}^*$ by construction. 
Given an element $(x_\bullet)$ of the limit, there is a unique starry word $u = (g_1, \dots, g_n)$ with $\delta u = (x_\bullet)$.
To find $g_m$, choose $i\in I$ distinct from $m$, and set $g_m$ to be the $m$\textsuperscript{th} entry of $\bous_{n-1}(x_i)$ if $m < i$ and the $(m{-}1)$\textsuperscript{st} entry of $\bous_{n-1}(x_i)$ if $m > i$.
Notice that $g_m$ does not depend on the choice of $i \neq m$.
\end{proof}

\begin{proposition}\label{bs words characterization}
Let $k\geq 1$ be an integer. A partial groupoid $X$ is lower $(2k{-}1)$-Segal if and only if $\bous_n \colon X_n \to \bswords_k(X)_n$ is surjective for all $n\geq k+1$.
\end{proposition}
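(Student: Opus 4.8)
The plan is to run the argument of \cref{edgy segal characterization} again, replacing the edgy-word apparatus with its starry-word analogue from \cref{lem starry-suspects as limit} and taking advantage of the fact that, for symmetric objects, the higher Segal conditions collapse and gappedness can be dropped. As a first reduction: because $X$ is spiny, $\bous_n$ is injective (\cref{bous spiny Segal}), and since $\bswords_k(X)_n = \bigcup_I \bswords_I(X)_n$ with $I$ ranging over the $(k{+}1)$-element subsets of $[n]$ with $0 \notin I$, the map $\bous_n\colon X_n \to \bswords_k(X)_n$ is surjective exactly when $\bous_n$ is surjective onto each single $\bswords_I(X)_n$.

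Fix such an $I$; since $n \geq k+1$ the subset $I$ is proper, and $|I| = k+1 \geq 2$ so \cref{lem starry-suspects as limit} applies. I would then form the same diagram as in the proof of \cref{edgy segal characterization}, with $\words_I$ and $\edgemap_n$ replaced by $\bswords_I$ and $\bous_n$, relating $X_n$, $\bswords_I(X)_n$, $\lim X\cube{I}_{12}$, and $\lim X\cube{I}_{>0}$. The cofinality input \cite[Lemma~8.3.4]{Riehl:CHT} is unchanged --- each overcategory $\kappa \downarrow J$ is the nonempty connected poset $\ps(J)_{12}$ --- so $\lim X\cube{I}_{>0} \to \lim X\cube{I}_{12}$ is a bijection, and therefore $X\cube{I}$ is cartesian if and only if $\iota_{12}^*\colon X_n \to \lim X\cube{I}_{12}$ is a bijection. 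By \cref{lem starry-suspects as limit}, $\iota_{12}^* = \delta \circ \bous_n$ with $\delta$ a bijection; as $\bous_n$ is injective, so is $\iota_{12}^*$, whence $\iota_{12}^*$ is a bijection precisely when it (equivalently $\bous_n$) is surjective onto $\bswords_I(X)_n$. Combining these identifications yields the key equivalence: $\bous_n$ is surjective onto $\bswords_I(X)_n$ if and only if $X\cube{I}$ is cartesian.

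It remains to match the condition ``$X\cube{I}$ cartesian for all $n \geq k+1$ and all $(k{+}1)$-subsets $I$ with $0 \notin I$'' with lower $(2k{-}1)$-Segality. The forward direction is immediate from \cref{rmk non-gapped}, which says a lower $(2k{-}1)$-Segal symmetric object has $X\cube{I}$ cartesian for every proper $I$, in particular those omitting $0$. For the converse, given a gapped $J \subset [m]$ of cardinality $k+1$ (so $m \geq 2k$ and $J$ is proper), I would pass to $I' = \{\, j+1 \mid j \in J \,\} \subset [m+1]$, which has cardinality $k+1$ and omits $0$; the hypothesis makes $X\cube{I'}$ cartesian, and \cref{lem removal of gaps} then forces $X\cube{J}$ to be cartesian, giving lower $(2k{-}1)$-Segality via \cref{def lower odd Segal}. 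The one point requiring care --- and the place where this statement improves on its edgy counterpart --- is the bookkeeping around the range $n \geq k+1$ (rather than $n \geq 2k$) and the absence of any gappedness hypothesis on $I$; both are exactly the symmetric simplifications furnished by \cref{prop symmetric higher Segal}, \cref{rmk non-gapped}, and \cref{lem removal of gaps}, which ensure the smallest nontrivial cube already sits over $X_{k+1}$.
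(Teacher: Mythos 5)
Your proof is correct and follows essentially the same route as the paper's: the paper likewise reduces, via \cref{lem removal of gaps} and \cref{rmk non-gapped}, to the statement that lower $(2k{-}1)$-Segality is equivalent to $X\cube{I}$ being cartesian for all $(k{+}1)$-element subsets $I \subset [n]\setmin 0$, and then reruns the proof of \cref{edgy segal characterization} with \cref{lem starry-suspects as limit} in place of \cref{lem suspects as limit}. You have simply spelled out the details (the shift $I' = \{j+1 \mid j \in J\}$ and the cofinality step) that the paper leaves implicit.
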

\begin{proof}
By \cref{lem removal of gaps} and \cref{rmk non-gapped}, $X$ is lower $(2k{-}1)$-Segal if and only if $X\cube{I}$ is cartesian for each subset $I \subset [n]\setmin 0 \subset [n]$ of cardinality $k+1$.
Using \cref{lem starry-suspects as limit} in place of \cref{lem suspects as limit} in the proof of \cref{edgy segal characterization}, we see that $X$ is lower $(2k{-}1)$-Segal if and only if $\bous_n \colon X_n \to \bswords_k(X)_n$ is surjective for all $n\geq k+1$. 
\end{proof}

\section{Actions of partial groups}\label{actions}
In this section, we propose a notion of (partial) action of a partial group or partial groupoid on a set, by generalizing a formulation of group action based on the transporter groupoid. 
\subsection{Partial actions of groups}
When the partial group is a group, our definition below generalizes the existing notion of a partial action of a group on a set due to Exel \cite{Exel:PAGAIS}. 
We will see later that each partial action of a group gives rise to a partial group (\cref{ex realizable locality}).
\begin{definition}
A partial action of a group $G$ on a set $S$ is a partially defined function $\cdot \colon G \times S \pto S$ such that for all $g, h \in G$ and $x \in S$,
\begin{enumerate}
\item $1 \cdot x = x$, 
\item $h\cdot (g \cdot x)$ implies $(hg)\cdot x$ with equality if so, and
\item $g \cdot x$ implies $g^{-1} \cdot (g \cdot x)$. 
\end{enumerate}
\label{partial group action}
\end{definition}
Removal of condition (3) gives a definition of partial action of a monoid. 
The partial action is \emph{total} if it is an ordinary action, that is, if the action map is totally defined. 
By a result of Abadie and Kellendonk and Lawson \cite{Abadie:EATDPA,KellendonkLawson:PAG}, a partial action of a group is always \emph{globalizable}: it is the restriction of a total action of $G$ on a superset $\tilde{S} \supseteq S$. 

Total actions are equivalent to functors from $G$ to $\set$. 
More generally but still very classically, if $B$ is a category and $F\colon B \to \set$ is a functor,
then the Grothendieck construction produces a category $\int F$ with objects $(b,x)$ where $x \in F(b)$
and with morphisms $(b,x) \to (b',x')$ those $g \colon b \to b'$ in $B$ with $F(g)(x) = x'$, along with a functor $\int F \to B$. 
This determines a functor 
\[
	{\textstyle\int} \colon \fun(B,\set) \to \cat_{/B}
\]
where $\cat_{/B}$ is the overcategory whose objects are functors with codomain $B$, and morphisms are commutative triangles over $B$.
The essential image of $\int$ consists of the \emph{discrete opfibrations}, or \emph{star bijective functors}.
A functor $p\colon E \to B$ is star bijective if for each object $x \in E$ and each morphism $g \colon p(x) \to b$ in $B$, there exists a unique morphism $\tilde g$ in $E$ with domain $x$ such that $p(\tilde g) = g$:
\[ \begin{tikzcd}[sep=small]
x \dar[dotted, no head] & {} \ar[drr, phantom, "\rightsquigarrow"] & &  x \rar[ "\exists !"] \dar[dotted, no head]  & x' \dar[dotted, no head] \\
p(x) \rar & b & & p(x) \rar & b
\end{tikzcd} \]
In other words, for each object $x \in E$, the induced map $\hom_E(x,-) \to \hom_B(p(x),-)$ on stars is a bijection. 
If $B$ is a group (or more generally a groupoid), the domain $E$ of a star bijective functor $E\to B$ is automatically a groupoid, so the Grothedieck correspondence restricts to an equivalence between
$\fun(G,\set)$ and the full subcategory of $\gpd_{/G}$ on the star bijective functors. 

\subsection{Star injective maps of partial groupoids}\label{sec star injective}

Given a partial action of $G$ on $S$, there is still an associated transporter groupoid $\trans{S}{G}$ with object set $S$ and morphisms $x \xrightarrow{g} g \cdot x$ whenever $g \cdot x$ is defined, as well as a canonical functor $\trans{S}{G} \to G$. 
The partiality of the action corresponds to the functor $\trans{S}{G} \to G$ being merely injective on stars, one of the themes of \cite{KellendonkLawson:PAG} (see also \cite{PinedoMarin:PGASTS} for the groupoid case). 
Upon taking nerves, this motivates the following definitions. 

\begin{definition}
Let $X$ be a simplicial or symmetric set and $x \in X_0$ an object.
The \emph{star} at $x$ is the collection
\[
    \starset x = \starset_X x = \{ f \mid f \in X_n \text{ for some } n\geq 0 \text{ and } (d_\top)^n f = x\} \subseteq \coprod_{n=0}^\infty X_n.
\]
of all $n$-simplices (as $n\geq 0$ varies) emanating from $x$.
\end{definition}

Each star is evidently a graded set, and may be further be regarded as a presheaf over the subcategory of $\ord$ or of $\fin$ consisting of bottom-preserving maps, i.e.,\ those maps $\alpha \colon [n] \to [m]$ such that $\alpha(0) = 0$. 
Namely, for $x\in X_0$, the set $(\starset x)_n$ is the pullback
\[ \begin{tikzcd}
(\starset x)_n \rar[hook] \dar \ar[dr, phantom, "\lrcorner" very near start] & X_n \dar{d_\top^n} \\
\{ x \} \rar[hook] & X_0
\end{tikzcd} \]
and $\alpha^* \colon X_m \to X_n$ restricts to $(\starset x)_m \to (\starset x)_n$ for every bottom-preserving $\alpha$.

\begin{definition}[Star injective maps]\label{def star inj maps}
Let $E$ and $L$ be partial groupoids, or just edgy simplicial sets. 
A map $p \colon E \to L$ is \emph{star injective} if $\starset x \to \starset p(x)$ is a monomorphism for all $x\in E_0$.
\end{definition}

A star injective map $p \colon E \to L$ encodes something akin to a partial left action of $L$ on $S = E_0$ (see \cref{app action}).
Specifically, there are partial functions
\begin{align*}
	L_n \times S &\nrightarrow S \\
	(g, x) & \mapsto g \cdot x
\end{align*}
defined as follows.
If $n=0$, then $g\cdot x$ is defined if and only if $p(g) = x$, in which case $g\cdot x = x$.
For $n \geq 1$, the $n$-simplex $g = [g_1 | \dots | g_n] \in L_n$ acts on $x$ if and only if there is an $n$-simplex $\tilde g = [\tilde g_1 | \dots | \tilde g_n] \in E_n$ such that the source of  $\tilde g_1$ is  $x$.
In this case, $g \cdot x$ is the target of $\tilde g_n$, an element of $E_0$.
In general, the phrase ``$g \in L_n$ acts on $x \in E_0$'' will mean exactly that $g$ is in the image of the map $\starset x \to \starset p(x)$.
Expanded, ``$g$ acts on $x$'' means that there is a (unique) $\tilde g\in L_n$ such that $d_\top^n(\tilde g) = x$ and $p(\tilde g) = g$.
It is possible to write down axioms for this action analogous to those for a partial action of a group or a monoid and that characterize star injective maps as we do in \cref{app action}.
Hayashi independently introduced these axioms in \cite{Hayashi:PGSF}.
But none of this will be needed here; instead, we will adopt the following terminology. 

\begin{definition}\label{def partial action}
Let $L$ be a partial groupoid. 
By a \emph{partial action} of $L$ on a set $S$ we mean a star injective map of partial groupoids $\rho \colon E \to L$ such that $E_0 = S$.
The category of partial actions of $L$ (with $S$ varying) is the full subcategory of $\pgpd_{/L}$ on the star injective maps.
\label{partial action}
\end{definition}

\begin{remark}\label{rem partial action of group}
When $L$ is actually a group $G$, this definition is strictly more general than Exel's notion of a partial action of $G$ on $S$, which would require that $E$ be a groupoid, not merely a partial groupoid.
\end{remark}

We next look at two examples of actions of a partial group on itself. 

\begin{example}[The action on itself by multiplication]
Let $L$ be a partial group, and consider the map of partial groupoids $d_\bot \colon \ldec L \to L$.
We have $(\ldec L)_n = L_{n+1}$, so in particular the set being acted upon is $(\ldec L)_0 = L_1$.
This is a star injective map since if 
\[ [f | g_1 | \cdots | g_n],\, [f | h_1 | \cdots | h_n] \in (\ldec L)_n = L_{n+1} \]
have the same source $f$ and map to the same element $[g_1 | \cdots | g_n] = [h_1 | \cdots | h_n]$ in $L_n$, then they were equal by spininess of $\ldec L$. 
\end{example}

For the second example, it is convenient to have a symmetric version of the edgewise subdivision for simplicial objects \cite[\S1.9]{Waldhausen:AKTS}.

\begin{construction}[Edgewise subdivision]
Let $Q \colon \fin \to \fin$ denote the doubling endofunctor sending $[n]$ to
\[
  \{ n', \dots, 1', 0', 0, 1, \dots, n\} \cong \{0,1, \dots, 2n+1\} = [2n+1]
\]
where the bijection preserves this order (i.e.\ $i'\mapsto n-i$ and $i \mapsto i+n+1$).
A map $\alpha \colon [m] \to [n]$ is sent to the function operating as $i' \mapsto \alpha(i)'$ and $i\mapsto \alpha(i)$.
If $X$ is a symmetric set then its edgewise subdivision is defined to be $\tw(X) \coloneq XQ \colon \fin^\op \to \set$.
It has $n$-simplices $\tw(X)_n = X_{2n+1}$, 
so in particular the $1$-simplices are $3$-simplices of $X$.
The Segal map $\edgemap_{2n+1}$ for $X$ factors through the Segal map $\edgemap_n^{\tw}$ for $\tw(X)$, so spininess for $X$ implies spininess for $\tw(X)$.
In this case, we visualize an $n$-simplex as follows
\[
\tw(X)_n \ni \left( \begin{tikzcd}[sep=small]
  x_0 \rar{f_1} & x_1 \rar{f_2} & \cdots \rar{f_n} & x_n \\
  y_0 \uar{u} & y_1 \lar{g_1} & \cdots \lar{g_2} & y_n \lar{g_n}
\end{tikzcd} \right) = [g_n | \dots | g_1|  u | f_1 | \dots | f_n] \in X_{2n+1}
\]
with faces and degeneracies acting symmetrically.
\end{construction}

\begin{example}[The action on itself by conjugation]
Let $L$ be a partial group, and let $\tw(L)$ be its edgewise subdivision, which is a partial groupoid.
There is a map $(l,r)\colon \tw(L) \to L^{\op} \times L$, which is star injective by spininess and the definition of $\tw(L)$. 
Consider the pullback diagram
\[
\begin{tikzcd}
\conj(L) \rar \dar \arrow[dr, phantom, "\lrcorner", very near start] & L \dar{(\tau,\id)} \\
\tw(L) \rar{(l,r)} & L^{\op} \times L, 
\end{tikzcd}
\]
where $\tau$ is inversion. 
As a pullback of a star injective map, $\conj(L) \to L$ is star injective, and $\conj(L) \to L$ is the left conjugation action of $L$ on itself.  
\end{example}

\begin{lemma}
\label{star inj nondeg}
A star injective map between edgy simplicial sets or partial groupoids sends nondegenerate simplices to nondegenerate simplices. 
\end{lemma}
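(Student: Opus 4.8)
The plan is to prove the contrapositive in pointwise form: given $f \in E_n$ with $p(f)$ degenerate, I would construct an explicit degeneracy of $f$. In both settings the argument runs through a single mechanism. Degeneracy of a simplex is detected by one of its \emph{edges} being an identity, and any such edge emanates from some vertex $v$ of the simplex, hence lies in $\starset v$ together with the degenerate edge $\id_v = s_0 v$. Since $p$ is a map of (symmetric or simplicial) sets it commutes with all the coface/coedge operators, so it sends a witnessing edge $e$ of $f$ with source $v$ to $p(e) = \id_{p(v)} = p(\id_v)$. As $e$ and $\id_v$ both lie in $\starset v$ and have the same image, the hypothesis that $\starset v \to \starset{p(v)}$ is a monomorphism (\cref{def star inj maps}) forces $e = \id_v$. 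Thus $f$ carries the corresponding identity edge, from which I would deduce that $f$ is itself degenerate. The only care needed throughout is to confirm that the witnessing edge genuinely lies in the star of the vertex used for the injectivity step, which holds because that vertex is exactly its bottom vertex $d_\top$.

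For partial groupoids the relevant bookkeeping is supplied by \cref{char nondeg}: a simplex $g$ of a spiny symmetric set is degenerate precisely when some off-diagonal entry $g_{ij}$ (with $i \neq j$) of its matrix form is an identity. Applied to $g = p(f)$, this produces indices $i \neq j$ with $p(f)_{ij}$ an identity. Naturality of $p$ gives $p(f)_{ij} = \epsilon_{ij}^* p(f) = p(\epsilon_{ij}^* f) = p(f_{ij})$, and the edge $f_{ij}$ has source the $i$-th vertex $v_i$ of $f$, so $f_{ij}, \id_{v_i} \in \starset{v_i}$ and $p(f_{ij}) = \id_{p(v_i)}$. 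The star-injectivity step then yields $f_{ij} = \id_{v_i}$, and \cref{char nondeg} in the direction ``an off-diagonal identity entry forces degeneracy'' shows $f$ is degenerate.

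For an edgy simplicial set I would instead use the standard spine $(f_1, \ldots, f_n)$, since off-diagonal entries of the above kind need not even be available without inverses. The easy half is that a degenerate simplex $p(f) = s_i g$ has identity $(i{+}1)$-st spine edge $\epsilon_{i,i+1}^* p(f)$, whereupon the star-injectivity step applied to $f_{i+1} = \epsilon_{i,i+1}^* f$, whose source is the $i$-th vertex $v_i$ of $f$, gives $f_{i+1} = \id_{v_i}$. The substantive point --- and the step I expect to be the main obstacle --- is the converse: that an \emph{edgy} simplex with an identity spine edge is degenerate. Here edginess is indispensable. With $f_{i+1} = \id_{v_i}$, I would compare $f$ with $s_i(d_{i+1} f)$: the face $d_{i+1}$ simply deletes the identity spine edge (its composition across $f_{i+1} = \id_{v_i}$ being trivial), and reinserting a degenerate edge at $v_i$ via $s_i$ restores it, so $f$ and $s_i(d_{i+1} f)$ have equal spines. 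Because $\edgemap_n$ is injective for an edgy simplicial set, $f = s_i(d_{i+1} f)$ is degenerate. The boundary indices $i = 0$ and $i = n-1$ are handled identically, deleting or reinserting at an endpoint.
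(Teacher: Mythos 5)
Your proof is correct and follows essentially the same route as the paper's: both reduce the statement to the fact that star injectivity forces any edge mapping to an identity to be an identity (since that edge and $\id$ of its source lie in the same star), combined with the detection of degeneracy by identity edges (\cref{char nondeg} for partial groupoids, the spine characterization for edgy simplicial sets). The only difference is that you supply an explicit proof, via $f = s_i(d_{i+1}f)$ and injectivity of $\edgemap_n$, of the claim that an edgy simplex with an identity spine edge is degenerate, which the paper simply cites as a known characterization.
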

\begin{proof}
Star injective maps send nonidentities to nonidentities.
Let $\rho$ be a star injective map and $f \colon x\to x'$ an edge in the source of $\rho$.
If $\rho(f)$ is an identity, then we have $\rho(f) = \id_{\rho(x)} = \rho(\id_x)$, so $f = \id_x$.
The result for partial groupoids now follows from \cref{char nondeg}, and for edgy simplicial sets follows from the characterization that an $n$-simplex $z$ is degenerate if and only if $\edgemap_n(z)$ contains an identity among its entries. 
\end{proof}

\begin{proposition}\label{nondegen star injective}
Let $L$ be a partial groupoid, $g\in L_n$, and $\name{g} \colon \rep{n} \to L$ the classifying map for $g$.
Then $g$ is nondegenerate if and only if $\name{g} $ is star injective.
\end{proposition}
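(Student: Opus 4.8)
The plan is to translate star injectivity of $\name{g}$ into a distinctness condition on the rows of the matrix form of $g$, which \cref{char nondeg} identifies with nondegeneracy. First I would note, via the Yoneda lemma, that $\name{g}$ sends an $m$-simplex $\alpha\colon[m]\to[n]$ of $\rep{n}$ to $\alpha^*g\in L_m$, and that a vertex $i\in\rep{n}_0=\hom([0],[n])$ maps to the $i$-th vertex $v_i$ of $g$. Since $\name{g}$ commutes with $d_\top$, it carries $\starset i$ into $\starset v_i$, so star injectivity of $\name{g}$ is the assertion that for every vertex $i$ the assignment $\alpha\mapsto\alpha^*g$ is injective on the set of simplices $\alpha$ with $\alpha(0)=i$.

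Next I would identify this injectivity concretely. Since $\epsilon_{jk}^*\alpha^*g=g_{\alpha(j),\alpha(k)}$, the matrix of $\alpha^*g$ is $(g_{\alpha(j),\alpha(k)})$, and for $\alpha(0)=i$ the Bousfield--Segal data is $\bous_m(\alpha^*g)=(g_{i,\alpha(1)},\dots,g_{i,\alpha(m)})$. As $L$ is spiny, $\bous_m$ is injective by \cref{bous spiny Segal}, so for $\alpha,\alpha'$ with initial vertex $i$ one has $\alpha^*g=(\alpha')^*g$ precisely when $g_{i,\alpha(k)}=g_{i,\alpha'(k)}$ for all $k$. Hence $\alpha\mapsto\alpha^*g$ is injective on the star at $i$ if and only if the map $k\mapsto g_{i,k}$ on $\{0,\dots,n\}$ is injective, i.e.\ the $i$-th row of the matrix has pairwise distinct entries; the nontrivial ``only if'' here comes from the two distinct edges $(i,k)$ and $(i,k')$ colliding under $\name{g}$ whenever $g_{i,k}=g_{i,k'}$ with $k\neq k'$.

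Combining these observations finishes both directions at once: $\name{g}$ is star injective if and only if every row of $(g_{jk})$ has distinct entries, and by \cref{char nondeg} this holds exactly when $g$ is nondegenerate. I do not expect a genuine obstacle here — the computations are bookkeeping — and the one point requiring care is simply the translation that $\name{g}(\alpha)=\alpha^*g$ together with the fact that in a spiny symmetric set the top-row (Bousfield--Segal) data already detects equality of simplices, which is what lets the row-distinctness criterion control injectivity on each star.
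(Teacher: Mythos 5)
Your proof is correct, and its second half coincides with the paper's: both directions ultimately reduce to \cref{char nondeg} via the identification $\name{g}(\epsilon_{ij}) = g_{ij}$, and your observation that $g_{ik}=g_{ik'}$ with $k\neq k'$ produces two distinct edges in $\starset i$ colliding under $\name{g}$ is exactly the paper's argument for ``not star injective $\Rightarrow$ degenerate.'' The one place you diverge is the forward implication: the paper gets ``$\name{g}$ star injective $\Rightarrow$ $g$ nondegenerate'' for free from \cref{star inj nondeg} (star injective maps preserve nondegeneracy) applied to $\id_{[n]}\in\rep{n}_n$, whereas you instead prove the sharper equivalence that $\name{g}$ is star injective if and only if every row of $(g_{jk})$ has distinct entries, using spininess and \cref{bous spiny Segal} to reduce injectivity on each star to injectivity on edges. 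Your route costs a little more bookkeeping but buys something the paper's proof leaves implicit: it justifies why a failure of star injectivity of $\name{g}$ in any simplicial degree $m$ can always be witnessed already on $1$-simplices, which is the step the paper passes over when it asserts that non--star-injectivity yields a coincidence $g_{ij}=g_{ik}$ of matrix entries. Either argument is complete; yours is self-contained modulo \cref{char nondeg} and \cref{bous spiny Segal}, while the paper's is shorter by leaning on the general preservation lemma for one direction.
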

\begin{proof}
If $\name{g}$ is star injective, then since $g$ is the image of the nondegenerate simplex $\id_{[n]} \in \rep{n}_n$ under $\name{g}$, it is nondegenerate by \cref{star inj nondeg}. 
Conversely, if $\name{g}$ is not star injective, then $g_{ij} = \name{g}(\epsilon_{ij}) = \name{g}(\epsilon_{ik}) = g_{ik}$ for some $i, j, k$ with $j \neq k$, so $g$ is degenerate by \cref{char nondeg}.
\end{proof}

\subsection{Characteristic actions}
As mentioned above in \cref{rem partial action of group}, there are reasons to prefer star injective maps whose domain is a groupoid, rather than a partial groupoid.
A large number of examples of actions of interest will satisfy the following stronger condition, which will play a major role for the remainder of the paper.

\begin{definition}\label{def char map}
A star injective map $\rho \colon E \to L$ of partial groupoids is called \emph{characteristic} if $\rho$ is surjective and $E$ is a groupoid.
We also refer to $\rho$ as a \emph{characteristic action} of $L$ on $E_0$.
\end{definition}

A word $w = (g_1,\dots,g_n) \in \words(L)_n$ is said to act on some element $x \in E_0$ if there are $x = x_0,\dots,x_n \in E_0$ such that $g_i \cdot x_{i-1} = x_i$ for each $i$, that is, if $w$ is in the image of $\rho_1^{\times n} \colon \words(E)_n \to \words(L)_n$. 
If $\rho \colon E \to L$ is any star injective map, then $\rho$ being characteristic is equivalent to the following statement: a word $w$ determines a simplex of $L$ if and only if it acts successively on some $x \in E_0$. 
Thus, a characteristic map characterizes the composability of a word. 

Notice that if $\rho \colon E \to L$ and $\rho' \colon E' \to L$ are characteristic maps, then so is the induced map $E \amalg E' \to L$.
This is because star injectivity is fundamentally a local property, and this generalizes the evident action of a group on $S \amalg S'$ when we start with actions on $S$ and on $S'$.

Every partial groupoid $L$ admits at least one characteristic map:

\begin{example}[A canonical example]\label{canonical action}
Suppose $L$ is a partial groupoid.
For each $g \in L_n$, there is a corresponding classifying map $\name{g} \colon \rep{n} \to L$.
We saw in \cref{nondegen star injective} that $g$ is nondegenerate if and only if $\name{g}$ is star injective.
Adding these together we thus have a star injective map
\[
	\rho \colon \coprod_{n\geq 0} \coprod_{\nondeg L_n} \rep{n} \to L
\]
where $\nondeg L_n \subseteq L_n$ is the set of nondegenerate elements.
By construction this map is surjective on nondegenerate elements.
It follows that $\rho$ is surjective, hence is a characteristic map.
\end{example}

Our next example generalizes \cite[Example~2.4(2)]{Chermak:FSL} and is the underlying source of many important partial groups.

\begin{example}\label{ex realizable locality}
Suppose a group $G$ acts partially on a set $S$.
By taking the nerve of the map from the transporter groupoid described in \S\ref{sec star injective}, we have a star injective map
\[
  E \to BG.
\]
Here, $E_0 = S$ and the $n$-simplices of $E$ have the form 
\[ \begin{tikzcd}[sep=small, cramped]
x_0 \rar{g_1} & x_1 \rar{g_2} & \cdots \rar{g_n} & x_n
\end{tikzcd} \]
with $x_i \in S$, $g_i\in L_1$, and $g_i \cdot x_{i-1} = x_i$.
We let $L_S(G) \subseteq BG$ be the image of this map, which is a partial group (if nonempty). 
Then $E \to L_S(G)$ is a characteristic action.
Notice that the $n$-simplices of $L_S(G) \subseteq BG$ are precisely those $[g_1 | \cdots | g_n]$ that act on some $x \in S$, i.e., for which there exist elements $x = x_0, \dots, x_n \in S$ with $g_i \cdot x_{i-1} = x_i$. 
\end{example}

In fact, \cref{ex realizable locality} encompasses all partial groups embeddable in a group. 
\begin{theorem}\label{pgs in a group}
Suppose $L$ is a symmetric subset of $BG$ for some group $G$. 
If $\rho \colon E \to L$ is a characteristic map, then there is a partial action of $G$ on $E_0$ and an isomorphism $E \to N(\trans{E_0}{G})$ over $L$ that is the identity on $E_0$. 
\end{theorem}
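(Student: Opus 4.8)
The plan is to convert the characteristic map $\rho$ into an honest functor from the groupoid $E$ to the group $G$, read a partial action off of this functor, and then recognize $E$ as the nerve of the associated transporter groupoid. Since $E$ is a groupoid and $BG$ is the nerve of one, the composite $E \xrightarrow{\rho} L \hookrightarrow BG$ corresponds to a functor $\phi \colon E \to G$; on morphisms it sends an edge $e \colon x \to y$ of $E$ to the group element $\rho(e) \in L_1 \subseteq G$. Star injectivity of $\rho$ says exactly that $\phi$ is injective on each star: two edges out of a common source $x$ with the same image under $\phi$ must coincide. This lets me define a partial action of $G$ on $S = E_0$ by declaring $g\cdot x$ to be defined precisely when some edge $e$ of $E$ has source $x$ and $\phi(e)=g$, in which case $g\cdot x$ is the target of $e$; the value is unambiguous by star injectivity.

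Next I would verify the three axioms of \cref{partial group action}. Axiom (1) holds because $\phi(\id_x)=1$ and, by star injectivity, $\id_x$ is the only edge out of $x$ hitting $1$, so $1\cdot x = x$. Axiom (2) follows from functoriality together with the existence of composites in the category $E$: if $e\colon x\to y$ and $e'\colon y\to z$ witness $g\cdot x=y$ and $h\cdot y=z$, then $e'\circ e$ witnesses $(hg)\cdot x=z$. Axiom (3) is the one place the groupoid (rather than merely partial groupoid) hypothesis is essential: the inverse edge $e^{-1}$ witnesses that $g^{-1}\cdot(g\cdot x)$ is defined, in line with \cref{rem partial action of group}. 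With the axioms in hand, $\trans{E_0}{G}$ is a genuine groupoid.

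Then I would build the isomorphism. Define $\Psi\colon E \to \trans{E_0}{G}$ to be the identity on objects and to send an edge $e\colon x\to y$ to the transporter morphism $(\phi(e),x)\colon x \to \phi(e)\cdot x = y$. Functoriality is immediate from $\phi(e'\circ e)=\phi(e')\phi(e)$ and the composition rule $(h,y)\circ(g,x)=(hg,x)$ in the transporter groupoid. The map $\Psi$ is injective on morphisms by star injectivity and surjective by the very definition of the action (every defined $(g,x)$ comes from some edge $e$), so it is an isomorphism of groupoids. Taking nerves and using the identification of a groupoid with its nerve gives an isomorphism $E \to N(\trans{E_0}{G})$ of symmetric sets that is the identity on $E_0$.

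Finally I would check that this isomorphism lies over $L$. Both composites $E \to BG$ — namely $\rho$ followed by $L\hookrightarrow BG$, and $\Psi$ followed by $N(\trans{E_0}{G}) \to BG$ — send an edge $e$ to $\phi(e)$; since a map of symmetric sets into the nerve $BG$ is determined by its effect on $1$-simplices (the Segal map $\edgemap$ for $BG$ is an isomorphism), the two composites agree. Because $\rho$ is surjective, comparing images then forces $\loc{E_0}{G}=L$, so $\Psi$ is an isomorphism over $L$. The routine but attention-demanding bookkeeping is the right-to-left composition convention in $BG$ and the role of $d_\top$ as the source, which must be tracked so that $\phi$ is genuinely a functor and the transporter composition matches; the one real idea is the interplay of star injectivity (well-definedness and injectivity of $\Psi$) with the groupoid structure (the action axioms and surjectivity of $\Psi$), and the main point to get exactly right is the concluding identification $\loc{E_0}{G}=L$, which hinges on surjectivity of $\rho$.
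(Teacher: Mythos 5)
Your proof is correct and follows essentially the same route as the paper's: view the composite $E \to L \subseteq BG$ as a star injective functor from the groupoid $E$ to $G$, extract the partial action of $G$ on $E_0$ from it, and write down the explicit identity-on-objects isomorphism $e \mapsto (\phi(e),x)$ onto the transporter groupoid, using surjectivity of $\rho$ to identify the image with $L$. The only divergence is that the paper delegates the correspondence between star injective functors to $G$ and partial actions to \cite[Proposition 3.7]{KellendonkLawson:PAG}, whereas you verify the three axioms of \cref{partial group action} directly; this is a harmless and somewhat more self-contained substitution.
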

\begin{proof}
The composite $E \to L \subseteq BG$ is a star injective map, which is then a star injective map between groupoids in the classical sense. 
According to \cite[Proposition 3.7]{KellendonkLawson:PAG} this corresponds to a partial action of $G$ on $E_0$.
The image of $N(\trans{E_0}{G}) \to BG$ is $L$, and $f \mapsto (d_1(f) \xrightarrow{\rho(f)} d_0(f))$ specifies an identity-on-objects isomorphism between $E$ and $N(\trans{E_0}{G})$ over $L$. 
\end{proof}

\begin{example}[Objective partial groups]\label{obj partial groups}
A important class of motivating examples of characteristic actions are given by objective partial groups, including localities.
Let $(M,\mathcal{O})$ be an objective partial group in the sense of Chermak \cite[Definition~2.6]{Chermak:FSL}. 
Here we use $M_1$ in place of $\mathcal{M}$ and $M$ in place of Chermak's domain $\mathbf{D}(\mathcal{M})$. 
Let $E$ be the nerve of the core groupoid of the associated transporter category \cite[Remark~2.8(1)]{Chermak:FSL}. 
This has $E_0 = \mathcal{O}$, a set of subgroups of $M$, and there is a star injective map
$\rho\colon E \to M$ that sends an $n$-simplex $X_0 \xrightarrow{f_1} X_1 \xrightarrow{f_2} \cdots \xrightarrow{f_n} X_n$ to $[f_1|f_2|\cdots|f_n]$. 
Since axiom (O1) for an objective partial group translates to ``a word is a simplex if and only if it acts on some $X \in \mathcal{O}$'',
this map is characteristic. 
\end{example}

\begin{remark}\label{rem edgy char action redux}
\Cref{def char map} also makes sense in the setting of edgy simplicial sets, by instead requesting that $E$ is the nerve of a category.
\Cref{ex realizable locality} can be imitated in the monoid case to produce interesting edgy simplicial sets equipped with characteristic actions, and other edgy simplicial sets like $\bcom M$ from \cref{ex bcom M} admit natural characteristic actions.
But not every edgy simplicial set does so.
For example, let $X$ be the 2-skeleton of $BM$, where $M$ is the smallest monoid containing a nonidentity idempotent $m$. 
If $p\colon E \to X$ is a characteristic map, then surjectivity implies there is a lift $[u|v]$ of $[m|m]$, which we write as 
  $a \xrightarrow{u} b \xrightarrow{v} c$.
Then $vu$ and $u$ are both sent by $p$ to $mm=m$, so star injectivity gives $vu=u$.
This implies $b=c$, and since $E$ is the nerve of a category it contains the 3-simplex $[u | v | v]$.
This is a contradiction since $X$ does not contain $[m|m|m]$.
\end{remark}

\section{The closure space associated to an action}\label{sec action to closure}

Suppose $\rho \colon E \to L$ is star injective.
Given $f\in L_n$, define the \emph{domain} of $f$, denoted $\dom{f} \subseteq E_0$, to be the set of elements of $E_0$ on which $f$ acts.
Passing to intersections of domains, this gives $E_0$ the structure of a closure space.

\begin{definition}
A \emph{closure operator} on a set $S$ is a monotone map $\cl \colon \ps(S) \to \ps(S)$ satisfying $A \subseteq \cl(A)$ and $\cl(\cl(A)) = \cl(A)$ for all $A\subseteq S$.
A set equipped with a closure operator is a \emph{closure space}.
Equivalently, it is the data of a collection of subsets of $S$ which is closed under arbitrary intersections.
A set $A \subseteq S$ is \emph{closed} if $\cl(A) = A$.
\end{definition}

\begin{definition}
\label{def closure operator}
Given a star injective map $\rho \colon E \to L$, the associated closure operator $\cl$ on $E_0$ is the smallest closure space containing all of the domains of $n$-simplices of $L$.
Explicitly, the closure of a set $A \subseteq E_0$ is 
\begin{equation}\label{eq closure operator def}
  \cl_\rho(A) = \cl(A) = \bigcap_{A \subseteq \dom{f}} \dom{f}
\end{equation}
where $f$ ranges over all $n$-simplices of $L$ (as $n$ varies).
In other words, $\cl(A)$ is the set of those $x\in E_0$ such that whenever an $n$ simplex $f \in L_n$ acts on each $a \in A$, then $f$ acts on $x$. 
\end{definition}

\begin{lemma}\label{lem 00 containment}
Suppose $f\in L_n$, and $\alpha \colon [m] \to [n]$ is a function with $\alpha(0) = 0$.
Then $\dom{f} \subseteq \dom{\alpha^*f}$, with equality when $\alpha$ is surjective.
\end{lemma}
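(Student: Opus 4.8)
The plan is to unwind the definition of the domain in terms of lifts and to exploit that $\rho$ is a map of presheaves, so it commutes with the operators $\alpha^*$. Recall that $x \in \dom{f}$ means precisely that $f$ acts on $x$, i.e.\ that $f$ lies in the image of $\starset x \to \starset \rho(x)$; concretely, there is a simplex $\tilde f \in E_n$ with $\rho(\tilde f) = f$ and source $d_\top^n \tilde f = x$. The observation I would lean on is that for a simplex $y$ in a symmetric set the operator $d_\top^n$ extracts the $0$th vertex, so the source of $y \in E_n$ is $v^* y$, where $v \colon [0] \to [n]$ names the element $0$. (Only existence of such a lift is relevant to domain membership, so star injectivity of $\rho$ is not actually needed for this lemma, beyond being part of the standing hypothesis.)

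First I would prove the inclusion $\dom{f} \subseteq \dom{\alpha^* f}$. Suppose $x \in \dom{f}$, and let $\tilde f \in E_n$ be a lift of $f$ with source $x$. Since $\rho$ commutes with $\alpha^*$, the simplex $\alpha^* \tilde f \in E_m$ lifts $\alpha^* f$. Its source is $v^* \alpha^* \tilde f = (\alpha v)^* \tilde f$, and because $\alpha(0) = 0$ the composite $\alpha v \colon [0] \to [n]$ again names $0$; hence the source of $\alpha^* \tilde f$ equals the source of $\tilde f$, namely $x$. Thus $\alpha^* f$ acts on $x$, giving $x \in \dom{\alpha^* f}$.

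For the equality statement when $\alpha$ is surjective, I would produce a source-preserving section and reduce to the inclusion already established. Since $\alpha$ is surjective with $\alpha(0) = 0$, there is a section $\beta \colon [n] \to [m]$ of $\alpha$ with $\beta(0) = 0$ (take $0$ as the preimage of $0$, and arbitrary preimages elsewhere). Then $\beta^* \alpha^* f = (\alpha \beta)^* f = \id^* f = f$. Applying the inclusion already proved to the simplex $\alpha^* f \in L_m$ and the map $\beta \colon [n] \to [m]$ with $\beta(0) = 0$ yields $\dom{\alpha^* f} \subseteq \dom{\beta^* \alpha^* f} = \dom{f}$, which together with the first part gives $\dom{f} = \dom{\alpha^* f}$.

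There is no real obstacle here; the only thing demanding care is the variance bookkeeping, namely that $\rho$ intertwines the two incarnations of $\alpha^*$ and that the $0$th vertex of $\alpha^* y$ is the $\alpha(0)$th vertex of $y$. This last identity is exactly where the hypothesis $\alpha(0) = 0$ is used, both for the inclusion and, via the choice of a section fixing $0$, for the reverse containment.
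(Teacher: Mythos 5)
Your proof is correct and follows essentially the same route as the paper: push a lift $\tilde f$ forward along $\alpha^*$ for the inclusion, then choose a section of $\alpha$ fixing $0$ and apply the inclusion again for the reverse containment. The extra bookkeeping you supply about the $0$th vertex of $\alpha^*y$ being the $\alpha(0)$th vertex of $y$ is exactly the point the paper leaves implicit.
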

\begin{proof}
Let $x\in \dom{f}$, and suppose $\tilde f$ is a lift of $f$ which starts at $x$.
Then $\alpha^*\tilde f$ is a lift of $\alpha^*f$ which starts at $x$.
Now suppose $\alpha$ is a surjection, and choose a section $\delta \colon [n] \to [m]$ of $\alpha$ with $\delta(0) = 0$.
By the first part, $\dom{\alpha^*f} \subseteq \dom{\delta^*(\alpha^*f)} = \dom{f}$.
\end{proof}

There is an alternative closure operator on $E_0$, denoted by $\cl_1$, where the building blocks are domains of $1$-simplices rather than domains of $n$-simplices.
One modifies \eqref{eq closure operator def} so that $\cl_1(A)$ is the intersection of $\dom{f}$ ranging over all $f\in L_1$ with $A \subseteq \dom{f}$.
The following convenient result lets us compare the two.

\begin{lemma}\label{bous domain lemma}
Suppose $E$ is a groupoid, $L$ is a partial groupoid, and $\rho \colon E \to L$ is a star injective map.
\begin{enumerate}[label=\textup{(\arabic*)}, ref=\arabic*]
\item 
\label{item g v intersection}
Let $g\in L_n$ with $\bous_n(g) = (g_1, \dots, g_n)$. Then $\dom{g} = \bigcap_{i=1}^n \dom{g_i}$.
\item 
\label{item nonempty}
Suppose $g_1, \dots, g_n \in L_1$.
If $\bigcap_{i=1}^n \dom{g_i}$ is nonempty, then there exists $g\in L_n$ such that $\bous_n(g) = (g_1, \dots, g_n)$.
\end{enumerate} 
\end{lemma}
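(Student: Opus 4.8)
The plan is to prove both parts at once by pushing everything through the Bousfield--Segal picture of \cref{bous spiny Segal}, which is available because $E$ is a groupoid (so $\bous \colon E \to \bswords E$ is an isomorphism) and $L$ is spiny (so $\bous_n \colon L_n \to \bswords(L)_n$ is a monomorphism). Throughout I would use that $\rho$ is a map of symmetric sets, hence natural, so it commutes with every operator $\alpha^*$, and in particular with each coedge restriction $\epsilon_{0i}^*$ and with $\bous_n$. The bookkeeping fact underlying the whole argument is that, since $d_\top^n f$ selects the $0$-vertex, the condition $x \in \dom{f}$ means exactly that $f$ admits a lift $\tilde f \in E_n$ whose source is $x$.

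For the inclusion $\dom{g} \subseteq \bigcap_i \dom{g_i}$ in \eqref{item g v intersection}, I would begin with a lift $\tilde g \in E_n$ of $g$ having source $x$ and restrict along coedges: the edge $\epsilon_{0i}^* \tilde g$ has source $x$ and satisfies $\rho(\epsilon_{0i}^* \tilde g) = \epsilon_{0i}^* g = g_i$, so $g_i$ acts on $x$ and thus $x \in \dom{g_i}$ for every $i$.

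The reverse inclusion in \eqref{item g v intersection} and the existence claim in \eqref{item nonempty} share one construction, which is the core of the proof. Given $x \in \bigcap_i \dom{g_i}$, choose for each $i$ a lift $\tilde g_i \in E_1$ of $g_i$ with source $x$. Because the $\tilde g_i$ are co-initial, the tuple $(\tilde g_1, \dots, \tilde g_n)$ is a starry word in $\bswords(E)_n$; as $E$ is a groupoid, $\bous$ is an isomorphism and yields a unique $\tilde g \in E_n$ with $\bous_n(\tilde g) = (\tilde g_1, \dots, \tilde g_n)$, necessarily of source $x$. Applying $\rho$ and naturality gives $\bous_n(\rho(\tilde g)) = (g_1, \dots, g_n)$. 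For \eqref{item g v intersection} the right-hand side is $\bous_n(g)$, so spininess of $L$ forces $\rho(\tilde g) = g$; then $\tilde g$ is a lift of $g$ with source $x$, whence $x \in \dom{g}$. For \eqref{item nonempty} I would simply set $g \coloneq \rho(\tilde g)$, which satisfies $\bous_n(g) = (g_1, \dots, g_n)$ by the same computation.

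The main obstacle is precisely the assembly step: upgrading the separately chosen edge-lifts $\tilde g_i$ to a single $n$-simplex of $E$. This is exactly what the groupoid hypothesis on $E$ provides through the Bousfield--Segal nerve theorem (\cref{bous spiny Segal}); without it there need be no simplex restricting to all the $\tilde g_i$ simultaneously. The only remaining care is to confirm that the common-source condition is literally the defining condition $d_1(\tilde g_1) = \cdots = d_1(\tilde g_n)$ for membership in $\bswords(E)_n$, and to note that star injectivity of $\rho$ makes each chosen lift unique (so the ``choices'' are in fact forced), although uniqueness is not needed for the existence arguments above.
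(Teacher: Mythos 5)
Your proposal is correct and follows essentially the same route as the paper: lift each $g_i$ at a common source $x$, assemble the lifts via the Bousfield--Segal isomorphism $\bous \colon E \xrightarrow{\sim} \bswords E$ (valid since $E$ is a groupoid), push forward along $\rho$, and use injectivity of $\bous_n$ on $L$ to identify $\rho(\tilde g)$ with $g$ in part (1). The only cosmetic difference is that you inline the coedge-restriction argument for the forward inclusion where the paper cites \cref{lem 00 containment}.
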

\begin{proof}
If $x\in \bigcap_{i=1}^n \dom{g_i}$, then there are lifts $\tilde g_i \colon x\to y_i$ for $i=1, \dots, n$.
As $E$ is a groupoid, the Bousfield--Segal map $\bous \colon E \to \bswords E$ is an isomorphism by \cref{bous spiny Segal}, so there exists a unique $\tilde g$ with $\bous_n(\tilde g) = (\tilde g_1, \dots, \tilde g_n) \in \bswords(E)_n$.
As $\tilde g$ starts at $x$, it is a witness to the fact that $x\in \dom{\rho(\tilde g)}$.

Since $\bous_n \colon L_n \to \bswords(L)_n$ is injective, $\rho(\tilde g) = g$ in \eqref{item g v intersection}, so the preceding paragraph establishes the reverse inclusion in \eqref{item g v intersection}; the forward inclusion follows from \cref{lem 00 containment}.
The preceding paragraph also establishes \eqref{item nonempty}, using $g = \rho (\tilde g) \in L_n$.
\end{proof}

From \eqref{item g v intersection}, we immediately conclude the following.

\begin{corollary}\label{cap one simplices}
Suppose $E$ is a groupoid, $L$ is a partial groupoid, and $\rho \colon E \to L$ is a star injective map.
Then the closure operator $\cl$ is equal to the alternate closure operator $\cl_1$. 
Every closed set is the intersection of domains of 1-simplices; in case closed sets satisfy the descending chain condition, this is a finite intersection.
\qed
\end{corollary}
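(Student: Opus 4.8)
The plan is to read everything off part~\eqref{item g v intersection} of \cref{bous domain lemma}, which expresses the domain of an $n$-simplex $g$ as the intersection $\bigcap_{i=1}^{n}\dom{g_i}$ of the domains of the $1$-simplices $g_1,\dots,g_n$ appearing in its Bousfield--Segal spine $\bous_n(g)$. Since $\cl$ intersects domains of simplices in all dimensions whereas $\cl_1$ uses only the $1$-simplices, the inclusion $\cl(A)\subseteq\cl_1(A)$ is immediate. For the reverse inclusion I would fix $A$ and an arbitrary simplex $g$ with $A\subseteq\dom{g}$; then $A\subseteq\dom{g}\subseteq\dom{g_i}$ for each $i$, so every $g_i$ is among the $1$-simplices intersected in the definition of $\cl_1(A)$. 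Hence $\cl_1(A)\subseteq\bigcap_{i}\dom{g_i}=\dom{g}$, and intersecting over all such $g$ gives $\cl_1(A)\subseteq\cl(A)$. This proves $\cl=\cl_1$, and consequently any closed set $A=\cl(A)=\cl_1(A)$ is by construction an intersection of domains of $1$-simplices.

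The only part calling for an actual argument is the finiteness claim under the descending chain condition, which is the usual minimality trick. First I would observe that each $\dom{f}$ is closed, since it occurs among the domains intersected when forming $\cl(\dom{f})$, forcing $\cl(\dom{f})=\dom{f}$; consequently every finite intersection $\dom{f_1}\cap\cdots\cap\dom{f_m}$ is closed as well. Writing $A=\bigcap_{i\in I}\dom{f_i}$ over the family of all $1$-simplices $f_i$ with $A\subseteq\dom{f_i}$, the descending chain condition furnishes a minimal element $C$ among these finite subintersections. For any further index $j\in I$, the set $C\cap\dom{f_j}$ is again a finite subintersection contained in $C$, so minimality gives $C\cap\dom{f_j}=C$, i.e.\ $C\subseteq\dom{f_j}$. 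Thus $C\subseteq\bigcap_{i\in I}\dom{f_i}=A$, while $A\subseteq C$ holds trivially, and so $A=C$ is the desired finite intersection.

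I do not anticipate any real obstacle: the entire content is carried by \cref{bous domain lemma}, and the hypothesis that $E$ is a groupoid enters only through that lemma (it is what makes the Bousfield--Segal map for $E$ an isomorphism, so that a collection of compatibly-based lifts can be assembled into a single lift). The descending chain condition is used solely in the final minimality step and is otherwise unnecessary for the identification $\cl=\cl_1$.
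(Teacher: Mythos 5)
Your proof is correct and follows exactly the route the paper intends: the identity $\cl=\cl_1$ is read off from \cref{bous domain lemma}\eqref{item g v intersection}, and the finiteness claim is the standard minimal-element argument under the descending chain condition (the paper simply declares the whole corollary ``immediate'' from that lemma and supplies no further detail). Your fleshed-out version, including the observation that each $\dom{f}$ is itself closed so that finite subintersections are closed, is a faithful and complete expansion of that argument.
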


\Cref{bous domain lemma} is used in the following example related to \cref{ex realizable locality}.

\begin{example}\label{ex subset G set}
Suppose $\tilde S$ is a $G$-set, and $S\subset \tilde S$ is a subset.
If $f\in BG_n$ has matrix form $(f_{ij})$ (i.e.\ $\bous_n(f) = (f_{01}, \dots, f_{0n})$), then the domain of $f$ is 
\[
	\dom{f} = S \cap f_{10}(S) \cap \dots \cap f_{n0}(S).
\]
This is because $\dom{f_{0i}} =S\cap f_{0i}^{-1}(S)= S \cap f_{i0}(S)$. 
In the special case where $S$ is a Sylow $p$-subgroup of a finite group $G$ and the action is conjugation, the domain of $f$ is an intersection of Sylow subgroups and is typically denoted by $S_f$ \cite[p.\ 65]{Chermak:FSL}.
\end{example}

\begin{lemma}[Domains of identities] 
\label{domains of identities}
Let $\rho\colon E \to L$ be surjective, star injective map of partial groupoids. 
The domains of identities of $a \in L_0$ are distinct for distinct $a$ and form a partition of $E_0$. 
Every proper closed subset of $E_0$ is a subset of some $\dom{\id_a}$. 
\end{lemma}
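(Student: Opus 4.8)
The plan is to identify each $\dom{\id_a}$ with the degree-zero fiber $\rho_0^{-1}(a) \subseteq E_0$ and, more generally, to observe that the domain of any simplex is contained in a single such fiber. All three assertions then follow from elementary properties of fibers and of the closure operator.

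First I would compute $\dom{\id_a}$. Since $\id_a = s_0 a$ arises from the surjection $s_0 \colon [1] \to [0]$ (which fixes $0$), \cref{lem 00 containment} gives $\dom{\id_a} = \dom{a}$; and from the degree-zero clause of the action — namely that $a$ acts on $x$ exactly when $\rho(x) = a$ — we conclude $\dom{\id_a} = \rho_0^{-1}(a)$. (Directly: $s_0 x$ is a lift of $\id_a$ with source $x$ whenever $\rho(x) = a$, and conversely any lift $\tilde g$ of $\id_a$ with source $x$ has $\rho(x) = d_1 \rho(\tilde g) = d_1 \id_a = a$.) The same reasoning yields the key general fact: if $f \in L_n$ has source vertex $a_f \coloneq (d_\top)^n f$, then any lift $\tilde f$ with source $x \in \dom{f}$ satisfies $\rho(x) = (d_\top)^n f = a_f$, so $\dom{f} \subseteq \rho_0^{-1}(a_f) = \dom{\id_{a_f}}$.

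With these in hand the three claims are immediate. As $\rho$ is surjective, so is $\rho_0$, hence each fiber $\dom{\id_a} = \rho_0^{-1}(a)$ is nonempty; the fibers are pairwise disjoint and cover $E_0$, so they are distinct for distinct $a$ and partition $E_0$. For the final statement, let $C \subsetneq E_0$ be proper and closed. Since $\cl(C) = \bigcap_{C \subseteq \dom{f}} \dom{f}$ equals $C \neq E_0$, the indexing family cannot be empty (an empty intersection would return all of $E_0$), so there is some $f$ with $C \subseteq \dom{f}$; the general fact then gives $C \subseteq \dom{f} \subseteq \dom{\id_{a_f}}$. The one place demanding care is exactly this last extraction of a single $f$ with $C \subseteq \dom{f}$, which hinges on reading the closure formula correctly together with the convention that the empty intersection is all of $E_0$; everything else is bookkeeping with face maps and with the compatibility of lifts under $\rho$.
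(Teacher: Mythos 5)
Your proof is correct, and the first two assertions are handled exactly as in the paper: $\dom{\id_a}=\rho^{-1}(a)$, and surjectivity of $\rho$ does the rest. For the final assertion your route differs slightly from the paper's. The paper first invokes \cref{cap one simplices} to write a proper closed set $C$ as a \emph{nonempty} intersection of domains of $1$-simplices, and then applies \cref{lem 00 containment} to $s_0d_1f$ for any $f\colon a\to b$ appearing in that intersection. You instead argue directly from the defining formula $\cl(C)=\bigcap_{C\subseteq\dom{f}}\dom{f}$: properness forces the indexing family to be nonempty (else the empty intersection is $E_0$), and any $f$ in the family has $\dom{f}\subseteq\rho^{-1}(a_f)=\dom{\id_{a_f}}$ because every lift of $f$ starting at $x$ witnesses $\rho(x)=(d_\top)^n f$. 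This is the same underlying mechanism (it is \cref{lem 00 containment} applied to the constant map $[1]\to[n]$ at $0$), but your version buys something small: \cref{cap one simplices} carries the hypothesis that $E$ is a groupoid, which the lemma as stated does not assume, so your direct argument with arbitrary $n$-simplices is the more self-contained and marginally more general one. Your nonemptiness step, which you correctly flag as the delicate point, is sound given the standard convention that an empty intersection inside $\ps(E_0)$ is $E_0$.
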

\begin{proof}
Since $\dom{\id_a}$ is just the fiber $\rho^{-1}(a)$, the first statement follows from the surjectivity of $\rho$. 
By \cref{cap one simplices} a proper closed subset $C$ is the intersection of a nonempty collection of domains of $1$-simplicies. 
For any $f\colon a \to b$ so appearing, $C$ is a subset of $\dom{\id_a} = \dom{s_0d_1f}$ by \cref{lem 00 containment}.
\end{proof}

\begin{proposition}\label{prop empty not closed}
Suppose $G$ is a finite group and $\rho \colon E \to BG$ is a characteristic action.
In the associated closure space, the empty set is not closed.
\end{proposition}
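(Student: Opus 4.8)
The plan is to show directly that $\cl(\emptyset)$ is nonempty. By \cref{def closure operator}, every domain contains $\emptyset$, so $\cl(\emptyset)=\bigcap_f \dom{f}$, the intersection over \emph{all} simplices $f$ of $BG$. Since $\rho$ is characteristic, $E$ is a groupoid, so \cref{cap one simplices} applies and $\cl = \cl_1$; hence
\[
  \cl(\emptyset) = \bigcap_{g \in BG_1} \dom{g} = \bigcap_{g \in G} \dom{g}.
\]
This reduces the problem to producing a single element of $E_0$ lying in the domain of every group element.

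The heart of the argument, and the step where finiteness of $G$ is essential, is to package all of $G$ into one word. I would enumerate $G = \{h_0, h_1, \dots, h_{m-1}\}$ with $h_0 = 1$ and $m = |G|$, and set $g_i = h_i h_{i-1}^{-1}$ for $1 \le i \le m-1$. Then $w = (g_1, \dots, g_{m-1})$ is a potentially composable word (automatic, as $BG$ has a single object) whose partial products telescope: $g_i \cdots g_1 = h_i$ for each $i$. Because $\rho$ is a characteristic action and every tuple of elements of $G$ is already a simplex of $BG$, the word $w$ must act on some $x_0 \in E_0$; concretely, there is a lift $\tilde w \in E_{m-1}$ with source $x_0$.

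Finally I would read off all the domains from this single lift. Since $E$ is a groupoid, the $(0,i)$-edge $\epsilon_{0i}^*\tilde w$ of $\tilde w$ is an honest edge of $E$ with source $x_0$, and $\rho(\epsilon_{0i}^*\tilde w) = \epsilon_{0i}^* w = w_{0i}$ is the matrix entry $g_i \cdots g_1 = h_i$ (recall $\bous_{m-1}(w) = (g_1, g_2 g_1, \dots)$). Thus $h_i$ acts on $x_0$, i.e.\ $x_0 \in \dom{h_i}$, for every $i$; together with the trivial membership $x_0 \in \dom{h_0}$ (as $h_0 = 1$), this gives $x_0 \in \bigcap_{g \in G}\dom{g} = \cl(\emptyset)$. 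Hence $\cl(\emptyset) \ne \emptyset$ and $\emptyset$ is not closed. The only real obstacle is the middle step: recognizing that the characteristic property, applied to the cleverly chosen finite word $w$ whose partial products exhaust $G$, forces the existence of a common point on which all of $G$ acts. Everything else is bookkeeping with the Bousfield--Segal (matrix) description of domains, and finiteness of $G$ enters solely to keep $w$ a word of finite length.
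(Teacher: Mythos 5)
Your proof is correct and follows essentially the same route as the paper: both arguments package all of $G$ into a single simplex of $BG$ (you via the telescoping spine, the paper via the element $g$ with $\bous_n(g)=(g_1,\dots,g_n)$ listing all group elements), use surjectivity of $\rho$ to lift it, and conclude that the intersection of all one-simplex domains — the minimal closed set by \cref{cap one simplices} — is nonempty. The only cosmetic difference is that you re-derive the relevant inclusion $\dom{g}\subseteq\bigcap_i\dom{g_{0i}}$ by hand from the lift, where the paper cites \cref{bous domain lemma}.
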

\begin{proof}
Suppose $G$ has $n$ elements $g_1, \dots, g_n$.
Let $g\in BG_n$ be the unique element with $\bous_n(g) = (g_1, \dots, g_n)$.
Since $\rho$ is surjective, $\dom{g}$ is nonempty.
But $\dom{g} = \dom{g_1} \cap \cdots \cap \dom{g_n}$ by \cref{bous domain lemma}, and this is the minimal closed set by \cref{cap one simplices}.
\end{proof}

\begin{proposition}\label{prop empty closed}
Suppose $L$ is a partial groupoid but not a group and $\rho \colon E \to L$ is a characteristic action.
In the associated closure space, the empty set is closed.
\end{proposition}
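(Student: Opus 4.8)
The plan is to argue by contradiction and show that a point of $\cl(\emptyset)$ would force $L$ to be the nerve of a group. Unwinding \cref{def closure operator}, the set $\cl(\emptyset) = \bigcap_f \dom{f}$ is the intersection of the domains of \emph{all} simplices of $L$, so a point $x \in \cl(\emptyset)$ is precisely a point of $E_0$ on which every simplex of $L$ acts. I would therefore assume such an $x$ exists and derive a contradiction.

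First I would use the identities to pin down $L_0$. For each $a \in L_0$ the simplex $\id_a$ acts on $x$, so $x \in \dom{\id_a}$; but by \cref{domains of identities} these sets are the pairwise disjoint fibers $\rho^{-1}(a)$. Since $x$ lies in every one of them, there can be only one, so $L_0 = \{\rho(x)\}$ is a single point and $L$ is reduced. In particular the defining constraint $d_1(f_1) = \cdots = d_1(f_n)$ for $\bswords(L)_n$ is vacuous, giving $\bswords(L)_n = (L_1)^{\times n}$ for all $n$.

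Next I would promote ``$x$ meets every domain'' to surjectivity of the Bousfield--Segal map. Fix $n \geq 1$ and an arbitrary tuple $(g_1, \dots, g_n) \in (L_1)^{\times n}$. Each $g_i$ is a simplex acting on $x$, so $x \in \bigcap_{i=1}^n \dom{g_i}$, which is therefore nonempty. Because $\rho$ is a characteristic action (so $E$ is a groupoid and $\rho$ is star injective), \cref{bous domain lemma}\eqref{item nonempty} then produces a $g \in L_n$ with $\bous_n(g) = (g_1, \dots, g_n)$. Hence $\bous_n$ is surjective for every $n$; being injective by spininess (\cref{bous spiny Segal}), it is a bijection, so $\bous \colon L \to \bswords L$ is an isomorphism. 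By the second part of \cref{bous spiny Segal}, $L$ is the nerve of a groupoid, and as $L_0$ is a point this groupoid has a single object, i.e.\ $L$ is a group, contradicting the hypothesis.

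I expect the middle step to be the crux. The naive observation ``every edge of $L$ acts on $x$'' is not by itself strong enough to detect the group condition; what matters is the jump to composability of \emph{arbitrary} length-$n$ tuples, and this is exactly what \cref{bous domain lemma}\eqref{item nonempty} delivers, via the fact that the domain of a starry word equals the intersection of the domains of its entries. The reduction of $\cl(\emptyset)$ to an intersection over $1$-simplices could alternatively be front-loaded using \cref{cap one simplices}, but the contradiction argument above needs only the raw definition together with \cref{bous domain lemma}.
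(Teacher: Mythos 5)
Your proof is correct and is essentially the paper's argument run in the contrapositive direction: the paper uses \cref{bous domain lemma}\eqref{item nonempty} to exhibit an explicitly empty intersection $\dom{g_1}\cap\cdots\cap\dom{g_n}$ from a tuple outside the image of $\bous_n$, while you assume a point in $\cl(\varnothing)$ and use the same lemma to force $\bous$ to be an isomorphism, hence $L$ a group. Your version usefully spells out the reduction to a single vertex via \cref{domains of identities}, which the paper leaves implicit, but the key ingredients are identical.
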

\begin{proof}
If $L = \varnothing$, then $E$ is also empty and thus $\varnothing$ is the unique closed set.
If $L$ is not a group and not empty, then there is an integer $n\geq 2$ and an element $(g_1, \dots, g_n) \in L_1^{\times n}$ which is not in the image of $\bous_n$.
Then by \cref{bous domain lemma}\eqref{item nonempty}, the closed set $\dom{g_1} \cap \cdots \cap \dom{g_n}$ must be empty.
\end{proof}

The situation is more subtle for infinite groups, as the empty set may or may not be closed.
For example, the closure space associated to the identity map $G \to G$ has only a single closed set, which is not the empty set.
But each infinite group also admits a characteristic action with empty closed.

\begin{example}[Dissolution]
Suppose $G$ is a group, and let $\varphi \colon EG \to BG$ be the discrete opfibration associated with the left action of $G$ on itself by multiplication.
Unspooling, the objects of the groupoid $EG$ are the elements of $G$, and there is a unique morphism between any two objects.
For any $\sigma \in BG_n$, let $\tilde \sigma \in EG_n$ be the unique lift starting at the identity element $e\in G = EG_0$.
Let $E_\sigma \subseteq EG$ be the full subgroupoid spanned by the objects appearing along $\tilde \sigma$.
The groupoid $E_\sigma$ is finite.
The composite $E_\sigma \to EG \to BG$ is star injective, and $\sigma$ is in its image.
The star injective map 
\[
	\rho \colon \coprod_{n\geq 0} \coprod_{\sigma \in BG_n} E_\sigma \to BG
\]
is thus a characteristic action.
If $x\in \ob E_\sigma$, then since $E_\sigma$ is a finite groupoid, $x$ can be a member of $\dom{g}$ for at most finitely many $g\in BG_1 = G$.
If $G$ is infinite, then $\bigcap_{g\in G} \dom{g}$ is empty, hence $\varnothing$ is closed.
\end{example}

\section{Helly independence}
\label{sec helly independence}

In this section, we review the definitions and various characterizations of the Helly number of a closure space in the forms that are needed later.
All this is very well established, and we do not claim any originality. 
A textbook account is \cite[Ch.\ II]{vandevel:TCS}, and we adopt a 
definition of Helly core that agrees with Diognon--Reay--Sierksma \cite{DoignonReaySierksma:TGHNCS} and Jamison-Waldner \cite{Jamison-Waldner1981}. 

\subsection{Cores and the Helly number}
Throughout this section, 
\[
\text{$S$ is a closure space with closure operator $\cl$, and $\cl(\varnothing) = \varnothing$}. 
\]
Also, $\closeds \subseteq \subsets = \ps(S)$ is its lattice of closed subsets, and $\closeds'$ and $\subsets'$ denote nonempty subsets, for short. 
By a family of subsets of $S$, we will mean an indexed family $\ul A = (A_i)_{i \in I}$ throughout. 
In other words, a family is a function $\ul A \colon I \to \subsets$.
Its \emph{size} is the cardinality of $I$. 
A \emph{subfamily} of a family $\ul A$ is a family $\ul B$ of the form $\ul A|_J$ for some subset $J \subseteq I$. 
We use $J \subset_1 I$ to indicate that $J$ is a subset of $I$ with $J = I\setmin i$ for some $i \in I$, and use $J \subseteq_1 I$ to mean that $J \subset_1 I$ or $J = I$. 

\begin{definition}\label{def core independent}
Let $S$ be a closure space and $\ul A = (A_i)_{i \in I}$ a family of subsets of $S$. 
The \emph{core} of $\ul{A}$ is the subset
\[
	\core(\ul{A}) = \bigcap_{J \subseteq_1 I}  \cl\left(\bigcup_{j \in J} A_j\right).
\]
If $\ul A$ is nonempty, this is the same as
$\bigcap_{i \in I} \cl\left(\bigcup_{j \neq i} A_j\right).$ 
\end{definition}

\begin{example}\label{ex small cardinalities}
The cores of families of sizes $0$, $1$, $2$, and $3$ are $\cl(\varnothing)$, $\cl(\varnothing)$, $\cl(A_2) \cap \cl(A_1)$,
and $\cl(A_2 \cup A_3) \cap \cl(A_1 \cup A_3) \cap \cl(A_1 \cup A_2)$, respectively.
\end{example}

\begin{definition}
\label{helly independence}
A family $\ul A$ of nonempty subsets of $S$ is \emph{Helly independent} if $\core(\ul A) = \varnothing$, and \emph{Helly dependent} otherwise.
The \emph{Helly number} of $S$ is the maximal size $h(S) = h(\cl) = h(\closeds) \in \mathbb{N}$ of a finite Helly independent family, if this exists. 
If there are independent families of arbitrarily large finite size then we will indicate this by setting $h(S) = \infty$.
\end{definition}

\begin{remark}
\label{remark edge cases}
The core of a family is a closed subset and its definition makes sense in any closure space. 
In that generality, Helly independent families exist if and only if the empty set is closed, which explains the standing assumption.
Every family of subsets of $S$ of size $0$ or $1$ is independent by \cref{ex small cardinalities}.
The empty space $S = \varnothing$ is the unique closure space with Helly number $0$. 
\end{remark}

We next examine two basic monotonicity properties of $\core(-)$ and invariance under entrywise closure, which are valid in any closure space. 

\begin{lemma}[Monotonicity of cores]\label{core monotone}
Let $\ul A \in \subsets^I$ and $\ul B \in \subsets^K$ be families. 
\begin{enumerate}[label=\textup{(\arabic*)}, ref=\arabic*]
\item\label{subfamily core} If $\ul B$ is a subfamily of $\ul A$, then $\core(\ul B) \subseteq \core(\ul A)$. 
In particular, a subfamily of an independent family is independent. 
\item\label{subset core} If $I = K$ and $\ul B \leq \ul A$ in $\subsets^I$, then $\core(\ul B) \subseteq \core(\ul A)$. 
In particular, if $\ul B \leq \ul A$ and $\ul A$ is independent, then $\ul B$ is independent. 
\end{enumerate}
\end{lemma}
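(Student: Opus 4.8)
The plan is to reduce both parts to monotonicity of the closure operator, which is built into the definition of a closure space, together with one small combinatorial observation about index sets. I would dispatch part \eqref{subset core} first, as it is the more transparent. Since $\ul B \leq \ul A$ means $B_i \subseteq A_i$ for every $i \in I = K$, for each index set $J \subseteq_1 I$ we have $\bigcup_{j \in J} B_j \subseteq \bigcup_{j \in J} A_j$, and applying the monotone operator $\cl$ gives $\cl(\bigcup_{j \in J} B_j) \subseteq \cl(\bigcup_{j \in J} A_j)$. Intersecting these inclusions over all $J \subseteq_1 I$ yields $\core(\ul B) \subseteq \core(\ul A)$ directly from \cref{def core independent}.

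For part \eqref{subfamily core}, write $\ul B = \ul A|_K$ with $K \subseteq I$, so that $B_j = A_j$ for all $j \in K$. Since $\core(\ul A)$ is the intersection of the sets $\cl(\bigcup_{j \in J'} A_j)$ over $J' \subseteq_1 I$, it suffices to show that $\core(\ul B)$ is contained in each such term. The crucial observation is that $K \cap J' \subseteq_1 K$ for every $J' \subseteq_1 I$: if $J' = I$, or if $J' = I \setmin i$ with $i \notin K$, then $K \cap J' = K$; while if $J' = I \setmin i$ with $i \in K$, then $K \cap J' = K \setmin i \subset_1 K$. Consequently $\cl(\bigcup_{j \in K \cap J'} A_j)$ is one of the defining terms of $\core(\ul B)$, so $\core(\ul B) \subseteq \cl(\bigcup_{j \in K \cap J'} A_j)$; and since $K \cap J' \subseteq J'$, monotonicity of $\cl$ gives $\cl(\bigcup_{j \in K \cap J'} A_j) \subseteq \cl(\bigcup_{j \in J'} A_j)$. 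Chaining these two inclusions and intersecting over all $J' \subseteq_1 I$ produces $\core(\ul B) \subseteq \core(\ul A)$.

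The two \emph{in particular} clauses then follow formally: a subfamily of, or a termwise-smaller family than, a family of nonempty subsets is itself a family of nonempty subsets, and the containment of cores just established forces its core to be empty whenever that of $\ul A$ is, i.e.\ it is Helly independent in the sense of \cref{helly independence}.

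I do not expect a genuine obstacle here; the single point requiring care is the bookkeeping in part \eqref{subfamily core} verifying that $K \cap J'$ always has the form $\subseteq_1 K$, since this is exactly what licenses invoking the corresponding defining term of $\core(\ul B)$. I would also sanity-check the degenerate cases (empty families, or $K = \varnothing$) against \cref{ex small cardinalities,remark edge cases}, but these are subsumed by the same argument since all the relevant unions and intersections behave correctly on empty index sets.
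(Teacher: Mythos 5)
Your proof is correct and follows essentially the same route as the paper's: part (2) is immediate from monotonicity of $\cl$ term by term, and part (1) rests on the observation that $J' \cap K \subseteq_1 K$ for each $J' \subseteq_1 I$, so each defining term of $\core(\ul A)$ contains a defining term of $\core(\ul B)$ — your case analysis just makes explicit what the paper asserts in one line. The only quibble is in your justification of the second \emph{in particular} clause: a termwise-smaller family of a family of nonempty sets need not consist of nonempty sets (nonemptiness of $\ul B$ is an implicit hypothesis there, as in the paper's application where the $B_i$ are chosen singletons), but this does not affect the core inclusion, which is the actual content.
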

\begin{proof}
\eqref{subfamily core}: 
For each $J \subseteq_1 I$, the intersection $J \cap K \subseteq_1 K$ and 
\[
	\cl \left( \bigcup_{j \in J \cap K} B_j \right) \subseteq \cl \left( \bigcup_{j \in J} A_j \right). 
\]
This shows that each term in the intersection for $\core(\ul A)$ contains some such term in $\core(\ul B)$,
and thus $\core(\ul B) \subseteq \core(\ul A)$. 

\eqref{subset core}: Here both intersections run over the same $J \subseteq_1 I$, and by assumption $B_j \subseteq A_j$ for $j \in J$. So the inclusion of cores follows from the monotonicity of $\cl$. 
\end{proof}

\begin{lemma}[Invariance under closure] 
\label{clo core}
If $\ul A$ is any family and $\cl(\ul A) = (\cl(A_i))_{i \in I}$, then $\core(\cl(\ul A)) = \core(\ul A)$. 
\end{lemma}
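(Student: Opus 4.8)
The plan is to reduce the equality of cores to a termwise comparison, then settle each term using only monotonicity and idempotence of $\cl$. Both $\core(\ul A)$ and $\core(\cl(\ul A))$ are intersections indexed by the same collection of subsets $J \subseteq_1 I$, with respective terms $\cl\bigl(\bigcup_{j \in J} A_j\bigr)$ and $\cl\bigl(\bigcup_{j \in J} \cl(A_j)\bigr)$. So it suffices to prove, for each fixed $J \subseteq_1 I$, the single set equality $\cl\bigl(\bigcup_{j \in J} A_j\bigr) = \cl\bigl(\bigcup_{j \in J} \cl(A_j)\bigr)$; intersecting over all such $J$ then yields the lemma.

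For the forward inclusion I would use that $A_j \subseteq \cl(A_j)$ for each $j$, so that $\bigcup_{j \in J} A_j \subseteq \bigcup_{j \in J} \cl(A_j)$, and apply monotonicity of $\cl$ to obtain $\cl\bigl(\bigcup_{j \in J} A_j\bigr) \subseteq \cl\bigl(\bigcup_{j \in J} \cl(A_j)\bigr)$. For the reverse inclusion, I would note that for each $j \in J$ we have $A_j \subseteq \bigcup_{j \in J} A_j$, whence $\cl(A_j) \subseteq \cl\bigl(\bigcup_{j \in J} A_j\bigr)$ by monotonicity; taking the union over $j \in J$ gives $\bigcup_{j \in J} \cl(A_j) \subseteq \cl\bigl(\bigcup_{j \in J} A_j\bigr)$. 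Applying $\cl$ to both sides and using monotonicity once more, followed by idempotence $\cl(\cl(-)) = \cl(-)$, produces $\cl\bigl(\bigcup_{j \in J} \cl(A_j)\bigr) \subseteq \cl\bigl(\bigcup_{j \in J} A_j\bigr)$, completing the term.

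There is no genuine obstacle here: the statement is a direct consequence of the closure-operator axioms, and the only point requiring a little care is bookkeeping with the indexing convention $J \subseteq_1 I$, including the empty union in the term $J = \varnothing$, which is handled uniformly since $\cl(\varnothing) = \varnothing$ occurs identically on both sides. One could alternatively package the two inclusions by the single observation that $\bigcup_{j \in J} A_j$ and $\bigcup_{j \in J} \cl(A_j)$ are mutually contained after one application of $\cl$, so they share the same closure; I would likely present the two inclusions explicitly since that form generalizes cleanly and makes the use of idempotence transparent.
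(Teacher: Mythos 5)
Your proof is correct and takes essentially the same route as the paper: the paper reduces the lemma to the identity $\cl(\bigcup A_j) = \cl(\bigcup \cl(A_j))$ and cites it as a standard fact about closure operators, while you supply the (routine) two-inclusion verification of that identity. The only cosmetic quibble is that the $J = \varnothing$ term needs no appeal to $\cl(\varnothing) = \varnothing$, since both sides are literally $\cl(\varnothing)$.
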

\begin{proof}
This comes down to the equality $\cl(\bigcup A_j) = \cl(\bigcup \cl(A_j))$, a standard fact about closure operators. 
\end{proof}

\begin{corollary}
A Helly independent family has no duplicates. 
If there is a size $k$ Helly independent family, then there is a size $k$ Helly independent family of singletons, as well as a size $k$ Helly independent family of closed subsets. 
\label{cor clo monotonicity}
\end{corollary}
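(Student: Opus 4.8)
The plan is to handle the three assertions in turn, leaning on the two monotonicity lemmas and keeping the index set fixed throughout so that the size is never disturbed.

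First I would prove that a Helly independent family $\ul A = (A_i)_{i\in I}$ has no duplicates, by contraposition. Suppose $A_{i_0} = A_{i_1}$ for some $i_0 \neq i_1$ in $I$; such a family is nonempty (indeed of size at least two), so the formula $\core(\ul A) = \bigcap_{i\in I} \cl(\bigcup_{j\neq i} A_j)$ applies. I claim the nonempty set $A_{i_0}$ lies in $\core(\ul A)$. For $i \neq i_0$, the set $A_{i_0}$ is one of the summands of $\bigcup_{j\neq i} A_j$; for $i = i_0$, the duplicate $A_{i_1}$ survives in $\bigcup_{j\neq i_0} A_j$ and equals $A_{i_0}$. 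In either case $A_{i_0} \subseteq \cl(\bigcup_{j \neq i} A_j)$, so $A_{i_0} \subseteq \core(\ul A)$. Since $A_{i_0} \neq \varnothing$, the family is Helly dependent.

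Next, for the singletons, given a size $k$ Helly independent family $\ul A$ I would choose $a_i \in A_i$ for each $i$ (possible as the $A_i$ are nonempty) and set $\ul B = (\{a_i\})_{i\in I}$. Then $\ul B \leq \ul A$ entrywise, so \cref{core monotone}\eqref{subset core} gives $\core(\ul B) \subseteq \core(\ul A) = \varnothing$, whence $\ul B$ is Helly independent. It is indexed by the same $I$, hence has size $k$, and by the first part it has no duplicates, so the $a_i$ are distinct and $\ul B$ is a genuine family of $k$ singletons.

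Finally, for closed subsets I would apply \cref{clo core} to $\ul A$ directly: the entrywise closure $\cl(\ul A) = (\cl(A_i))_{i\in I}$ satisfies $\core(\cl(\ul A)) = \core(\ul A) = \varnothing$, and each $\cl(A_i) \supseteq A_i$ is nonempty, so $\cl(\ul A)$ is a Helly independent family of closed sets of size $k$. I expect no genuine obstacle: the entire argument reduces to invoking the correct monotonicity statement while holding the index set fixed, the only mild care being the case split $i = i_0$ versus $i \neq i_0$ in the first part.
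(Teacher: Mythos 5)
Your proof is correct and follows essentially the same route as the paper: the singleton and closed-set parts are verbatim applications of \cref{core monotone}\eqref{subset core} and \cref{clo core}, exactly as in the paper's proof. The only (harmless) variation is in the no-duplicates claim, where the paper passes to the two-element subfamily and reads off $\cl(A_i)\cap\cl(A_j)=\varnothing$ from \cref{core monotone}\eqref{subfamily core} and \cref{ex small cardinalities}, whereas you show directly that the duplicated nonempty member lies in every term of the intersection defining $\core(\ul A)$; both arguments are valid and of the same length.
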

\begin{proof}
If $A_i$ and $A_j$ are two members of an independent family with $i \neq j$, then the corresponding two element subfamily is independent by \cref{core monotone}\eqref{subfamily core}, and so $\cl(A_i) \cap \cl(A_j) = \varnothing$.
The nonemptiness of the sets then implies $A_i \neq A_j$. 
The next statement follows from \cref{core monotone}\eqref{subset core} by selecting elements $a_i \in A_i$ for each $i \in I$,
while the last is a consequence of \cref{clo core}. 
\end{proof}

\begin{remark}
\label{def usual helly number}
If interested only in the Helly number itself, one could just work with subsets of $S$ instead of indexed families by \cref{cor clo monotonicity}. 
In this context (\cref{def core independent} restricted to indexed families of singletons with no duplicates), the core of a subset $A$ of $S$ is
\[
\core(A) = \bigcap_{B \subseteq_1 A} \cl(B) = \cl(A) \cap \bigcap_{a \in A} \cl(A \setmin a), 
\]
and the Helly number is the maximal size of a subset $A$ of $S$ with empty core.
\end{remark}

\subsection{Helly critical families and the classical Helly number} 
The original meaning of the Helly number is in a sense dual to \cref{helly independence}. 
Classically, the Helly number of a closure space $S$ is the smallest $h \in \mathbb{N}$ such that whenever $\ul A$ is a finite family of at least $h+1$ closed subsets of $S$ such that each subfamily of size $h$ has nonempty intersection, then $\ul A$ has nonempty intersection.
Families $\ul A$ of largest size that refute the above condition are called \emph{critical}. 
The following material is based on an interpretation of 
\cite[Lemma~3.1]{Sierksma1975}; the terminology is inspired by \cite{ConfortiDiSumma:MSFSHN}. 

We now restrict attention to the lattice $\closeds$ of closed sets. 
It will be convenient to use the notation $\vee$ for the closure of a union of closed sets, that is
\[
  \bigvee_{k\in K} A_k \coloneq \cl \left(\bigcup_{k\in K} A_k\right).
\]
We will also write $\wedge$ for intersection of closed sets.
These are just the usual join and meet operations in the complete lattice $\closeds$.

\begin{definition}\label{def helly crit}
A family $\underline{A} = (A_i)_{i\in I} \in \closeds^I$ of closed subsets is said to be \emph{Helly critical} if $I = \varnothing$, or 
\[
\bigwedge_{i\in I} A_i = \varnothing \text{\quad and\quad} \bigwedge_{j \in J} A_j \neq \varnothing \text{\quad for each } J \subset_1 I. 
\]
\end{definition}

By \cref{lem critical no repeated} below, there can be no containment between members of a critical family.
If $S$ is nonempty, then $(\varnothing)$ is the unique Helly critical family of size 1.
A family of nonempty closed sets of size 2 is Helly critical if and only if it is Helly independent (see \cref{ex small cardinalities}).
Independent and critical families are not the same in general, but are dual. 

\begin{definition}
Fix a nonempty indexing set $I$, and consider the lattice $\closeds^I$ of $I$-indexed families of closed sets.
Define maps 
\[
\sepun,\, \sepin \colon \closeds^I \to \closeds^I \text{\quad and \quad} \totin \colon \closeds^I \to \closeds
\]
by 
\[
\sepun(\ul A)_i = \bigvee_{j \in I\setmin i} A_j, \quad \sepin(\ul A)_i = \bigwedge_{j \in I \setmin i} A_j, \quad \text{and} \quad \totin(\ul A) = \bigwedge_{i \in I} A_i. 
\]
\end{definition}

In terms of these maps, the nonempty Helly independent families of closed sets are precisely those $\underline{A} \in (\closeds')^I \subseteq \closeds^I$ such that $\totin \sepun(\underline{A}) = \varnothing$.
The nonempty Helly critical families are those $\underline{A} \in \closeds^I$ with $\totin(\underline{A}) = \varnothing$ and $\sepin(\underline{A}) \in (\closeds')^I$. 

\begin{lemma}\label{union intersection adjoint}
The maps $\sepun, \sepin$, and $\totin$ are all monotone, and $\sepun$ is left adjoint to $\sepin$.
\end{lemma}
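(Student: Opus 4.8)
The plan is to verify monotonicity by inspection and then establish the adjunction by unpacking both sides of the defining bijection into one and the same elementary condition on pairs of indices.

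For monotonicity, observe that each of $\sepun$, $\sepin$, and $\totin$ is assembled entirely from the join $\vee$ and meet $\wedge$ of the complete lattice $\closeds$, both of which are monotone, together with reindexing (dropping one coordinate). Hence if $\ul A \leq \ul B$ in $\closeds^I$, meaning $A_i \leq B_i$ for every $i$, then for each $i$ we get $\bigvee_{j \neq i} A_j \leq \bigvee_{j \neq i} B_j$ and $\bigwedge_{j \neq i} A_j \leq \bigwedge_{j \neq i} B_j$, and also $\bigwedge_{i} A_i \leq \bigwedge_{i} B_i$. This yields monotonicity of all three maps at once.

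For the adjunction $\sepun \dashv \sepin$, I would fix $\ul A, \ul B \in \closeds^I$ and show that each of $\sepun(\ul A) \leq \ul B$ and $\ul A \leq \sepin(\ul B)$ is equivalent to the condition
\[
(\ast) \qquad A_j \leq B_i \quad\text{for every pair } i \neq j \text{ in } I.
\]
The inequality $\sepun(\ul A) \leq \ul B$ asserts $\bigvee_{j \neq i} A_j \leq B_i$ for each $i$; since $B_i$ is closed and the join is the closure of a union, this holds if and only if $\bigcup_{j \neq i} A_j \subseteq B_i$, i.e.\ $A_j \leq B_i$ for all $j \neq i$, and letting $i$ vary gives exactly $(\ast)$. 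Dually, $\ul A \leq \sepin(\ul B)$ asserts $A_j \leq \bigwedge_{i \neq j} B_i$ for each $j$, and since the meet is the greatest lower bound this holds if and only if $A_j \leq B_i$ for all $i \neq j$, again recovering $(\ast)$. As both inequalities in $\closeds^I$ translate to the single condition $(\ast)$, they are equivalent, which is precisely the adjunction $\sepun \dashv \sepin$.

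There is essentially no obstacle here; the only point that is not purely formal is the passage from $\bigvee_{j \neq i} A_j \leq B_i$ to the family of inequalities $A_j \leq B_i$, which uses that the upper bound $B_i$ is closed so that replacing the union by its closure on the left is harmless. This is guaranteed because we work inside the lattice $\closeds$. What makes the adjunction fall out is simply that the two unpackings land on the same indexed condition relating $A_j$ and $B_i$ for distinct $i,j$.
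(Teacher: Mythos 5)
Your proof is correct. The only substantive difference from the paper's argument is the way the adjunction is exhibited: the paper constructs the unit and counit directly, checking $\ul A \leq \sepin\sepun(\ul A)$ and $\sepun\sepin(\ul A)\leq \ul A$ from the elementary inequalities $A_i \leq \bigvee_{k\neq j}A_k$ and $\bigwedge_{k\neq j}A_k \leq A_i$ (valid for $j\neq i$), whereas you verify the Galois-connection bi-implication $\sepun(\ul A)\leq \ul B \iff \ul A \leq \sepin(\ul B)$ by reducing both sides to the single pairwise condition $A_j\leq B_i$ for $i\neq j$. For posets these two characterizations of an adjunction are equivalent (and the triangle identities are automatic), so both routes are complete; yours is arguably the more symmetric presentation, since the two unit/counit inequalities appear as the two directions of one equivalence, while the paper's has the mild advantage of producing the comparison maps $\id\Rightarrow\sepin\sepun$ and $\sepun\sepin\Rightarrow\id$ explicitly, which is the form in which the adjunction is actually used in the proof of \cref{ind crit correspondence}. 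Your justification of the step $\bigvee_{j\neq i}A_j\leq B_i \iff (A_j\leq B_i \text{ for all } j\neq i)$ via closedness of $B_i$ is fine; one could equally invoke the universal property of the join in the complete lattice $\closeds$.
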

\begin{proof}
Monotonicity is clear.
To establish the adjunction $\sepun \dashv \sepin$, we show there are natural transformations $\id\Rightarrow \sepin\sepun$ and $\sepun \sepin \Rightarrow \id$.
As $A_i \leq \bigvee_{k \in I \setmin j} A_k$ for all $j \in I\setmin i$, 
\[
A_i \leq \bigwedge_{j \in I\setmin i}\,\, \bigvee_{k \in I \setmin j} A_k.
\]
But the right side is the $i$\textsuperscript{th} entry of $\sepin\sepun(\underline{A})$.
Hence $\underline{A} \leq \sepin\sepun(\underline{A})$.
Dually, since $\bigwedge_{k \in I \setmin j} A_k \leq A_i$ whenever $j \in I\setmin i$, we have 
\[  
\bigvee_{j \in I\setmin i}\, \bigwedge_{k \in I\setmin j} A_k \leq A_i.
\]
Thus $\sepun \sepin(\underline{A}) \leq \underline{A}$ for all $\underline{A} \in \closeds^I$.
\end{proof}

\begin{proposition}
\label{ind crit correspondence}
The map $\sepin$ takes Helly critical families to Helly independent families. 
The map $\sepun$ takes Helly independent families to Helly critical families.
\end{proposition}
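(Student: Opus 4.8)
The statement is Proposition~\ref{ind crit correspondence}, asserting that the maps $\sepin$ and $\sepun$ interchange Helly critical and Helly independent families. Let me sketch how I would prove both directions.

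The plan is to reduce the entire statement to the adjunction $\sepun \dashv \sepin$ established in \cref{union intersection adjoint}, together with the monotonicity of $\totin$ and the two reformulations recorded just above the proposition: a family $\underline A \in (\closeds')^I$ of closed sets is Helly independent exactly when $\totin\sepun(\underline A) = \varnothing$, while $\underline A \in \closeds^I$ is Helly critical exactly when $\totin(\underline A) = \varnothing$ and $\sepin(\underline A) \in (\closeds')^I$. Since the maps $\sepun, \sepin, \totin$ are only defined for nonempty $I$, I treat nonempty families throughout. The two inputs I would extract from \cref{union intersection adjoint} are the unit and counit inequalities $\underline A \leq \sepin\sepun(\underline A)$ and $\sepun\sepin(\underline A) \leq \underline A$, valid for all $\underline A \in \closeds^I$; these are precisely the two natural transformations produced in the proof of that lemma.

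For the first assertion, let $\underline A$ be Helly critical, so $\totin(\underline A) = \varnothing$ and $\sepin(\underline A) \in (\closeds')^I$. To conclude that $\sepin(\underline A)$ is Helly independent I must verify that it lies in $(\closeds')^I$, which is exactly the second half of criticality, and that $\totin\sepun\sepin(\underline A) = \varnothing$. For the latter I would apply the monotone map $\totin$ to the counit inequality $\sepun\sepin(\underline A) \leq \underline A$, obtaining $\totin\sepun\sepin(\underline A) \leq \totin(\underline A) = \varnothing$, so the composite is empty. Hence $\sepin(\underline A)$ is Helly independent.

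For the second assertion, let $\underline A$ be Helly independent, so $\underline A \in (\closeds')^I$ and $\totin\sepun(\underline A) = \varnothing$. To see that $\sepun(\underline A)$ is Helly critical I need $\totin(\sepun(\underline A)) = \varnothing$, which is exactly the independence hypothesis, together with $\sepin\sepun(\underline A) \in (\closeds')^I$. Here I would invoke the unit inequality $\underline A \leq \sepin\sepun(\underline A)$ entrywise: since each $A_i$ is nonempty, so is each entry of $\sepin\sepun(\underline A)$, giving $\sepin\sepun(\underline A) \in (\closeds')^I$. This completes criticality.

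I do not expect a serious obstacle: the proof is formal once \cref{union intersection adjoint} is in hand. The only points demanding care are bookkeeping ones — matching the notions of critical and independent to the $\totin, \sepin, \sepun$ language exactly as in the two reformulations, applying monotonicity of $\totin$ to the \emph{counit} inequality in the first statement (rather than the unit), and using the \emph{unit} inequality to propagate nonemptiness in the second. I would also note that the standing hypothesis $\cl(\varnothing) = \varnothing$ is what makes Helly independence a nonvacuous notion, though it is not otherwise invoked in this manipulation.
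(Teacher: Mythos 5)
Your proof is correct and follows essentially the same route as the paper: both directions reduce to the unit and counit inequalities of the adjunction $\sepun \dashv \sepin$ from \cref{union intersection adjoint} together with monotonicity of $\totin$, applied exactly as you describe. The paper phrases the first step as composing $\totin$ with the natural transformation $\sepun\sepin \Rightarrow \id$, which is the same as your application of $\totin$ to the counit inequality.
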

\begin{proof}
Suppose $\underline{A}$ is Helly critical.
By definition, we have $\sepin(\underline{A}) \in (\closeds')^K$.
\Cref{union intersection adjoint} supplies a natural transformation $\totin \sepun \sepin \Rightarrow \totin \id_{\closeds^K} = \totin$. 
We thus have
\[
  \totin \sepun (\sepin (\underline{A})) \leq \totin(\underline{A}) = \varnothing
\]
since $\underline{A}$ is Helly critical.
Thus $\totin \sepun (\sepin (\underline{A})) = \varnothing$, and we conclude that $\sepin(\underline{A})$ is Helly independent.

Now suppose $\underline{A} \in (\closeds')^K$ is Helly independent.
We wish to show that $\totin(\sepun(\underline{A})) = \varnothing$ and $\sepin(\sepun(\underline{A})) \in (\closeds')^K$. 
The first of these is immediate since $\underline{A}$ is Helly independent.
But we also know $\underline{A} \leq \sepin \sepun(\underline{A})$ by \cref{union intersection adjoint}, so each entry of $\sepin \sepun(\underline{A})$ is nonempty.
Thus $\sepun(\underline{A})$ is Helly critical.
\end{proof}

\begin{theorem}\label{helly rank v helly critical}
A finite Helly number is equal to the size of the largest critical family.
\end{theorem}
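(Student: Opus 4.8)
The plan is to read off the equality directly from the correspondence in \cref{ind crit correspondence}, using that the operators $\sepin$ and $\sepun$ are endomorphisms of $\closeds^I$ for a fixed indexing set $I$ and therefore preserve the size of a family. Write $h = h(S)$ for the (finite) Helly number. Before invoking the correspondence, I would note via \cref{cor clo monotonicity} that $h$ may equally be computed as the largest size of a Helly independent family drawn from $(\closeds')^I$, so that both sides of the desired equality concern families of nonempty closed sets.

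To exhibit a critical family attaining size $h$, I take a Helly independent family $\ul A \in (\closeds')^I$ with $|I| = h$. By \cref{ind crit correspondence}, $\sepun(\ul A)$ is Helly critical, and it is indexed by the same $I$, hence has size $h$; this gives the inequality in one direction. For the reverse bound, I let $\ul A$ be an arbitrary Helly critical family, of some size $n$. Again by \cref{ind crit correspondence}, $\sepin(\ul A)$ is Helly independent, and criticality guarantees $\sepin(\ul A) \in (\closeds')^I$, so it is a genuine family of nonempty sets of size $n$; thus $n \le h$. Since $\ul A$ was arbitrary, every critical family has size at most $h$, so in particular a largest one exists and its size equals $h$.

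The only delicate points are the degenerate sizes, but these are already built into the conventions: by \cref{def helly crit} the empty family and the singleton $(\varnothing)$ count as critical, and comparison with \cref{ex small cardinalities} shows that $\sepin$ and $\sepun$ send these to the expected independent families (for instance $(\varnothing) \mapsto (S)$, and conversely any nonempty $(A) \mapsto (\varnothing)$), so no separate case analysis is needed. I do not expect a genuine obstacle here: the substantive work is the adjunction $\sepun \dashv \sepin$ of \cref{union intersection adjoint} feeding \cref{ind crit correspondence}, and the present statement is the bookkeeping that packages it. The one thing to get right is the translation of ``Helly number'' into ``largest closed independent family'' via \cref{cor clo monotonicity}.
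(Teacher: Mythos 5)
Your proof is correct and follows essentially the same route as the paper's: both reduce the statement to \cref{ind crit correspondence} (noting that $\sepin$ and $\sepun$ preserve the indexing set, hence the size) together with \cref{cor clo monotonicity} to pass between independent families of arbitrary subsets and of closed sets. The paper's version is just a terser packaging of the same bookkeeping, with an added remark covering the $h=\infty$ case.
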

\begin{proof}
\cref{ind crit correspondence} implies that there is a Helly critical $I$-indexed family if and only if there is a Helly independent $I$-indexed family of closed sets, which holds if and only if there is a Helly independent $I$-indexed family by \cref{cor clo monotonicity}.  
This proves the assertion if $h < \infty$, and if $h = \infty$, then there are Helly critical families of size $n$ for each $n \in \mathbb{N}$ for the same reasons. 
\end{proof}

The following basic lemmas on critical families will be needed in \cref{sec:degree_as_helly_rank}. 

\begin{lemma}\label{lem critical no repeated}
Let $\underline{A} = (A_i)_{i \in I}$ be a family. 
If $A_k \subseteq A_\ell$ for some $k \neq \ell$, then $\underline{A}$ is not Helly critical.
In particular, a Helly critical family does not have duplicate elements, and it does not have $S$ as a member. 
\end{lemma}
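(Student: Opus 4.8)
The plan is to argue by contradiction straight from \cref{def helly crit}. Suppose $\underline{A} = (A_i)_{i \in I}$ is Helly critical yet $A_k \subseteq A_\ell$ for some $k \neq \ell$. Having two distinct indices forces $I \neq \varnothing$, so both defining conditions of criticality are in force: the total meet $\bigwedge_{i \in I} A_i$ is empty, while $\bigwedge_{j \in J} A_j \neq \varnothing$ for every $J \subset_1 I$.

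The single observation driving the proof is that $A_k \subseteq A_\ell$ makes $A_\ell$ redundant in the total meet. I would take $J = I \setmin \ell$, which is a legitimate $\subset_1 I$ subset, and note that $k \in J$ gives $\bigwedge_{j \in J} A_j \subseteq A_k \subseteq A_\ell$. Meeting with $A_\ell$ therefore changes nothing:
\[
  \bigwedge_{i \in I} A_i = A_\ell \wedge \bigwedge_{j \in J} A_j = \bigwedge_{j \in J} A_j .
\]
Criticality makes the left-hand side empty and the right-hand side nonempty, the required contradiction.

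The two ``in particular'' clauses then follow quickly. Duplicates are the case $A_k = A_\ell$, which gives $A_k \subseteq A_\ell$, so a critical family has no repeats. For the claim that $S$ is never a member, I would first extract from the main statement that a critical family admits no containment between distinct members; as every closed set lies in the top element $S$, a critical family having $S$ among its members can have no other member, so it has size at most one. The empty family has no members, and the only size-one critical family is $(\varnothing)$ with $S \neq \varnothing$ (recorded after \cref{def helly crit}), whose sole member $\varnothing$ differs from $S$. Hence $S$ is never a member.

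I do not expect a genuine obstacle here; the one point demanding care is the low-cardinality bookkeeping in the last clause, where the main containment argument needs two distinct indices and so cannot be applied to singleton or empty families directly — these must be dispatched by hand as above.
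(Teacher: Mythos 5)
Your proof is correct and follows essentially the same route as the paper's: both take $J = I\setmin\ell$ and use $A_k\subseteq A_\ell$ to show the full meet coincides with (the paper: contains) the meet over $J$, contradicting criticality, and both dispatch the "$S$ is never a member" clause by reducing to the size-one case $(\varnothing)$. Your handling of the low-cardinality bookkeeping is slightly more explicit than the paper's but amounts to the same argument.
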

\begin{proof}
With $J = I\setmin \ell$, we have $k \in J$ but $\ell \notin J$ so 
\[
	\bigwedge_{j \in J} A_j = A_k \wedge \bigwedge_{j \in J} A_j \subseteq A_\ell \wedge \bigwedge_{j \in J} A_j = \bigwedge_{i\in I} A_i.
\]
The family cannot be Helly critical, for that would require the left-hand side to be nonempty and the right-hand side to be empty.

If $S$ were a member of a critical $\ul A$, then $\ul A = (S)$ and so $S = \varnothing$. 
But $(\varnothing)$ is in fact not critical in this case. 
\end{proof}

\begin{lemma}\label{lem critical subfamily}
Let $S$ be a nonempty closure space and $\underline{A}$ an $I$-indexed family of closed sets with $I$ finite. 
If $\bigwedge_{i\in I} A_i = \varnothing$, then $\underline{A}$ has a Helly critical subfamily, nonempty if $\ul A$ is. 
\end{lemma}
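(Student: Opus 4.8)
The plan is to extract an inclusion-minimal subfamily whose total intersection is still empty, and then to show that minimality is exactly what forces it to be Helly critical. Before doing so, I would first record that the hypotheses already force $I \neq \varnothing$: the meet of the empty subfamily is the top element of $\closeds$, namely $S$ itself (since $\cl(S) = S$), and this is nonempty by assumption, so $\bigwedge_{i \in I} A_i = \varnothing$ cannot hold when $I = \varnothing$. This observation also accounts for the parenthetical conclusion ``nonempty if $\ul A$ is,'' and it is the single place where the standing hypothesis that $S$ is nonempty is used.

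Next I would introduce the collection $\mathcal{J} = \{\, J \subseteq I : \bigwedge_{j \in J} A_j = \varnothing \,\}$. This is nonempty, because $I \in \mathcal{J}$ by hypothesis, and it is finite because $I$ is. Hence I may choose $J_0 \in \mathcal{J}$ that is minimal with respect to inclusion (equivalently, of least cardinality). As noted above, the empty meet equals $S \neq \varnothing$, so $\varnothing \notin \mathcal{J}$ and therefore $J_0 \neq \varnothing$.

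Finally I would verify directly that the subfamily $\ul A|_{J_0} = (A_j)_{j \in J_0}$ satisfies the two defining conditions of \cref{def helly crit}. Its total meet $\bigwedge_{j \in J_0} A_j$ is empty because $J_0 \in \mathcal{J}$. For the other condition, fix any $J' \subset_1 J_0$; then $J'$ is a proper subset of $J_0$, so by minimality of $J_0$ we have $J' \notin \mathcal{J}$, that is, $\bigwedge_{j \in J'} A_j \neq \varnothing$. These are precisely the requirements for $\ul A|_{J_0}$ to be Helly critical, and it is nonempty since $J_0 \neq \varnothing$. There is essentially no obstacle beyond this bookkeeping; the only subtlety warranting care is the lattice-theoretic convention that the empty meet in $\closeds$ is the top element $S$, which is exactly what excludes the degenerate choice $J_0 = \varnothing$ and explains the role of the nonemptiness hypothesis on $S$.
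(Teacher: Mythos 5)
Your proof is correct and is essentially the paper's argument in different packaging: the paper runs an induction on $|I|$, which amounts to exactly your extraction of an inclusion-minimal subfamily with empty meet, and both hinge on the same observations (finiteness of $I$, and that the empty meet is $S \neq \varnothing$, which rules out the empty subfamily and yields the parenthetical conclusion). One small nitpick: ``inclusion-minimal'' and ``of least cardinality'' are not equivalent conditions on an element of your collection $\mathcal{J}$, though either choice of $J_0$ makes the argument go through.
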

\begin{proof}
This uses induction on $|I| \geq 1$.
If $|I|=1$ then the assumption implies $\underline{A} = (\varnothing)$ is Helly critical.
Suppose $|I| \geq 2$ and assume $\bigwedge_{i\in I} A_i = \varnothing$.
If $\bigwedge_{j\in J} A_j$ is nonempty for each $J \subset_1 I$, then $\underline{A}$ is already Helly critical, and we are done.
Otherwise, fix a subset $J \subset_1 I$ with $\bigwedge_{j \in J} A_j = \varnothing$.
By induction, there exists $\varnothing \neq K \subseteq J$ such that $\ul A|_K$ is Helly critical.
\end{proof}

Recall that a subspace of a closure space $S$ is a subset $U$ with the subspace closure operator $A \mapsto \cl(A) \cap U$. 
Equivalently, a subset of $U$ is closed if it is the intersection of $U$ with a closed subset of $S$.
If $U$ is closed, then $h(U) \leq h(S)$ since closed sets in $U$ are closed sets in $S$.
We now consider a situation involving subsets which may not be closed.

\begin{lemma}
\label{helly domid general}
Let $\mathcal{U}$ be a pairwise disjoint nonempty collection of nonempty subspaces of $S$, such that every proper closed subset of $S$ is contained in some member of $\mathcal{U}$.  
If $\ul A = (A_i)_{i \in I}$ is a critical family of $S$, then  $\ul A$ is a critical family of closed subsets of $U$ for some $U \in \mathcal{U}$, or else $|I| = 2$. 
In particular, either 
\begin{enumerate}[label=\textup{(\alph*)}, ref=\alph*]
\item $h(S) = \sup_{U \in \mathcal{U}} h(U)$, or\label{item sup case}
\item $h(S) = 2$ and $h(U) = 1$ for all $U \in \mathcal{U}$. 
\end{enumerate}
\end{lemma}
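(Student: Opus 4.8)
The plan is to establish the structural ``first part'' of the statement and then read off the numerical dichotomy from it, supplemented by a converse lifting argument. Throughout I note that $S \neq \varnothing$ (a nonempty collection of pairwise disjoint nonempty subspaces cannot exist in the empty space), so $(\varnothing)$ is itself a critical family. I would first record, using \cref{def helly crit} and \cref{lem critical no repeated}, the features of a critical family $\ul A = (A_i)_{i\in I}$ that I will exploit: when $|I| \geq 2$ each $A_i$ is a \emph{nonempty proper} closed set (removing one index leaves a nonempty intersection contained in $A_i$, and $S$ is never a member of a critical family), and when $|I| \geq 3$ every pairwise intersection $A_i \cap A_j$ is nonempty (for a third index $k$ it contains $\bigwedge_{\ell \neq k} A_\ell \neq \varnothing$). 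Since the members of $\mathcal{U}$ are pairwise disjoint, every nonempty proper closed set lies in a \emph{unique} member.

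With these in hand the first assertion is quick. If $|I| \leq 1$ then a critical family is either empty or equal to $(\varnothing)$, and both are critical in any member $U$ (all members being nonempty). If $|I| \geq 3$, the nonempty pairwise intersections $A_i \cap A_j \subseteq U_i \cap U_j$ force the unique members $U_i \ni A_i$ to coincide in a single $U$; each $A_i$ is then closed in $U$ (as $A_i = \cl(A_i)\cap U$), and since every meet defining criticality is an honest intersection of subsets of $U$, the conditions of \cref{def helly crit} transfer verbatim, so $\ul A$ is critical in $U$. The size $|I| = 2$ is the genuine exception, since two disjoint nonempty closed sets may sit in distinct members.

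For the ``in particular'' I would pass, via \cref{helly rank v helly critical}, to the supremum of sizes of critical families, writing $h^\ast = \sup_{U \in \mathcal{U}} h(U)$. The first part yields $h(S) \leq \max(2, h^\ast)$, since every critical family of $S$ either has size $2$ or has the size of a critical family in some member. The main obstacle is the reverse inequality $h^\ast \leq h(S)$, i.e.\ promoting a critical family $(B_i)_{i\in I}$ of a member $U$ with $m = |I| \geq 2$ to one of $S$. The candidate is $(\cl(B_i))_i$: its remove-one intersections are nonempty and its entries are closed in $S$, so everything hinges on showing $\bigcap_i \cl(B_i) = \varnothing$, a priori in doubt because the closures could share a point $p$ outside $U$. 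I would rule this out by a disjointness argument: from $\cl(B_i) \cap U = B_i$ and $\bigcap_i B_i = \varnothing$ any such $p$ lies outside $U$; then for each $i_0$ the closed set $\bigcap_{i \neq i_0} \cl(B_i)$ contains $p \notin U$ yet meets $U$ (it contains the nonempty $\bigcap_{i \neq i_0} B_i \subseteq U$), so it is not proper (a proper closed set meeting $U$ would lie in $U$) and hence equals $S$. As $m \geq 2$, ranging over two indices $i_0$ forces $\cl(B_i) = S$ for all $i$, whence $B_i = \cl(B_i) \cap U = U$ for every $i$, contradicting $\bigcap_i B_i = \varnothing$. Thus $(\cl(B_i))_i$ is critical of size $m$ and $h(U) \leq h(S)$.

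Finally I would combine the bounds. Each member is nonempty so $h(U) \geq 1$, giving $h^\ast \geq 1$. If $h^\ast \geq 2$ (including $h^\ast = \infty$), then $\max(2,h^\ast) = h^\ast$ and the two inequalities give $h(S) = h^\ast$, which is alternative \eqref{item sup case}. If $h^\ast = 1$, then $1 \leq h(S) \leq \max(2,1) = 2$, so $h(S) \in \{1,2\}$: the value $1$ again gives \eqref{item sup case}, while $h(S) = 2$ gives $h(S) = 2$ with $h(U) = 1$ for all $U$, which is alternative \eqref{item sup case}'s companion \eqref{item sup case}—namely case~\eqref{item sup case} fails and we land in the second alternative. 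The delicate point is entirely the lifting step above, where disjointness of the members together with the no-repeats property of \cref{lem critical no repeated} prevents the $S$-closures from acquiring a spurious common point outside $U$.
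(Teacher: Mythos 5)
Your proof is correct. The structural half --- forcing all the $U_i$ to coincide when $|I|\geq 3$ by observing that the pairwise intersections $A_i\cap A_j$ are nonempty and the members of $\mathcal{U}$ are pairwise disjoint --- is exactly the argument in the paper, which in fact only writes out this part and leaves the numerical dichotomy as an unspoken consequence. Where your write-up goes beyond the paper is the verification that $\sup_{U}h(U)\leq h(S)$: since the members of $\mathcal{U}$ are not assumed closed, a critical family $(B_i)$ of a subspace $U$ does not automatically yield one of $S$, and your argument --- showing that a common point $p$ of the $S$-closures $\cl(B_i)$ would force each deleted intersection $\bigcap_{i\neq i_0}\cl(B_i)$ to be a non-proper closed set, hence all of $S$, hence $B_i=U$ for all $i$, contradicting $\bigcap_i B_i=\varnothing$ --- is a genuine and necessary supplement that the paper's proof does not supply. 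This is exactly the right use of the hypothesis that every proper closed set lies in a member of $\mathcal{U}$. The only blemish is cosmetic: the final sentence of your last paragraph garbles the reference to the second alternative (it repeatedly cites the label of case~(\ref{item sup case})), but the intended case analysis ($h^\ast\geq 2$ gives case~(\ref{item sup case}); $h^\ast=1$ gives either $h(S)=1$, again case~(\ref{item sup case}), or $h(S)=2$, the second alternative) is correct.
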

\begin{proof}
Since $\ul A$ is critical, each $A_i$ is a proper subset of $S$ by \cref{lem critical no repeated}.
The statement thus holds when $|I| \leq 1$, so we may assume $|I| \geq 3$.
Use the assumption to choose for $i \in I$ some $U_i \in \mathcal{U}$ containing $A_i$. 
Since
\[ 
\varnothing \neq \bigwedge_{j \in J} A_j \subseteq \bigwedge_{j \in J} U_{j}. 
\]
for each $J \subset_1 I$, it follows from $|I| \geq 3$ and the pairwise disjointness of the $\mathcal{U}$ that all the $U_i$ are equal. 
\end{proof}

\begin{corollary}\label{helly critical domid}
If $\rho \colon E \to L$ is a characteristic map with $L$ nonempty, then either
\begin{enumerate}[label=\textup{(\alph*)}, ref=\alph*]
\item $h(\rho) = \sup_{a \in L_0} h(\dom{\id_a})$, or
\item $h(\rho) = 2$ and $h(\dom{\id_a}) = 1$ for all $a \in L_0$. 
\end{enumerate}
\end{corollary}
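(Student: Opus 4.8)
The plan is to obtain this as a direct instance of \cref{helly domid general}, applied to the closure space $S = E_0$ (with its associated closure operator) and the collection $\mathcal{U} = \{\dom{\id_a} \mid a \in L_0\}$ of domains of identities. A characteristic map is in particular a surjective star injective map of partial groupoids, so \cref{domains of identities} applies and supplies exactly what is needed: the sets $\dom{\id_a}$ are pairwise disjoint for distinct $a$ and partition $E_0$, and every proper closed subset of $E_0$ is contained in some $\dom{\id_a}$. These are precisely the disjointness and covering hypotheses of \cref{helly domid general}.

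It remains to check the nonemptiness conditions. Since $\dom{\id_a} = \rho^{-1}(a)$ and $\rho$ is surjective, each $\dom{\id_a}$ is nonempty; and because $L$ is nonempty the object set $L_0$ is nonempty, so $\mathcal{U}$ is itself a nonempty collection of nonempty subspaces. With all hypotheses verified, \cref{helly domid general} yields either $h(E_0) = \sup_{U \in \mathcal{U}} h(U)$, or $h(E_0) = 2$ and $h(U) = 1$ for every $U \in \mathcal{U}$. As $h(\rho)$ denotes the Helly number of $(E_0, \cl)$ and the subspaces $U$ range precisely over the $\dom{\id_a}$ as $a$ varies in $L_0$, this is exactly the asserted dichotomy.

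There is no serious obstacle in this argument: essentially everything is packaged into \cref{domains of identities} and \cref{helly domid general}, and the remaining work is simply to identify $\mathcal{U}$ and confirm its nonemptiness from the surjectivity of $\rho$. The only point meriting a moment's attention is the bookkeeping convention that $h(\dom{\id_a})$ in the statement refers to the Helly number of $\dom{\id_a}$ taken with its subspace closure operator — matching the $h(U)$ appearing in \cref{helly domid general} — rather than any ambient quantity computed in $E_0$.
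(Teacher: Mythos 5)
Your proof is correct and follows exactly the paper's route: the paper likewise obtains this as an immediate application of \cref{helly domid general} to $\mathcal{U} = \{\dom{\id_a}\}$, with \cref{domains of identities} supplying the disjointness and covering hypotheses. Your additional verification of the nonemptiness conditions (from surjectivity of $\rho$ and nonemptiness of $L_0$) is a reasonable bit of care that the paper leaves implicit.
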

\begin{proof}
By \cref{domains of identities}, we may apply \cref{helly domid general} to $\mathcal{U} = \{ \dom{\id_a} \}$.
\end{proof}

\begin{remark}
\label{rmk about dom id}
If $\rho \colon E \to L$ is a characteristic map, then applying \cref{lem critical no repeated} to the closure space $\dom{\id_{a}}$, 
\cref{helly domid general} gives that no $\dom{\id_a}$ can appear in a critical family of closed subsets of $E_0$ of size $\geq 3$.
\end{remark}

\section{Degree as Helly number}\label{sec:degree_as_helly_rank}

When $\rho \colon E \to L$ is a characteristic map of partial groupoids with $\cl_\rho(\varnothing) = \varnothing$, we write $h(\rho)$ for the Helly number of the closure space $E_0$. 
Recall from \cref{prop empty closed} that if $L$ is not a group, then the empty set is closed.

\begin{theorem}
\label{helly degree theorem}
Let $\rho \colon E \to L$ be a characteristic map with $L$ not a groupoid. 
Then $\deg(L) \leq h(\rho)$. If $E_0$ satisfies the descending chain condition on closed subsets, then $h(\rho) \leq \deg(L)$.
\label{main intro}
\end{theorem}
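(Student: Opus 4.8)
The plan is to translate both inequalities into statements about Helly critical families of domains, using the starry-word form of higher Segality. By \cref{bs words characterization}, $L$ is lower $(2k{-}1)$-Segal exactly when $\bous_n \colon L_n \to \bswords_k(L)_n$ is surjective for all $n \geq k+1$; and by \cref{bous domain lemma} a starry word $(g_1,\dots,g_n)$ lies in the image of $\bous_n$ if and only if $\bigcap_j \dom{g_j} \neq \varnothing$, while the face $d_i(g_1,\dots,g_n)$ lies in the image of $\bous_{n-1}$ if and only if $\bigcap_{j \neq i}\dom{g_j} \neq \varnothing$. Thus lower $(2k{-}1)$-Segality becomes the purely order-theoretic assertion: for every starry word, if some $(k{+}1)$-element index set $I$ has all its single-index omissions $\bigcap_{j\neq i}\dom{g_j}$ ($i \in I$) nonempty, then the full intersection is nonempty. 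Since $L$ is not a groupoid, $\varnothing$ is closed (\cref{prop empty closed}), so the machinery of \cref{sec helly independence} applies and, by \cref{helly rank v helly critical}, $h(\rho)$ is the maximal size of a Helly critical family of closed subsets of $E_0$.

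For $\deg(L) \leq h(\rho)$ I would show that $h(\rho) \leq k$ forces $L$ to be lower $(2k{-}1)$-Segal, with no chain condition needed; taking $k=h(\rho)$ (or noting the bound is vacuous when $h(\rho)=\infty$) then gives the inequality. Given $w = (g_1,\dots,g_n) \in \bswords_k(L)_n$ witnessed by $I$ with $|I| = k+1$, suppose toward a contradiction that $\bigcap_j \dom{g_j} = \varnothing$. By \cref{lem critical subfamily} the family $(\dom{g_j})_j$ contains a nonempty Helly critical subfamily indexed by some $K$, and $|K| \leq h(\rho) \leq k < |I|$ by \cref{helly rank v helly critical}. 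Choosing $i \in I \setminus K$ gives $\bigcap_{j \in K}\dom{g_j} \supseteq \bigcap_{j\neq i}\dom{g_j} \neq \varnothing$, contradicting criticality. Hence $w \in \operatorname{im}\bous_n$, establishing surjectivity.

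The reverse inequality is where the descending chain condition enters, through \cref{cap one simplices}: every closed set is then a \emph{finite} intersection of domains of $1$-simplices, so each closed set $C$ carries a complexity, the least number of such domains cutting it out. Writing $k = \deg(L)$, I would prove by induction on the total excess $\sum_i(\operatorname{complexity}(C_i)-1)$ that every Helly critical family $(C_i)_{i\in I}$ of closed sets has $|I| \leq k$. In the base case every member is a single domain $\dom{f_i}$; if $|I| \geq k+1$ (so $|I|\geq 3$, as $k \geq 2$ because $L$ is not a groupoid) the members lie in a common fiber $\dom{\id_a}$ by \cref{helly domid general}, hence the $f_i$ share the source $a$ and $(f_i)_{i\in I}$ is a genuine starry word; applying the translated Segal condition of the first paragraph to any $(k{+}1)$-element subset of $I$ forces $\bigcap_{i\in I}\dom{f_i}\neq\varnothing$, contradicting criticality, so $|I|\leq k$. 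For the inductive step (which is purely lattice-theoretic, using the Segal hypothesis only through the base case) I would fix a member $C_{i_0}$ of complexity $\geq 2$, split it as $C_{i_0}=D\cap E$ into strictly simpler closed sets, and pass to the family obtained by replacing $C_{i_0}$ with the two entries $D$ and $E$. This augmented family has empty total intersection and strictly smaller excess, so by \cref{lem critical subfamily} it contains a critical subfamily, necessarily of size $\leq k$ by the inductive hypothesis. A short case analysis on whether that subfamily uses $D$, $E$, or neither produces in every case a \emph{proper} subfamily of $(C_i)_{i\in I}$ with empty intersection; since every proper subfamily of a critical family has nonempty intersection, this is a contradiction, and $|I|\leq k$ follows. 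Taking $k=\deg(L)$ yields $h(\rho)\leq\deg(L)$.

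The main obstacle, which dictates the whole structure, is the mismatch between the two sides: the face operations delete a \emph{single} $1$-simplex at a time, whereas a Helly critical family is built from closed sets that may be intersections of \emph{several} $1$-simplex domains. One cannot in general replace each critical member by a single domain, since a transversal choice of one generator per member need not retain the empty total intersection, so the higher-dimensional simplices are genuinely present. The chain condition is exactly what tames this: it guarantees finite complexity and thereby licenses the inductive splitting that shrinks the ``blocks'' while preserving the critical configuration, reducing everything to the single-domain base case where the Segal hypothesis is actually used. This is also what pinpoints why the chain condition is needed for $h(\rho)\leq\deg(L)$ but not for the opposite inequality.
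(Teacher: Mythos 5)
Your proof is correct, but it is organized quite differently from the paper's, which routes everything through two intermediate results. The paper first reduces to the fibers $\dom{\id_a}$ (\cref{helly critical domid}, \cref{main rel domid}) and then handles the exceptional possibility $h(\rho)=2$ with $\sup_a h(\dom{\id_a})=1$ by a separate one-line appeal to $L$ not being a groupoid; you avoid that detour entirely by working with $h(\rho)$ directly and folding the observation $\deg(L)\geq 2$ into your base case. For the easy inequality your argument is essentially the paper's in contrapositive form, though cleaner: where the paper explicitly builds a critical family from a non-Segal witness (with a case split on whether $\bigcap_{i\in I}\dom{g_i}$ is empty), you extract one with \cref{lem critical subfamily} and derive a contradiction. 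The real divergence is in the hard direction. The paper proves a standalone refinement result, \cref{critical 1 simplices}: under the chain condition, any critical family can be replaced in one step by a critical family of at least the same size whose members are domains of single $1$-simplices (decompose every member at once via \cref{cap one simplices}, extract a critical subfamily, and check the index projection is surjective). Your induction on total complexity excess, splitting one member into $D\cap E$ at a time and analyzing which of $D$, $E$ the extracted critical subfamily uses, re-derives the same content; I checked the case analysis and it does close up (in the cases where the subfamily uses $D$ or $E$, either the reconstituted subfamily of the original is proper, giving a contradiction, or it already has size $|M|\leq k$, forcing $|I|\leq k$). What the paper's version buys is a reusable statement (\cref{critical 1 simplices} is cited again in \cref{deg dim} and \cref{rem dcc and h one}) and a shorter proof; what yours buys is a self-contained argument that never needs the surjectivity-of-$\pi$ step or the comparison \cref{helly rank v helly critical} between the two sides in the middle of the induction. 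Both correctly isolate the chain condition as entering only through the finite decomposition of closed sets into $1$-simplex domains.
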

\begin{proof}
By \cref{main rel domid} and \cref{helly critical domid}, $\deg(L) \leq h(\rho)$ with equality under the descending chain condition if $h(\rho) \neq 2$. 
But if $h(\rho) = 2$, then $h(\rho) \leq \deg(L)$ because $L$ is not a groupoid. 
\end{proof}

\begin{example}\label{ctexs}
If $L$ is a groupoid ($\deg(L) = 1$), then pretty much anything that can go wrong in \cref{main intro} does go wrong. 
\begin{enumerate}[leftmargin=*, label=\arabic*., ref=\arabic*] 
\item\label{gpd two obj ctex} If $L$ is a groupoid with distinct objects $a$ and $b$, then $h(\rho) = 2$. 
This is because the non-intersecting, nonempty closed sets $\rho^{-1}(a) = \dom{\id_a}$ and $\rho^{-1}(b) = \dom{\id_b}$ show $h(\rho) \geq 2$, while $h(\rho) > 2$ is not possible by \cref{main rel domid} and \cref{helly critical domid}. 
\item\label{fin grp ctex} If $L$ is a finite group, then $\varnothing$ is never closed by \cref{prop empty not closed} so $h(\rho)$ is not defined.
\item\label{inf grp ctex} If $L$ is an infinite group, then $h(\rho)$ may or may not be defined, depending on $\rho$.
The partial action of $\mathbb{Z}$ on $\mathbb{Z}\setmin 0$ by translation does give $h(\rho) = 1$, but in general we do not know the possibilities for the Helly number in this case.  
\item\label{empty ctex} If $L = \varnothing$, then the identity map is the unique characteristic map, and $h(\id_\varnothing) = 0$. 
\end{enumerate}
\end{example}

\begin{remark}
A version of the first inequality from \cref{helly degree theorem} also holds for characteristic maps $\rho \colon E \to X$ between edgy simplicial sets: 
if $k$ is the smallest positive integer such that $X$ is lower $2k$-Segal, then $k\leq h(\rho)$.
However, the proof of this stronger statement is much more involved, and not every edgy simplicial set admits a characteristic action (\cref{rem edgy char action redux}).
\end{remark}

Before filling out the proof of \cref{helly degree theorem}, we begin with a straightforward example of the degree of the reduction of a groupoid \cite[Example 5.5]{HackneyLynd:PGSSS}.
The reduction of a partial groupoid is discussed in \cref{degree of reduction}.

\begin{example}\label{ex groupoid reduction}
Suppose $E$ is a groupoid with more than one object and let $L = \mathcal{R}E$ be its reduction.
The canonical map $E \to L$ is a characteristic action.
If $g \colon x \to y$ is in $L_1\setmin\{\id\} = E_1\setmin s_0(E_0)$, then $\dom{g} = \{x\}$, while $\dom{\id} = E_0$.
It follows that the closed sets are the empty set, $E_0$, and the singletons that are sources of nonidentity morphisms of $E$. 
In particular, this closure space satisfies the descending chain condition.
Notice that $L$ is a group if and only if there is at most one object of $E$ that is the source of a nonidentity morphism.
If $L$ is not a group, and $x,y$ are distinct objects each of which is the source of a nonidentity morphism, then $(\{x\},\{y\})$ is a Helly critical family; there are no larger Helly critical families, so $2 = h(\rho) = \deg(L)$ by \cref{helly degree theorem}.
\end{example}

The following result says that if closed sets of $E_0$ satisfy the descending chain condition, a critical family realizing the Helly number $h(\rho)$ can always be replaced by a critical family of the same size whose members are domains of $1$-simplices.
It is the only source of this assumption on $E_0$. 

\begin{proposition}\label{critical 1 simplices}
Let $L$ be a partial groupoid and $\rho \colon E \to L$ be a characteristic map such that the collection of closed sets satisfies the descending chain condition.
Let $I$ be a nonempty finite set and $\underline{A}$ a Helly critical $I$-indexed family.
Then there is a finite set $M$, a surjection $\pi \colon M \to I$, and distinct $1$-simplices $g_m \in L_1$ such that $A_{\pi(m)} \subseteq \dom{g_m}$ for each $m\in M$ and $(\dom{g_m})_{m\in M}$ is Helly critical. 
If $|I| \geq 3$, the $g_m$ are never identities.
If $|I| = h(\rho)$, then $\pi$ is necessarily a bijection.
\end{proposition}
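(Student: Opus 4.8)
The plan is to express each $A_i$ in terms of domains of $1$-simplices, harvest a critical subfamily, and then solve a bipartite matching problem to relabel it so that it surjects onto $I$. First I would use the descending chain condition together with \cref{cap one simplices} to write, for each $i\in I$, the closed set $A_i$ as a finite intersection $A_i=\bigcap_{g\in G_i}\dom{g}$ of domains of $1$-simplices; since $\underline A$ is critical it contains neither $E_0$ nor repeated members (\cref{lem critical no repeated}), so each $A_i$ is proper and each $G_i$ is nonempty. Writing $H_0=\bigcup_{i\in I}G_i$, the family $\underline D=(\dom{g})_{g\in H_0}$ has meet $\bigcap_{g\in H_0}\dom{g}=\bigcap_{i\in I}A_i=\varnothing$, so by \cref{lem critical subfamily} it has a nonempty Helly critical subfamily $(\dom{g})_{g\in H}$ with $H\subseteq H_0$. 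This already produces a critical family of domains of \emph{distinct} $1$-simplices (distinct because a critical family has no repeats, hence distinct domains and a fortiori distinct simplices), and every $g\in H$ satisfies $A_{i}\subseteq\dom{g}$ for at least one $i$, namely any $i$ with $g\in G_i$.

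The remaining and most delicate point is to choose a labeling $\pi\colon H\to I$ with $A_{\pi(g)}\subseteq\dom{g}$ that is \emph{surjective}; I expect this to be the main obstacle. I would phrase it as a bipartite graph on $I$ and $H$ with an edge joining $i$ and $g$ whenever $A_i\subseteq\dom{g}$, and record the criticality witnesses of $\underline A$: for each $i\in I$ pick $x_i\in\bigcap_{j\neq i}A_j$, which exists by \cref{def helly crit}. A surjection of the required form exists as soon as this graph has a matching saturating $I$ (assign each matched $g$ to its partner and every other $g$ to any neighbour), so by Hall's theorem it suffices to prove $|\mathcal N(T)|\geq|T|$ for every $T\subseteq I$, where $\mathcal N(T)=\{g\in H\mid A_i\subseteq\dom{g}\text{ for some }i\in T\}$. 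Fix such a $T$. Any $g\in H\setmin\mathcal N(T)$ lies in some $G_{i(g)}$ with $i(g)\notin T$, so for $i\in T$ we have $i(g)\neq i$ and hence $x_i\in A_{i(g)}\subseteq\dom{g}$; thus $x_i\in\bigcap_{g\notin\mathcal N(T)}\dom{g}$. Since the total meet $\bigcap_{g\in H}\dom{g}$ is empty while $x_i$ lies in the meet over $H\setmin\mathcal N(T)$, there must be some $g\in\mathcal N(T)$ with $x_i\notin\dom{g}$; choosing one such $g$ for each $i$ defines a map $\phi\colon T\to\mathcal N(T)$. This $\phi$ is injective: if $\phi(i)=\phi(i')=g$, then $g$ covers some $i''\in T$, whence $x_j\in A_{i''}\subseteq\dom{g}$ for all $j\neq i''$, so $x_i,x_{i'}\notin\dom{g}$ forces $i=i''=i'$. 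Therefore $|\mathcal N(T)|\geq|T|$, Hall's condition holds, and extending a saturating matching yields the desired surjection $\pi$.

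Finally I would read off the three supplementary assertions. Setting $M=H$ and $g_m=m$, the $g_m$ are distinct, $(\dom{g_m})_{m\in M}$ is Helly critical, and $A_{\pi(m)}\subseteq\dom{g_m}$. If $|I|\geq 3$, then $|M|\geq|I|\geq 3$ by surjectivity, so by \cref{rmk about dom id} no $\dom{\id_a}$ can occur among the members of the critical family $(\dom{g_m})$, i.e.\ no $g_m$ is an identity. If $|I|=h(\rho)$, then surjectivity gives $|M|\geq h(\rho)$, while \cref{helly rank v helly critical} bounds the size of the critical family $(\dom{g_m})$ by $h(\rho)$; hence $|M|=|I|$ and the surjection $\pi$ between finite sets of equal size is a bijection. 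The one step I would treat most carefully is the Hall inequality, as the injectivity of $\phi$ is exactly where the hypothesis that $\underline A$ is Helly critical (through its witnesses) enters.
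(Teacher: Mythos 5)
Your proof is correct, and the first half (using the descending chain condition and \cref{cap one simplices} to decompose each $A_i$ into domains of $1$-simplices, then extracting a nonempty Helly critical subfamily via \cref{lem critical subfamily}) matches the paper. Where you diverge is the surjectivity of the labeling $\pi$, and there the paper's route is substantially shorter: it never forgets which $A_i$ each domain came from. Concretely, the paper indexes the family of domains by the set $K$ of \emph{pairs} $(i,j)$ with $B_{ij}$ a domain occurring in the decomposition of $A_i$, so the labeling $\pi_1\colon K\to I$, $(i,j)\mapsto i$, is built in from the start; after extracting a critical subfamily $\underline{B}|_M$ with $M\subseteq K$, surjectivity of $\pi=\pi_1|_M$ is a one-line consequence of the criticality of $\underline{A}$: if the image of $\pi$ were contained in some $J\subset_1 I$, then $\varnothing\neq\bigwedge_{j\in J}A_j\subseteq\bigwedge_{k\in M}B_k=\varnothing$. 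By instead indexing your critical subfamily by the underlying set $H$ of $1$-simplices, you discard the labels and must recover a surjective labeling afterwards, which you do by setting up a bipartite graph and verifying Hall's condition using the criticality witnesses $x_i\in\bigcap_{j\neq i}A_j$; your verification (the injectivity of $\phi$ via the observation that a $g\in\mathcal N(T)$ with $A_{i''}\subseteq\dom{g}$ can exclude at most the single witness $x_{i''}$) is valid, and the extension of the saturating matching to all of $H$ works because every $g\in H$ has at least one neighbour. So both arguments are sound; the trade-off is that your version is self-contained combinatorics at the cost of an extra layer (Hall's theorem), while the paper's pair-indexing makes the surjectivity essentially automatic. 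The three supplementary assertions are handled identically in both.
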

\begin{proof}
As before, the family $\underline{A}$ does not contain the maximal closed set $E_0$ by \cref{lem critical no repeated}. 
Thus, for each $i \in I$, the descending chain condition (via \cref{cap one simplices}) gives a \emph{nonempty} finite set $\{B_{i1}, \dots, B_{in_i} \}$ such that 
\[
	A_i = \bigwedge_{j = 1}^{n_i} B_{ij}
\]
with each $B_{ij}$ the domain of a 1-simplex.
Let $K \subseteq I \times \mathbb{N}$ be the set of pairs $(i,j)$ appearing, and consider the $K$-indexed family $\underline{B}=(B_k)_{k \in K}$. 
There is a surjective map $\pi_1 \colon K \to I$ sending $(i,j)$ to $i$. 
We have
\[
	\bigwedge_{k\in K} B_k = \bigwedge_{i\in I} A_i = \varnothing,
\]
so \cref{lem critical subfamily} guarantees a nonempty subset $M\subseteq K$ such that $\underline{B}|_M = (B_k)_{k\in M}$ is Helly critical. 

Write $\pi \colon M \to I$ for the restriction of $\pi_1$ to $M$. 
Then $\pi$ is surjective. 
Otherwise, choosing $J \subset_1 I$ containing the image of $\pi$, we would have
\[
	\varnothing \neq \bigwedge_{j \in J} A_j \subseteq \bigwedge_{k \in \pi^{-1}(J) = M} B_k = \varnothing
\]
by the criticality of both $\ul A$ and $\ul B|_M$, a contradiction. 

We know that the members of $\underline{B}|_M$ are distinct by \cref{lem critical no repeated}, hence the $g_m$ are distinct.
The statement that the $g_m$ are nonidentity elements when $|I| \geq 3$ follows from \cref{rmk about dom id}.
If $|I| = h(\rho)$, then $|M| \leq h(\rho)$ by \cref{helly rank v helly critical} and so $\pi$ is a bijection.
\end{proof}

\begin{remark}\label{rem dcc and h one}
If the closure space associated to a characteristic map $\rho \colon E \to L$ satisfies the descending chain condition, then $h(\rho) \neq 1$.
For if $h(\rho) = 1$, then \cref{critical 1 simplices} produces a $g\in L_1$ such that $(\dom{g})$ is critical.
This implies that $\dom{g} = \varnothing$, which is impossible since $\rho$ is surjective.
\end{remark}

\begin{theorem}
Let $\rho \colon E \to L$ be a characteristic map of nonempty partial groupoids with $\cl_\rho(\varnothing) = \varnothing$,
and set $h = \sup_{a \in L_0} h(\dom{\id_a})$. 
Then $\deg(L) \leq h$. 
If closed subsets of $E_0$ satisfy the descending chain condition, then $h \leq \deg(L)$. 
\label{main rel domid}
\end{theorem}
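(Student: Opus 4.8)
The plan is to reformulate both inequalities through the starry-word criterion of \cref{bs words characterization} together with the domain formula of \cref{bous domain lemma}. Recall that a starry word $(g_1, \dots, g_n)$ consists of $1$-simplices sharing a common source $a \in L_0$, so that each $\dom{g_i}$ is a closed subset of the fiber $\dom{\id_a}$. By \cref{bous domain lemma}, the word lies in the image of $\bous_n$ exactly when $\bigcap_{i=1}^n \dom{g_i} \neq \varnothing$, while the face $d_i$ (which deletes $g_i$) lies in the image of $\bous_{n-1}$ exactly when $\bigcap_{j \neq i} \dom{g_j} \neq \varnothing$. Thus \cref{bs words characterization} converts lower $(2k{-}1)$-Segality into a Helly-type statement about domains of $1$-simplices inside the fibers $\dom{\id_a}$, and the whole argument becomes a comparison between critical families and the conditions cutting out these images.

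For $\deg(L) \leq h$, I would show $L$ is lower $(2h{-}1)$-Segal when $h < \infty$ (the case $h = \infty$ being trivial). Fix $n \geq h+1$ and a starry word $w = (g_1, \dots, g_n) \in \bswords_h(L)_n$ with witnessing set $I$ of size $h+1$, so $\bigcap_{j \neq i} \dom{g_j} \neq \varnothing$ for each $i \in I$. I argue by contradiction that $\bigcap_{j=1}^n \dom{g_j} \neq \varnothing$. Were this intersection empty, \cref{lem critical subfamily}, applied in the nonempty closure space $\dom{\id_a}$, would produce a Helly critical subfamily $(\dom{g_j})_{j \in K}$, whose size satisfies $|K| \leq h(\dom{\id_a}) \leq h$ by \cref{helly rank v helly critical}. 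Since $|I| = h+1 > |K|$ there is $i \in I \setmin K$, and then $\bigcap_{j \neq i} \dom{g_j} \subseteq \bigcap_{j \in K} \dom{g_j} = \varnothing$, contradicting $i \in I$. Surjectivity of $\bous_n$ onto $\bswords_h(L)_n$ then follows from \cref{bous domain lemma}\eqref{item nonempty}, giving lower $(2h{-}1)$-Segality.

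For the reverse inequality under the descending chain condition, it suffices to show that the existence of a Helly critical family of size $k \geq 2$ among the closed subsets of some fiber $\dom{\id_a}$ forces $\deg(L) \geq k$; since $\deg(L) \geq 1$ always and $h$ is the supremum of the $h(\dom{\id_a})$, taking the supremum of such $k$ then yields $h \leq \deg(L)$. So fix such a family $(A_i)_{i \in I}$ with $|I| = k$. Each $A_i$ is nonempty, hence every $1$-simplex whose domain contains $A_i$ has source $a$ (the fibers $\dom{\id_a}$ are pairwise disjoint by \cref{domains of identities}); this keeps everything inside a single fiber. Invoking \cref{critical 1 simplices}—the sole place the chain condition is used—I replace $(A_i)$ by a Helly critical family $(\dom{g_m})_{m \in M}$ of domains of distinct $1$-simplices with $|M| \geq |I| = k$, all of source $a$ by the previous observation. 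Ordering $M$ turns $(g_m)$ into a starry word of length $n' = |M|$ whose total domain intersection is empty (so it is not in the image of $\bous_{n'}$, since the domain of any simplex of $L$ is nonempty by surjectivity of $\rho$) while each of its $n'$ faces is composable. Hence this word lies in $\bswords_{n'-1}(L)_{n'} \setmin \image(\bous_{n'})$, so by \cref{bs words characterization} $L$ fails to be lower $(2(n'{-}1){-}1)$-Segal, giving $\deg(L) \geq n' \geq k$.

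The main obstacle I anticipate is exactly this passage, in the lower bound, from an abstract Helly critical family to one realized by domains of $1$-simplices sharing a common source, so that it assembles into a genuine starry word witnessing the failure of a lower Segal condition. This is where the descending chain condition is indispensable, through \cref{critical 1 simplices}, and where one must check that nonemptiness of the critical members forces a common source via the fiber partition of \cref{domains of identities}. The remaining delicate point is the bookkeeping relating $|M|$, the resulting word length $n'$, and the Segal index $k = n'-1$, which is the step most prone to off-by-one errors.
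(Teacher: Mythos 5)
Your proof is correct and follows essentially the same route as the paper: both directions are reduced, via \cref{bs words characterization} and \cref{bous domain lemma}, to Helly statements about domains of $1$-simplices inside the fibers $\dom{\id_a}$, with \cref{critical 1 simplices} supplying the critical family of domains in the DCC direction. The only organizational difference is that you prove the first inequality directly by extracting a critical subfamily with \cref{lem critical subfamily} and deriving a contradiction, whereas the paper argues contrapositively by building a critical family of size $k+1$ or $k+2$ from a failed Segal condition; the content is the same.
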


\begin{proof}
Fix $k \geq 1$ and assume first that $\deg(L)$ is greater than $k$, so that $L$ is not lower $(2k{-}1)$-Segal.
There exists, by \cref{bs words characterization}, an $n \in \mathbb{N}$, a subset $I \subset J = [n]\setmin 0 \subset [n]$ of cardinality $k+1$, and a word $(g_1, \dots, g_n) \in (\bswords_I (L)_n) \setmin \bous_n(L_n)$.
Let $a \in L_0$ be the common source of the $g_j$. 
Consider the $J$-family $\ul A$ with $A_j = \dom{g_j}$;
notice that $\bigcap_{j\in J} A_j = \varnothing$ and $\bigcap_{j\neq i} A_j \neq \varnothing$ for each $i\in I$.
If the restriction $\ul A|_I$ has empty intersection, then it is a critical family of size $k+1$. 
If the restriction $\ul A|_I$ has nonempty intersection, then the $(I \cup \{*\})$-indexed family $\ul B$ with $\ul B|_I = \ul A|_I$ and $B_* = \bigcap_{j\notin I} A_j$ is critical of size $k+2$.
In any case, $h(\dom{\id_a}) > k$ and hence $\deg(L) \leq h(\dom{\id_a}) \leq h$. 

Assume now that the Helly number of the domain of the identity of some $a \in L_0$ is greater than $k$, and that closed sets in $E_0$ satisfy the descending chain condition.
Let $I = [k+1]\setmin 0$ and fix a critical $I$-family $\ul A$ of $\dom{\id_a}$; by \cref{critical 1 simplices} we take $A_i = \dom{g_i}$ with $g_i \in L_1$.
Notice that all $g_i$ have source $a$.
By construction, the word $(g_1, \dots, g_{k+1})$ is in $\bswords_I(L)_{k+1}$, but not in the image of the Bousfield--Segal map. 
So $L$ is not $(2k{-}1)$-Segal by \cref{bs words characterization}, i.e.\ $k < \deg(L)$.
\end{proof}

\section{The finite dimensional case}\label{sec:the_finite_dimensional_case}

\begin{definition}\label{def skeleton}
Let $X$ be a symmetric set and $n\geq -1$. 
The \emph{$n$-skeleton} $\sk_n X \subseteq X$ is the smallest symmetric subset of $X$ containing $X_k$ for all $k\leq n$,
and $X$ is said to be \emph{$n$-skeletal} if $X = \sk_n X$. 
We say that $X$ is \emph{$n$-dimensional}, and write $\dim X =n$, if $X$ is $n$-skeletal but not $m$-skeletal for any $m < n$.
If $X$ is $n$-skeletal for some $n$, then $X$ is \emph{finite dimensional}.
\end{definition}
A symmetric set being $n$-skeletal rarely implies that its underlying simplicial set is $n$-skeletal.
In fact, a partial group with finitely many 1-simplices is finite dimensional by the following theorem from \cite{HackneyMolinier:DPG}.

\begin{theorem}\label{HM theorem}
Suppose $L$ is a partial group such that $L_1$ has cardinality $n+1$.
The dimension of $L$ is at most $n$, and is equal to $n$ just when $L$ is a group.
\end{theorem}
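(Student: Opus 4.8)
The plan is to reason entirely through the matrix form $(f_{ij})$ of a simplex in a spiny symmetric set, together with the nondegeneracy criterion of \cref{char nondeg}. First I would record that since $L$ is reduced, $L_0$ is a single point, so the only degenerate $1$-simplex is the identity $\id$; hence among the $n+1$ elements of $L_1$ exactly $n$ are nonidentities. Now if $f \in L_k$ is nondegenerate, \cref{char nondeg} gives that $f_{01}, \dots, f_{0k}$ are distinct nonidentity edges, and since there are only $n$ nonidentities in $L_1$ this forces $k \leq n$. To pass from this to the dimension bound, I would use that every simplex of a symmetric set is a degeneracy $\sigma^* z$ of a nondegenerate simplex $z$ along some surjection $\sigma$ (obtained by descending on dimension). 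As $z$ then has dimension at most $n$, every $L_m$ is contained in $\sk_n L$, so $L = \sk_n L$ and $\dim L \leq n$.

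Next I would observe that, by the bound just proved, $\dim L = n$ holds precisely when $L$ possesses a nondegenerate $n$-simplex. The easy implication is that a group gives degree $n$: if $L$ is the nerve of a group $G$ with $|G| = n+1$, then (since $\bous$ is an isomorphism for a groupoid by \cref{bous spiny Segal}) I can choose $f \in L_n$ with $\bous_n(f)$ equal to a listing of the $n$ nonidentity elements of $G$, and \cref{char nondeg} certifies that this $f$ is nondegenerate. So the real content is the converse, that a nondegenerate $n$-simplex forces $L$ to be a group.

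For this converse I would exploit that, by \cref{char nondeg}, a nondegenerate $n$-simplex $f$ has no repeated entry in any row of $(f_{ij})$. Each such row then consists of $n+1$ distinct entries in the $(n+1)$-element set $L_1$, so for every index $i$ the map $j \mapsto f_{ij}$ is a bijection onto $L_1$. I would use these bijections to lift an arbitrary word: given $(h_1, \dots, h_m) \in \words(L)_m = L_1^{m}$ (every tuple is potentially composable because $L$ is reduced), set $i_0 = 0$ and inductively select $i_\ell$ with $f_{i_{\ell-1} i_\ell} = h_\ell$, which is possible since row $i_{\ell-1}$ is a bijection. Then the function $\alpha \colon [m] \to [n]$ with $\alpha(\ell) = i_\ell$ satisfies $\edgemap_m(\alpha^* f) = (h_1, \dots, h_m)$, so $\edgemap_m$ is surjective. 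Since $\edgemap_m$ is injective by spininess, $\edgemap \colon L \to \words L$ is an isomorphism, whence $L$ is the nerve of a groupoid by Grothendieck's theorem, and a group because it is reduced.

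The main obstacle is this converse, and within it the key realization that nondegeneracy makes each row of $(f_{ij})$ a bijection onto $L_1$; once that is in hand, the word-lifting step is a short bookkeeping argument. A secondary point that needs a little care is the decomposition of an arbitrary simplex as a degeneracy of a nondegenerate one in the symmetric (as opposed to simplicial) setting, but I expect this to follow routinely by induction on dimension using \cref{def degenerate}.
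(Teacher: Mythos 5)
The paper does not actually prove this theorem: it is quoted verbatim from \cite{HackneyMolinier:DPG}, so there is no in-paper argument to compare yours against. Judged on its own, your proof is correct and complete, and it uses only machinery already available in this paper. The dimension bound is right: reducedness gives exactly one identity edge, so a nondegenerate $k$-simplex has $k$ distinct nonidentity entries $f_{01},\dots,f_{0k}$ by \cref{char nondeg}\eqref{bs distinct}, forcing $k\leq n$; the decomposition of an arbitrary simplex as a degeneracy of a nondegenerate one is indeed a routine induction on dimension from \cref{def degenerate} (only existence is needed, not Eilenberg--Zilber uniqueness). The forward implication via surjectivity of $\bous_n$ for a group (\cref{bous spiny Segal}) is fine. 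The converse is where the content lies, and your key observation is exactly the right one: each row of the matrix of a nondegenerate $n$-simplex has $n+1$ distinct entries by \cref{char nondeg}, hence is a bijection onto $L_1$, and since $\words(L)_m = L_1^{\times m}$ for a reduced $L$, the row-by-row lifting produces a (not necessarily order-preserving, but that is fine in $\fin$) map $\alpha\colon[m]\to[n]$ with $\edgemap_m(\alpha^*f)=(h_1,\dots,h_m)$; surjectivity plus spininess makes $\edgemap$ an isomorphism, and Grothendieck's nerve theorem finishes. The only cosmetic slip is the phrase ``a group gives degree $n$'' where you mean dimension $n$; the argument itself is sound, including the edge case $n=0$.
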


Some of our primary applications will be to finite partial groups, so we will often be working with finite dimensional symmetric sets.
It is immediate from \cref{HM theorem}, \cref{helly degree theorem}, and \cref{canonical action} that a finite partial group has finite degree.
In fact general finite dimensional partial groupoids also have finite degree, as we see below. 
This comes from the fact that finite dimensionality implies an absolute bound on the height of the poset of closed subsets in the closure space associated with a characteristic action. 

\begin{lemma}
\label{L skeletal implies E skeletal}
If $L$ is $n$-skeletal and $\rho \colon E \to L$ is star injective, then $E$ is $n$-skeletal. 
\end{lemma}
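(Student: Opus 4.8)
The plan is to reduce the statement to the already-established fact (\cref{star inj nondeg}) that a star injective map carries nondegenerate simplices to nondegenerate simplices. The bridge between skeletality and nondegeneracy is the following observation, which I would record first: in any $n$-skeletal symmetric set $Y$, every simplex of dimension $m > n$ is degenerate. Indeed, by \cref{def skeleton} the $m$-simplices of $Y = \sk_n Y$ are exactly the elements of the form $\alpha^* y$ with $y \in Y_k$ for some $k \leq n$ and $\alpha \colon [m] \to [k]$ a morphism of $\fin$. Factoring $\alpha$ as a surjection $\pi \colon [m] \twoheadrightarrow [j]$ followed by an injection $\iota$ in $\fin$, one gets $j \leq k \leq n < m$, so $\pi$ is a noninvertible surjection and $\alpha^* y = \pi^*(\iota^* y)$ exhibits the simplex as degenerate in the sense of \cref{def degenerate}.

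With this in hand, I would prove that every simplex of $E$ lies in $\sk_n E$ by strong induction on its dimension $m$. For $m \leq n$ this is immediate. For $m > n$, take $x \in E_m$. Its image $\rho(x) \in L_m$ is degenerate by the observation applied to the $n$-skeletal symmetric set $L$. The contrapositive of \cref{star inj nondeg} then forces $x$ itself to be degenerate, so $x = \tau^* z$ for some noninvertible surjection $\tau \colon [m] \twoheadrightarrow [\ell]$ and $z \in E_\ell$ with $\ell < m$. By the inductive hypothesis $z \in \sk_n E$, and since $\sk_n E$ is a symmetric subset it is closed under $\tau^*$; hence $x = \tau^* z \in \sk_n E$. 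This completes the induction and shows $E = \sk_n E$, i.e.\ that $E$ is $n$-skeletal.

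The only real content is imported from \cref{star inj nondeg}, so I do not anticipate a genuine obstacle; the main thing to get right is the elementary bookkeeping in the first paragraph --- namely the explicit description of the $m$-simplices of a skeleton and the epi--mono factorization in $\fin$ that turns an arbitrary structure map into a degeneracy operator. One should also double-check that \cref{star inj nondeg} applies in the relevant generality (star injective maps of partial groupoids, or of edgy simplicial sets), which it does, so no additional hypotheses on $\rho$ are needed beyond star injectivity.
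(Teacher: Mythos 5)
Your proof is correct and takes essentially the same route as the paper's: both reduce the statement to \cref{star inj nondeg}, i.e.\ that star injective maps preserve nondegeneracy, combined with the equivalence between being $n$-skeletal and having no nondegenerate simplices above dimension $n$. The paper's proof is a one-liner that leaves that equivalence implicit, whereas you spell out both directions (the epi--mono factorization in $\fin$ and the induction on dimension), but the mathematical content is identical.
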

\begin{proof}
If $e$ is a nondegenerate $m$-simplex of $E$, then $\rho(e) \in L_m$ is nondegenerate by \cref{star inj nondeg}, and so 
$m \leq n$ because $L$ is $n$-skeletal. 
\end{proof}

\begin{proposition}
\label{dim int domains}
Let $\rho\colon E \to L$ be star injective map with $E$ a groupoid, and $g_1,\dots,g_m$ be distinct, nonidentity elements of $L_1$. 
If $m > \dim L$, then $\dom{g_1} \cap \cdots \cap \dom{g_m} = \varnothing$. 
\end{proposition}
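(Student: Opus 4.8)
The plan is to argue by contradiction, turning a nonempty intersection of domains into an honest nondegenerate simplex of $L$ in a dimension that exceeds $\dim L$. Suppose $\dom{g_1} \cap \cdots \cap \dom{g_m} \neq \varnothing$. The hypotheses of \cref{bous domain lemma} are exactly in force here ($E$ is a groupoid, $L$ is a partial groupoid, and $\rho$ is star injective), so part \eqref{item nonempty} of that lemma applies and produces a simplex $g \in L_m$ with $\bous_m(g) = (g_1, \dots, g_m)$.

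The next step is to read off nondegeneracy from this starry word. Recalling that $\bous_m(g)$ is the tail of the top row of the matrix form $(g_{ij})$ of $g$, we have $g_{0i} = g_i$ for $1 \le i \le m$. By hypothesis the $g_i$ are distinct, nonidentity elements, which is precisely condition \eqref{bs distinct} of \cref{char nondeg}. Hence $g$ is a nondegenerate $m$-simplex of $L$.

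Finally, since $L$ is $(\dim L)$-skeletal, every nondegenerate simplex of $L$ has dimension at most $\dim L$. As $m > \dim L$, this contradicts the nondegeneracy of $g$, and therefore $\dom{g_1} \cap \cdots \cap \dom{g_m} = \varnothing$. The argument is short, and I do not anticipate a serious obstacle beyond assembling the right lemmas; the one point deserving care is verifying that the hypotheses of \cref{bous domain lemma} genuinely hold (in particular that $E$ is a groupoid), since that lemma is exactly what upgrades ``the $g_i$ jointly act at some common point'' to ``there is a single simplex $g$ realizing them as a starry word,'' which is the crux of the whole argument.
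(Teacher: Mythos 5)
Your proof is correct and is exactly the paper's argument: the paper dispatches this proposition as "immediate from \cref{bous domain lemma}\eqref{item nonempty}," and your write-up simply fills in the intended details (nondegeneracy via \cref{char nondeg}\eqref{bs distinct} and the contradiction with $L$ being $(\dim L)$-skeletal). No issues.
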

\begin{proof}
This is immediate from \cref{bous domain lemma}\eqref{item nonempty}.
\end{proof}

\begin{proposition}
\label{fdim implies artinian}
Let $L$ be an $n$-skeletal partial groupoid and $\rho \colon E \to L$ a characteristic map.
A strictly increasing chain of closed subsets of $E_0$ can have length at most $n+2$ (that is, may have at most $n+3$ elements).
\end{proposition}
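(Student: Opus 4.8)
The plan is to bound the chain by sorting its members into three types---the empty set, the top element $E_0$, and the nonempty proper closed subsets---and to control the third type by a rank argument based on domains of $1$-simplices, converting an a priori unbounded chain of closed sets into a descending chain of subsets of a \emph{fixed} set of size at most $n$.

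First I would analyze the shape of a closed set $D$ that is nonempty and proper (i.e.\ $D \neq E_0$). By \cref{domains of identities} there is a unique $a \in L_0$ with $D \subseteq \dom{\id_a}$, and by \cref{cap one simplices} the set $D$ is the intersection of the domains $\dom{g}$ over all $g \in L_1$ with $D \subseteq \dom{g}$. Any such $g$, with source $s$, satisfies $\dom{g} \subseteq \dom{\id_s}$ by \cref{lem 00 containment}; since $D$ is nonempty and the $\dom{\id_b}$ are pairwise disjoint (\cref{domains of identities}), this forces $s = a$, so the only identity occurring is $\id_a$. Writing $G(D)$ for the set of \emph{nonidentity} $1$-simplices $g$ with $D \subseteq \dom{g}$, I obtain $D = \dom{\id_a} \cap \bigcap_{g \in G(D)} \dom{g}$. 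As this intersection is nonempty, \cref{dim int domains} forbids more than $\dim L$ distinct nonidentity $1$-simplices from sharing a common domain, whence $\lvert G(D)\rvert \le \dim L \le n$.

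Next I would fix a strictly increasing chain $C_0 \subsetneq \cdots \subsetneq C_\ell$ of closed sets and examine its nonempty proper members. If $C_i \subsetneq C_j$ are two such, then $C_i \subseteq \dom{\id_{a_i}}$ and $C_i \subseteq C_j \subseteq \dom{\id_{a_j}}$, so nonemptiness of $C_i$ and disjointness of distinct identity domains force $a_i = a_j$; hence all nonempty proper members lie in one block $U = \dom{\id_a}$ and form a chain $D_0 \subsetneq \cdots \subsetneq D_s$ of nonempty closed subsets of $U$. The assignment $D \mapsto G(D)$ is antitone, and it is \emph{strictly} antitone along this chain: if $G(D_t) = G(D_{t+1})$, the representation from the previous paragraph would give $D_t = D_{t+1}$. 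Thus $G(D_0) \supsetneq \cdots \supsetneq G(D_s)$ is a strictly descending chain of subsets of the set $G(D_0)$, which has at most $n$ elements, so $s \le n$ and there are at most $n+1$ nonempty proper members.

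Finally I would assemble the count: a strictly increasing chain contains at most one copy of $\varnothing$ (necessarily $C_0$ if present), at most $n+1$ nonempty proper members, and at most one copy of $E_0$ (necessarily $C_\ell$ if present), giving at most $n+3$ elements in all and hence length $\ell \le n+2$. The step requiring the most care---and the heart of the argument---is the representation $D = \dom{\id_a} \cap \bigcap_{g \in G(D)} \dom{g}$ together with the bound $\lvert G(D)\rvert \le n$, since this is precisely what replaces the possibly infinite lattice of closed sets by a descending chain inside a finite set and makes \cref{dim int domains} bite.
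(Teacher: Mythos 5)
Your proof is correct and follows essentially the same route as the paper's: both reduce a chain of nonempty proper closed sets inside a single block $\dom{\id_a}$ to a strictly monotone chain of sets of nonidentity $1$-simplices via \cref{cap one simplices}, bound that by $\dim L \leq n$ using \cref{dim int domains}, and then append $\varnothing$, $\dom{\id_a}$, and $E_0$. The only cosmetic difference is that the paper builds an explicit nested generating family $g_1,\dots,g_{k_m}$ along the chain, whereas you use the canonical maximal generating set $G(D)$ and strict antitonicity of $D \mapsto G(D)$ — the same idea with slightly different bookkeeping.
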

\begin{proof}
Let $C_m \subset \cdots \subset C_2 \subset C_1$ be a strictly increasing chain of closed subsets of $E_0$.
To start, assume that $C_1$ is a proper subset of $\dom{\id_a}$ for some $a \in L_0$, and also that $C_m \neq \varnothing$.
Using \cref{cap one simplices}, choose $1 \leq k_1 < \cdots < k_m$ and distinct edges $g_1,\dots,g_{k_m} \in L_1$ such that
\[
C_i = \dom{g_1} \cap \cdots \cap \dom{g_{k_i}}
\]
for each $1 \leq i \leq m$. 
Since $C_1$ is a proper subset of $\dom{\id_a}$, we may assume that no $g_j$ is an identity. 
By \cref{dim int domains}, $k_m \leq n = \dim(L)$, hence $m \leq n$.

We may extend our chain at the top by $C_1 \subset \dom{\id_a} \subseteq E_0$ where $g_1 \colon a \to b$, and at the bottom by $\varnothing \subseteq C_m$ if $\varnothing$ is closed.
Thus any strictly increasing chain has at most $n+3$ elements.
\end{proof}

\begin{theorem}
\label{deg dim}
If $L$ is a nonempty partial groupoid, then $\deg(L) \leq \dim(L)+1$.
\end{theorem}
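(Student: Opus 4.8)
The plan is to combine the main theorem \cref{helly degree theorem} with the chain bound \cref{fdim implies artinian} and the intersection bound \cref{dim int domains}. If $\dim(L) = \infty$ the inequality is vacuous, so I may assume $L$ is $n$-dimensional for some finite $n = \dim(L)$. If $L$ is a groupoid then $\deg(L) = 1$, and since $L$ is nonempty we have $n \geq 0$, whence $\deg(L) \leq n + 1$; thus I may assume $L$ is not a groupoid. In that case $n \geq 1$, since a $0$-dimensional partial groupoid is the nerve of a discrete groupoid. By \cref{canonical action} there is a characteristic action $\rho \colon E \to L$, and by \cref{fdim implies artinian} the closed subsets of $E_0$ satisfy the descending chain condition. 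As $L$ is not a groupoid, \cref{helly degree theorem} gives $\deg(L) \leq h(\rho)$, so it suffices to prove $h(\rho) \leq n + 1$.

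By \cref{helly rank v helly critical} it is enough to show that every Helly critical family of closed subsets of $E_0$ has size at most $n + 1$. A critical family of size at most $2$ is harmless, as $2 \leq n + 1$. So let $\ul A = (A_i)_{i \in I}$ be a critical family with $|I| \geq 3$. Applying \cref{critical 1 simplices} (this is where the descending chain condition is used) produces a finite set $M$, a surjection $\pi \colon M \to I$, and distinct $1$-simplices $g_m \in L_1$, which are nonidentities because $|I| \geq 3$, such that the family $(\dom{g_m})_{m \in M}$ is again Helly critical. Surjectivity of $\pi$ yields $|I| \leq |M|$, so it remains to bound $|M|$.

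Here \cref{dim int domains} enters. Since $(\dom{g_m})_{m \in M}$ is critical, removing any single index leaves a subfamily with nonempty intersection; that is, some $|M| - 1$ of the sets $\dom{g_m}$ share a common point. These are the domains of $|M| - 1$ distinct nonidentity elements of $L_1$, and $E$ is a groupoid, so \cref{dim int domains} forces $|M| - 1 \leq \dim(L) = n$. Combining the two inequalities gives $|I| \leq |M| \leq n + 1$. Hence every Helly critical family has size at most $n + 1$, so $h(\rho) \leq n + 1$ and therefore $\deg(L) \leq h(\rho) \leq n + 1 = \dim(L) + 1$.

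The main obstacle is the reduction, via \cref{critical 1 simplices}, from an arbitrary Helly critical family of closed sets to one built out of domains of distinct nonidentity $1$-simplices, since the dimension bound \cref{dim int domains} is available only for such families. Once that reduction is in place the chain of inequalities $|I| \leq |M| \leq \dim(L) + 1$ closes immediately, and the only remaining care is the bookkeeping of small critical families together with the elementary fact that a non-groupoid partial groupoid has dimension at least one.
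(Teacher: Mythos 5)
Your proof is correct and follows essentially the same route as the paper's: reduce to the non-groupoid case, invoke \cref{fdim implies artinian} to get the descending chain condition, pass through \cref{helly degree theorem}, use \cref{critical 1 simplices} to replace a critical family by domains of distinct nonidentity edges, and bound its size via \cref{dim int domains}. The only cosmetic difference is that you bound the size of \emph{every} Helly critical family (using surjectivity of $\pi$), whereas the paper works directly with one of maximal size $h(\rho)$, where $\pi$ is a bijection; the lemmas used and the key inequality $|M|-1 \leq \dim(L)$ are identical.
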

\begin{proof}
We assume that $L$ is finite dimensional.
If $L$ is a (nonempty) groupoid, then $\deg(L) = 1 = 0 + 1 \leq \dim(L)+1$ and we are done, so we assume that $L$ is not a groupoid.
As $0$-dimensional symmetric sets are discrete groupoids, we have in particular assumed $\dim(L) \geq 1$, so we also can disregard the case $\deg(L) = 2$.
We thus assume $\deg(L) \geq 3$.
Fix any characteristic map $\rho\colon E \to L$. 
By \cref{fdim implies artinian}, the closure space satisfies the descending chain condition. 
This implies that $h(\rho) = \deg(L) \geq 3$ by \cref{helly degree theorem}. 
Second, it means \cref{critical 1 simplices} applies to give a Helly critical family $(\dom{g_i})_{i\in I}$ where $|I| = h(\rho) = h \geq 3$ and the $g_i$ are distinct nonidentity edges of $L$ (see also \cref{lem critical no repeated}).
The nonempty intersection $\varnothing \neq \bigcap_{j \neq i} \dom{g_j}$ of $h-1$ elements implies $h-1 \leq \dim(L)$ by \cref{dim int domains}, and hence $\deg(L) \leq \dim(L) + 1$.
\end{proof}

\begin{remark}
The bound given in \cref{deg dim} does not hold if $L$ is not assumed spiny.
Indeed, the symmetric sphere from \cref{ex sym sphere} is $n$-dimensional, but has degree equal to $2n$.
\end{remark}

\begin{example}[Degree of $\NA$]
\label{ex degree NA}
Recall the partial groupoid $\NA$ from \cref{sec NA}, which we will now observe has degree 3.
First, $\NA$ is $2$-dimensional so $\deg(\NA) \leq 3$ by \cref{deg dim}.
To show that $\deg(\NA) > 2$, we must show that $\NA$ is not 3-Segal.
Using the notation from \S\ref{sec NA} and \S\ref{sec edgy segal}, consider the word
\[
w = ((g \circ f)^{-1}, g \circ f, g^{-1}, g, h, h^{-1}) \in \words_I(\NA)_{6},
\]
where $I$ is the gapped subset $\{1, 3, 5\}$ of $[6]$. 
But $w$ is not a simplex, since otherwise $d_1d_{\top}d_{\bot}w = [f|g|h] \in \NA_3$ would force associativity.
Thus $\deg(\NA) > 2$ by \cref{edgy segal characterization}. 
\end{example}

In \cref{ex groupoid reduction}, we saw that the reduction of a groupoid often has degree 2, so in this case reduction generally increases the degree by $1$. 
At least under a finite dimensionality assumption, the degree is left unchanged by reduction in all other cases. 

If $L$ is a nonempty partial groupoid, then the reduction map $r \colon L \to \bar L = \mathcal{R}L$, $g \mapsto \bar g$ is surjective and star injective.
It is star injective because if $g, g' \in L_n$ have source $a \in L_0$ and $r(g) = r(g')$, then either $g = g'$, or $g$ and $g'$ are totally degenerate, hence $g = g'$ since they must be totally degenerate on $a$. It is surjective since $L$ is nonempty.
Given a characteristic map $\rho \colon E \to L$, the composite $\bar\rho = r \circ \rho \colon E \to \bar L$ is characteristic.
The resulting closure operators $\cl_\rho = \cl$ and $\cl_{\bar \rho} = \barcl$ on $E_0$ are closely related. 

\begin{lemma}
\label{reduction cl comparison} 
Suppose $L$ is not a groupoid.
A subset of $E_0$ is closed for $\cl$ if and only if it is closed for $\barcl$ or is of the form $\dom{\id_a}$ for some $a \in L_0$. 
\end{lemma}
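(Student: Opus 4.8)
The plan is to reduce everything to domains of $1$-simplices and then compare the generating domains of the two closure operators. Since $E$ is a groupoid, \cref{cap one simplices} applies to both $\rho$ and $\bar\rho = r\circ\rho$: a subset of $E_0$ is $\cl$-closed (resp.\ $\barcl$-closed) precisely when it is an intersection of domains of $1$-simplices of $L$ (resp.\ of $\bar L$). So I would first catalogue these generating domains. For $L$ they split into the fibers $\dom{\id_a}=\rho^{-1}(a)$ (\cref{domains of identities}) and the domains $\dom{g}$ of nonidentity edges $g$; by \cref{star inj nondeg} such a $g$ with source $a$ satisfies $\dom{g}\subseteq\rho^{-1}(a)$. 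Since $\bar L$ is reduced, it has a single identity $\id_*$ with $\bar\rho$-domain $E_0$, while $r$ sends nonidentities to nonidentities (\cref{star inj nondeg}) and is injective on nondegenerate simplices, hence restricts to a bijection between the nonidentity edges of $L$ and those of $\bar L$. The key computation is that this bijection preserves domains: for nonidentity $g\in L_1$, the $\rho$-domain $\dom{g}$ equals the $\bar\rho$-domain of $rg$, because any lift in $E$ of $rg$ is necessarily a nonidentity edge and so maps under $\rho$ back to $g$. Consequently the $\barcl$-generating domains are exactly $\{E_0\}$ together with the nonidentity domains $\dom{g}$, whereas the $\cl$-generating domains are the fibers $\rho^{-1}(a)$ together with those same nonidentity domains.

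The one place the hypothesis that $L$ is not a groupoid enters is in showing that $\varnothing$ is $\barcl$-closed, and I expect this to be the main obstacle. Since $L$ is spiny but not a groupoid, \cref{bous spiny Segal} provides a starry word $(g_1,\dots,g_n)\in\bswords(L)_n$ with common source $a$ that is not in the image of $\bous_n$. By \cref{bous domain lemma}\eqref{item nonempty} this forces $\bigcap_{i}\dom{g_i}=\varnothing$. Not every $g_i$ can be an identity, for otherwise the word would be $\bous_n$ of a degenerate simplex; and discarding the identity terms leaves the intersection unchanged, since each nonidentity $\dom{g_i}$ already lies in $\rho^{-1}(a)=\dom{\id_a}$. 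Thus $\varnothing$ is an intersection of nonidentity domains, hence $\barcl$-closed.

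With these preliminaries in hand, the two implications become routine casework. For $(\Leftarrow)$: a fiber $\dom{\id_a}$ is a $\cl$-generating domain, and any $\barcl$-closed set is an intersection of $E_0$ with nonidentity domains, all of which are $\cl$-closed; intersections of $\cl$-closed sets are $\cl$-closed. For $(\Rightarrow)$: write a $\cl$-closed set as $C=\bigcap_{a\in A}\rho^{-1}(a)\cap B$, where $A\subseteq L_0$ records the sources of the identity factors and $B=\bigcap_{g\in\mathcal G}\dom{g}$ runs over a set $\mathcal G$ of nonidentity edges. Here $B$ is an intersection of $\bar\rho$-domains, hence $\barcl$-closed. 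If $A=\varnothing$ then $C=B$ is $\barcl$-closed; if $\lvert A\rvert\ge 2$ the fibers are pairwise disjoint (\cref{domains of identities}), so $C=\varnothing$, which is $\barcl$-closed by the previous paragraph. In the remaining case $A=\{a\}$: if $\mathcal G=\varnothing$ then $C=\rho^{-1}(a)=\dom{\id_a}$ has the desired special form; and if $\mathcal G\neq\varnothing$ then $B$ lies in the single fiber $\rho^{-1}(a')$, where $a'$ is the source of any chosen $g\in\mathcal G$, whence $C=\rho^{-1}(a)\cap B$ equals $B$ (if $a'=a$) or $\varnothing$ (if $a'\neq a$), each of which is $\barcl$-closed. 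This exhausts the cases and completes the equivalence.
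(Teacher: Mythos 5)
Your proof is correct and follows the same route as the paper's: both reduce to \cref{cap one simplices} together with the observation that $\dom{g}$ coincides with the $\bar\rho$-domain of $\bar g$ for nonidentity edges $g$, the identity domains accounting for the entire discrepancy between the two closure operators. You are in fact more careful than the paper's three-line argument in checking that $\varnothing$ is $\barcl$-closed (which the forward implication needs, e.g.\ when two distinct fibers $\dom{\id_a}$, $\dom{\id_b}$ intersect), and your use of the hypothesis that $L$ is not a groupoid there is exactly right; the only nitpick is that injectivity of $r$ on nonidentity edges comes from the construction of the reduction (only totally degenerate simplices are identified) rather than from star injectivity, which by itself only controls edges sharing a source.
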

\begin{proof}
Writing $\bardom(\bar{g})$ for the domain of  $\bar{g} \in \bar L_1$, we have $\dom{g} \subseteq \bardom(\bar{g})$, since any lift of $g$ is also a lift of $\bar{g}$. 
If $g$ is not an identity, then equality holds, since reduction of edges is injective when restricted to nonidentities.
The lemma now follows from \cref{cap one simplices}.
\end{proof}

\begin{theorem}
\label{degree of reduction}
If $L$ is a finite dimensional partial groupoid but not a groupoid, then $\deg(\bar L) = \deg(L)$. 
\end{theorem}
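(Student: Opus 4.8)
The plan is to compute both degrees as Helly numbers and then compare them. Fix a characteristic map $\rho \colon E \to L$, so that $\bar\rho = r \circ \rho \colon E \to \bar L$ is also characteristic, and write $\cl = \cl_\rho$ and $\barcl = \cl_{\bar\rho}$ for the two closure operators on $E_0$. Since $L$ is finite dimensional, \cref{fdim implies artinian} shows that $\cl$-closed subsets of $E_0$ satisfy the descending chain condition; as $L$ is not a groupoid, \cref{helly degree theorem} then yields $\deg(L) = h(\cl)$. By \cref{reduction cl comparison} every $\barcl$-closed set is $\cl$-closed, so the descending chain condition transfers to $\barcl$ as well. Thus, once we know that $\bar L$ is not a groupoid, \cref{helly degree theorem} applied to $\bar\rho$ will give $\deg(\bar L) = h(\barcl)$, and the theorem reduces to the equality $h(\cl) = h(\barcl)$.

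The first substantive step, which I expect to be the main obstacle, is showing that $\bar L$ is not a groupoid --- equivalently, being reduced, not a group. Here I would argue through the action directly. Since $L$ is not a groupoid, \cref{bous spiny Segal} supplies a starry word $(g_1, \dots, g_n)$ outside the image of $\bous_n$, and \cref{bous domain lemma}\eqref{item nonempty} then forces $\bigcap_i \dom{g_i} = \varnothing$. All the $g_i$ share a common source $a$, so each $\dom{g_i} \subseteq \dom{\id_a}$; hence the identity entries do not shrink the intersection, and discarding them leaves at least two nonidentity edges whose domains still meet in $\varnothing$ (a single nonidentity edge has nonempty domain by surjectivity of $\rho$). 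Reduction is injective on nonidentities and preserves their domains (as in the proof of \cref{reduction cl comparison}), so the reduced tuple is a starry word in $\bar L$ with empty domain-intersection. By \cref{bous domain lemma}\eqref{item g v intersection} together with surjectivity of $\bar\rho$, it cannot lie in the image of $\bous$, whence $\bar L$ is not a group.

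It remains to prove $h(\cl) = h(\barcl)$. One inequality is immediate: every $\barcl$-critical family consists of $\cl$-closed sets by \cref{reduction cl comparison}, and criticality involves only ordinary intersections of its members, so any $\barcl$-critical family is already $\cl$-critical, giving $h(\barcl) \le h(\cl)$. For the reverse I would split on the value of $h(\cl) = \deg(L) \ge 2$. If $h(\cl) \ge 3$, a critical family realizing it has size $\ge 3$, so by \cref{rmk about dom id} none of its members equals some $\dom{\id_a}$; by \cref{reduction cl comparison} all members are therefore $\barcl$-closed, making the family $\barcl$-critical and yielding $h(\barcl) \ge h(\cl)$. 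If instead $h(\cl) = 2$, then by the previous paragraph $\bar L$ is not a groupoid, so $h(\barcl) = \deg(\bar L) \ge 2$, and together with $h(\barcl) \le 2$ this forces equality. In every case $\deg(L) = h(\cl) = h(\barcl) = \deg(\bar L)$, as desired.
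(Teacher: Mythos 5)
Your proof is correct, and it takes a recognizably different route from the paper's, though both run on the same underlying machinery. The paper invokes \cref{main rel domid} to write $\deg(L) = \sup_{a}h(\dom{\id_a})$ and $\deg(\bar L) = h(\barcl)$, and then applies \cref{helly domid general} to the $\barcl$-closure space with $\mathcal{U} = \{\dom{\id_a}\}$ to identify $h(\barcl)$ with that same supremum (using $\deg(L)\geq 2$ to rule out the degenerate case of that lemma). You instead work with the absolute statement $\deg = h$ from \cref{helly degree theorem} and compare critical families for $\cl$ and $\barcl$ directly on $E_0$: the inequality $h(\barcl)\leq h(\cl)$ is free since $\barcl$-closed sets are $\cl$-closed and criticality is a purely intersection-theoretic condition, and the reverse inequality follows from \cref{rmk about dom id} (no $\dom{\id_a}$ occurs in a critical family of size $\geq 3$) together with \cref{reduction cl comparison}, with the $h(\cl)=2$ case handled separately by squeezing. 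Since \cref{rmk about dom id} is itself a corollary of \cref{helly domid general}, the two arguments are close cousins, but yours is more hands-on and, notably, supplies an explicit verification that $\bar L$ is not a groupoid --- via a starry word with empty domain-intersection surviving reduction --- a point the paper's proof asserts without argument. The only things worth making explicit in a final write-up are that at least two nonidentity entries survive the discarding step (so the reduced tuple has length $\geq 2$, which is what contradicts $\bous$ being an isomorphism) and that both Helly numbers are finite (immediate from \cref{deg dim}), both of which your argument already contains implicitly.
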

\begin{proof}
The reduction $\bar{L}$ is finite dimensional and not a groupoid, and \cref{fdim implies artinian} allows us to apply the results of \cref{sec:degree_as_helly_rank}. 
Apply \cref{main rel domid} for $L$ to express $\deg(L)$ as the supremum of $h(\dom{\id_a})$ over $a \in L_0$, and 
for $\bar L$ to get $\deg(\bar L) = h(\barcl)$. 
\cref{reduction cl comparison} implies that $\barcl$ satisfies the hypothesis of \cref{helly domid general} with $\mathcal{U} = \{\dom{\id_a} \mid a \in L_0\}$, and that $\dom{\id_a}$ has the same closed sets when regarded as a subspace of either of the two closure spaces. 
As $2 \leq \deg(L) = \sup_{a \in L_0} h(\dom{\id_a})$, we must be in case \eqref{item sup case} of \cref{helly domid general}, so $h(\barcl) = \sup_{a \in L_0} h(\dom{\id_a})$. 
\end{proof}

\section{Degree of punctured Weyl groups}\label{punctured weyl}

Throughout this section, $V$ is a real Euclidean space with inner product $(u,v)$, and $\rsys$ is a root system in $V$. 
This means that $\rsys$ is a finite spanning set of nonzero vectors such that for each root $\alpha \in \rsys$,
\begin{enumerate}[label=(R\arabic*), ref=R\arabic*]
\item\label{reduced} $\reals\alpha \cap \rsys = \{\alpha, -\alpha\}$, and 
\item\label{W invariant} $s_\alpha(\rsys) = \rsys$, where
$s_{\alpha}\colon v \mapsto v-2\alpha(v,\alpha)/(\alpha,\alpha)$ denotes the
reflection in the hyperplane orthogonal to $\alpha$. 
\end{enumerate}
These are the same conventions as in \cite[Chapter 1]{Humphreys1990}, except that we assume for convenience that $\rsys$ spans $V$. 
The Weyl group of $\rsys$ is the subgroup $W = W(\rsys)$ of the orthogonal group $O(V)$ generated by the $s_\alpha$ for $\alpha \in \rsys$. 
The root system is \emph{crystallographic} if $2(\beta,\alpha)/(\alpha,\alpha) \in \ints$ for all $\alpha,\beta \in \rsys$.
It is irreducible if is not the orthogonal union of two or more root systems. 

A positive system in $\rsys$ is a subset that is the set of positive elements in some compatible total ordering $<$ on $V$, that is, one that respects addition and scalar multiplication. 
If $H$ is a hyperplane containing no root and $v$ is a nonzero vector in the orthogonal complement of $H$, then the set $\{\alpha \in \rsys \mid (v,\alpha) > 0\}$ is a positive system, and all positive systems can be described in this way. 
A positive system is convex in $\rsys$, namely $\reals_{\geq 0}\pos \cap \rsys = \pos$, and $\rsys$ is the disjoint union of $\pos$ and $-\pos$ by \eqref{reduced}. 
These two properties characterize positive systems among subsets of $\rsys$ (cf. \cite[VI.1.7, Corollary 1]{BourbakiLie4-6} when $\rsys$ is crystallographic).
The Weyl group acts freely and transitively on the set of positive systems \cite[Section~1.8]{Humphreys1990}.
If $\pos$ is a positive system, then so is its negative $-\pos$.

A base $\base$ is a subset of $\rsys$ that forms a basis for $V$ and has the property that every root is a linear combination of 
elements of $\base$ with all coefficients nonnegative or all coefficients nonpositive. 
If $\pos$ is a positive system, then the set of roots in $\pos$ not expressible as a linear combination of two or more roots in $\pos$ with strictly positive coefficients is a base.
Conversely, given a base, the roots in the convex cone spanned by it is a positive system. 
In this way bases and positive systems determine each other uniquely. 
Elements of a base are called simple roots, and generally come with a fixed ordering $\{\alpha_1,\dots,\alpha_n\}$. 
The Weyl group is generated by the simple reflections $s_i = s_{\alpha_i}$ \cite[Section~1.5]{Humphreys1990}.\footnote{The simple reflections $s_i$ should not be confused with the simplicial degeneracy operators $s_i$ introduced in \cref{sec simp symm}. Degeneracies will not appear again in this paper.} 
The inversion set $\invset(w) = \pos \cap w^{-1}(-\pos)$ is the set of positive roots sent by $w$ to negative roots, and its cardinality is equal to the length of $w$, i.e. the number of simple reflections appearing a reduced expression for $w$ \cite[Corollary 1.7]{Humphreys1990}. 
The longest element $w_0$ of $W$ (with respect to $\pos$) is characterized by $w_0(\pos) = -\pos$. 

\subsection{Punctured Weyl groups}

Let $\rsys$ be a root system with fixed positive system $\pos \subset \rsys$. 
The Weyl group $W$ acts partially on the set of positive roots, and we may apply \cref{ex realizable locality} in this case.
The partial action determines a transporter groupoid with nerve $E = E_{\pos}(W)$ and a star injective map of partial groupoids $E \to BW$. 
Let $L_{\pos}(W)$ be its image, and $\rho \colon E \to L_{\pos}(W)$ the induced characteristic map (\cref{def char map}).  

\begin{definition}
\label{pun Weyl}
$L_{\pos}(W)$ is the \emph{punctured Weyl group} of $\rsys$. 
\end{definition}

This is a combinatorial analogue of the $p$-local punctured groups of \cite{HenkeLibmanLynd2023}.
Since any two positive systems are $W$-conjugate, it is essentially unique as a symmetric subset of $BW$ in the sense that any two are in the same $W$-orbit under conjugation. 

Fix $W$ and $\pos$ and set $L = L_{\pos}(W)$ for short. 
An element $w \in W$ determines a $1$-simplex in $L$ if and only if there is some positive root $\alpha$ such that $w(\alpha)$ is also positive.
Equivalently, this is the case if the set of such $\alpha$'s, 
\[
\dom{w} = \pos \cap w^{-1}(\pos) = \pos\backslash \invset(w),
\]
the complement in $\pos$ of the inversion set $\invset(w) = \pos \cap w^{-1}(-\pos)$, is nonempty.
Thus, 
\[
L_1 = W\backslash\{w_0\}, 
\]
which explains our usage of `punctured' in \cref{pun Weyl}. 

In general a word $w = (w_1,\dots,w_n) \in BW_n$ of non-$w_0$ elements is an $n$-simplex of $L$ if there is some positive root
$\alpha$ such that the word successively acts on $\alpha$, that is, such that each of the $n$ roots 
\[
w_{01}(\alpha), w_{02}(\alpha), \dots, w_{0n}(\alpha)
\]
is positive, where as usual $w_{ij} = w_j \cdots w_{i+1}$ if $i \leq j$, and $w_{ji} = w_{ij}^{-1}$. 
Again, this is equivalent to say that the domain
\begin{align*}
\dom{w} &= \dom{w_{01}} \cap \dom{w_{02}} \cap \cdots \cap \dom{w_{0n}}\\
        &= \pos \cap w_{10}(\pos) \cap w_{20}(\pos) \cdots \cap w_{n0}(\pos),
\end{align*}
an intersection of positive systems including $\pos$, is nonempty.
As we saw in \cref{ex subset G set}, the above is typical of any partial action of a group $G$ on a subset of a $G$-set.

\begin{theorem}
\label{weyl main}
The degree of $L_{\pos}(W)$ is given in Table~\ref{hrank table} below for $\rsys$ irreducible. 
If $\rsys$ decomposes as an orthogonal union of root systems $\rsys_i$, then the degree of $L_{\pos}(W)$ is the sum of the degrees of the $L_{\pos_i}(W_i)$ for $W_i$ the corresponding direct factor of $W$. 
\end{theorem}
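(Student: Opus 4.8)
The plan is to deduce the entire theorem from the Main Theorem (\cref{helly degree theorem}), which converts the degree into a Helly number, and then to perform the Helly-number computation one root system at a time. First I would check the hypotheses of \cref{helly degree theorem} for the characteristic action $\rho\colon E \to L_{\pos}(W)$ produced by \cref{ex realizable locality}: the underlying closure space sits on the finite set $\pos$, so the descending chain condition is automatic, and $L_{\pos}(W)$ is a group only if $W\setminus\{w_0\}$ is a subgroup, which by order considerations ($(|W|-1)\mid|W|$) forces $|W|=2$, i.e.\ $\rsys=A_1$. Thus away from the single irreducible exception $A_1$ (where $L$ is the trivial group and $\deg=1$ by inspection, matching the table) we have $\deg(L_{\pos}(W)) = h(\rho)$.

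Next I would pin down the closure operator. Since $\dom{w} = \pos \cap w^{-1}(\pos)$ is the intersection of the two positive systems $\pos$ and $w^{-1}(\pos)$, it is convex, i.e.\ $\reals_{\geq 0}\dom{w}\cap\pos = \dom{w}$; hence every domain containing a set $A$ contains $\cone_{\reals}(A)=\reals_{\geq 0}A\cap\pos$, giving $\cl(A)\supseteq\cone_{\reals}(A)$. For the reverse inclusion, given $\beta\in\pos\setminus\cone_{\reals}(A)$ (so $\beta\notin\reals_{\geq0}A$), I would separate $\beta$ from the closed cone $\reals_{\geq0}A$ by a linear functional, correct it by a functional positive on all of $\pos$, and perturb generically to avoid every root; the resulting positive system contains $A$ but not $\beta$, and since it meets $\pos$ in the nonempty set $A$ it is not $-\pos$, hence equals $w^{-1}(\pos)$ for some $w\neq w_0$. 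Then $\beta\notin\dom{w}\supseteq A$, so $\cl(A)=\cone_{\reals}(A)$ and $h(\rho)$ is the Helly number of $\pos$ under convex-cone closure.

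The substance of the first assertion is then the convex-closure analogue of \cref{hZpos}: the Helly number of $(\pos,\cone_{\reals})$ equals the maximal size of a \emph{really abelian} set of positive roots, the notion defined so as to stand to convex closure exactly as abelian sets stand to integral closure. Granting this, the entries of \cref{hrank table} come from determining this maximal size in each irreducible crystallographic type. For the classical types one writes the positive roots in standard $e_i$-coordinates and matches an explicit extremal really abelian family against an upper bound; for $A_n$ this recovers Malcev's abelian maximum $\lfloor(n+1)^2/4\rfloor$, while the $B_n/C_n$ and $D_n$ values reflect the coarser convex closure. I expect the exceptional types $E_6,E_7,E_8,F_4,G_2$ to be the main obstacle, since there both the extremal families and the matching bounds require a careful, partly case-by-case (possibly computer-assisted) analysis of convex dependences among positive roots.

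Finally, for the additivity statement suppose $\rsys=\bigsqcup_i\rsys_i$ is an orthogonal union, so $\pos=\bigsqcup_i\pos_i$. Because distinct summands are orthogonal and each $\pos_i$ lies strictly on one side of a hyperplane, no positive root of one summand is a nonnegative combination involving roots of another; hence $\cone_{\reals}(A)\cap\pos_i=\cone_{\reals}(A\cap\pos_i)\cap\pos_i$, so $(\pos,\cl)$ is the disjoint sum of the spaces $(\pos_i,\cl_i)$. I would then show the Helly number is additive over disjoint sums: writing $A=\bigsqcup_i A_i$, the formula in \cref{def usual helly number} yields $\core(A)=\bigsqcup_i\core(A_i)$, so $A$ is Helly independent if and only if each $A_i$ is, and maximal independent families add. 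Combining this with the identification $h(\pos_i,\cl_i)=\deg(L_{\pos_i}(W_i))$ (from \cref{helly degree theorem}, or by direct inspection for any $A_1$ factor) gives $\deg(L_{\pos}(W))=\sum_i\deg(L_{\pos_i}(W_i))$.
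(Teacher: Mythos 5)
Your reduction is exactly the paper's: dispose of $A_1$ by hand, apply \cref{helly degree theorem} to the characteristic action of \cref{ex realizable locality} (the descending chain condition is automatic on the finite set $\pos$, and your order-counting argument that $L_{\pos}(W)$ is a group only for $A_1$ is fine), identify the closure operator as $\cone_{\reals}$ by a separating-hyperplane argument (this is \cref{closure op weyl}), and obtain additivity by exhibiting $(\pos,\cone_\reals)$ as the disjoint sum of the $(\pos_i,\cone_\reals)$. Your direct verification that $\core(A)=\bigsqcup_i\core(A_i)$ over a disjoint sum is a legitimate replacement for the paper's citation of van de Vel in \cref{hrank reducible}.

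Two steps, however, are asserted rather than proved, and together they carry essentially all of the content of the first sentence of the theorem. First, the ``convex-closure analogue of \cref{helly free Z-closure}'' that you grant --- that $h_{\reals}(\pos)$ equals the maximal size of a really abelian set --- is not automatic from the analogy with the abelian case: both statements rest on the Hoffman--Jamison-Waldner theorem (\cref{hrank convex geometry}) that the Helly number of a \emph{convex geometry} equals the maximal size of a free set, so one must at least observe that $(\pos,\cone_\reals)$ satisfies the antiexchange axiom (immediate here, but it is the reason the identification holds, and your proposal never names it). Second, and more substantially, the entries of \cref{hrank table} are never derived: determining the maximal size of a really abelian set in each irreducible type is where most of the work lies (\cref{hrank rsys}), requiring explicit extremal families together with matching upper bounds --- e.g.\ the duality identifying $B_n$ and $C_n$ under convex closure and the check that the unique size-$5$ abelian set in $B_3^+$ is not really abelian, Pilkington's theorem that $\cone_\ints=\cone_\reals$ in type $A$ only, and machine computation of convex cones for $E_6$, $E_7$, $E_8$. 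Your proposal correctly locates this as the main obstacle but does not surmount it, so as written it establishes the reduction of the theorem to a Helly-type extremal problem on $\pos$ rather than the theorem itself.
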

\begin{proof}
If $\rsys$ is $A_1$ then the punctured Weyl group is the trivial group, which has degree $1 = \lfloor \frac{(1+1)^2}{4} \rfloor$.
If the rank is larger than 1, then combine \cref{helly degree theorem} with \cref{hrank reducible} and \cref{hrank rsys} below.
\end{proof}

For orientation, we describe what this says combinatorially in the following two examples. 

\begin{example}\label{A2 example degree}
According to \cref{hrank table}, the punctured Weyl group of a rank $2$ root system is lower $3$-Segal (but not $1$-Segal). 
Recall from \cref{d seg embed in BG} that the first of the $3$-Segal conditions in a partial group has to do with words of length $m = 4$:
if $w = (w_1,w_2,w_3,w_4)$ is a list of elements in $W\backslash\{w_0\}$, and if for each of the three faces 
\[
d_0w = [w_2|w_3|w_4], \quad d_2w = [w_1|w_3w_2|w_4], \quad d_4w = [w_1|w_2|w_3]
\]
there is some positive root that the word successively keeps positive, then $w$ itself successively keeps some positive root positive, that is,
$[w_1|w_2|w_3|w_4] \in L_4$. 
To see that it is not lower $1$-Segal, one just needs to produce a list of non-longest elements that does not act on a positive root. 
For example, if $\base = \{\alpha, \beta\}$ is a base, then $(w_0s_\beta, s_\alpha s_\beta) = (w_0s_\beta, w_0s_\alpha (w_0s_\beta)^{-1})$ has this property: $w_0 s_\beta$ acts only on $\beta \in \pos$, but $w_0s_\alpha (w_0s_\beta)^{-1}$ sends $w_0 s_\beta (\beta)$ to the negative root $w_0s_\alpha(\beta)$.
\end{example}

\begin{example}\label{c3 example}
For a more involved but illustrative example (cf.\ Figure~\ref{fig:c3 free set} below), take $\rsys = C_3$. 
Fixing an orthonormal basis $\{a_1,a_2,a_3\}$ for $\reals^3$, we take as usual $\base = \{\alpha_1,\alpha_2,\alpha_3\}$ with $\alpha_1 = a_1-a_2$, $\alpha_2 = a_2-a_3$, and $\alpha_3 = 2a_3$. 
The punctured Weyl group has degree $4$. 
It is lower $7$-Segal, the lowest condition of which says that if $w = (w_1,\dots,w_8)$ is a word of length $8$ such that if each of the five faces $d_0w,d_2w,d_4w,d_6w,d_8w$
act on a positive root, then so does $w$.
On the other hand, it is not lower $5$-Segal.
For example, the word
\begin{equation}
\label{c3 word}
w = (\ko{s_3,s_3},s_2,s_3,\ko{s_2,s_2},s_3,s_1,s_3,\ko{s_2,s_2},s_3,s_2,s_1,s_3,\ko{s_2})
\end{equation}
of length 16 has the property that the four faces $d_1w, d_5w, d_{10}w,
d_{16}w$ act on a positive root, but $w$ doesn't.
The domains of the four faces of $w$ are correspondingly $\dom{d_1w} = \{\alpha_3\}$, $\dom{d_5w} = \{\alpha_2+\alpha_3\}$, $\dom{d_{10}w} = \{\alpha_1+2\alpha_2+\alpha_3\}$, and $\dom{d_{16}w} = \{\alpha_1+\alpha_2+\alpha_3\}$, whereas $\dom{w}= \varnothing$. 
\end{example}

\subsection{Closure operators on positive systems and convex geometries}

A \emph{convex geometry} in the sense of Edelman and Jamison \cite{EdelmanJamison1985} is a finite closure space $(S,\cl)$ satisfying the antiexchange condition: 
if $C \subseteq S$ is closed and $x$ and $y$ are distinct points not in $C$, then $y \in \cl(C \cup x)$ implies $x \notin \cl(C \cup y)$.
The prototypical example is a finite subset $S$ of affine space with $S$-relative convex hull $A \mapsto S \cap \conv(A)$. 

We let $\cone_\reals$ denote the ($\pos$-relative) convex cone, whose value on a subset $A$ of a root system $\rsys$ is 
$\cone_\reals(A) = \reals_{\geq 0}A \cap \pos$,
the roots that are linear combinations of the vectors in $A$ with nonnegative coefficients. 
The subset $A$ is \emph{convex} if $\cone_\reals(A) = A$. 
If $\rsys$ is crystallographic, 
the $\ints$-\emph{closure} of a subset $A$ of $\pos$ is the set 
\[
\cone_{\ints}(A) = \ints_{\geq 0}A \cap \pos,
\]
those roots of the form $\sum_{\alpha \in A} c_\alpha\alpha$ with $c_\alpha \in \mathbb{Z}_{\geq 0}$. 
A subset $A$ of $\pos$ is $\mathbb{Z}$-closed if and only if it is closed in the sense of Bourbaki \cite[VI.1.7, Definition 4]{BourbakiLie4-6}: whenever $\alpha, \beta \in A$ are such that $\alpha+\beta$ is a root, we have $\alpha+\beta \in A$. See, for example, \cite[\S 2, Lemma]{Pilkington2006}.

In \cite{Pilkington2006}, Pilkington makes a comparison of various closure operators defined on root systems, including convex and $\ints$-closure and decides in particular when they coincide. 
While $(\pos, \cone_\reals)$ is obviously a convex geometry, she shows the same for $(\pos,\cone_\ints)$. 
\begin{proposition}[Pilkington]
\label{Z-closure convex geometry}
If $\rsys$ is finite crystallographic, then the closure space $\pos$ with $\cone_\mathbb{Z}$ is a convex geometry. 
\end{proposition}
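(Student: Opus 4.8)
The plan is to verify the antiexchange condition directly, since $\pos$ is finite and $\cone_{\ints}$ is already a closure operator (note $\cone_{\ints}(\varnothing) = \ints_{\geq 0}\varnothing \cap \pos = \varnothing$, as $0 \notin \pos$). The essential tool is the linear functional realizing $\pos$: I would fix a vector $v$ orthogonal to a hyperplane containing no root, so that the positive system is $\pos = \{\alpha \in \rsys \mid (v,\alpha) > 0\}$, and set $\phi(\alpha) = (v,\alpha)$. Then $\phi(\alpha) > 0$ for every $\alpha \in \pos$, which is exactly what lets me read off sign information from nonnegative integer combinations of roots.

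So fix a $\cone_{\ints}$-closed set $C \subseteq \pos$ and distinct roots $x,y \in \pos \setmin C$ with $y \in \cone_{\ints}(C \cup \{x\})$. First I would write
\[
y = \sum_{\gamma \in C} c_\gamma \gamma + m\,x, \qquad c_\gamma, m \in \ints_{\geq 0}.
\]
Because $C$ is closed, $\cone_{\ints}(C) = C$, so $m = 0$ would force $y \in C$; hence $m \geq 1$. Suppose toward a contradiction that $x \in \cone_{\ints}(C \cup \{y\})$ as well, and write $x = \sum_{\gamma \in C} d_\gamma \gamma + n\,y$ with $d_\gamma, n \in \ints_{\geq 0}$; the same closedness argument gives $n \geq 1$. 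Substituting the second expression into the first yields
\[
(1 - mn)\,y = \sum_{\gamma \in C} (c_\gamma + m\,d_\gamma)\,\gamma,
\]
whose right-hand side is a nonnegative combination of roots of $C$.

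To finish I would apply $\phi$ and split on the value of $mn$. If $mn \geq 2$, then $\phi\bigl((1-mn)y\bigr) = (1-mn)\phi(y)$ is strictly negative (as $\phi(y) > 0$), while the right-hand side is a nonnegative sum of strictly positive terms, a contradiction. If $mn = 1$, then $m = n = 1$ and the identity reads $0 = \sum_\gamma (c_\gamma + d_\gamma)\gamma$; applying $\phi$ forces every $c_\gamma = d_\gamma = 0$, whence $y = m\,x = x$, contradicting $x \neq y$. In either case $x \notin \cone_{\ints}(C \cup \{y\})$, which is precisely the antiexchange condition.

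The argument is short, so the step I would flag as carrying the weight is the use of the closedness of $C$ to guarantee $m, n \geq 1$: this is exactly what rules out the degenerate combinations that would otherwise break antiexchange, and it is where the hypothesis that $C$ is \emph{closed} (rather than an arbitrary subset) is genuinely used. Crystallographicity enters only in making $\cone_{\ints}$ a sensible integral closure operator in the first place; the positivity functional $\phi$ does all of the geometric work, reducing the integral combinatorics to a single sign comparison.
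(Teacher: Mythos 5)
Your proof is correct. Note that the paper does not actually prove this proposition: it is quoted from Pilkington's work and used as a black box, so there is no internal argument to compare against. Your verification is a clean, self-contained substitute. The two pillars are exactly right: (i) a positive system is cut out by a linear functional $\phi = (v,-)$ that is strictly positive on all of $\pos$, and (ii) closedness of $C$ forces the coefficients $m,n$ of $x$ and $y$ to be at least $1$, after which the substitution $(1-mn)\,y = \sum_\gamma (c_\gamma + m d_\gamma)\gamma$ and a single sign comparison under $\phi$ dispatches both cases $mn\geq 2$ and $mn=1$ (the latter also covering $C=\varnothing$, where reducedness of $\rsys$ would anyway force $y=x$). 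Your closing observation is also accurate and worth keeping in mind: the argument never uses the integrality of the Cartan pairings, only that $\cone_\ints(A)=\ints_{\geq 0}A\cap\pos$ is a well-defined closure operator on a finite set, so antiexchange holds for this operator on any finite root system; crystallographicity matters in the paper only because that is the setting in which $\ints$-closed sets coincide with Bourbaki-closed sets and in which abelian sets of roots (\cref{helly free Z-closure}) are the relevant notion.
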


Recall that the defining characteristic map $\rho \colon E \to L_{\pos}(W)$ for a punctured Weyl group gives rise to a closure operator on the set $E_0 = \pos$ in which the closed subsets are intersections of positive systems. 

\begin{proposition}
\label{closure op weyl}
Suppose $\rsys$ has rank at least 2.
A subset of $E_0 = \pos$ is closed for the $L$-action if and only if it is convex, i.e.,
$\cl_\rho = \cone_{\reals}$.
\end{proposition}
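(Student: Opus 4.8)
The plan is to compare the two closure operators through their closed sets. Since $E$ is the nerve of the transporter groupoid it is a groupoid, so \cref{cap one simplices} applies: the $\cl_\rho$-closed subsets of $E_0 = \pos$ are exactly the intersections of domains of $1$-simplices. Here $\dom{w} = \pos \cap w^{-1}(\pos)$ for $w \in L_1 = W \setmin \{w_0\}$, and because $W$ acts transitively on positive systems with $w_0$ the unique element carrying $\pos$ to $-\pos$, these domains are precisely the sets $\pos \cap P$ as $P$ ranges over the positive systems different from $-\pos$. As the convex subsets of $\pos$ are the closed sets of $\cone_\reals$, it suffices to show that these two collections of subsets coincide.

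\textbf{Closed implies convex.} Every positive system is convex in $\rsys$, so $\pos$ and each $w^{-1}(\pos)$ are convex; using $\reals_{\geq 0}P \cap \rsys = P$ for a positive system $P$, each domain $\dom{w} = \pos \cap w^{-1}(\pos)$ is then convex as well. Convex subsets are stable under arbitrary intersection, so every intersection of domains, hence every $\cl_\rho$-closed set, is convex. In operator form this is the inclusion $\cone_\reals(A) \subseteq \cl_\rho(A)$ for all $A \subseteq \pos$.

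\textbf{Convex implies closed.} The empty set is convex, and it is $\cl_\rho$-closed by \cref{prop empty closed}, since for rank at least $2$ the punctured Weyl group $L$ is not a group (concretely, for each $\alpha \in \pos$ the region $(\,\cdot\,,\alpha) < 0$ is subdivided by a further root hyperplane, so some positive system $P \neq -\pos$ contains $-\alpha$, whence $\bigcap_{w \neq w_0}\dom{w} = \pos \cap \bigcap_{P \neq -\pos} P = \varnothing$). Now let $A \subseteq \pos$ be nonempty and convex and fix $\gamma \in \pos \setmin A$; I will separate $\gamma$ from $A$ by a positive system. Convexity of $A$ gives $\gamma \notin \reals_{\geq 0}A$, a closed polyhedral cone, so the separating hyperplane theorem (for a point and a closed convex cone) furnishes $v \in V$ with $(v,\alpha) \geq 0$ for all $\alpha \in A$ and $(v,\gamma) < 0$. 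Replacing $v$ by $v + \epsilon v_0$ for a strictly dominant $v_0$ (i.e.\ $(v_0,\beta) > 0$ for all $\beta \in \pos$) and small $\epsilon > 0$ makes the inequalities on $A$ strict while preserving $(v,\gamma) < 0$; since these strict conditions cut out an open set and regular vectors are dense, I may take the separating vector $v'$ to be regular. Then $P = \{\beta \in \rsys \mid (v',\beta) > 0\}$ is a positive system with $A \subseteq P$ and $\gamma \notin P$, and $P \neq -\pos$ because $\varnothing \neq A \subseteq P \cap \pos$. The corresponding $w \neq w_0$ satisfies $A \subseteq \dom{w}$ and $\gamma \notin \dom{w}$, whence $\gamma \notin \cl_\rho(A)$. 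As $\gamma$ was arbitrary, $\cl_\rho(A) = A$.

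The main obstacle is this separation step: converting the convexity of $A$ into a genuine positive system that isolates an external root, which requires upgrading the possibly singular and non-strict separating functional into a regular one without destroying strict positivity on $A$ or negativity at $\gamma$. Everything else is formal closure-system bookkeeping. Finally, the rank $\geq 2$ hypothesis enters only to make $\varnothing$ closed: in rank $1$ the group $L$ is trivial, $\cl_\rho(\varnothing) = \pos$, and the identity $\cl_\rho = \cone_\reals$ fails precisely on the empty set.
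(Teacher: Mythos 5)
Your proof is correct and takes essentially the same route as the paper's: one inclusion follows from the convexity of positive systems (equivalently, the $\reals$-linearity of the $W$-action), and the other from separating $\gamma$ from the cone on $A$ by a hyperplane meeting no root and then using the transitivity of $W$ on positive systems. You are somewhat more explicit than the paper about the edge cases (the empty set, and ensuring $P \neq -\pos$ so that the resulting $w$ is genuinely a $1$-simplex of $L$), but this does not change the argument.
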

\begin{proof}
Let $A$ be a subset of $\pos$.
If $\alpha \in \cone_{\reals}(A)$ and a simplex $w$ of $L$ acts on each root in $A$, then $w$ acts on $\alpha$ since the $W$-action is $\reals$-linear, and so $\cone_{\reals}(A) \subseteq \cl_\rho(A)$. 

Conversely, let $\alpha$ be a positive root not in $\cone_{\reals}(A)$. 
Fix a hyperplane $H$ of $V$ strictly separating $\alpha$ and $\cone_{\reals}(A)$ and containing no root \cite[1.2.4 Theorem]{Matousek2002}. 
Let $v$ be a nonzero vector in $H^\perp$ on the side of $\cone_{\reals}(A)$. 
This determines a second positive system $\pos_1 = \{\beta \in \rsys \mid (\beta,v) > 0\}$ of $\rsys$ that contains $\cone_{\reals}(A)$ but not $\alpha$. 
Since $W$ acts transitively on positive systems, there is an element $w \in W$ such that $w(\pos_1) = \pos$. 
Thus, $\cone_{\reals}(A) \subseteq \dom{w} = \pos \cap w^{-1}(\pos)$ but $\alpha \notin \dom{w}$, and this shows $\alpha \notin \cl_\rho(A)$. 
\end{proof}

\subsection{Helly number of a convex geometry and abelian sets of roots}

The relevance of the antiexchange condition for the closure space $\pos$ with $\cl_{\rho}$ given by \cref{closure op weyl} comes from the following theorem of Hoffman and  Jamison-Waldner on the Helly number of a convex geometry \cite[Theorem~7]{Jamison-Waldner1981}, \cite[Proposition~3]{Hoffman:BCHN}. 
\begin{theorem}
\label{hrank convex geometry}
The Helly number of a convex geometry is the maximal size of a free subset. 
\end{theorem}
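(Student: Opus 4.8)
The plan is to combine the duality between Helly independent and Helly critical families with the antiexchange axiom, the latter entering only in one of the two inequalities. Recall that a subset $F$ of the convex geometry $S$ is \emph{free} if $a \notin \cl(F \setminus \{a\})$ for every $a \in F$. By \cref{helly rank v helly critical} it suffices to compare the maximal size of a Helly critical family of closed sets with the maximal size of a free set, and by \cref{cor clo monotonicity} (or directly via \cref{ind crit correspondence}) I may pass freely between families of singletons, of closed sets, and of arbitrary subsets. Only the inequality ``free $\leq$ Helly'' will require that $S$ be a convex geometry; the reverse inequality holds in any finite closure space.

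First I would treat the inequality $h(S) \leq \max |F|$, which needs no antiexchange. Given a Helly critical family $\ul A = (A_i)_{i \in I}$ of closed sets, criticality provides for each $i$ a point $p_i \in \bigwedge_{j \neq i} A_j$, and $p_i \notin A_i$ since $\bigwedge_{i} A_i = \varnothing$. These witnesses are pairwise distinct: if $p_i = p_{i'}$ with $i \neq i'$, the common point would lie in every $A_j$, contradicting $\bigwedge_i A_i = \varnothing$. Moreover $\{p_j : j \neq i\} \subseteq A_i$, and $A_i$ is closed, so $p_i \notin \cl(\{p_j : j \neq i\})$; hence $F = \{p_i : i \in I\}$ is free of size $|I|$. (One checks equivalently, and just as directly, that the underlying set of any Helly independent family of singletons is free.)

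The substantive direction is $\max |F| \leq h(S)$, and it is here that the antiexchange condition is indispensable. Starting from a free set $F = \{a_1, \dots, a_n\}$, the plan is to manufacture a Helly critical family of the same size from \emph{copoints}: for each $a \in F$, since $a \notin \cl(F \setminus \{a\})$ and $S$ is finite, there is a closed set $H_a \supseteq \cl(F \setminus \{a\})$ that is maximal among closed sets not containing $a$. The $(n{-}1)$-fold intersections are easy to control, for $b \in H_a$ whenever $b \in F \setminus \{a\}$, so $a \in \bigwedge_{a' \neq a} H_{a'}$ and these are nonempty. The hard part will be verifying that the total intersection $\bigwedge_{a \in F} H_a$ is empty: this is exactly the step that can fail for a general closure space, and it must be forced using antiexchange, exploiting that in a convex geometry each copoint is determined by the extreme point it omits and that the resulting data are distinct across $a \in F$. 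Once criticality of $(H_a)_{a\in F}$ is established, \cref{helly rank v helly critical} yields $n \leq h(S)$, finishing the argument. I expect this emptiness-of-intersection step to be the main obstacle, and the precise point at which the geometry of the convex geometry, rather than mere lattice theory, is genuinely used; it is the content of the Hoffman and Jamison-Waldner theorem.
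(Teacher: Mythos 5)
Your proposal starts from the wrong definition of ``free,'' and this is not cosmetic: it changes the truth value of the statement and inverts which inequality is the hard one. In the paper (following Hoffman and Jamison--Waldner), a subset $F$ is free when \emph{every} subset of $F$ is closed --- equivalently, $F$ is closed \emph{and} $a \notin \cl(F \setmin a)$ for each $a \in F$. You use only the second, independence-type condition. These differ even in convex geometries: take $S \subset \reals^2$ to be the four vertices of a square together with its center $c$, with relative convex hull as closure. Each vertex $v$ satisfies $v \notin \cl(F\setmin v)$ where $F$ is the set of four vertices, so $F$ is ``free'' in your sense, yet $h(S) = 3$ (a maximal genuinely free set, e.g.\ two adjacent vertices together with $c$, has size $3$). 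So the inequality $\max|F| \leq h(S)$ that you call the substantive direction is \emph{false} for your $F$'s, and the step you defer --- emptiness of $\bigcap_{a} H_a$ for your copoints $H_a \supseteq \cl(F\setmin a)$ --- genuinely fails there: in the example $c$ lies in $\cl(F\setmin v)$ for every $v$, hence in every $H_v$. Since you explicitly leave that step as ``the content of the Hoffman and Jamison--Waldner theorem,'' the argument is also incomplete on its own terms.

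With the correct definition the picture is reversed. The inequality $\max|F| \leq h(S)$ is the easy one and needs no antiexchange: if every subset of $F$ is closed, then $\core(F) = \cl(F) \cap \bigcap_{a} \cl(F\setmin a) = F \cap \bigcap_{a}(F\setmin a) = \varnothing$, so $F$ is Helly independent and $|F| \leq h(S)$ (equivalently, $(F\setmin a)_{a\in F}$ is already a Helly critical family). The direction that genuinely uses antiexchange is $h(S) \leq \max|F|$: your witness set $\{p_i\}$ extracted from a critical family is indeed Helly independent (since $\bigcap_i \cl(\{p_j\}_{j\neq i}) \subseteq \bigcap_i A_i = \varnothing$), but it need not be closed --- in the square example three vertices form a Helly independent set whose closure picks up the center --- so it need not be free, and one must invoke the antiexchange axiom to replace it by a free set of the same size. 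That replacement is the real content of \cref{hrank convex geometry}, which the paper quotes from \cite{Jamison-Waldner1981,Hoffman:BCHN} rather than proving; your proposal does not supply it.
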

A subset $A$ of a closure space is \emph{free} if every subset of $A$ is closed.
This is equivalent to the condition that the sets $A$ and $A\setmin a$ are closed (for each $a\in A$).
Note that when $A$ is closed, the set $A\setmin a$ is closed just when $a\notin \cl(A\setmin a)$; thus a subset is free if and only if it is closed and Helly independent (\cref{def usual helly number}).

Suppose for the moment that $\rsys$ is crystallographic.

\begin{definition}
\label{abelian def}
A subset $A$ of a crystallographic root system $\rsys$ is \emph{abelian} if the sum of two roots in $A$ is never a root.
\end{definition}

\begin{proposition}
\label{helly free Z-closure}
A subset $A$ of positive roots is abelian if and only if it is free for $\cone_{\mathbb{Z}}$. 
The Helly number of $\pos$ with respect to $\mathbb{Z}$-closure is the maximal size of an abelian subset of $\pos$. 
\end{proposition}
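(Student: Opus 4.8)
The plan is to reduce the whole statement to its first sentence, since the second then follows formally. By \cref{Z-closure convex geometry} the closure space $(\pos, \cone_{\ints})$ is a convex geometry, so by \cref{hrank convex geometry} its Helly number equals the maximal size of a \emph{free} subset; once the first sentence identifies the free subsets with the abelian ones, the Helly number is exactly the maximal size of an abelian subset of $\pos$. So all the content lies in proving that a subset $A \subseteq \pos$ is abelian if and only if it is free for $\cone_{\ints}$.

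For this I would lean on the combinatorial description of $\ints$-closed sets recalled in the text (attributed to \cite{Pilkington2006}): a subset $B$ of $\pos$ is $\ints$-closed precisely when $\alpha + \beta \in B$ whenever $\alpha, \beta \in B$ and $\alpha + \beta$ is a root. Two elementary facts streamline the argument. First, by the reducedness axiom \eqref{reduced}, $2\alpha$ is never a root, so the diagonal case $\alpha = \beta$ of the condition is automatic and can be ignored. Second, if $\alpha, \beta \in \pos$ and $\alpha + \beta$ is a root, then it is in fact a \emph{positive} root, since $\alpha + \beta \in \reals_{\geq 0}\pos$ and $\pos$ is convex in $\rsys$.

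For the forward direction (abelian implies free), suppose $A$ is abelian and let $B \subseteq A$ be an arbitrary subset. No two roots of $B \subseteq A$ sum to a root, so the closure condition for $B$ holds vacuously, and $B$ is $\ints$-closed; as every subset of $A$ is closed, $A$ is free. For the converse (free implies abelian), suppose $A$ is free and, seeking a contradiction, that $\alpha, \beta \in A$ satisfy $\alpha + \beta \in \rsys$; by the remarks above we may take $\alpha \neq \beta$ and note $\alpha + \beta \in \pos$. Freeness makes the two-element set $\{\alpha, \beta\}$ closed, so the closure condition forces $\alpha + \beta \in \{\alpha, \beta\}$, which is impossible because $\alpha$ and $\beta$ are nonzero. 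Hence no two roots of $A$ sum to a root, i.e.\ $A$ is abelian.

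I expect no serious obstacle: the argument is genuinely light once the Bourbaki-style characterization of $\ints$-closure and the convex-geometry Helly number theorem (\cref{hrank convex geometry}) are in hand. The only point that demands a little care is the bookkeeping around equal roots and positivity of sums — namely confirming that the $\alpha = \beta$ case is vacuous and that a root arising as a sum of two positive roots is itself positive — and both of these are immediate from \eqref{reduced} and the convexity of $\pos$.
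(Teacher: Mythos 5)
Your proposal is correct and follows essentially the same route as the paper: reduce everything to the equivalence ``abelian $\Leftrightarrow$ free for $\cone_{\ints}$'' (forward direction vacuous via the Bourbaki characterization of $\ints$-closed sets, converse via a non-closed two-element subset), then conclude the Helly number statement from \cref{Z-closure convex geometry} and \cref{hrank convex geometry}. Your extra bookkeeping — that $2\alpha$ is never a root by \eqref{reduced} and that a sum of positive roots which is a root is positive by convexity of $\pos$ — is left implicit in the paper but is a worthwhile check.
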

\begin{proof}
We need to show that a subset of positive roots is abelian if and only if each subset of it is $\ints$-closed. 
From the definitions, a subset of an abelian set is abelian, and each abelian set is $\ints$-closed. 
Conversely, if $A$ is not abelian, then there is a subset of $A$ of size two that is not $\ints$-closed. 
The second statement now follows from \cref{hrank convex geometry}. 
\end{proof}

The maximal size of an abelian set of positive roots was computed by Malcev \cite{Malcev1945} (see \cite{Malcev1962} for an English translation).
It is the same as the maximal dimension of an abelian subalgebra of the associated complex semisimple Lie algebra. 

\begin{corollary}
\label{helly number Z closure}
For an irreducible crystallographic root system $\rsys$, the Helly number of $\pos$ with respect to $\ints$-closure is given in \cref{hrank Z table}. 
\begin{table}[ht]
{
\renewcommand{\arraystretch}{1.3}
\setlength{\tabcolsep}{12pt}
\centering
\caption{Helly number for $\ints$-closure}\label{hrank Z table}
\begin{tabular}{@{}lcclc@{}}\toprule
$\rsys$ & $h_{\ints}(\pos)$ && $\rsys$ & $h_{\ints}(\pos)$\\
\cmidrule(r){1-2} \cmidrule(l){4-5}
$A_n$ & $\lfloor \frac{(n+1)^2}{4} \rfloor$ && $B_n$ & 
$\begin{cases}
\,\,2n-1 & n \leq 3\\
\,\binom{n}{2}+1, & n \geq 4
\end{cases}
$
\\\addlinespace
$D_n$ & $\binom{n}{2}$ && $C_n$ & $\binom{n+1}{2}$ \\
$E_6$ & $16$ && $F_4$ & $9$   \\
$E_7$ & $27$ && $G_2$& $3$  \\
$E_8$ & $36$ &&  &  \\
\bottomrule
\end{tabular}
}
\end{table}
\end{corollary}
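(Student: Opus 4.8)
The plan is to derive the table directly from \cref{helly free Z-closure} together with the classical computation of Malcev \cite{Malcev1945}. By \cref{helly free Z-closure}, the Helly number $h_\ints(\pos)$ equals the maximal cardinality of an abelian subset of $\pos$, so the task is purely to determine this maximum in each irreducible crystallographic type. First I would observe that an abelian set $A \subseteq \pos$ is exactly the data needed to make $\bigoplus_{\alpha \in A} \mathfrak{g}_\alpha$ an abelian subalgebra of the complex semisimple Lie algebra $\mathfrak{g}$ of type $\rsys$: the requirement that no sum of two roots of $A$ be a root forces $[\mathfrak{g}_\alpha, \mathfrak{g}_\beta] = 0$, while $A \subseteq \pos$ excludes the pairs $\alpha, -\alpha$. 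Hence $\max_{A}|A|$ is at most the maximal dimension of an abelian subalgebra of $\mathfrak{g}$, and Malcev's theorem is the (nontrivial) reverse inequality together with the explicit value in each type.

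Next I would match Malcev's values against \cref{hrank Z table} by exhibiting the extremal families. In type $A_n$, with $\pos = \{a_i - a_j \mid i < j\}$, the rectangular family $\{a_i - a_j \mid i \leq p < j\}$ is abelian of size $p(n+1-p)$, optimized to $\lfloor (n+1)^2/4\rfloor$. In types $C_n$, $D_n$ the families $\{a_i + a_j \mid i \leq j\}$ and $\{a_i + a_j \mid i < j\}$ give $\binom{n+1}{2}$ and $\binom{n}{2}$. Type $B_n$ is the delicate one, since there are two competing extremal families: $\{a_i + a_j \mid i < j\}$ enlarged by a single short root $a_k$, of size $\binom{n}{2} + 1$, and the set of all roots with $a_1$-coefficient $1$, namely $\{a_1\} \cup \{a_1 \pm a_j \mid j > 1\}$, of size $2n-1$. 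The second dominates for $n \leq 3$ and the first for $n \geq 4$ (the two agree at $n = 4$, both giving $7$), which is the source of the case distinction in the table; for $n = 2$ the value $2n-1 = 3$ also agrees with the $C_2 = B_2$ entry. The exceptional types $E_6, E_7, E_8, F_4, G_2$ contribute the constants $16, 27, 36, 9, 3$ read directly off Malcev's list.

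I expect the genuine obstacle to be the reconciliation in the previous paragraph rather than the reduction, which is immediate. To invoke Malcev responsibly one must confirm that his invariant coincides with the maximal size of an abelian set of \emph{roots} --- not merely the dimension of some abelian subalgebra possibly involving the Cartan --- but this is exactly what the embedding $A \mapsto \bigoplus_{\alpha \in A}\mathfrak{g}_\alpha$ supplies on one side and the upper bound of \cite{Malcev1945} on the other. The remaining care is entirely in the low-rank behaviour of types $B$ and $C$, where the generic formulas are not yet extremal; once the two families above are compared and the crossover at $n = 4$ is recorded, the entries of \cref{hrank Z table} follow.
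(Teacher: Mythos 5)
Your proposal is correct and follows the same route as the paper: the corollary is obtained by combining \cref{helly free Z-closure} (Helly number for $\ints$-closure equals the maximal size of an abelian set of positive roots) with Malcev's computation of that maximum via abelian subalgebras, which is exactly how the paper derives \cref{hrank Z table}. Your explicit extremal families and the $B_n$ crossover analysis at $n=4$ are details the paper delegates to Malcev and to Gorenstein--Lyons--Solomon, and they check out.
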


We now return to general root systems, and consider the convex analogue of an abelian subset.

\begin{definition}
A \emph{really abelian} subset of positive roots is a free subset of $\pos$ for convex closure.
\label{really abelian def}
\end{definition}
Visually, a subset of $\pos$ is really abelian if the rays determined by the roots in this set are precisely the extremal rays of the cone that it spans. 
This is determinable projectively: if we cut by an affine hyperplane $V_1$ passing through the simple roots and replace each positive root $\alpha$ by the unique point in the intersection of $V_1$ with the ray $\reals_{\geq 0}\alpha$, then $\cone_{\reals}$ becomes relative convex hull for the image of $\pos$ in $V_1$,
and a really abelian set is then one whose image is precisely the set of extreme points of its convex hull. 

In a rank 2 root system, a really abelian subset of maximal size is a set of two adjacent roots. 
\cref{fig:c3 free set} is an affine picture of the really abelian (blue) subset of size $4$ that appeared in \cref{c3 example}, which realizes the maximal size of a really abelian subset for $C_3$.
By contrast, the unique maximal abelian set of roots  also includes $2a_1$ and $2a_2$.
\begin{figure}[ht]
\centering
\begin{tikzpicture}[scale=5]
    \fill[blue!10] (0,0) -- (0,{2/3/2}) -- ({2/5/2},{2/5/2 + 2/3/2}) -- ({2/5/2},{2/5/2}) -- cycle;
    \draw[blue, dotted, thin] (0,0) -- (0,{2/3/2}) -- ({2/5/2},{2/5/2 + 2/3/2}) -- ({2/5/2},{2/5/2}) -- cycle;
    \filldraw[blue] (0,0) circle (0.2pt) node[left] {$2a_3$}; 
    \filldraw[red] (0,2/3) circle (0.2pt) node[left] {$2a_2$};
    \filldraw[red] (2/5,2/5) circle (0.2pt) node[right] {$2a_1$};
    \filldraw[red] (0,1) circle (0.2pt) node[left] {$a_2-a_3$}; 
    \filldraw[red] (1,0) circle (0.2pt) node[left] {$a_1-a_2$};
    \filldraw[red] (1/2,1/2) circle (0.2pt) node[right] {$a_1-a_3$};
    \filldraw[blue] ({2/5/2}, {2/5/2 + 2/3/2}) circle (0.2pt) node[above right] {$a_1 + a_2$};
    \filldraw[blue] ({2/5/2}, {2/5/2}) circle (0.2pt) node[right] {$a_1 + a_3$};
    \filldraw[blue] (0, {2/3/2}) circle (0.2pt) node[left] {$a_2 + a_3$}; 
\end{tikzpicture}
\caption{A really abelian subset of size 4 in $C_3$}
\label{fig:c3 free set}
\end{figure}
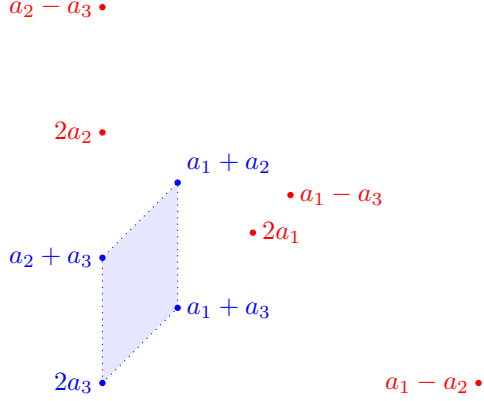

The following two basic lemmas will be used when considering $H_4$ in the proof of \cref{hrank rsys}.

\begin{lemma}
\label{constricting ra}
Let $A$ be a really abelian subset of $\pos$ containing the distinct roots $\alpha, \beta$. 
Then $\alpha$ and $\beta$ are adjacent roots in the rank 2 positive subsystem $\pos \cap \realspan(\alpha,\beta)$.
\end{lemma}
\begin{proof}
A subset of two positive roots in a rank $2$ root system is really abelian if and only if the two roots are adjacent.
So this just comes from the fact that $A \cap \realspan(\alpha,\beta)$ is really abelian in $\pos \cap \realspan(\alpha,\beta)$. 
\end{proof}

\begin{lemma}
\label{ra simple root}
If $A$ is a nonempty subset of $\pos$, then some $W$-conjugate of $A$ in $\pos$ contains a simple root. 
\end{lemma}
\begin{proof}
By induction on the minimal height $\mathrm{ht}(\alpha) = \sum_{\beta \in \base} c_{\beta}$ of a root $\alpha = \sum_{\beta \in \base}c_{\beta}\beta$ in $A$. 
Fix $\alpha$ of minimal height in $A$ and let $\beta \in \base$ be such that $(\alpha,\beta) > 0$ (which exists since $(\alpha,\alpha) > 0$). 
As $\beta \notin A$, the conjugate $s_\beta(A)$ is a subset of $\pos$ containing $s_\beta(\alpha)$ of strictly smaller height. 
By induction, some $W$-conjugate of $s_\beta(A)$ contains a simple root and so the same is true of $A$. 
\end{proof}

\subsection{The maximal size of a really abelian set}
In this section we compute the maximal size of a really abelian subset of positive roots in a finite root system and thus the Helly number for convex closure. 

Write $h_{\reals}(\pos)$ and $h_{\ints}(\pos)$ for the Helly number with respect to convex and $\ints$-closure, respectively (the latter only when $\rsys$ is crystallographic). 
From the definitions, there is a comparison $\cone_\ints \subseteq \cone_{\reals}$ of closure operators,
and thus an inequality of Helly numbers $h_{\reals}(\pos) \leq h_{\ints}(\pos)$. 
More to the point, a really abelian set is abelian. 
\begin{lemma}
\label{hrank reducible}
Let $\rsys$ be a finite root system with irreducible components $\rsys_1$, \dots, $\rsys_r$. 
Then the Helly number (with respect to either $\cone_\ints$ or $\cone_\reals$) of $\rsys^+$ is equal to the sum of the Helly numbers of the $\rsys_i^+$. 
\end{lemma}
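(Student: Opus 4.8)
The plan is to reduce the computation of each Helly number to the free-set description available for convex geometries. In both variants the relevant closure space is a convex geometry: for $\cone_\reals$ this is noted just before \cref{Z-closure convex geometry}, and for $\cone_\ints$ (where $\rsys$ is crystallographic) it is \cref{Z-closure convex geometry} itself. Hence \cref{hrank convex geometry} identifies $h(\pos)$ and each $h(\rsys_i^+)$ with the maximal size of a free subset, and it suffices to prove that maximal free sets add up across the orthogonal decomposition. Writing $V = V_1 \oplus \cdots \oplus V_r$ for the orthogonal decomposition with $V_i = \operatorname{span}(\rsys_i)$ and setting $\pos_i = \pos \cap \rsys_i$, the positive system splits as a disjoint union $\pos = \pos_1 \sqcup \cdots \sqcup \pos_r$, each $\pos_i$ being a positive system of $\rsys_i$ (note $\cl(\varnothing) = \varnothing$ in every case, since $0 \notin \pos$, so the Helly machinery applies).

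The first and essential step is to show that the closure operator, in either variant, respects this decomposition: for $A \subseteq \pos$ I claim $\cl(A) = \bigsqcup_i \cl_i(A \cap \pos_i)$, where $\cl_i$ is the corresponding closure operator on $\pos_i$. The inclusion $\supseteq$ is immediate, and the content is the reverse inclusion. Suppose $\alpha \in \pos_i$ lies in $\cl(A)$, say $\alpha = \sum_{\beta \in A} c_\beta \beta$ with coefficients $c_\beta \in \reals_{\geq 0}$ (resp.\ $\ints_{\geq 0}$). Projecting orthogonally onto $V_j$ for $j \neq i$ annihilates the left-hand side and every $\beta \notin \rsys_j$, leaving $0 = \sum_{\beta \in A \cap \pos_j} c_\beta \beta$. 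Since the roots of the positive system $\pos_j$ all lie in the open half-space $\{(-,v_j) > 0\}$ cut out by a defining vector $v_j$, pairing this relation with $v_j$ forces every such $c_\beta$ to vanish. Thus only roots of $A \cap \pos_i$ contribute, so $\alpha \in \cone(A \cap \pos_i)$ computed inside $\rsys_i$, proving the claim. This orthogonality argument is the one substantive point; the rest is formal bookkeeping, and I expect it to be the main obstacle only in that one must take care that the same projection argument serves both $\cone_\reals$ and $\cone_\ints$ verbatim.

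With the decomposition of $\cl$ established, the closed subsets of $\pos$ contained in a fixed $A$ are precisely the sets $\bigsqcup_i C_i$ with each $C_i$ a closed subset of $A \cap \pos_i$. Consequently $A$ is free, i.e.\ every subset of $A$ is closed, if and only if each $A \cap \pos_i$ is free in $\pos_i$. Therefore the maximal size of a free subset of $\pos$ is obtained by choosing a maximal free subset in each component independently, giving $\sum_i$ (maximal free subset size in $\pos_i$). Applying \cref{hrank convex geometry} on both sides converts this into the desired additivity $h(\pos) = \sum_{i=1}^r h(\rsys_i^+)$, valid for $\cone_\reals$ for any finite root system and for $\cone_\ints$ when $\rsys$ is crystallographic.
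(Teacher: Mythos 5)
Your proof is correct, but it finishes differently from the paper's. Both arguments hinge on the same key fact, namely that the closure operator splits across the orthogonal decomposition, $\cl(A) = \bigsqcup_i \cl_i(A \cap \pos_i)$; the paper simply asserts this (reducing to the case $r=2$) and then cites the general result that the Helly number of a disjoint union of closure spaces is the sum of the Helly numbers \cite[Ch.\ II, Theorem 3.3]{vandevel:TCS}, with no appeal to the convex geometry structure. You instead prove the splitting explicitly via the orthogonal projection and half-space argument (a genuine improvement in self-containedness, and your observation that the same projection works verbatim for $\reals_{\geq 0}$ and $\ints_{\geq 0}$ coefficients is exactly right), and then route the additivity through \cref{hrank convex geometry}: free sets decompose componentwise, so maximal free set sizes add. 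Your route costs you the hypothesis that both $(\pos,\cl)$ and each $(\pos_i,\cl_i)$ are convex geometries --- harmless here, since the paper establishes this for both $\cone_\reals$ and (in crystallographic type) $\cone_\ints$, and \cref{hrank convex geometry} is invoked anyway in the subsequent computations --- whereas the paper's citation works for arbitrary disjoint-union closure spaces. The one point worth making explicit in your write-up is that $\pos_j$ lies in an open half-space cut out by a single vector $v_j$ (which the paper's description of positive systems guarantees), since that is what makes the pairing step kill all the coefficients on $A \cap \pos_j$.
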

\begin{proof}
Let $\rsys$ be a root system in $V$; it is enough to consider the case of a decomposition $\rsys = \rsys_1 \amalg \rsys_2$ and an  orthogonal decomposition $V = V_1 \oplus V_2$, with $\rsys_i$ a root system in $V_i$.
If $A_i \subseteq \pos_i$ for $i=1,2$, then $\cone(A_1\amalg A_2) \subseteq \pos$ is the disjoint union of $\cone(A_1) \subseteq \pos_1$ and $\cone(A_2) \subseteq \pos_2$, hence this is a disjoint union closure space \cite[\S I.1.14]{vandevel:TCS}.
But for such a closure space the Helly number is given by the sum of the Helly numbers \cite[Ch.\ II, Theorem 3.3]{vandevel:TCS}.
\end{proof}

\begin{theorem}
\label{hrank rsys}
Let $\rsys$ be an irreducible root system and $\pos$ a choice of positive system.  
The maximal size of a really abelian subset of $\pos$, and thus the Helly number of $\pos$ for convex closure, is given in \cref{hrank table}. 

\begin{table}[ht]
{
\renewcommand{\arraystretch}{1.3}
\setlength{\tabcolsep}{12pt}
\centering
\caption{Maximal sizes of really abelian sets of positive roots}\label{hrank table}
\begin{tabular}{@{}lcclc@{}}\toprule
$\rsys$ & $h_{\reals}(\pos)$ && $\rsys$ & $h_{\reals}(\pos)$\\
\cmidrule(r){1-2} \cmidrule(l){4-5}
$A_n$ & $\lfloor \frac{(n+1)^2}{4} \rfloor$ && $B_n/C_n$ & $\binom{n}{2}+1$  \\
$D_n$ & $\binom{n}{2}$  && $F_4$ & $6$  \\
$E_6$ & $16$ && $G_2$& $2$ \\
$E_7$ & $27$             &&  $I_2(m)$ & $2$ 
\\
$E_8$ & $36$ &&  $H_3$& $5$  \\
      &      && $H_4$& $8$ 
      \\
\bottomrule
\end{tabular}
}
\end{table}
\end{theorem}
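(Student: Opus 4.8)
The plan is to pin down the upper bound once and for all via the comparison of closures, and then to supply matching lower bounds (and, in the multiply-laced types, matching upper bounds) by explicit constructions. First I would record that since $\cone_\ints \subseteq \cone_\reals$ as operators on $\pos$, every subset that is free for $\cone_\reals$ is free for $\cone_\ints$; that is, every really abelian set is abelian. Hence $h_\reals(\pos) \le h_\ints(\pos)$, and by \cref{hrank convex geometry} (knowing via \cref{closure op weyl} and \cref{Z-closure convex geometry} that both are convex geometries) these numbers are the maximal sizes of free sets. The values $h_\ints(\pos)$ are tabulated in \cref{helly number Z closure}. For the simply-laced types $A_n$, $D_n$, $E_6$, $E_7$, $E_8$ the relevant entries of \cref{hrank table} and \cref{hrank Z table} agree, so this inequality already gives the correct upper bound there.

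Next I would realize the simply-laced bounds by exhibiting particular maximal abelian sets that happen to lie in convex position, hence are really abelian. One cannot argue that every abelian set is really abelian: in $D_4$ the set $\{a_1+a_3,\, a_1-a_2,\, a_1+a_2,\, a_3-a_4,\, a_3+a_4\}$ is abelian, yet $a_1+a_3 = \tfrac12\big((a_1-a_2)+(a_1+a_2)+(a_3-a_4)+(a_3+a_4)\big)$ shows it is not free. Instead, for $A_n$ I would take a ``rectangle'' $\{a_i - a_j \mid i \in P,\ j \in Q\}$ for a balanced partition $[n+1] = P \sqcup Q$, of size $\lfloor (n+1)^2/4 \rfloor$, and for $D_n$ the all-positive set $\{a_i + a_j \mid i < j\}$ of size $\binom{n}{2}$. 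A direct coordinate (edge-cover) computation shows that in each case no member is a nonnegative combination of the others, so these sets are really abelian and meet the upper bound. For $E_6$, $E_7$, $E_8$ I would exhibit analogous extremal configurations and check convex independence by the same method.

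For the multiply-laced types the bound $h_\reals \le h_\ints$ is no longer sharp, and here I would first exploit that really abelianness is projective: by the convex-geometry description a set is really abelian exactly when no root in it is a nonnegative combination of the others, which depends only on the rays spanned. Since the positive systems of $B_n$ and $C_n$ project to the same ray configuration (as $a_i$ and $2a_i$ span one ray), this gives $h_\reals(B_n^+) = h_\reals(C_n^+)$ and explains the common value $\binom{n}{2}+1$. For the lower bound I would verify that $\{a_i + a_j \mid i < j\} \cup \{2a_n\}$ is in convex position, as illustrated for $C_3$ in \cref{fig:c3 free set}. For $G_2$ the rank $2$ observation forces the answer $2$, since among any three distinct rays in a plane one lies in the cone of the other two; and $F_4$ I would settle by a finite verification, producing a really abelian set of size $6$ and excluding larger ones.

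The step I expect to be the main obstacle is the sharp upper bound in the multiply-laced cases, where $h_\ints$ strictly overshoots the true value. For $F_4$ this is a bounded (if tedious) computation, but for the infinite family $B_n/C_n$ one needs a uniform argument that no convex-independent subset of $\pos$ exceeds $\binom{n}{2}+1$. Establishing this matching upper bound for convex position among the roots $a_i \pm a_j$ together with the short (resp.\ long) roots, rather than settling for the weaker abelian bound coming from \cref{helly free Z-closure}, is the technical heart of the theorem.
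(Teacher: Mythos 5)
Your overall strategy matches the paper's: upper bounds from the inclusion $\cone_\ints \subseteq \cone_\reals$ together with Malcev's values in \cref{hrank Z table}, lower bounds from explicit maximal abelian sets checked to be in convex position, the ray identification $a_i \leftrightarrow 2a_i$ to transfer between $B_n$ and $C_n$ (the paper phrases this as the duality $\alpha \mapsto 2\alpha/(\alpha,\alpha)$ being an isomorphism of convex closure spaces), the rank-$2$ argument for $G_2$, and a finite check for $F_4$. One genuine difference: in type $A$ the paper avoids any convex-position verification by citing Pilkington's theorem that $\cone_\ints = \cone_\reals$ in type $A$ (and only there), so every abelian set is automatically really abelian; your rectangle construction is fine but costs a computation the paper gets for free. (Your $D_4$ example correctly shows that this shortcut is unavailable outside type $A$, which is exactly why the paper must select, among the maximal abelian sets in types $D$ and $E$, ones that happen to be convex.)

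More importantly, you have misdiagnosed where the remaining work lies, and in doing so left a gap. You declare the ``technical heart'' to be a uniform upper bound $h_{\reals}(B_n^+) = h_{\reals}(C_n^+) \leq \binom{n}{2}+1$, but you already hold all the ingredients for $n \geq 4$: your projective identification gives $h_{\reals}(C_n^+) = h_{\reals}(B_n^+)$, and \cref{hrank Z table} gives $h_{\ints}(B_n^+) = \binom{n}{2}+1$ for $n \geq 4$, so the abelian bound is already sharp there. The only case your outline genuinely fails to cover is $B_3$, where $h_{\ints}(B_3^+) = 5 > 4$. The paper closes it with a short finite argument: a really abelian subset of $B_3^+$ contains at most one short root $a_i$, its long roots form an abelian subset of $D_3 = A_3$ of size at most $4$, the unique abelian set of size $5$ is therefore $\{a_1, a_1\pm a_2, a_1\pm a_3\}$, and this is not really abelian because $a_1 = \frac{1}{2}(a_1+a_2)+\frac{1}{2}(a_1-a_2)$. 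You need to supply this (or an equivalent) for $B_3$; the uniform $B_n/C_n$ obstacle you anticipate does not exist. The $F_4$ upper bound, which you defer to ``finite verification,'' is handled in the paper by bounding the short and long roots of a really abelian set by three each.
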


For the proof in the crystallographic cases we follow Gorenstein--Lyons--Solomon \cite[Lemma~3.3.6]{GLS3}, who 
give case-by-case proofs of correctness for all types except $E_8$.
(Malcev didn't give a proof in the $E_8$ case either.)
For a case-independent proof of the correctness of Malcev's numbers, see Suter \cite{Suter2004}. 

The set $\{a_1,\dots,a_n\}$ denotes an orthonormal basis of a Euclidean space $V$ of dimension $n$. 
Let $\mathbf{S}^n$ be the set of all length $n$ vectors of signs, i.e., those $\epsilon = (\epsilon_1,\dots,\epsilon_n)$ such that $\epsilon_i = \pm 1$ for all $i$. 
For such an $\epsilon$, we write $a_{\epsilon} = \frac{1}{2}\sum_{i = 1}^n \epsilon_ia_i$. 
A symbol such as $a_{+--+}$ is shorthand for $a_{(1,-1,-1,1)}=\frac12[a_1 - a_2 - a_3 + a_4]$. 

We use the explicit realizations of the irreducible crystallographic root systems and orderings of simple roots from \cite[Table~1.8]{GLS3}, listed in Table~\ref{rsys explicit} but omitting $A_m$, $C_m$, and $G_2$ since they can be handled by other means.
(The choice of realization and ordering on simple roots in \cref{rsys explicit} differs from the Bourbaki ordering \cite[Plates~I--IX]{BourbakiLie4-6} for type E.)

\begin{table}
{
\renewcommand{\arraystretch}{1.2}
\setlength{\tabcolsep}{8pt}
\centering
\caption{Explicit Gorenstein--Lyons--Solomon realizations of irreducible crystallographic root systems and numberings of simple roots}\label{rsys explicit}
\begin{tabular}{ll}
$B_m$ & $\dynkin[labels={1,2,,m}] B{oo...oo}$\\
       & $\rsys = \{\pm a_i \pm a_j \mid 1 \leq i < j \leq m\} \cup \{\pm a_i \mid 1 \leq i \leq m\}$\\
       & $\base = \{a_1-a_2, a_2-a_3,\dots,a_{m-1}-a_m, a_m\}$\\
$D_m$
& $\dynkin[labels={1,2,,m-1,m}] D{oo...ooo}$\\
& $\rsys = \{\pm a_i \pm a_j \mid 1 \leq i < j \leq m\}$\\
& $\base = \{a_1-a_2, a_2-a_3,\dots,a_{m-1}-a_m, a_{m-1}+a_{m}\}$\\
$F_4$ 
& $\dynkin[label, */.style={solid,draw=black,fill=white}, ordering=Bourbaki] F4$\\
& $\rsys = \{\pm a_i \pm a_j \mid 1 \leq i < j \leq 4\} \cup \{\pm a_i \mid 1 \leq i \leq 4\} \cup \{a_{\epsilon} \mid \epsilon \in \mathbf{S}^4\}$\\
& $\base= \{a_2-a_3, a_3-a_4,a_4,a_{+---}\}$\\
$E_8$ 
& $\dynkin[labels={1,4,2,3,5,6,7,8}, */.style={solid,draw=black,fill=white}] E8$\\ 
& $\rsys = \{\pm a_i \pm a_j \mid 1 \leq i < j \leq 4\} \cup \{a_{\epsilon} \mid \epsilon \in \mathbf{S}^8, \epsilon_1\cdots \epsilon_8 = 1\}$\\ 
 & $\base = \{a_{+------+}, a_7-a_8,a_6-a_7,a_7+a_8, a_5-a_6, a_4-a_5, a_3-a_4, a_2-a_3\}$\\
$E_7$ 
& $\dynkin[labels={1,4,2,3,5,6,7}, */.style={solid,draw=black,fill=white}] E7$\\
& $\rsys = \{\gamma \in \rsys_{E_8} \mid \gamma \perp a_1 + a_2\}$ \\
& $\base = \base_{E_8}-\{a_2-a_3\}$\\
$E_6$ 
& $\dynkin[labels={1,4,2,3,5,6}, */.style={solid,draw=black,fill=white}] E6$\\
& $\rsys = \{\gamma \in \rsys_{E_7} \mid \gamma \perp a_2 - a_3\}$ \\
& $\base = \base_{E_7}-\{a_3-a_4\}$
\end{tabular}
}
\end{table}

\begin{proof}[Proof of \cref{hrank rsys}]
This is case by case. 
If $\rsys$ has rank $1$, then $h(\pos) = 1$.
If it has rank 2, then $h(\pos) = 2$ by \cref{constricting ra}. 
Thus, we may assume that the rank is at least $3$ from now on. 

\smallskip
\noindent
\textit{Type $A$:}  In type $A$ only, $\cone_\ints = \cone_\reals$ by \cite[Theorem]{Pilkington2006}. 
So the maximal size of a really abelian set of positive roots in $A_n$ is the same as the maximal size of an abelian set, which is $\lfloor (n+1)^2/4 \rfloor$. 

\smallskip
\noindent
\textit{Type $D$:}
The corresponding positive system for $D_n$ is $\{a_i \pm a_j \mid 1 \leq i < j \leq n\}$. 
When $n = 4$, there is a unique abelian subset $\Gamma = \{a_1 \pm a_j \mid j = 2,3,4\}$ of positive roots of maximal size $6 = \binom{4}{2}$. 
This is the full set of roots of $D_4$ in which $a_1$ occurs with positive coefficent.
It follows from this property that $\Gamma$ is convex. 
If $\Lambda = \Gamma\setmin\alpha$, where $\alpha$ has nonzero coefficient on $a_j$ ($j = 2,3,4$), then $\alpha \notin \cone_\reals(\Lambda)$.
This shows $\Lambda$ is convex as well and hence $\Gamma$ is really abelian.

Assume now that $n > 4$. 
There are exactly two abelian sets of maximal size in $D_n$ by \cite[p.~112]{GLS3}, one of which is $\Gamma = \{a_i+a_j \mid 1 \leq i < j \leq n\}$.
This is the set of roots of $D_n$ in which each basis vector occurs with positive coefficient, and so it is convex. 
Any subset $\Lambda \subset \Gamma$ of cocardinality $1$ is convex for a similar reason as before (no $a_i+a_j$ is in the convex cone of the remaining elements of $\Gamma$).
Hence, $\Gamma$ realizes $h(D_n^+) = \binom{n}{2}$ when $n > 4$.  

\smallskip
\noindent
\textit{Type $E$:}
Consider the following abelian sets given in \cite[p.113--114]{GLS3}: 
\begin{align*}
\Gamma_6 &= \left\{\sum_{i = 1}^6 c_i\alpha_i \in E_6^+ \tallmid c_1 = 0 \text{ or } 1 \text{ and } c_6 = 1\right\} \subseteq E_6^+,\\
\Gamma_7 &= \left\{\sum_{i = 1}^7 c_i\alpha_i \in E_7^+ \tallmid c_1 = 1\right\} \subseteq E_7^+,\\
\Gamma_8 &= \left\{a_1\pm a_i \mid i = 2,\dots,8\right\} \cup \left\{a_{+\epsilon_2\cdots\epsilon_8} \tallmid \sum_{i = 2}^8 \epsilon_i = 5 \text{ or } 7\right\} \subseteq E_8^+,
\end{align*}
of sizes $16$, $27$, and $36$, respectively. 
A computation using Magma \cite{Magma}, using the built-in Toric Geometry package to compute convex cones, shows that each of these is really abelian. 

\smallskip
\noindent
\textit{Type $B_n/C_n$}: 
Duality $\alpha \mapsto 2\alpha/(\alpha,\alpha)$ is an isomorphism between the closure spaces $B_n$ and $C_n$ with convex closure, so it suffices to work with $B_n$, where $B_n^+ = \{a_i \pm a_j\} \cup \{a_i\}$. 
Note that the set of long roots in $B_n$ forms a system of type $D_n$. 
There is always an abelian set of size $1 + \binom{n}{2}$, namely
\[
\Gamma = \{a_1\} \cup \{a_i+a_j \mid 1 \leq i < j \leq n\},
\]
which is of maximal size when $n \geq 4$. 
As in the $D_n$ case, the set of long roots in $\Gamma$ is really abelian, and further, its convex cone does not contain $a_1$. 
Similarly, as in the $D_n$ case, $a_i + a_j$ is not in the convex cone of the other long roots of $B_n$, and it doesn't help to throw in $a_1$. 
That is, if $\Lambda$ is of cocardinality $1$ in $\Gamma$ and contains $a_1$, then it is again convex. 
Thus, $\Gamma$ is really abelian.
This completes the $B_n$ case when $n \geq 4$. 

When $n = 3$, the maximal size of an abelian subset of $B_3$ is $5$, not $1+\binom{3}{2} = 4$. 
However, we claim that there is a unique abelian set $\Gamma$ of positive roots of size $5$ in $B_3$, and it is not really abelian. 
Indeed, $\Gamma$ can contain at most one short root $a_i$, and the set of long roots in $\Gamma$ is an abelian set in $D_3 = A_3$ of size $4$. 
This is unique: $\Gamma \cap D_3 = \{a_1 \pm a_2, a_1 \pm a_3\}$.
Abelian then implies $i = 1$, and hence $\Gamma = \{a_1, a_1 \pm a_2, a_1 \pm a_3\}$ is the unique abelian set of size $5$. 
But $\Gamma$ is not really abelian as $a_1 = \frac{1}{2}(a_1+a_2) + \frac{1}{2}(a_1-a_2)$. 
Hence, $h(B_3^+) = 4 = 1+\binom{3}{2}$. 

\smallskip
\noindent
\textit{Type $F_4$:}
Consider now $F_4$, whose short roots comprise the last two sets of the union describing $\rsys$ in Table~\ref{rsys explicit}, and where the corresponding positive system is
\[
F_4^+ = \{a_i \pm a_j \mid 1 \leq i < j \leq 4\} \cup \{a_i \mid 1\leq i \leq 4\} \cup \{a_{+\epsilon_2\epsilon_3\epsilon_4} \mid \epsilon_i = \pm\}. 
\]
A computation on Magma (or by hand) shows that the subset 
\[
\{a_1, a_1 + a_2, a_1+a_3, a_1+a_4, a_{+++-}, a_{++++}\}
\]
of $F_4^+$ of size $6$ is really abelian. 
Conversely, suppose that $\Gamma \subseteq F_4^+$ is really abelian.
As argued in \cite[p.112--113]{GLS3}, there can be at most one short root in $\Gamma$ of the form $a_i$, since the sum of two distinct such is a root. 
There can be at most one short root in $\Gamma$ of the form $a_{+\epsilon_2\epsilon_3\epsilon_4}$ with one minus sign, and if there is one, then $a_{+---} \notin \Gamma$. 
Likewise, there can be at most one short root in $\Gamma$ of the form $a_{+\epsilon_2\epsilon_3\epsilon_4}$ with two minus signs, and if there is one, then $a_{++++} \notin \Gamma$. 
Altogether, there can be at most three short roots in $\Gamma$.
For each $1 \leq i < j \leq 4$, $a_i$ is in the convex cone of $\{a_i\pm a_j\}$, so we can have at most one of $a_i+a_j$ or $a_i-a_j$ in $\Gamma$. 
But if $\{i,j,k,l\} = \{1,2,3,4\}$, then the presence of $a_i+a_j$ or $a_i-a_j$ in $\Gamma$ excludes both $a_k \pm a_l$, since the half sum of $a_i\pm a_j$ and $a_k\pm a_l$ is a root of the form $a_{+\epsilon_2\epsilon_3\epsilon_4}$. 
These two restrictions show there can be at most $\binom{4}{2}/3 = 3$ long roots as well, and hence $|\Gamma| \leq 6$ as desired. 
This completes the proof in the crystallographic cases. 

\smallskip
\noindent
\textit{Type $H$}:
The numbers $h_{\reals}(H_3^+) = 5$ and $h_{\reals}(H_4^+) = 8$ were computed with Python code written explicitly for this purpose. 
The number of positive roots in $H_4$ is 60, which is too large for a brute force search of really abelian subsets $\Gamma$. 
To cut down on the search space we first use \cref{ra simple root} to assume $\Gamma$ contains a simple root $\alpha$. 
A rank $2$ subsystem of ($H_3$ or) $H_4$ is of type $A_1 \times A_1$, $A_2$, or $H_2$ by Theorem 4.1 of \cite{ChenMoodyPatera:NCRS}. 
\Cref{constricting ra} then implies that $(\alpha,\beta) \in \{2,\varphi,1,0\}$ for each $\beta \in \Gamma$, where $\varphi = \frac{1+\sqrt{5}}{2}$ is the golden ratio, assuming the roots have norm $2$. 
As there are $6$, $10$, and $15$ positive roots at inner products $\varphi$, $1$, and $0$ with the simple root $\alpha$ (half the number of vertices of a icosahedron, dodecahedron, and icosidodecahedron, respectively) this places $\Gamma$ in a subset of $\pos$ of size $1+6+10+15 = 32$. 
\end{proof}

\begin{remark}
There are exactly $3$ really abelian subsets of $H_3$ of size $5$ corresponding to the three pentagons one sees in a view of an icosidodecahedron. 
On the other hand, there are hundreds of really abelian subsets of positive roots in $H_4$ of maximal size $8$, including 236 that contain one of the four simple roots. 
\end{remark}

\appendix

\section{Proof of the third cube lemma}\label{cube appendix}
We prove \cref{generalized pasting law}.
Fix $n \geq 0$.
For concision write $\ncube \coloneq [1]^n$ for the generic $n$-cube, and $0\in \ncube$ for its minimal element.
Subsets of posets are taken to be full subcategories.
Let $X \colon [2] \times \ncube \to \mathcal{C}$ be a stacked pair of $(n{+}1)$-cubes, and set
\[
P \coloneq X|_{\{0,1\} \times \ncube}, \quad
Q \coloneq X|_{\{1,2\} \times \ncube}, \quad
R \coloneq X|_{\{0,2\} \times \ncube}.
\]
Assuming $Q$ is cartesian, we want to show that $P$ is cartesian if and only if $R$ is cartesian.
To make our diagrams more manageable, write $00 = (0,0)$ and $10 = (1,0)$ for the elements of $[0,2] \times \ncube$.
We reformulate the cartesian properties for $R, P$ using the following squares.
\[
\begin{tikzcd}[column sep=small]
(\{0,2\} \times \ncube) \setmin 00 \rar \dar & 
([2] \times \ncube) \setmin \{00,10\} \dar{i} &[-0.3cm]
(\{0,1\} \times \ncube) \setmin 00 \rar \dar & 
([2] \times \ncube) \setmin 00 \dar{j}
\\
{\{0,2\} \times \ncube} \rar & {[2] \times \ncube} &
{\{0,1\} \times \ncube} \rar & {[2] \times \ncube} 
\end{tikzcd} 
\]
The horizontal maps are left adjoints, hence initial.
Thus we have $R$ cartesian if and only if $X \to i_*i^*X$ is an equivalence, and $P$ is cartesian if and only if $X \to j_*j^*X$ is an equivalence.
Consider the following diagram of inclusions:
\[ \begin{tikzcd}[column sep=0]
(\{1,2\} \times \ncube) \setmin \{10\} \dar \ar[rr]{k'} && \{1,2\} \times \ncube \dar \\
([2] \times \ncube) \setmin \{00,10\} \ar[rr,"k"]\drar["i"']  && ([2] \times \ncube) \setmin 00 \dlar{j}\\
& {[2] \times \ncube}
\end{tikzcd} \]
Examination of the top square implies that $j^*X = X|_{([2] \times \ncube) \setmin 00}$ is right Kan extended along $k$ from $X|_{([2] \times \ncube) \setmin \{00,10\}} = i^*X$. 
Since all maps are fully faithful, we only need to check that this is right Kan extended at $10$, which is true using the top arrow since $Q$ is cartesian.
We then have
\[
  X \to j_*j^* X \xrightarrow\simeq j_*k_*i^*X = i_*i^*X
\]
and the first map is an equivalence if and only if the composite is so. \qed

\section{The degree of the symmetric spheres}
\label{app sphere}

In this appendix we consider, for $n\geq 1$, the concrete model $\rep{n}/\partial\rep{n}$ for the symmetric sphere, and prove that its degree is $2n$, as asserted in \cref{ex sym sphere}.
We first show that the symmetric $n$-sphere is lower $(4n{-}1)$-Segal, which relies on a simple combinatorial analysis of certain maps of finite sets.
The uniqueness and existence parts of this $d$-Segal condition are separated into the first two subsections.
The model $\Delta^n/\partial\Delta^n$ for the simplicial sphere is also lower $(4n{-}1)$-Segal (\cref{cor simplicial sphere}). 
We give an explicit example to show that $\rep{n}/\partial\rep{n}$ is not lower $(4n{-}3)$-Segal in \cref{lem sphere lower bound}.

Recall that \cref{rmk non-gapped} provides an alternative description of the $(2k{-}1)$-Segal condition for a symmetric set $X$.
By passing from the category $\fin$ to the equivalent category of nonempty finite sets, the condition becomes that for each finite set $S$ and each proper subset $I\subset S$ of cardinality $k+1$, if we are provided simplices $x_i \in X_{S_i}$ for $i\in I$ (where $S_i = S\setmin i$) such that the restrictions of $x_i$ and $x_j$ to $X_{S_i\cap S_j}$ are equal for all $i,j\in I$, then there exists a unique $x\in X_S$ whose restriction to $X_{S_i}$ is equal to $x_i$.
We aim to establish this in the case when $X = \rep{n}/\partial\rep{n}$ and $k=2n$.
The next two subsections cover the uniqueness and existence parts of the statement; the existence argument is more delicate and works only for $n\geq 2$.
A concrete extension of this $X$ to the category of all nonempty finite sets has $X_S$ the quotient of $\hom(S,[n])$ which identifies the nonsurjective functions.

\subsection{Uniqueness}
Below we consider functions between nonempty finite sets.
A function $f\colon S \to R$ is \emph{trivial} if it is not surjective, and otherwise we say $f$ is \emph{epi}. 
Write $[f]$ for the equivalence class under the relation on $\hom(S,R)$ identifying all of the trivial functions to a single element $\ast$.
We'll use superscript notation to indicate restriction to a complement of an element, so that
\[
	f^i \colon S_i \coloneq S \setmin i \to R
\]
is the restriction of $f$, and similarly $f^{ij} = (f^i)^j = (f^j)^i \colon S_{ij} \coloneq S \setmin \{i,j\} \to R$.

\begin{lemma}\label{lem trivial}
Let $f\colon S \to R$ and $T \subsetneq S$ be a proper subset of $S$ containing at least $|R|$ elements.
If $f^t$ is trivial for all $t\in T$, then $f$ is trivial.
\end{lemma}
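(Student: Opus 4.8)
The plan is to argue by contradiction: I assume $f$ is epi and then locate some $t\in T$ with $f^t$ epi, contradicting the hypothesis. The crucial first observation is a description, under the standing assumption that $f$ is surjective, of exactly which restrictions $f^t$ stay surjective. Removing $t$ from the domain can only delete the value $f(t)$ from the image, and it does so precisely when $t$ is the unique preimage of $f(t)$. Hence, for surjective $f$, the restriction $f^t$ is trivial if and only if $f^{-1}(f(t)) = \{t\}$; that is, exactly when $t$ lies in a singleton fiber of $f$.

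With this in hand, I would invoke the hypothesis that $f^t$ is trivial for every $t\in T$: each such $t$ is then the only element of its fiber. This forces the restriction $f|_T\colon T\to R$ to be injective, since two distinct elements of $T$ with a common value would share a fiber of size at least two. Consequently $|f(T)| = |T| \geq |R|$, and as $f(T)\subseteq R$ this yields $f(T)=R$ together with $|T| = |R|$.

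The contradiction now comes from the properness of $T$. Choosing $s\in S\setmin T$ (possible since $T\subsetneq S$), its value $f(s)$ lies in $R = f(T)$, so $f(s)=f(t)$ for some $t\in T$. But $t$ has singleton fiber $\{t\}$, while $s\neq t$ also maps to $f(t)$, contradicting $f^{-1}(f(t))=\{t\}$. Therefore $f$ cannot be surjective, i.e.\ $f$ is trivial.

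I expect no genuine obstacle: the argument rests entirely on the single observation that, for a surjection, deleting a domain point loses a value exactly when that point is alone in its fiber, after which injectivity of $f|_T$ and the bound $|T|\geq|R|$ combine with properness of $T$ in a one-line pigeonhole. The only subtlety is purely bookkeeping---remembering that ``trivial'' means ``not surjective'' so that all fiber counts are taken relative to the known surjectivity of $f$.
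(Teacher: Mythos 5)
Your argument is correct and is essentially the paper's own proof: both assume $f$ is epi, observe that triviality of $f^t$ forces $f^{-1}(f(t))=\{t\}$ for each $t\in T$, and then use $|T|\geq|R|$ together with $T\subsetneq S$ to conclude that an element of $S\setmin T$ has nowhere to map. No gaps.
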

\begin{proof}
If $f$ is epi, then $\image(f^t)$ is a proper subset of $R = \image(f) = \image(f^t) \cup \{f(t) \}$.
Thus $f^{-1}f(t) = \{t\}$ for all $t\in T$, so $f$ has nowhere to send elements of $S \setmin T \neq \varnothing$.
\end{proof}

\begin{lemma}\label{lem two agree}
Let $f, g\colon S \to R$ be functions, and $i\neq j$ in $S$.
If $f^i = g^i$ and $f^j = g^j$, then $f=g$.
\end{lemma}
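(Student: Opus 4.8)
The plan is to observe that the two hypotheses together force agreement at every point of $S$, because the complements $S_i$ and $S_j$ jointly cover $S$. Concretely, since $i \neq j$ we have $S_i \cup S_j = (S \setmin i) \cup (S \setmin j) = S$, so every element of $S$ lies in at least one of the two subsets on which $f$ and $g$ are assumed to coincide.

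First I would fix an arbitrary element $k \in S$ and split into cases according to whether $k = i$ or not. If $k \neq i$, then $k \in S_i$, so the hypothesis $f^i = g^i$ gives $f(k) = f^i(k) = g^i(k) = g(k)$. Otherwise $k = i$, and then $k \neq j$ forces $k \in S_j$, so the hypothesis $f^j = g^j$ gives $f(k) = f^j(k) = g^j(k) = g(k)$. In either case $f(k) = g(k)$, and since $k$ was arbitrary we conclude $f = g$.

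There is no real obstacle here: the statement is a pointwise consistency check, and the only content is the set-theoretic identity $S_i \cup S_j = S$ that holds precisely because $i \neq j$. I would keep the write-up to a single sentence or two, phrasing it via the covering $S = S_i \cup S_j$ rather than the two-case split if a more streamlined presentation is preferred.
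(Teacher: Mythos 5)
Your proof is correct and is essentially the same argument as the paper's: both rest on the observation that $S_i \cup S_j = S$ when $i \neq j$, using $f^i = g^i$ to get agreement away from $i$ and $f^j = g^j$ to handle the point $i$ itself. No issues.
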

\begin{proof}
It is immediate that $f$ and $g$ agree outside of the two-element subset $\{i,j\}$.
But also $f(j) = f^i(j) = g^i(j) = g(j)$ and $f(i) = f^j(i) = g^j(i) = g(i)$.
\end{proof}

\begin{lemma}\label{lem three agree triv}
Let $f, g\colon S \to R$ with $|S| > |R| = r$. 
If $[f^i] = [g^i]$ for $i$ ranging over an $r$-element subset $I$ of $S$, then $[f]=[g]$. (If not all $[f^i] = \ast$, then $f=g$.)
\end{lemma}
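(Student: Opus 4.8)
The plan is to case-split on whether all the classes $[f^i]$, $i \in I$, equal $\ast$.

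First I would handle the degenerate case in which $[f^i] = \ast$ for every $i \in I$. Then each $f^i$ and each $g^i$ is trivial, and since $|S| > r = |I|$ the set $I$ is a proper subset of $S$ with exactly $|R|$ elements; so \cref{lem trivial} applied with $T = I$ forces both $f$ and $g$ to be trivial. Hence $[f] = \ast = [g]$, which is all that is asserted in this case (the parenthetical sharpening does not apply here).

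Otherwise I fix $i_0 \in I$ with $[f^{i_0}] \neq \ast$. Since the equivalence relation keeps distinct epi functions distinct, $[f^{i_0}] = [g^{i_0}]$ forces $f^{i_0} = g^{i_0}$, so $f$ and $g$ agree on $S \setmin i_0$ and it remains only to check $f(i_0) = g(i_0)$. If some further index $j \in I \setmin i_0$ also has $f^j$ epi, then likewise $f^j = g^j$, and \cref{lem two agree} applied at $i_0$ and $j$ yields $f = g$ at once.

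The remaining subcase---where $i_0$ is the only index of $I$ with epi restriction---is the main obstacle, since \cref{lem two agree} then supplies only one matching restriction. Here $f^j$, and hence $g^j$, is trivial for every $j \in I \setmin i_0$. The key point is that for such $j$, because $f$ is epi while $f^j$ is not, the value $f(j)$ has $j$ as its unique preimage; thus the $r-1$ values $\{ f(j) \mid j \in I \setmin i_0 \}$ are distinct and none equals $f(i_0)$. As $|R| = r$, this identifies $f(i_0)$ as the unique value of $R$ omitted by this list. Running the identical counting for $g$---and using that $g$ agrees with $f$ off $i_0$, so the omitted-value set is the same---identifies $g(i_0)$ as that same value. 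Hence $f(i_0) = g(i_0)$, giving $f = g$ and in particular $[f] = [g]$. The crux is thus the symmetric pigeonhole argument forcing the value at $i_0$; the hypotheses $r \geq 3$ and $|S| > r$ ensure there is enough room to run it.
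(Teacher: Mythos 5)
Your proof is correct and follows essentially the same route as the paper's: the same three-way case split (all restrictions trivial, exactly one epi, at least two epi), handled respectively by \cref{lem trivial}, the pigeonhole identification of the value at the distinguished index via the $(r{-}1)$-element image of the remaining indices, and \cref{lem two agree}. No gaps.
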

\begin{proof}
For $i\in I$, since  $[f^i] = [g^i]$ we either have $f^i$ and $g^i$ are epi and equal, or are both trivial.
We assume $f^i$ is epi for exactly one $i\in I$ -- the case when there are no such $i$ has been handled in \cref{lem trivial} and the case with two or more such $i$ in \cref{lem two agree}.
Since $f^i$ and $g^i$ are epi, so are $f$ and $g$; we know $f(t) = g(t)$ for $t\neq i$. 
For each $j\in I \setmin i = I_i$ we have 
\[
	f^{-1}f(j) = \{j\} = g^{-1}g(j)
\]
by triviality of $f^j$ and $g^j$.
In particular $f(i)$ and $g(i)$ are not in the $(r{-}1)$-element set $f(I_i) \subset R$, hence $f(i) = g(i)$.
\end{proof}

\subsection{Existence}
Let $R$ be a set of cardinality $r \geq 3$.
Fix a set $S$ with at least $2r$ elements and a $(2r{-}1)$-element subset $I\subsetneq S$.
Assume we are given functions
\[
	f_i \colon S_i \coloneq S\setmin i \to R
\]
for $i\in I$ such that $[f_i^j] = [f_j^i]$ for all $i\neq j$ in $I$.

We form a colored graph from this data.
The vertices are the elements of $I$, and there is an edge between $i$ and $j$ if and only if $f_i^j = f_j^i$.
We color this graph as follows:
\begin{enumerate}[label=\arabic*.]
\item $i \in I$ is a \emph{black vertex} when $f_i$ is epi.
\item $i \in I$ is a \emph{red vertex} when $f_i$ is trivial.
\item An edge between $i,j$ is a \emph{black edge} if $f_i^j = f_j^i$ is epi.
\item An edge between $i,j$ is a \emph{red edge} if $f_i^j = f_j^i$ is trivial.
\end{enumerate}
Notice that black edges always are between black vertices, but red edges could join vertices of any color.

\begin{lemma}\label{lem two black}
Any black vertex is incident to at least $r-1$ black edges.
\end{lemma}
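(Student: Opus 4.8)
The plan is to fix a black vertex $i$, so that $f_i \colon S_i \to R$ is surjective, and to count directly the black edges emanating from $i$. The first thing to observe is that for a black vertex the edge–coloring is completely controlled by $f_i$ alone: if the restriction $f_i^j$ happens to be epi, then since the hypothesis gives $[f_i^j] = [f_j^i]$ and two epi functions in the same equivalence class are necessarily equal, we automatically get $f_j^i = f_i^j$, so $\{i,j\}$ is an edge and it is black. Conversely a black edge at $i$ requires $f_i^j = f_j^i$ epi, hence $f_i^j$ epi. Thus it suffices to show that at least $r-1$ of the indices $j \in I \setmin i$ have $f_i^j$ surjective.

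Next I would determine exactly when a restriction $f_i^j$ fails to be surjective. Since $f_i^j$ is the restriction of $f_i$ to $S_i \setmin j = S_{ij}$, it is trivial precisely when removing $j$ destroys a value, i.e.\ when $j$ is the unique preimage of $f_i(j)$ under $f_i$. In other words, the ``bad'' indices $j \in S_i$ are exactly those lying in a singleton fiber of $f_i$, and their number equals the number of singleton fibers.

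The key quantitative input is then that $f_i$ has at most $r-1$ singleton fibers, and this is where the standing assumption $|S| \geq 2r$ is used: one has $|S_i| = |S| - 1 \geq 2r - 1 > r = |R|$, so the surjection $f_i$ cannot be injective, forcing at least one fiber to contain two or more elements; with only $r$ fibers in total, at most $r-1$ of them can be singletons. Consequently at most $r-1$ elements of $S_i$ are bad.

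Finally I would restrict to $I \setmin i$. Because $I \setmin i \subseteq S_i$ has $|I| - 1 = 2r - 2$ elements and the bad elements among them are a subset of the at most $r-1$ bad elements of $S_i$, at least $(2r-2) - (r-1) = r-1$ of the indices in $I \setmin i$ give epi restrictions, hence at least $r-1$ black edges at $i$. The one delicate point, which I would flag as the main obstacle, is the bookkeeping that transfers the singleton-fiber bound from all of $S_i$ down to the smaller set $I \setmin i$: the cardinality count $2r-2$ against $r-1$ is exactly tight, so the argument has no slack and relies on $|I| = 2r-1$ and $|S| \geq 2r$ being used in precisely this balanced way.
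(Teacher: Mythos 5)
Your proof is correct and takes essentially the same route as the paper's: the paper obtains the bound of at most $r-1$ trivial restrictions $f_i^j$ by citing \cref{lem trivial}, whose own proof is exactly your singleton-fiber observation, and then performs the same count $|I\setminus\{i\}| = 2r-2$ against $r-1$. Your explicit remark that $f_i^j$ being epi automatically forces $f_j^i = f_i^j$ (so that epi restrictions really do yield black edges) is left implicit in the paper, but it is the same argument.
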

\begin{proof}
Suppose $i$ is a black vertex. 
By \cref{lem trivial}, since $f_i$ is epi, we can have at most $r-1$ elements $j\in I_i$ with $f_i^j$ trivial.
But $I_i$ has cardinality $2r-2$.
\end{proof}

\begin{lemma}\label{lem edge exists}
If $ij$ and $ik$ are black edges, then $jk$ is an edge (either black or red).
\end{lemma}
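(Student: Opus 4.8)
The plan is to compare $f_j$ and $f_k$ on their common domain $S_{jk}$ and show $f_j^k = f_k^j$, which is exactly the assertion that $jk$ is an edge. The standing hypothesis supplies only the class equality $[f_j^k] = [f_k^j]$, so the argument splits according to whether these restrictions are epi or trivial.

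First I would record the consequences of the two black edges. By definition $f_i^j = f_j^i$ and $f_i^k = f_k^i$ are epi, so $f_i$, $f_j$, $f_k$ are all epi, and restricting further to $S_{ijk}$ shows $f_j$ and $f_k$ both agree with $f_i$ there; hence $f_j^k$ and $f_k^j$ agree on $S_{ijk} = S_{jk} \setminus \{i\}$. Thus everything reduces to the single value at $i$: the edge $jk$ exists if and only if $f_j(i) = f_k(i)$. If $f_j^k$ is epi, then so is $f_k^j$, and the class equality forces $f_j^k = f_k^j$ (an epi function is alone in its class), giving a black edge $jk$.

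The crux is the trivial case. Here I would first argue that $f_i(j) = f_i(k)$. Triviality of $f_j^k$ means $f_j(k)$ is attained only at $k$ among points of $S_j$; since $f_i^k$ is epi it attains the value $f_i(k) = f_j(k)$ at some point other than $k$, and the only way to avoid producing a second preimage of $f_j(k)$ inside $S_j$ is for that point to be $j$, whence $f_i(j) = f_i(k)$. Consequently $T \coloneq f_i(S_{ijk})$ omits this common value, so $|T| = r-1$. Now a counting step produces a useful fourth index: the $2r-4$ points of $I \setminus \{i,j,k\}$, together with at least one point of $S \setminus I$ (nonempty since $|S| \geq 2r > |I|$), all lie in $S_{ijk}$ and map into $T$. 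If each $\ell \in I \setminus \{i,j,k\}$ were the unique point of $S_{ijk}$ carrying its value, these would be $2r-4$ distinct values inside $T$, and the extra point of $S\setminus I$ would either duplicate one of them or push the count to $2r-3 > r-1$; for $r \geq 3$ both alternatives are impossible. Hence some $\ell \in I \setminus \{i,j,k\}$ has $f_i(\ell)$ attained at a second point $\ell'$ of $S_{ijk}$.

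Finally I would transport the value at $i$ through $\ell$. Because the extra preimage $\ell'$ (which lies in both $S_{ij}$ and $S_{ik}$) survives deleting $\ell$, the restrictions $f_j^\ell$ and $f_k^\ell$ remain epi; combined with the class equalities this makes $j\ell$ and $k\ell$ black edges, that is, $f_j = f_\ell$ on $S_{j\ell}$ and $f_k = f_\ell$ on $S_{k\ell}$. Evaluating both at $i$ (which lies in each domain) gives $f_j(i) = f_\ell(i) = f_k(i)$, so $f_j^k = f_k^j$ and $jk$ is a red edge. I expect the main obstacle to be precisely this trivial case, and within it the production of such an $\ell$ black-adjacent to both $j$ and $k$; the essential input is $|T| = r-1$, which forces enough repetition of values, and this is exactly why establishing $f_i(j) = f_i(k)$ must come first.
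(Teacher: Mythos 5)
Your proof is correct and follows essentially the same route as the paper's: reduce to the single value at $i$, dispose of the epi case via the class equality, show the common image on $S_{ijk}$ has exactly $r-1$ elements, use a pigeonhole count (requiring $2r-4\geq r-1$ and a point of $S\setminus I$) to produce a fourth index $\ell$ with a repeated value, deduce that $j\ell$ and $k\ell$ are black, and transport the value at $i$ through $f_\ell$. The only differences are cosmetic: you justify $f_i(j)=f_i(k)$ via unique preimages rather than as the common complement of the $(r-1)$-element image, and you carry out the pigeonhole by hand where the paper cites its earlier triviality lemma.
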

\begin{proof}
The assumption is that $f_{ij} \coloneq f_i^j = f_j^i$ and $f_{ik} \coloneq f_i^k = f_k^i$ are epi; we write $f_{ijk} = f_{ij}^k = f_{ik}^j$.
If $f_k^j$ is epi, then by definition there is a black edge $jk$, so we assume $f_k^j$ and $f_j^k$ are both trivial.
Let $T = S \setmin \{i,j,k\}$; our assumption implies $f_j(t) = f_k(t)$ for $t\in T$.
Our aim is to show $f_j(i) = f_k(i)$.
Notice that 
\[
f_{ij}(T \cup \{k\}) = \image(f_{ij}) = R = \image(f_{ik}) = f_{ik}(T\cup \{j\}),
\]
while $f_{ijk}(T) \subseteq \image (f_k^j) \neq R$.
Thus $f_{ijk}(T)$ is an $r-1$ element subset of $R$. 

Write $g \colon T \twoheadrightarrow f_{ijk}(T)$ for the codomain restriction of $f_{ijk}$.
Since $T$ contains at least $2r-3$ elements (since $S$ contained at least $2r$), there must be $\ell$ in the $2r-4$ element set $T\cap I = I\setmin \{i,j,k\}$ such that $g^{-1}g(\ell)$ has cardinality strictly greater than one. (This is an application of \cref{lem trivial} to $g$ -- we are using $|T\cap I| = 2r-4 \geq r-1 = |f_{ijk}(T)|$, which is true since $r \geq 3$.)
But 
\[
	g^{-1}g(\ell) \subseteq f_p^{-1} f_p(\ell) \quad p\in \{i,j,k\}
\]
so $f_i^\ell$ and $f_j^\ell$ and $f_k^\ell$ are all epi.
Then 
\[
	f_j(i) = f_j^\ell(i) = f_\ell^j(i) = f_\ell^k(i) = f_k^\ell(i) = f_k(i)
\]
and we conclude $f_j^k = f_k^j$.
\end{proof}

If all vertices of the graph are red, then for any trivial $f\colon S \to R$ we have $[f^i] = [f_i]$ for all $i\in I$.
We consider the case where there is at least one black vertex, hence at least $r-1$ black edges. 
Assume the graph has a black edge between $i_0$ and $i_1$.
Define $f \colon S \to R$ by 
\[
	f(t) = \begin{cases}
		f_{i_0}(t) & i_0 \neq t \\
		f_{i_1}(t) & i_1 \neq t.
	\end{cases}
\]

\begin{proposition}\label{blck up low}
For each black vertex $i$, we have $f^i = f_i$.
\end{proposition}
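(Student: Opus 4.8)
The plan is to prove $f^i = f_i$ by checking the two functions agree at every point of $S_i = S\setmin i$, splitting according to whether the black vertex $i$ is one of the endpoints $i_0,i_1$ of the chosen black edge. (As a preliminary, note $f$ is well defined precisely because $f_{i_0}$ and $f_{i_1}$ agree on $S\setmin\{i_0,i_1\}$, which is the assertion that $i_0i_1$ is a black edge.)

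If $i\in\{i_0,i_1\}$ the claim is immediate from the definition of $f$: for $i=i_0$ and any $t\neq i_0$ the first branch gives $f(t)=f_{i_0}(t)=f_i(t)$, and symmetrically for $i=i_1$ via the second branch. So the substantive case is a black vertex $i\notin\{i_0,i_1\}$. Here the key claim I would establish is that \emph{both $ii_0$ and $ii_1$ are edges} of the colored graph, i.e.\ $f_i$ agrees with $f_{i_0}$ on $S\setmin\{i,i_0\}$ and with $f_{i_1}$ on $S\setmin\{i,i_1\}$. Granting this, the proposition follows by a short pointwise check: for $t\in S_i$ with $t\neq i_0$ we get $f_i(t)=f_{i_0}(t)=f(t)$, while for $t=i_0$ (so $t\neq i_1$) we get $f_i(i_0)=f_{i_1}(i_0)=f(i_0)$, invoking the $i_1\neq t$ branch of the definition of $f$. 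Hence $f^i=f_i$.

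To prove the key claim I treat $ii_0$; the argument for $ii_1$ is identical. If $ii_0$ is already an edge there is nothing to do, so suppose it is not, and in particular it is not a black edge. Writing $N(v)$ for the set of black neighbors of a vertex $v$, non-edgeness forces $i_0\notin N(i)$ and $i\notin N(i_0)$, so $N(i),N(i_0)\subseteq I\setmin\{i,i_0\}$, a set of cardinality $2r-3$. Both $i$ and $i_0$ are black (the latter as an endpoint of a black edge), so \cref{lem two black} yields $|N(i)|,|N(i_0)|\geq r-1$, whence
\[
  |N(i)\cap N(i_0)|\;\geq\; 2(r-1)-(2r-3)\;=\;1 .
\]
Thus there is a common black neighbor $j$, and applying \cref{lem edge exists} to the black edges $ji$ and $ji_0$ shows $ii_0$ is an edge after all, contradicting our assumption.

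I expect this last counting step — producing a common black neighbor — to be the crux of the whole argument, since it is exactly where the hypotheses $|I|=2r-1$ and $r\geq 3$ enter in an essential way (they are what make $2(r-1)>2r-3$). The case analysis and the final pointwise verification are then routine bookkeeping against the definition of $f$ and the edge condition $f_i^{\,j}=f_j^{\,i}$.
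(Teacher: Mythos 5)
Your proof is correct, and it reorganizes the argument along genuinely different lines from the paper's. The paper proceeds in two stages: it first forms $J = \{i_0\}\cup N(i_0)$ (of size at least $r$ by \cref{lem two black}), handles each $j\in J\setminus\{i_0,i_1\}$ by producing the edges $ji_0$ and $ji_1$ (the latter via \cref{lem edge exists} applied to the black edges $i_0j$ and $i_0i_1$) and invoking \cref{lem two agree} at the two points $i_0,i_1$; it then handles a black vertex $k\notin J$ by a different count ($|I\setminus J|\leq r-1$ against $|N(k)|\geq r-1$) to find a black neighbour $j\in J$ for which $f^j=f_j$ is already known, and bootstraps from there. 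Your argument is more uniform: you show once and for all that every black vertex $i\notin\{i_0,i_1\}$ is joined by an edge to each of $i_0$ and $i_1$, via the common-neighbour count $|N(i)\cap N(i_0)|\geq 2(r-1)-(2r-3)=1$ inside $I\setminus\{i,i_0\}$ followed by \cref{lem edge exists}, and then the pointwise check (equivalently \cref{lem two agree} at the two points $i_0$ and $i_1$) finishes in a single step with no bootstrapping through intermediate vertices. The ingredients are the same (\cref{lem two black}, \cref{lem edge exists}, \cref{lem two agree}, and the hypothesis $|I|=2r-1$, which is what makes both counts work), but your decomposition is arguably cleaner and yields the slightly stronger structural fact that all black vertices are adjacent to both $i_0$ and $i_1$. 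One small correction to your closing remark: the inequality $2(r-1)>2r-3$ holds for every $r$, so $r\geq 3$ is not what makes the count go through; that hypothesis is instead needed inside \cref{lem edge exists} itself.
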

\begin{proof}
This is by definition if $i$ is $i_0$ or $i_1$.
Let $J \subseteq I$ be the set consisting of $i_0$ and all of the vertices connected to $i_0$ via a black edge.
By \cref{lem two black} we have $|J| \geq r$.
Let $j\in J \setmin \{i_0,i_1\}$, so that there is a black edge between $j$ and $i_0$. As \cref{lem edge exists} guarantees an edge between $j$ and $i_1$, 
we have the first equality in the second line below.
\[
\begin{gathered}
f_j^{i_0} = f_{i_0}^j = (f^{i_0})^j = (f^j)^{i_0} \\
f_j^{i_1} = f_{i_1}^j = (f^{i_1})^j = (f^j)^{i_1} 
\end{gathered}
\]
By \cref{lem two agree}, $f_j = f^j$.

Let $k$ be a black vertex in $I \setmin J$.
Since $k$ is incident to at least $r-1$ black edges and $|I \setmin J| = 2r-1 - |J| \leq (2r-1) - r = r-1$, there must be a black edge between $k$ and some $j \in J$.
Then there is a red or black edge between $k$ and $i_0$, and we can  reapply the same argument above to see $f^k=f_k$.
\end{proof}

In particular, the definition of $f$ is independent of the choice of $i_0$ and $i_1$ joined by a black edge.
Still assuming that the graph has a black vertex, we have:

\begin{lemma}\label{red triv}
If $i \in I$ is a red vertex, then $f^i$ is trivial.
\end{lemma}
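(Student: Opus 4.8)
The plan is to argue by contradiction, paralleling the strategy that worked for black vertices in \cref{blck up low} and \cref{red triv}. Suppose $i$ is a red vertex but $f^i$ is epi; I want to derive a contradiction from the fact that $f_i$ is trivial while $f^i$ is surjective. Since the graph is assumed to contain a black vertex, it has at least $r-1$ black edges by \cref{lem two black}, and $f$ was constructed from a fixed black edge $i_0 i_1$. The key structural fact I will exploit is that $f$ and $f_i$ differ in at most one place: if $f^i$ is epi then $f$ is epi, whereas $f_i$ is trivial, so $f$ and $f_i$ cannot be compatible in the way the hypotheses $[f_i^j]=[f_j^i]$ demand.

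First I would establish that $f^i$ epi forces $f$ to be epi, and record that $f$ restricted away from a large set of coordinates stays controlled. Then I would compare $f^i$ with the various $f_j^i = f_i^j$ for black vertices $j$: by \cref{blck up low} we have $f^j = f_j$ at each black vertex $j$, so $(f^j)^i = f_j^i = f_i^j$. The idea is to find two distinct black vertices $j, k$ (both different from $i$) connected appropriately, so that $f^i$ agrees with $f_i^j$ and $f_i^k$ on the relevant restrictions; since there are at least $r-1 \geq 2$ black edges and $|I| = 2r-1$ is large, such vertices exist. Using \cref{lem two agree} on the restrictions $(f^i)^j$ and $(f^i)^k$ versus $f_i^j$ and $f_i^k$, I would conclude $f^i = f_i$, contradicting that $f^i$ is epi while $f_i$ is trivial.

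The main obstacle will be the bookkeeping needed to guarantee enough black vertices distinct from $i$ whose edges to $i$ I can use, and to verify the edge/restriction compatibilities hold simultaneously at two indices so that \cref{lem two agree} applies. Concretely, I must show that among the $\geq r$ vertices in the black-connected component $J$ of $i_0$, at least two are distinct from $i$ and have the property that $f_i$ agrees with them after the appropriate single-coordinate deletion; this is where the counting $|J| \geq r$ against $|I| = 2r-1$ and $r \geq 3$ does the real work, exactly as in the proof of \cref{blck up low}. Once two such indices are secured, the contradiction via \cref{lem two agree} is immediate, since a trivial function and an epi function cannot coincide.
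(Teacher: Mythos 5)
There is a genuine gap. Your argument hinges on the equality of functions $f_j^i = f_i^j$ for black vertices $j$, but the hypothesis only gives $[f_j^i] = [f_i^j]$. Since $i$ is red, $f_i$ is trivial, hence $f_i^j$ is trivial, and so the hypothesis tells you only that $f_j^i$ is \emph{also} trivial --- two trivial functions represent the same class $\ast$ without being equal. Consequently the identities $(f^i)^j = (f^j)^i = f_j^i$ (valid, via \cref{blck up low}) do not yield $(f^i)^j = f_i^j$, and \cref{lem two agree} cannot be applied to conclude $f^i = f_i$. Indeed $f^i = f_i$ is stronger than what is true or needed; the lemma only asserts that $f^i$ is trivial (equivalently $[f^i]=[f_i]$), not that $f^i$ coincides with the trivial function $f_i$.

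The fix is simpler than the machinery you set up. By \cref{lem two black} the graph contains a set $B$ of $r$ black vertices, all necessarily distinct from the red vertex $i$, and for each $j\in B$ the computation above shows $(f^i)^j = f_j^i$ is trivial. Now $B$ is a proper subset of $S_i$ of cardinality $r = |R|$ (as $|S_i|\geq 2r-1 > r$), so \cref{lem trivial} applied to $f^i$ forces $f^i$ to be trivial; no contradiction argument and no pairing of two indices for \cref{lem two agree} is required. (The paper phrases this as $[(f^i)^j]=[f_i^j]$ for all $j\in B$ and invokes \cref{lem three agree triv} to get $[f^i]=[f_i]=\ast$, which is the same count.) Your bookkeeping about the black-connected component $J$ of $i_0$ is not where the difficulty lies --- the difficulty is that class equality of trivial functions cannot be upgraded to function equality.
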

\begin{proof}
Let $B \subseteq I$ be a set consisting of $r$ black vertices (guaranteed by \cref{lem two black}).
For each $j\in B$ we have
\[
	(f^i)^j = (f^j)^i = f_j^i,
\]
so $[(f^i)^j] = [f_j^i] = [f_i^j]$ for all $j\in B$.
By \cref{lem three agree triv}, $[f^i] = [f_i]$.
\end{proof}

\begin{theorem}\label{thm existence}
Let $R$ be a set of cardinality $r \geq 3$ and $I \subsetneq S$ a proper subset of cardinality $|I| = 2r-1$.
Assume we are given functions $f_i \colon S_i  \to R$ for $i\in I$ such that $[f_i^j] = [f_j^i]$ for all $i\neq j$ in $I$.
Then there exists a function $f\colon S \to R$ such that $[f^i] = [f_i]$ for all $i\in I$.
Moreover, $[f]$ is unique.
\end{theorem}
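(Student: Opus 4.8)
The plan is to treat existence and uniqueness separately, pushing almost all of the work onto the colored-graph lemmas already assembled in this subsection.

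For existence I would split on whether the graph on $I$ has a black vertex. If every vertex is red, then $f_i$ is trivial for all $i$, so $[f_i] = \ast$; any trivial $f \colon S \to R$ then works, since a restriction of a non-surjective function is non-surjective and hence $[f^i] = \ast = [f_i]$. (A constant map supplies such an $f$, because $|S| \geq 2r > r = |R|$.) Otherwise a black vertex exists, so by \cref{lem two black} it is incident to at least $r - 1 \geq 2$ black edges, and in particular some black edge between $i_0$ and $i_1$ exists. I would then take exactly the $f$ built from this edge in the construction preceding \cref{blck up low}. For a black vertex $i$, \cref{blck up low} gives $f^i = f_i$, hence $[f^i] = [f_i]$; for a red vertex $i$, \cref{red triv} gives $f^i$ trivial, and $f_i$ is trivial by redness, so $[f^i] = \ast = [f_i]$. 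This exhausts $I$ and establishes existence.

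For uniqueness, suppose $f$ and $g$ both represent admissible solutions, so that $[f^i] = [f_i] = [g^i]$ and in particular $[f^i] = [g^i]$ for all $i \in I$. Since $|I| = 2r - 1 \geq r$, I would restrict to any $r$-element subset $I' \subseteq I$ and invoke \cref{lem three agree triv}, whose hypotheses $|S| > |R| = r \geq 3$ are in force; this yields $[f] = [g]$, so the class $[f]$ is unique.

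I do not expect a genuine obstacle at this stage: the real difficulty was already absorbed into \cref{lem edge exists} and \cref{blck up low}, which are precisely what allow a single globally defined $f$ to be reconciled with every $f_i$ at once. The only point needing attention is confirming that the chosen $f$ is correct at \emph{all} vertices, and not merely at the two endpoints of the distinguished black edge; but \cref{blck up low} together with \cref{red triv} is engineered to deliver exactly that, so the argument is just a matter of assembling the pieces.
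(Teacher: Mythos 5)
Your proof is correct and follows essentially the same route as the paper's: the all-red case handled by any trivial map, the black-vertex case handled by the explicit $f$ built from a black edge together with \cref{blck up low} and \cref{red triv}, and uniqueness via \cref{lem three agree triv}. The only difference is that you spell out the routine verifications (existence of a black edge via \cref{lem two black}, triviality of restrictions) that the paper leaves implicit.
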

\begin{proof}
If all vertices are red, then any trivial $f\colon S \to R$ will do.
If there is at least one black vertex, then we constructed a specific epi $f \colon S \to R$ which has the property $[f^i]=[f_i]$ by \cref{blck up low} and \cref{red triv}.
Uniqueness of $[f]$ follows from \cref{lem three agree triv}.
\end{proof}

\subsection{Degree of the symmetric spheres}

In the symmetric sphere $\rep{n} / \partial \rep{n}$, for each $k\geq 0$, the set of $k$-simplices of $\rep{n} / \partial\rep{n}$ consists of a unique totally degenerate element $\ast_k$, along with the surjective functions $[k] \twoheadrightarrow [n]$.

\begin{lemma}\label{lem sphere lower bound}
If $n\geq 1$, then $\deg(\rep{n}/\partial \rep{n}) > 2n-1$.
\end{lemma}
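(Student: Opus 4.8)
The plan is to show that $\Sigma S^n$ fails the lower $(2(2n{-}1){-}1) = (4n{-}3)$-Segal condition, which by \cref{def degree} is exactly the assertion $\deg(\Sigma S^n) > 2n-1$. Setting $k = 2n-1$, so that $k+1 = 2n$, I would invoke the reformulation of lower $(2k{-}1)$-Segality in \cref{exercise sset}: it suffices to produce an $m$, a gapped subset $I \subset [m]$ of cardinality $2n$, and a compatible family $(x_i)_{i \in I}$ of $(m{-}1)$-simplices admitting no filler in $(\Sigma S^n)_m$. Throughout I identify a $p$-simplex of $\Sigma S^n = \rep{n}/\partial\rep{n}$ with the class $[f]$ of a function $f\colon [p] \to [n]$ (surjective ones being genuine simplices, trivial ones all equal to $\ast$) and the face $d_i$ with restriction $f \mapsto [f^i]$, as in the appendix; compatibility of $(x_i) = ([f_i])$ then reads $[f_i^j] = [f_j^i]$. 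This is the negation of the situation in \cref{thm existence}, where the threshold $|I| = 2(n{+}1){-}1 = 2n+1$ forced a unique filler, so the task is to witness that one index short, at $|I| = 2n$, existence can fail.

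For $n \geq 2$ I would build the counterexample from two conflicting epimorphisms. Fix $R = [n]$, choose $m$ and a gapped $I \subset [m]$ of size $2n$ partitioned as $I = A \sqcup B$ with $|A| = |B| = n$, and declare every vertex of $[m]$ outside $I$ to be an \emph{extra}. Define $g, h\colon [m] \to [n]$ by: $g$ sends each vertex of $A$ and each extra to $0$ and the vertices $b_1, \dots, b_n$ of $B$ bijectively onto $1, \dots, n$; and symmetrically $h$ sends each vertex of $B$ and each extra to $0$ and the vertices $a_1, \dots, a_n$ of $A$ bijectively onto $1, \dots, n$. Both are surjective and $g \neq h$ (e.g.\ $g(a_1) = 0 \neq 1 = h(a_1)$). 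Put $f_i = g^i$ for $i \in A$ and $f_i = h^i$ for $i \in B$. A direct check shows each such face is surjective (deleting one $A$-vertex leaves value $0$ on the other $A$-vertices and extras, and $1, \dots, n$ on $B$), so the $x_i = [f_i]$ are genuine; and every cross double-restriction $g^{ij}$ (resp.\ $h^{ij}$) with $i \in A, j \in B$ is non-surjective, since it deletes the unique vertex carrying the value $g(j)$ (resp.\ $h(i)$). Hence $[f_i^j] = \ast = [f_j^i]$ for cross pairs, while within $A$ and within $B$ the double-restrictions agree on the nose, so $(x_i)_{i \in I}$ is compatible.

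To see there is no filler, suppose $f\colon [m] \to [n]$ satisfies $[f^i] = [f_i]$ for all $i \in I$. For $i \in A$ the face $f_i = g^i$ is surjective, so $[f^i] = [g^i]$ forces the genuine equality $f^i = g^i$; applying this at two distinct indices of $A$ (available since $n \geq 2$) and invoking \cref{lem two agree} yields $f = g$. The same argument on $B$ gives $f = h$, whence $g = h$, a contradiction; and no trivial $f$ can work since $[f_i] \neq \ast$. Thus $(x_i)$ has no filler and $\Sigma S^n$ is not lower $(4n{-}3)$-Segal. The case $n = 1$ (the minimal circle, visibly not a groupoid nerve) I would dispatch separately: with $I = \{0,2\} \subset [2]$, the family equal to the single nondegenerate edge $[01]$ in both slots is compatible but has no filler, as any such $2$-simplex would have to send the middle vertex to both $1$ and $0$.

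The main obstacle is pinning down the two-epimorphism configuration so that all the off-diagonal restrictions degenerate simultaneously: this is exactly where the ``$2r-1$'' count in \cref{thm existence} is tight, and the construction above is its sharp failure at $2r-2 = 2n$. A secondary point needing care is the passage from ``$(x_i)$ has no filler'' to ``the associated cube is not cartesian''; since $\Sigma S^n$ is symmetric I can drop the gappedness of $I$ by appealing to \cref{rmk non-gapped}, or equivalently arrange $I$ gapped at the outset as above and quote \cref{exercise sset} directly.
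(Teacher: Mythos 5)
Your proof is correct and takes essentially the same approach as the paper: the paper's elements $a = 0\cdots012\cdots n0$ and $b = 12\cdots n0\cdots0$ are exactly your two conflicting epimorphisms $g,h$ (with $I = \{0,\dots,2n-1\}\subset[2n]$, non-gapped, justified via \cref{rmk non-gapped}), and the contradiction is likewise extracted from \cref{lem two agree} applied on each half of $I$. The only cosmetic differences are your insistence on a gapped $I$ with interleaved extras and your explicit $n=1$ computation, where the paper instead notes that $\Sigma S^1$ is the free partial group on one generator and hence not a group.
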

\begin{proof}
We show that $X= \rep{n}/\partial \rep{n}$ is not lower $d$-Segal for $d= 2(2n-1)-1 = 4n-3$.
The proof is valid for $n \geq 2$; the $n=1$ case holds since $\rep{1}/\partial \rep{1}$ is the free partial group on one generator, which is not a group.
Consider the following elements $a,b \in X_{2n}$ and $a',b' \in X_{2n-1}$:
\begin{align*}
a &= \underbrace{0\cdots0}_n12 \cdots n0 & a' &= \underbrace{0\cdots0}_{n-1}12 \cdots n0 \\
b &= 12 \cdots n\underbrace{0\cdots0}_{n+1} & b' &= 12 \cdots n\underbrace{0\cdots0}_n.
\end{align*}
Since $n\geq 1$, $a\neq b$. 
The faces (excluding the top one) of $a$ and $b$ are 
\[
	d_i a =\begin{cases}
		a' & 0 \leq i \leq n-1 \\
		\ast & n \leq i \leq 2n-1
	\end{cases}
	\qquad
	d_i b =\begin{cases}
		\ast & 0 \leq i \leq n-1 \\
		b' & n \leq i \leq 2n-1.
	\end{cases}
\]
Consider the set $I = \{0, \dots, 2n-1\} \subsetneq [2n]$, and let $x_0 = x_1 = \dots = x_{n-1} = a'$ and $x_n = x_{n+1} = \dots = x_{2n-1} = b'$. 
Then for $0 \leq i < j \leq 2n-1$ we have $d_i x_j = d_{j-1} x_i$; this comes down to the simplicial identity $d_i d_j = d_{j-1} d_i$ when $0\leq i < j \leq n-1$ or $n \leq i < j \leq 2n-1$, while for $i\in [0,n-1]$ and $j\in [n,2n-1]$ our definition gives $d_i x_j = \ast = d_{j-1} x_i$.
There is no $x \in X_{2n}$ having $d_i x = x_i$ for $i=0, \dots, 2n-1$.
If there were, by \cref{lem two agree} we would have $a = x = b$.
\end{proof}
Notice this proof breaks down if there is an overlap between $\{0,1\}$ and $\{n,n+1\}$.

\begin{theorem}\label{thm degree sphere}
For $n\geq 1$, the degree of $\rep{n}/\partial \rep{n}$ is $2n$.
\end{theorem}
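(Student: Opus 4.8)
The plan is to establish the two inequalities $\deg(\Sigma S^n) \geq 2n$ and $\deg(\Sigma S^n) \leq 2n$ separately, harvesting the combinatorial work already done in this appendix. The lower bound is immediate: \cref{lem sphere lower bound} gives $\deg(\Sigma S^n) > 2n-1$, hence $\deg(\Sigma S^n) \geq 2n$ for all $n \geq 1$. Everything else is the upper bound, i.e.\ showing that $\Sigma S^n$ is lower $(4n{-}1)$-Segal, which is the case $k = 2n$ of the lower $(2k{-}1)$-Segal condition.

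For $n \geq 2$ I would verify lower $(4n{-}1)$-Segality through the reformulation of \cref{exercise sset}, which (with $k = 2n$) asks that for every gapped $I \subset [m]$ of cardinality $2n+1$ and every compatible family $(x_i)_{i\in I}$ of $(m{-}1)$-simplices satisfying $d_i x_j = d_{j-1}x_i$ for $i<j$, there is a unique $x \in (\Sigma S^n)_m$ with $d_i x = x_i$. The key is to translate this into the hypotheses of \cref{thm existence}: identify $(\Sigma S^n)_m$ with the set of classes $[f]$ of functions $f\colon [m]\to[n]$ (all nonsurjective ones collapsed to $\ast_m$), under which the face operator $d_i$ becomes restriction $[f]\mapsto [f^i]$ in the unreindexed convention $f^i = f|_{[m]\setmin i}$. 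A short reindexing check then shows that the simplicial compatibility $d_i x_j = d_{j-1}x_i$ translates exactly to $[f_i^j] = [f_j^i]$. Taking $R = [n]$, so $r = n+1 \geq 3$, and $S = [m]$, the indexing set $I$ has cardinality $2n+1 = 2r-1$; moreover a gapped subset of this size forces $m \geq 4n$, whence $\lvert S\rvert = m+1 \geq 4n+1 \geq 2n+2 = 2r$, so the cardinality hypotheses of \cref{thm existence} hold automatically. \cref{thm existence} then supplies exactly the required unique extension, so $\Sigma S^n$ is lower $(4n{-}1)$-Segal and $\deg(\Sigma S^n) \leq 2n$. (If convenient, \cref{segality top bottom} lets one restrict to $I$ containing $0$ and $m$, though the size bookkeeping needs no such restriction.)

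The remaining case $n = 1$ falls outside \cref{thm existence} (there $r = 2$), but can be dispatched differently: by \cref{lem sphere lower bound}, $\Sigma S^1$ is the free partial group on one generator, hence a nonempty partial groupoid, and one checks that its only nondegenerate simplices lie in dimensions $0$ and $1$, so $\dim(\Sigma S^1) = 1$. The dimension bound \cref{deg dim} then gives $\deg(\Sigma S^1) \leq \dim(\Sigma S^1) + 1 = 2$, matching the lower bound. Combining the two inequalities in all cases yields $\deg(\Sigma S^n) = 2n$. I expect the main friction to be the bookkeeping in the translation to \cref{thm existence} --- in particular, matching the index shift in $d_i x_j = d_{j-1}x_i$ against the symmetric (unreindexed) restrictions $f^i$, and confirming the cardinality inequalities --- rather than the underlying mathematics, which is already contained in the preceding lemmas.
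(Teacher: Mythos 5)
Your proposal is correct and follows essentially the same route as the paper: the lower bound from \cref{lem sphere lower bound}, the case $n=1$ via the dimension bound of \cref{deg dim}, and the upper bound for $n\geq 2$ by translating the compatible-family condition into the hypotheses of \cref{thm existence} with $R=[n]$, $r=n+1$, $|I|=2r-1$. The only (immaterial) difference is that you apply \cref{thm existence} directly to a gapped subset $I\subset[m]$ with $m\geq 4n$, whereas the paper applies it to $I=\{0,\dots,2n\}\subset[m]$ for any $m>2n$ and then invokes \cref{lem removal of gaps}; both satisfy the cardinality hypotheses, since \cref{thm existence} places no gappedness requirement on $I$.
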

\begin{proof}
Let $X = \rep{n}/\partial \rep{n}$.
By \cref{lem sphere lower bound}, we need $\deg(X) \leq 2n$.
If $n=1$, then $X$ is a 1-dimensional partial group, so \cref{deg dim} implies $\deg(X) \leq \dim(X) + 1 = 2$. 
When $n\geq 2$, taking $R = [n]$ in \cref{thm existence} gives that $X$ is lower $(2(2n){-}1)$-Segal, hence has degree at most $2n$.
\end{proof}

We conclude with this deduction about a simplicial model of the $n$-sphere.

\begin{corollary}\label{cor simplicial sphere}
For $n \geq 1$, the simplicial set $X = \Delta^n / \partial \Delta^n$ is lower $(4n{-}1)$-Segal.
\end{corollary}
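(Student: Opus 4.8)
The plan is to run the combinatorial heart of the symmetric-sphere argument while keeping track of order. Write $X = \Delta^n/\partial\Delta^n$, whose $k$-simplices are the surjective order-preserving maps $[k]\to[n]$ together with a single basepoint $\ast_k$ (the class of the non-surjective ones). Since lower $(4n{-}1)$-Segal means lower $(2k{-}1)$-Segal with $k=2n$, I would invoke the reformulation of \cref{exercise sset}: it suffices to show that for every gapped subset $I\subset[m]$ of cardinality $2n+1$ and every family $(x_i)_{i\in I}$ with $x_i\in X_{m-1}$ satisfying $d_ix_j = d_{j-1}x_i$ for $i<j$ in $I$, there is a unique $x\in X_m$ with $d_ix = x_i$ for all $i$. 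By the same bookkeeping as in the proof of \cref{thm degree sphere} --- setting $S=[m]$, $R=[n]$, and choosing representatives $f_i\colon S_i\to R$ via the order isomorphism $S_i\cong[m-1]$ --- the compatibility $d_ix_j = d_{j-1}x_i$ translates into $[f_i^j]=[f_j^i]$ and the filling problem into producing a class $[f]$ with $[f^i]=[f_i]$. The one new feature is that each $f_i$ is now order-preserving, and the filler $f$ must be too.

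For $n\geq 2$ we have $r=|R|=n+1\geq 3$ and $|I|=2r-1$, so \cref{thm existence} already supplies a unique class $[f]$ (over all, not necessarily monotone, functions) with $[f^i]=[f_i]$; uniqueness of the monotone filler is then automatic, since distinct monotone classes remain distinct among all functions. The crux is to verify that the $f$ constructed there is order-preserving. I would use that the filler built in the appendix agrees with $f_{i_0}$ on $S_{i_0}$ and with $f_{i_1}$ on $S_{i_1}$ for a chosen black edge $i_0<i_1$ of $I$. For a pair $a<b$ in $[m]$, monotonicity of $f$ is immediate unless $\{a,b\}=\{i_0,i_1\}$, and precisely here the hypothesis that $I$ is \emph{gapped} provides an element $c\in S$ with $i_0<c<i_1$. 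Since $c\in S_{i_0}\cap S_{i_1}$, monotonicity of $f_{i_1}$ on $S_{i_1}$ and of $f_{i_0}$ on $S_{i_0}$ gives $f(i_0)=f_{i_1}(i_0)\leq f_{i_1}(c)=f(c)=f_{i_0}(c)\leq f_{i_0}(i_1)=f(i_1)$, so $f$ is monotone and defines a simplex of $X$. (If all $f_i$ are trivial, the basepoint is the unique filler by \cref{lem trivial}, applied with $T=I$ of size $2n+1>n+1$.)

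I expect the main obstacle to be exactly this monotonicity check, and the key point is that gappedness of $I$ --- an unavoidable hypothesis here, since $X$ is only a simplicial set and so one cannot appeal to the symmetry reduction \cref{lem removal of gaps} used in the symmetric case --- is precisely what interpolates between the two ``missing'' values $f(i_0)$ and $f(i_1)$ to force order-preservation. The remaining identifications (that $d_ix=x_i$ corresponds to $[f^i]=[f_i]$) are routine.

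Finally, the case $n=1$, where $r=2$ falls outside the range of \cref{thm existence}, I would dispatch by a direct elementary argument: the $m$-simplices of $\Delta^1/\partial\Delta^1$ are the monotone step functions $[m]\to\{0,1\}$ (recorded by their jump position) together with $\ast$, and one checks the lower $3$-Segal filling condition for a gapped triple $i_0\ll i_1\ll i_2$ by hand; alternatively one observes that $\Delta^1/\partial\Delta^1$ is edgy and applies \cref{edgy segal char redux} with $k=2$.
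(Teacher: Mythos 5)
Your proof is correct, but it reaches the crux of the matter---order-preservation of the filler---by a genuinely different mechanism than the paper. The paper's proof uses \cref{thm degree sphere} purely as a black box: since $X \subseteq \Sigma S^n$ and $I$ is gapped, there is a unique filler $x \in (\Sigma S^n)_m$, and one shows $x$ is monotone by contradiction. If $f(\ell) > f(\ell+1)$, then \cref{lem trivial} gives at least $n+1$ indices $i \in I$ with $f^i$ epi; gappedness ensures $\ell$ and $\ell+1$ are not both in $I$, so one can choose such an $i \notin \{\ell,\ell+1\}$, and monotonicity of $x_i = d_i x$ forces $f(\ell) = f^i(\ell) \leq f^i(\ell+1) = f(\ell+1)$. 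This uses only the faces of the filler, not its construction, and hence runs uniformly for all $n \geq 1$ (the $n=1$ input to \cref{thm degree sphere} being supplied there by \cref{deg dim}). You instead open up the proof of \cref{thm existence} and check monotonicity of the explicit filler glued from $f_{i_0}$ and $f_{i_1}$ along a black edge, interpolating through a gap element $c$ at the single uncovered pair $\{i_0,i_1\}$. That check is correct---and in fact, since $i_0 \ll i_1$ are non-adjacent, every adjacent pair already lies in $S_{i_0}$ or $S_{i_1}$, so your interpolation does slightly more than is strictly needed---but it ties the argument to the internal construction of the filler rather than to its defining property, and obliges you to treat $n=1$ separately because \cref{thm existence} requires $r \geq 3$; your fallback via edginess of $\Delta^1/\partial\Delta^1$ and \cref{edgy segal char redux} does work. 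Both arguments locate the role of gappedness (unavoidable here, as you note, since \cref{lem removal of gaps} is unavailable for mere simplicial sets) in essentially the same place, and your reduction via \cref{exercise sset} matches the paper's.
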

\begin{proof}
Let $I \subset [m]$ be a gapped subset of size $2n+1$, and suppose we have elements $x_i \in X_{m-1}$ satisfying $d_i x_j = d_{j-1} x_i$ for $i < j$ in $I$.
Since $X$ is a simplicial subset of $\rep{n}/\partial \rep{n}$, by \cref{thm degree sphere} there is a unique element $x \in (\rep{n}/\partial \rep{n})_m$ such that $d_i x = x_i$ for all $i\in I$.
The only question is whether or not $x$ is an element of $X_m$.
Suppose not; then in particular $x \neq \ast$.
Write $f\colon [m] \twoheadrightarrow [n]$ for $x$ considered as function, and let $\ell$ be the least integer such that $f(\ell) > f(\ell+1)$ (if there is no such $\ell$, then $f$ is order-preserving, contrary to assumption.)

As $f^i$ is trivial for at most $n$ elements $i\in I$ by \cref{lem trivial}, there are at least $n+1$ elements of $I$ with $f^i$ epi.
Since $I$ is gapped, we may choose $i$ with $f^i$ epi and $i \neq \ell, \ell+1$. 
But then $f(\ell) = f^i(\ell) \leq f^i(\ell+1) = f(\ell+1)$, contrary to assumption.
We conclude that $x\in X_m$ after all, and that $X$ is indeed lower $(4n{-}1)$-Segal.
\end{proof}

\section{Actions}
\label{app action}

We unravel the notion of action from \cref{def partial action}.

\begin{definition}\label{def L-sets}
Let $L$ be a partial groupoid and $S$ a set.
We say that $S$ is an \emph{$L$-set} if it is equipped with 
partially-defined functions $L_n \times S \nrightarrow S$ for $n\geq 0$ subject to the following conditions:

\begin{enumerate}[label = A\arabic*), ref = A\arabic*]
\item Given $x\in S$, there exists a unique vertex $a\in L_0$ such that $a\cdot x$ is defined. Moreover, $a\cdot x = x = [\id_a] \cdot y$.
\label{action identity}
\end{enumerate}
For the remaining conditions, assume 
$[f_1 | \dots | f_n] \cdot x$ is defined, where $n\geq 1$ and $f_i \colon a_{i-1} \to a_i$.
\begin{enumerate}[resume*]
\item For each $0\leq i \leq n$ we have $[f_1 | \dots | f_i | \id_{a_i} | f_{i+1} | \dots | f_n] \cdot x = [f_1 | \dots | f_n] \cdot x.$
\label{action degen}
\item For each $1 \leq i \leq n-1$ we have $[f_1 | \dots | f_{i+1} \circ f_i  | \dots | f_n] \cdot x = [f_1 | \dots | f_n] \cdot x.$
\label{action inner face}
\item If $n\geq 2$, then $[f_1 | \dots | f_{n-1} ] \cdot x$ is defined.
\label{action last face}
\item If $n\geq 2$, then $[f_2 | \dots | f_n] \cdot ( [f_1] \cdot x) = [f_1 | \dots | f_n] \cdot x.$
\label{action splitting}
\item $[f_1 | \dots | f_n | f_n^{-1} | \dots | f_1^{-1}] \cdot x = x$.
\label{action inversion}
\end{enumerate}
A map of $L$-sets is a function $\phi \colon S \to S'$ such that if $f\cdot x$, then $\phi(f \cdot x) = f\cdot \phi(x)$.
\end{definition}

Condition \ref{action identity} provides a function $S \to L_0$, and this action of $L$ on $S$ is really an action of $L$ on $\{S_a\}$ where $S_a$ is the preimage of $a\in L_0$ and $S \cong \coprod_{a\in L_0} S_a$.

\begin{lemma}\label{lem ordb action}
Suppose $f\in L_n$ and $f \cdot x$ is defined.
If $\alpha \colon [m] \to [n]$ in $\ord$ satisfies $\alpha(0) = 0$, then $(\alpha^*f) \cdot x$ is defined.
If, additionally, $\alpha(m) = n$, then $(\alpha^*f) \cdot x = f \cdot x$.
\end{lemma}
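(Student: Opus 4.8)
The plan is to factor $\alpha$ as an order-preserving surjection followed by an order-preserving injection and to treat each factor separately against the axioms of \cref{def L-sets}. Write $\alpha = \iota\circ\pi$ with $\pi\colon[m]\twoheadrightarrow[k]$ surjective and $\iota\colon[k]\hookrightarrow[n]$ injective, so that $\alpha^* f = \pi^*(\iota^* f)$. A surjection automatically satisfies $\pi(0)=0$, and since $\alpha(0)=0$ we obtain $\iota(0)=\iota(\pi(0))=\alpha(0)=0$; likewise, if $\alpha(m)=n$ then $\iota(k)=\iota(\pi(m))=\alpha(m)=n$. Thus each factor inherits the relevant endpoint condition. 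The whole point of the factorization is that $\iota^*$ is a composite of face operators, to which \ref{action inner face} and \ref{action last face} apply, while $\pi^*$ is a composite of degeneracy operators, to which \ref{action degen} applies.

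First I would handle the injection. The elements of $[n]\setmin\image(\iota)$ can be removed from $f$ one at a time, in decreasing order, producing a chain of simplices that terminates in $\iota^* f$. At each stage the vertex being removed is either the current top vertex, giving a top face $d_\top$ whose action remains defined by \ref{action last face} (or, in the base case of a $1$-simplex collapsing to its source object when $k=0$, by \ref{action identity}), or else an interior vertex, giving an inner face governed by \ref{action inner face}. The key point is that since $\iota(0)=0$ the vertex $0$ is never removed, so no bottom face $d_\bot$ is ever required; this is exactly where the hypothesis $\alpha(0)=0$ enters. Hence $(\iota^* f)\cdot x$ is defined. Moreover \ref{action inner face} preserves the value of the action whereas \ref{action last face} need not, so when $\iota(k)=n$ the top vertex $n$ is also never removed, only inner faces occur, and therefore $(\iota^* f)\cdot x = f\cdot x$.

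Next I would handle the surjection. Applying $\pi^*$ to $g\coloneq\iota^* f$ merely inserts identity edges $\id_{a_i}$ into the word $[g_1|\cdots|g_k]$, and repeated application of \ref{action degen} shows that $(\pi^* g)\cdot x$ is defined and equal to $g\cdot x$ whenever the latter is defined. Combining the two steps gives $(\alpha^* f)\cdot x = (\pi^*(\iota^* f))\cdot x = (\iota^* f)\cdot x$, which is defined, establishing the first assertion; and when $\alpha(m)=n$ this further equals $f\cdot x$, establishing the second.

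The main obstacle is essentially organizational rather than conceptual: one must pin down the decomposition of $\alpha$ so that every elementary step is transparently one of the face/degeneracy moves licensed by \ref{action identity}--\ref{action last face}, and in particular verify that the decreasing-order removal never calls for $d_\bot$ and, in the equality case, never calls for $d_\top$. Once this ordering of elementary moves is fixed, each individual step is an immediate invocation of a single axiom, and neither \ref{action splitting} nor \ref{action inversion} is needed.
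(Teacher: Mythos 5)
Your argument is correct and is essentially the paper's own proof, which likewise consists of decomposing $\alpha^*$ into face and degeneracy operators while avoiding $d_\bot$ (and also avoiding $d_\top$ for the equality statement), with \ref{action identity} and \ref{action degen} covering degeneracies, \ref{action inner face} covering inner faces, and \ref{action last face} covering the top face. Your epi--mono factorization and decreasing-order removal of vertices simply make explicit the bookkeeping that the paper's one-sentence proof leaves implicit.
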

\begin{proof}
This follows by decomposing $\alpha^*$ into face and degeneracy maps, without using $d_\bot$ (and without using $d_\top$ in the second instance).
Conditions \ref{action identity} and \ref{action degen} cover  degeneracies, \ref{action inner face} covers inner faces, and \ref{action last face} covers the top face $d_\top$.
\end{proof}

\begin{lemma}\label{lem arb action}
Suppose $f\in L_n$ and $f \cdot x$ is defined.
Let $\gamma \colon [m] \to [n]$ be an arbitrary map in $\ord$.
Define $\bar \gamma \colon [m+1] \to [n]$ by $\bar\gamma(0) = 0$ and $\bar \gamma(i) = i-1$ for $i >0$.
Then 
\[
  (\gamma^* f) \cdot ([f_{0,\gamma(0)}] \cdot x) = (\bar \gamma^*f) \cdot x. 
\]
\end{lemma}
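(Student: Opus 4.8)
The plan is to read $\bar\gamma$ as the map obtained from $\gamma$ by prepending a $0$, that is $\bar\gamma(0)=0$ and $\bar\gamma(i)=\gamma(i-1)$ for $i>0$ (interpreting the displayed $\bar\gamma(i)=i-1$ this way). With this reading the whole statement is, once the relevant simplices are identified, nothing but the splitting axiom \cref{action splitting}. Concretely, I would first record two face computations for the simplex $\bar\gamma^* f \in L_{m+1}$. Its initial edge is $\epsilon_{01}^*\bar\gamma^* f = (\bar\gamma\epsilon_{01})^* f = \epsilon_{0,\gamma(0)}^* f = f_{0,\gamma(0)}$, since $\bar\gamma\epsilon_{01}$ sends $0\mapsto 0$ and $1\mapsto\gamma(0)$. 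Its bottom face is $d_\bot \bar\gamma^* f = (\bar\gamma\delta^0)^* f = \gamma^* f$, where $\delta^0\colon[m]\to[m+1]$ is the coface $j\mapsto j+1$ and one checks $\bar\gamma\circ\delta^0=\gamma$.

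With these in hand the main case $m\ge 1$ is immediate. Since $\bar\gamma(0)=0$, \cref{lem ordb action} guarantees that $(\bar\gamma^* f)\cdot x$ is defined, so the standing hypothesis of \cref{action splitting} is met. Writing $\bar\gamma^* f=[f_{0,\gamma(0)} | g_2 | \dots | g_{m+1}]$, the subword $(g_2,\dots,g_{m+1})$ is exactly the spine of $d_\bot\bar\gamma^* f=\gamma^* f$, and as $m+1\ge 2$ the axiom \cref{action splitting} yields $(\gamma^* f)\cdot([f_{0,\gamma(0)}]\cdot x)=(\bar\gamma^* f)\cdot x$, which is the assertion.

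The only genuine wrinkle, and the step I expect to cost the most care, is the degenerate case $m=0$, where $\bar\gamma^* f=f_{0,\gamma(0)}$ has length one and \cref{action splitting} is unavailable. Here $\gamma^* f$ is the vertex $a_{\gamma(0)}\in L_0$, so the claim reduces to showing $a_{\gamma(0)}\cdot(f_{0,\gamma(0)}\cdot x)=f_{0,\gamma(0)}\cdot x$, that is, that $a_{\gamma(0)}$ is the unique vertex of \cref{action identity} acting on $f_{0,\gamma(0)}\cdot x$. I would obtain this by applying \cref{action splitting} to the degenerate length-two word $[f_{0,\gamma(0)} | \id_{a_{\gamma(0)}}]$, which acts on $x$ by \cref{lem ordb action}, and combining it with \cref{action degen} together with the identification $[\id_{a_{\gamma(0)}}]\cdot y=a_{\gamma(0)}\cdot y$ from the last clause of \cref{action identity}; uniqueness in \cref{action identity} then pins down the base vertex. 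Apart from this bookkeeping the argument is routine.
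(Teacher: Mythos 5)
Your proof is correct and follows essentially the same route as the paper's: read $\bar\gamma$ as prepending $0$ to $\gamma$ (the displayed formula $\bar\gamma(i)=i-1$ is a typo for $\gamma(i-1)$, as the paper's own computation $f_{\bar\gamma(0),\bar\gamma(1)}=f_{0,\gamma(0)}$ confirms), compute the initial edge and the bottom face of $\bar\gamma^*f$, and invoke axiom (\ref{action splitting}). Your separate treatment of the $m=0$ case, where (\ref{action splitting}) is unavailable because $\bar\gamma^*f$ has length one, is a legitimate extra step that the paper's one-line proof silently passes over, and your argument for it via the degenerate word $[f_{0,\gamma(0)}\,|\,\id]$ together with (\ref{action identity}) and (\ref{action degen}) is sound.
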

\begin{proof}
Set $g = \bar \gamma^*f$, and notice that $g\cdot x$ is defined by \cref{lem ordb action}. 
We have $g_1 = (\bar \gamma^*f)_{01} = f_{\bar\gamma(0),\bar\gamma(1)} = f_{0,\gamma(0)}$
and $[ g_2 | \dots | g_{m+1}] = d_0 (\bar \gamma^* f) = (\bar \gamma \delta^0)^* f = \gamma^* f$, so the equality holds by \ref{action splitting}.
\end{proof}

For each $n\geq 0$, let $E_n \subseteq L_n \times Y$ be the set of pairs $(f,y)$ such that $f\cdot y$ is defined.
For each $\gamma \colon [m] \to [n]$ in $\ord$, define a function $\gamma^* \colon E_n \to E_m$ by the rule  
\begin{equation}\label{eq gamma star}
  \gamma^*(f,y) \coloneq (\gamma^* f, f_{0,\gamma(0)} \cdot y);
\end{equation}
this element is in $E_m$ by \cref{lem arb action}.
Given a composable pair $\beta \colon [p] \to [m]$, $\gamma\colon [m] \to [n]$ in $\ord$, a quick check shows that $(\gamma \beta)^* = \beta^*\gamma^*$, hence the $E_n$ assemble into a simplicial set $E$.
Notice that the set of vertices $E_0 = \{(a, x) \mid a \cdot x\}$ is in bijection with $S$ by \ref{action identity}. 

Projection onto the first coordinate is a simplicial map $\pi \colon E \to L$.
This map is star injective since the source vertex of $(f,x)$ is (up to the isomorphism just mentioned) just $x$. 
This implies that $E$ is edgy.
If $[f_1 | \dots | f_n] \cdot x = y$ is defined, then the axioms imply that 
\[
  [f_1 | \dots | f_n | f_n^{-1} | \dots | f_1^{-1} | f_1 | \dots | f_n] \cdot x = [f_n^{-1} | \dots | f_1^{-1} | f_1 | \dots | f_n ] \cdot y
\]
is defined.
Thus every simplex of $E$ is germinable in the sense of \cite[Definition 2.4]{HackneyLynd:PGSSS}, for the anti-involution given by 
\[
  ([f_1 | \dots | f_n], x) \mapsto ([f_n^{-1} | \dots | f_1^{-1}], y).
\]
It follows from \cite[Theorem 4.1]{HackneyLynd:PGSSS} that $E$ canonically has the structure of a symmetric set.

\begin{theorem}
The preceding construction gives an equivalence between the category of $L$-sets of \cref{def L-sets} and the category of partial actions of $L$ from \cref{def partial action}.
\end{theorem}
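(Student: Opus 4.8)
The plan is to realize the construction of this appendix as a functor $\Phi$ from $L$-sets to partial actions, and to produce an explicit inverse functor $\Psi$. Given a partial action $\rho\colon E \to L$ in the sense of \cref{def partial action}, let $\Psi(\rho)$ be the set $E_0$ equipped with the action recalled in \cref{sec star injective}: for $f \in L_n$ and $x\in E_0$, declare $f\cdot x$ to be defined exactly when $f$ lies in the image of $\starset x \to \starset \rho(x)$, and in that case set $f \cdot x = d_\bot^n \tilde f$, where $\tilde f$ is the unique lift of $f$ with source $x$ (unique by star injectivity). On morphisms, $\Phi$ sends a map $\phi\colon S \to S'$ of $L$-sets to $(f,x)\mapsto (f,\phi(x))$, and $\Psi$ sends a map of partial actions, i.e.\ a commuting triangle over $L$, to its restriction $E_0 \to E_0'$; both assignments are well defined precisely because of the definedness clause in the respective notions of morphism.

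First I would verify that $\Psi(\rho)$ satisfies axioms \ref{action identity}--\ref{action inversion}. Axioms \ref{action identity}--\ref{action last face} are immediate from the simplicial structure of $E$ together with uniqueness of lifts: inserting an identity, composing an adjacent pair, or deleting the top edge of a word corresponds to applying a degeneracy, an inner face, or $d_\top$ to the lift $\tilde f$, none of which changes its source or its target $d_\bot^n\tilde f$. The splitting axiom \ref{action splitting} is the assertion $d_\bot^{n-1}(d_\bot \tilde f) = d_\bot^n \tilde f$, once one observes that $d_\bot \tilde f$ is the unique lift of $[f_2|\cdots|f_n]$ whose source is the target of $\tilde f_1 = [f_1]\cdot x$. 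The only axiom requiring the full symmetric structure is the inversion axiom \ref{action inversion}: applying the fold map $\alpha\colon [2n]\to[n]$ with $\alpha(i)=i$ for $i\le n$ and $\alpha(n+j)=n-j$ to $\tilde f\in E_n$ produces $\alpha^*\tilde f \in E_{2n}$ whose spine is $(\tilde f_1,\dots,\tilde f_n,\tilde f_n^{-1},\dots,\tilde f_1^{-1})$ and whose source and target are both $x$ (as $\alpha(0)=\alpha(2n)=0$ reads off the source of $\tilde f$). Since $\rho$ is a map of symmetric sets, $\alpha^*\tilde f$ lifts the word in \ref{action inversion}, which therefore acts on $x$ with value $x$.

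Next I would check that $\Phi$ and $\Psi$ are mutually inverse. For $\Psi\Phi \cong \id$, starting from an $L$-set $S$ one computes $d_\bot^n(f,x)$ in $E = \Phi(S)$ using \eqref{eq gamma star}; iterating the formula and applying \ref{action splitting} at each stage shows the $S$-coordinate of $d_\bot^n(f,x)$ is $f\cdot x$, so the action recovered by $\Psi$ agrees with the original on the nose. For $\Phi\Psi \cong \id$, star injectivity gives for each $n$ a bijection from $\{(f,x) : f\cdot x \text{ defined}\}$ to $E_n$, $(f,x)\mapsto \tilde f$, with inverse $\tilde g \mapsto (\rho(\tilde g), d_\top^{n}\tilde g)$; one then checks these bijections commute with all structure maps $\alpha^*$ by comparing \eqref{eq gamma star} against $\alpha^*\tilde f$, the point being that the source of $\alpha^*\tilde f$ is exactly $f_{0,\alpha(0)}\cdot x$ by definition of the action, so $\alpha^*\tilde f = \widetilde{\alpha^* f}$ by uniqueness of lifts. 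Both natural isomorphisms are the identity on underlying sets, and naturality against the morphism descriptions above is then formal.

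The hard part will be the bookkeeping in the last step: confirming that the reconstructed structure maps of $\Phi\Psi(\rho)$ match those of $E$ for every morphism $\alpha$ of $\fin$ (not merely of $\ord$), and, dually, that the iterated-bottom-face identification of $d_\bot^n(f,x)$ with $f\cdot x$ survives the passage from the simplicial set built in \cref{lem arb action} to its canonical symmetric refinement. I expect this to be controlled cleanly by two facts already available: uniqueness of lifts afforded by star injectivity handles the $\ord$-level comparison, and uniqueness of the symmetric refinement of a germinable edgy simplicial set (from \cite{HackneyLynd:PGSSS}) promotes the simplicial isomorphism to a symmetric one automatically. The remaining care is simply in tracking domains of definedness through the symmetric structure.
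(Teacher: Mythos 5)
Your proposal is correct and follows essentially the same route as the paper, which simply presents the two constructions (the appendix's functor from $L$-sets to partial actions, and the star-injective-to-action assignment in the reverse direction) and declares the verification ``more or less evident''; you supply exactly the omitted checks, including the fold-map argument for axiom (A6) and the appeal to uniqueness of the symmetric refinement. One cosmetic slip: applying $d_\top$ to the lift $\tilde f$ does change the target $d_\bot^{n}\tilde f$, but this is harmless since axiom (A4) asserts only definedness of $[f_1|\dots|f_{n-1}]\cdot x$, which your argument does establish.
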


The functor in the reverse direction is more or less evident: given a star injective map $E \to L$, the partial functions $L_n \times E_0 \pto E_0$ described after \cref{def star inj maps} endow $E_0$ with the structure of an $L$-set. 

\begin{remark}
Hayashi independently gave a version of \cref{def L-sets} for right actions, as well as a version of the preceding construction \cite[\S3,4]{Hayashi:PGSF}.
\end{remark}

\bibliographystyle{amsalpha}
\bibliography{pgds}
\end{document}